\documentclass{amsart}

\usepackage[title]{appendix}
\usepackage{amssymb}
\usepackage{amsmath}
\usepackage{amsthm}
\usepackage{tikz-cd}
\usepackage{mathtools}
\usepackage{framed}
\usepackage{comment}

\definecolor{darkblue}{rgb}{0,0,0.6}
\usepackage[breaklinks, ocgcolorlinks,colorlinks=true, citecolor=darkblue, filecolor=darkblue, linkcolor=darkblue, urlcolor=darkblue]{hyperref}

\usepackage[margin=30mm]{geometry}

% Special sets
\newcommand{\C}{\mathbb{C}}

\newcommand{\N}{\mathbb{N}}
\newcommand{\Q}{\mathbb{Q}}
\newcommand{\R}{\mathbb{R}}

\newcommand{\Z}{\mathbb{Z}}
\renewcommand{\H}{\mathbb{H}}
\newcommand{\F}{\mathbb{F}}

\renewcommand{\mod}{\,\,\text{\normalfont mod}\,}

% Algebra
\DeclareMathOperator{\Aut}{Aut}

\DeclareMathOperator{\Res}{Res}

\DeclareMathOperator{\PSL}{PSL}

\DeclareMathOperator{\Def}{def}
\DeclareMathOperator{\IM}{Im}
\DeclareMathOperator{\Tor}{Tor}
\DeclareMathOperator{\Ker}{Ker}

\DeclareMathOperator{\SL}{SL}
\DeclareMathOperator{\TL}{TL}

\DeclareMathOperator{\Cls}{Cls}
\DeclareMathOperator{\mass}{mass}

\DeclareMathOperator{\disc}{disc}
\DeclareMathOperator{\Ram}{Ram}

\DeclareMathOperator{\Gal}{Gal}

\DeclareMathOperator{\ord}{ord}
\DeclareMathOperator{\ei}{ei}
\DeclareMathOperator{\bm}{\mathcal{B}_{\text{max}}}
\DeclareMathOperator{\SF}{SF}
\DeclareMathOperator{\HT}{HT}
\DeclareMathOperator{\PHT}{PHT}

\DeclareMathOperator{\MP}{MP}
\DeclareMathOperator{\std}{std}

\newtheorem{thmx}{Theorem}

\newtheorem{corx}[thmx]{Corollary}

\newtheorem{thm}{Theorem}[section]

\newtheorem*{thm*}{Theorem}
\newtheorem{prop}[thm]{Proposition}
\newtheorem*{prop*}{Proposition}
\newtheorem{lemma}[thm]{Lemma}
\newtheorem{corollary}[thm]{Corollary}
\newtheorem*{corollary*}{Corollary}

\newtheorem{problem}[thm]{Problem}

\theoremstyle{definition}

\newtheorem*{question*}{Question}

\newtheorem*{theorem*}{Theorem}

\theoremstyle{remark}
\newtheorem{remark}[thm]{Remark}
\newtheorem*{remark*}{Remark}

\usepackage{rotating}

\usepackage{color}
\usepackage{enumerate}

\usepackage{cite}
\usepackage[nodisplayskipstretch]{setspace}
\usepackage{multirow}
\usepackage{placeins}
\usepackage{mathrsfs}
\usepackage{mathabx}
\usepackage{adjustbox}

\usepackage{esvect}

\newenvironment{clist}[1]
{\begin{enumerate}[\normalfont #1]}
{\end{enumerate}}

\newcommand{\wh}{\widehat}
\newcommand{\wt}{\widetilde}

\makeatletter
\@namedef{subjclassname@2020}{%
  \textup{2020} Mathematics Subject Classification}
\makeatother

\begin{document}

%[$(G,n)$-complexes and the Swan finiteness obstruction]
\title{Cancellation for $(G,n)$-complexes and the Swan finiteness obstruction}

\author{John Nicholson}
\address{School of Mathematics and Statistics, University of Glasgow, United Kingdom}
\email{john.nicholson@glasgow.ac.uk}

%55P15 Classification of homotopy type
%11R52 Quaternion and other division algebras: arithmetic, zeta functions
%20C05 Group rings of finite groups and their modules
%57M20 Two-dimensional complexes

\subjclass[2020]{Primary 55P15; Secondary 11R52, 20C05, 57M20}
%\keywords{ }

\begin{abstract} 
In previous work, we related homotopy types of finite $(G,n)$-complexes when $G$ has periodic cohomology to projective $\Z G$-modules representing the Swan finiteness obstruction. 
We use this to determine when $X \vee S^n \simeq Y \vee S^n$ implies $X \simeq Y$ for finite $(G,n)$-complexes $X$ and $Y$, and give lower bounds on the number of homotopically distinct pairs when this fails.
The proof involves constructing projective $\Z G$-modules as lifts of locally free modules over orders in products of quaternion algebras, whose existence follows from the Eichler mass formula.
In the case $n=2$, difficulties arise which lead to a new approach to finding a counterexample to Wall's D2 problem.
\end{abstract}

\maketitle

%\vspace{-7mm}

\section{Introduction}

For an integer $n \ge 2$, a \textit{$(G,n)$-complex} is a connected $n$-dimensional CW-complex $X$ with fundamental group $G$ and whose universal cover $\wt X$ is $(n-1)$-connected. 
Equivalently, it is the $n$-skeleton of a $K(G,1)$. Given a group $G$ and $n \ge 2$, a finite $(G,n)$-complex exists if and only if $G$ has type $F_n$ in the sense of Wall \cite{Wa65}.
For a group $G$ and $n \ge 2$, let $\HT(G,n)$ be the set of homotopy types of finite $(G,n)$-complexes which is a graded tree with edges between each $X$ and $X \vee S^n$ and with grading coming from the directed Euler characteristic which is defined to be $\vv \chi (X) = (-1)^n \chi(X)$.
The first basic question is whether or not $\HT(G,n)$ has cancellation, i.e. has the property that $X \vee S^n \simeq Y \vee S^n$ implies that $X \simeq Y$ for all $X, Y \in \HT(G,n)$. This is equivalent to the existence of a finite $(G,n)$-complex $X_0$ such that every finite $(G,n)$-complex is of the form $X_0 \vee rS^n$ for some $r \ge 0$.

This question is completely inaccessible in general.
However, a solution in the case where $G$ is finite abelian, which includes non-cancellation examples, follows from work of Browning \cite{Br79b}, Dyer-Sieradski \cite{DS73} and Metzler \cite{Me76}.
Examples of non-cancellation are also known over certain infinite groups, with the first coming from work of Dunwoody \cite{Du76}.
These examples are of special interest due to their applications to smooth $4$-manifolds \cite{BC+21, HK93-survey, KS84}, Wall's D2 problem \cite{Jo03a, Ni19} and combinatorial group theory \cite{MR17}.

The aim of this article is to study the cancellation problem over groups with periodic cohomology. This is the second part of a two-part series in which part one \cite{Ni20a} reduces the classification of $\HT(G,n)$ to problems about projective $\Z G$-modules which we resolve in the present article.

\subsection{Main results}

Let $\PHT(G,n)$ denote the set of polarised homotopy types of finite $(G,n)$-complexes, i.e. the homotopy types of pairs $(X, \rho)$ where $X$ is a finite $(G,n)$-complex and $\rho : \pi_1(X) \to G$ is an isomorphism.
Recall that a D2 \textit{complex} is a connected CW-complex which is cohomologically 2-dimensional and a group $G$ has the D2 \textit{property} if every finite D2 complex $X$ with $\pi_1(X) \cong G$ is homotopy equivalent to a $2$-complex \cite{Wa65}. 
We say that a non-trivial finite group $G$ has \textit{$k$-periodic cohomology} if its Tate cohomology groups satisfy $\widehat{H}^{i}(G;\Z) \cong \widehat{H}^{i+k}(G;\Z)$ for all $i \in \Z$.

Our main result is that cancellation for $\HT(G,n)$ is completely determined by $m_{\H}(G)$. 
\begin{thmx} \label{thm:main}
Let $G$ have $k$-periodic cohomology, let $n = ik$ or $ik-2$ for some $i \ge 1$ and, if $n=2$, suppose $G$ has the {\normalfont D2} property. Then the following are equivalent:
\begin{enumerate}[\normalfont(i)]
\item $\HT(G,n)$ has cancellation
\item $\PHT(G,n)$ has cancellation
\item $m_{\H}(G) \le 2$.
\end{enumerate}
\end{thmx}

Here $m_{\H}(G)$ denotes the number of copies of the quaternions $\H=M_1(\H)$ in the Wedderburn decomposition of $\R G$ for a finite group $G$ (see \cite[p318]{Ni18}).

\begin{remark} \label{remark:thm-main}
(a) The equivalence of (ii) and (iii) in the case $n=2$ was previously established in \cite[Theorem A]{Ni19}, building upon earlier work of F. E. A. Johnson \cite{Jo03a}. The equivalence of (i) and (ii) is in spite of the fact that, in \cite[Section 9]{Ni20a}, we showed that there exists $G$ with $4$-periodic cohomology such that $\HT(G,n) \ne \PHT(G,n)$ for $n>2$ is even.

(b) 
A statement like Theorem \ref{thm:main} can still be made in the case $n=2$ without the hypothesis that $G$ satisfies the D2 property. In this case, $\HT(G,2)$ (resp. $\PHT(G,2)$) should be replaced by the set of homotopy (resp. polarised homotopy) types of finite D2 complexes $X$ with $\pi_1(X) \cong G$. 
See Theorem \ref{thm:main-D2} for the precise statement as well as an outline of the proof.
\end{remark}

Note that, if $G$ has $k$-periodic cohomology, then $k$ is even (see, for example, \cite[Proposition 40.2]{Jo03a}) and so $n$ is always even in the statement above.
Recall that, if $G$ is finite and $n$ is even, then $\HT(G,n)$ is a \textit{fork} in the sense that it has a single vertex at each non-minimal grade $\vv \chi(X)$ and finitely many at the minimal level \cite[Corollary 4.7]{Ni20a}. Let $N(G,n) = \#\{X \in \HT(G,n) : \vv \chi(X) \text{ is minimal}\}$. 

Our second result is the following. This complements Theorem \ref{thm:main} since $\HT(G,n)$ has cancellation if and only if $N(G,n)=1$. In particular, Theorem \ref{thm:main} says that $N(G,n) > 1$ if and only if $m_{\H}(G) \ge 3$, and Theorem \ref{thm:main-bounds} demonstrates the extent to which cancellation fails as $m_{\H}(G) \to \infty$.

\begin{thmx} \label{thm:main-bounds}
Let $G$ have $k$-periodic cohomology, let $n = ik$ or $ik-2$ for some $i \ge 1$ and, if $n=2$, suppose $G$ has the {\normalfont D2} property. If $m = m_{\H}(G)$, then
\[ N(G,n) \ge e^{\tfrac{m \log m}{8 \log \log m} + O(m \log \log m)}. \]
\end{thmx}

\begin{remark} \label{remark:thm-main-bounds}
As in Remark \ref{remark:thm-main}, the statement holds in the case $n=2$ without the assumption that $G$ has the D2 property with $N(G,2)$ replaced by the number of homotopy types of finite D2 complexes $X$ with $\pi_1(X) \cong G$ and $\chi(X)$ ($=\vv\chi(X)$) minimal.
\end{remark}

The proofs of Theorems \ref{thm:main} and \ref{thm:main-bounds} will be based on results in \cite{Ni20a} which reduce the problems to pure algebra.
Let $n = ik$ or $ik-2$, let $P_{(G,n)}$ be a projective $\Z G$-module representing the Swan finiteness obstruction $\sigma_{ik}(G)$ and, if $n=2$, suppose that $G$ has the D2 property. Then, by \cite[Theorems A, B]{Ni20a}, there are isomorphisms of graded trees
\[\Psi: \PHT(G,n) \to [P_{(G,n)}], \quad \bar{\Psi}: \HT(G,n) \to [P_{(G,n)}]/\Aut(G)\]
where $[P_{(G,n)}]$ is the set of projective $\Z G$-modules $P$ with $P \oplus \Z G^i \cong P_{(G,n)} \oplus \Z G^j$ for some $i,j \ge 0$.
We view this as a graded tree by adding edges between $P$ and $P \oplus \Z G$ for all $P \in [P_{(G,n)}]$.

We will now define the action of $\Aut(G)$ on $[P_{(G,n)}]$. First recall that, if $\theta \in \Aut(G)$ and $P$ is a projective $\Z G$-module, then we can define $P_\theta$ to be the $\Z G$-module with the same underlying abelian group as $P$ but whose action is given by $g \cdot m = \theta(g) \cdot_P m$ for $g \in G$ and $m \in P$. Note that this does not necessarily give an action of $\Aut(G)$ on $[P_{(G,n)}]$ since we may have that $P \in [P_{(G,n)}]$ but $P_\theta \not \in [P_{(G,n)}]$.
Recall that, for $G$ a finite group, every projective $\Z G$-module is of the form $P \oplus \Z G^r$ where $P$ has rank one and $r \ge 0$ \cite{Sw60b}.
If $I \subseteq \Z G$ is the augmentation ideal and $r$ is an integer such that $(r,|G|)=1$, then $(I,r)$ is a projective $\Z G$-module \cite{Sw60a}. 
If $G$ has $k$-periodic cohomology, then the action of $\theta \in \Aut(G)$ on $[P_{(G,n)}]$ is then given by 
\[\theta : P \oplus \Z G^r \mapsto ((I, \psi_k(\theta)^i) \otimes P_\theta) \oplus \Z G^r\] 
where $P$ has rank one and $\psi_k : \Aut(G) \to (\Z/|G|)^\times$ is a map which depends only on $G$ \cite[Section 7]{Ni20a}. 

In particular, in order to prove Theorem \ref{thm:main}, it suffices to determine when $[P_{(G,n)}]$ and $[P_{(G,n)}]/\Aut(G)$ have cancellation for $G$ a group with $k$-periodic cohomology and an appropriate integer $k$. In order to prove Theorem \ref{thm:main-bounds}, it suffices to compute lower bounds on the number of equivalence classes of rank one projective modules in $[P_{(G,n)}]/\Aut(G)$.
These tasks will be achieved by combining detailed information on the modules $P_{(G,n)}$ with the cancellation theorems for projective $\Z G$-modules recently established by the author in \cite{Ni18} and \cite{Ni19}.

\subsection{Applications to Wall's D2 problem}
\label{ss:D2}

We now consider the case $n=2$ in more detail. Recall that every finite presentation $\mathcal{P}$ for a group $G$ has an associated presentation complex $X_{\mathcal{P}}$ which is a finite 2-complex with $\pi_1(X_{\mathcal{P}}) \cong G$. Conversely, every (connected) finite 2-complex $X$ with $\pi_1(X) \cong G$ is homotopy equivalent to $X_{\mathcal{P}}$ for some finite presentation $\mathcal{P}$ for $G$. 
We say that two finite presentations $\mathcal{P}$, $\mathcal{Q}$ are homotopy equivalent if $X_{\mathcal{P}}$, $X_{\mathcal{Q}}$ are homotopy equivalent (see Section \ref{section:D2} for more details).

For a finite presentation $\mathcal{P}$ define its \textit{deficiency} $\Def(\mathcal{P})$ to be the number of generators minus the number of relators. For a finitely presented group $G$, define the \textit{deficiency} $\Def(G)$ to be the maximum deficiency across all finite presentations for $G$.
Note that $\chi(X_{\mathcal{P}}) = 1 - \Def(\mathcal{P})$ and so $N(G,2)$ is the number of homotopy classes of presentations of $G$ with maximal deficiency.

Let $G$ be a group with 4-periodic cohomology.
It is a consequence Theorems \ref{thm:main} and \ref{thm:main-bounds} that, if $G$ has the D2 property, then we would expect non-cancellation examples for finite 2-complexes over $G$ provided $m_{\H}(G)$ is sufficiently large. For example, this applies when $G=Q_{28}$ is the quaternion group of order 28 since $G$ has the D2 property and $m_{\H}(G)=3$ \cite[Theorem 8.11]{Ni19}.

However, if we are interested in the D2 property itself, we could instead view Theorems \ref{thm:main} and \ref{thm:main-bounds} as a constraint that needs to be satisfied in order for the D2 property to hold. 
For example, let $Q_{4n}$ denote the quaternion group of order $4n$ which has standard presentation $\mathcal{P}_{\std} = \langle x , y \mid x^n = y^2, y^{-1}xy = x^{-1} \rangle$. Since $\Def(Q_{4n})=0$, the presentations $\mathcal{P}$ with maximal deficiency are \textit{balanced} in that $\Def(\mathcal{P})=0$.
If $Q_{4n}$ has the D2 property for all $n \ge 2$, then $m_{\H}(Q_{4n}) = \lfloor n/2 \rfloor$ implies that $N(Q_{4n},2) \gg e^{\lambda n}$ for all $\lambda >0$ and $n$ sufficiently large.
In \cite[Section 3]{MP19}, Mannan-Popiel introduced a family of presentations 
\[ \mathcal{P}_{\MP}^r = \langle x , y \mid x^n = y^2, y^{-1}xyx^{r-1} = x^ry^{-1}x^2y \rangle  \]
which, for certain $r \in \Z$, present $Q_{4n}$. Note that, if $r=0$ or $1$, then $\mathcal{P}_{\MP}^r$ is homotopy equivalent to $\mathcal{P}_{\std}$.
It was proposed in \cite{MP19} that the $\mathcal{P}_{\MP}^r$ contain all presentations for $Q_{4n}$ up to homotopy. 
In Proposition \ref{prop:MP}, we will show that $r \equiv s \mod n$ implies $\mathcal{P}_{\MP}^r \simeq \mathcal{P}_{\MP}^s$ and so there are at most $n$ presentations of the form $\mathcal{P}_{\MP}^r$ up to homotopy. Hence, for $n$ sufficiently large, either $Q_{4n}$ does not have the D2 property or $Q_{4n}$ has a presentation which is not of the form $\mathcal{P}_{\MP}^r$ up to homotopy.

This has the following broad generalisation.
For a finite presentation $\mathcal{P} = \langle x_1, \cdots, x_n \mid r_1, \cdots, r_m \rangle$, define the \textit{relation length} $\ell(\mathcal{P})$ to be $\sum_{i=1}^m |r_i|$ where $|\cdot |$ is the word norm on the free group $F(x_1,\cdots,x_n)$, i.e. $|1| = 0$ and, for $r \ne 1$, $|r|$ is the minimal $t \ge 1$ for which $r = s_1 \cdots s_t$ for some $s_i \in \{x_1^{\pm 1}, \cdots x_n^{\pm 1}\}$.
For the presentations for $Q_{4n}$ above, we have $\ell(\mathcal{P}_{\std}) = n+6$ and $\ell(\mathcal{P}_{\MP}^r) = n+2r+8 \le 3n+6$ for $2 \le r < n$.
The following is a consequence of the fact that, if $Q_{4n}$ has the D2 property, then $N(Q_{4n},2)$ grows super-exponentially in $n$.

\begin{corx} \label{cor:main-D2}
Let $\lambda > 0$. Then, for all $n$ sufficiently large, at least one of the following holds:
\begin{clist}{(i)}
\item 
$Q_{4n}$ does not have the {\normalfont D2} property.
\item 
$Q_{4n}$ has a balanced presentation which is not homotopy equivalent to a two-generator two-relator presentation $\mathcal{P}$ with $\ell(\mathcal{P}) \le \lambda n$.
\end{clist}
\end{corx}

\begin{remark} \label{remark:cor-main-D2}
Whilst we have restricted to $Q_{4n}$ for simplicity, an analogous result holds with $Q_{4n}$ replaced by an arbitrary group with periodic cohomology. Furthermore, in condition (ii), we can replace $\ell(\mathcal{P}) \le \lambda n$ with a slightly weaker bound.
For a more detailed statement, see Theorem \ref{thm:main-D2-detailed}.
\end{remark}	
	
This suggests that, if $Q_{4n}$ has the D2 property for all $n$, then there would need to exist a family of presentations for $Q_{4n}$ for which the maximal relation length growths super-linearly in $n$.
This leads to a new approach to finding a counterexample to the D2 problem; namely, by obstructing the existence of such a large number of presentations for $Q_{4n}$ which are distinct up to homotopy.

This would be in contrast to the situation for finite abelian groups $G$ where, from results of Metzler \cite{Me76} and Browning \cite{Br79b}, we know that every presentation for $G$ with maximal deficiency is homotopy equivalent to a presentation $\mathcal{P}$ with $\ell(\mathcal{P}) < \ell(\mathcal{P}_{\std}) + t$ where $t \le |G|$ is the greatest common divisor of the invariant factors of $G$ and $\mathcal{P}_{\std}$ denotes the standard presentation for $G$. 

\subsection{Applications to the classification of 4-manifolds}

In \cite[Section 1.3]{Ni20a}, we discussed how a key motivation behind the homotopy classification of finite $(G,n)$-complexes is the homotopy classification of $2n$-dimensional manifolds. Each finite $(G,n)$-complex $X$ has an associated closed smooth $2n$-manifold $M(X)$ which is well-defined up to homotopy equivalence \cite{KS84}. For fixed $G$ and $n$, the manifolds $M(X)$ are \textit{stably diffeomorphic} in the sense that, for finite $(G,n)$-complexes $X$, $Y$, there exists $r \ge 0$ such that $M(X) \# r(S^n \times S^n) \cong M(Y) \# r(S^n \times S^n)$ are diffeomorphic.
The case $n=2$ is of particular interest since the only known examples of closed smooth $4$-manifolds which are stably diffeomorphic but not homotopy equivalent are of the form $M(X)$, $M(Y)$ where $X$, $Y$ are homotopically distinct finite 2-complexes with certain finite abelian fundamental groups \cite{KS84}. 

Let $G$ be a finite group with $4$-periodic cohomology and $m_{\H}(G) \ge 3$. Then, conditional on $G$ having the D2 property, Theorem \ref{thm:main} implies that there exists homotopically distinct finite 2-complexes $X_i$ for $i=1,2$ with $\pi_1(X_i) \cong G$ and $\chi(X_1)=\chi(X_2)$. This raises the question of whether the $M(X_i)$ contain further examples of stably diffeomorphic closed smooth 4-manifolds which are homotopically distinct.

\begin{problem} \label{problem:smooth-man}
Let $G$ have $4$-periodic cohomology with $m_{\H}(G) \ge 3$.
Do there exist homotopically distinct finite $2$-complexes $X$, $Y$ with fundamental group $G$ and $\chi(X)=\chi(Y)$ such that $M(X)$, $M(Y)$ are homotopically distinct?
\end{problem}

One difficulty with tackling this problem is that, if $G$ has $4$-periodic cohomology and $m_{\H}(G) \ge 3$ then we do not know whether homotopically distinct finite 2-complexes $X$, $Y$ with fundamental group $G$ and $\chi(X)=\chi(Y)$ actually exist unless we (somehow) know that $G$ has the D2 property.

In general, we can still apply Theorem \ref{thm:main-D2} (see also Remark \ref{remark:thm-main} (b)) which is the same statement as Theorem \ref{thm:main} but with $\HT(G,2)$ replaced by the set of homotopy types of finite D2-complexes $X$ with $\pi_1(X) \cong G$. 
Recently, Adem-Hambleton showed that every finite D2-complex $X$ with $\pi_1(X) \cong G$ finite has an associated closed topological 4-manifold  \cite[Theorem 3.8]{AH23}. It may therefore be possible to use Theorem \ref{thm:main-D2} to obtain new examples of closed topological 4-manifolds which are stably homeomorphic but not homotopy equivalent. Such examples are known to exist by results of Freedman \cite{Fr82} though by a different construction.
%Such examples are known to exist by results of Freedman \cite{Fr82} (even in the simply connected case) though further examples of this form would be of interest since it would not be clear whether or not they admit smooth structures.
We will not explore Problem \ref{problem:smooth-man} further in this article.

\subsection{The action of $\Aut(G)$ on general projective modules} \label{ss:special-properties}

We will conclude by considering two special properties that the projective $\Z G$-modules $P_{(G,n)}$ have, and ask whether these properties are shared by arbitrary projective $\Z G$-modules. This is of purely algebraic interest.

In order to compute $\HT(G,n)$ from $\PHT(G,n) \cong [P_{(G,n)}]$, we needed to quotient by a certain action of $\Aut(G)$ on $[P_{(G,n)}]$ which was defined using $\psi: \Aut(G) \to (\Z/|G|)^\times$.
Let $C(\Z G)$ denote the projective class group and $T_G$ the Swan subgroup which is generated by the $(I,r)$ where $(r,|G|)=1$. The existence of the action was dependent on the following property of $P_{(G,n)}$:
\begin{enumerate}
\item[(P1)] If $\theta \in \Aut(G)$, then $[(P_{(G,n)})_\theta] = [P_{(G,n)}] \in C(\Z G)/T_G$, i.e. $[P_{(G,n)}] \in (C(\Z G)/T_G)^{\Aut(G)}$.
\end{enumerate}

Given such an action, the second property that the $P_{(G,n)}$  have is a consequence of Theorem \ref{thm:main}:
\begin{enumerate}
\item[(P2)] $[P_{(G,n)}]$ has cancellation if and only if $[P_{(G,n)}]/ \Aut(G)$ has cancellation.
\end{enumerate}

These properties each hold for free modules since the $\Aut(G)$ action on a free module is trivial. This implies that the class $\Z G \in [\Z G]$ is a fixed point under the $\Aut(G)$-action and so $[\Z G]$ has cancellation if and only if $[\Z G]/\Aut(G)$ has cancellation.

In contrast to this, we show that properties (P1) and (P2) fail for arbitrary projective $\Z G$-modules, even in the case where $G$ has periodic cohomology. For (P1), we show that, if $G = C_p$ for $p \ge 23$ prime, then there exists $[P] \in C(\Z G)$ and $\theta \in \Aut(G)$ such that $[P_\theta] \ne [P] \in C(\Z G)/T_G$ (see Theorem \ref{thm:example1}).
For (P2) note that, if $G = Q_{28}$ is the quaternion group of order 28, then $T_G = 0$ and $\Aut(G)$ acts trivially on $C(\Z G)$. We will prove the following, which is stated later as Theorem \ref{thm:main-example}.

\begin{thm} \label{thm:main-example-intro}
There exists a projective $\Z Q_{28}$-module $P$ such that $[P]$ has non-cancellation and $[P]/\Aut(Q_{28})$ has cancellation. Here $\theta \in \Aut(Q_{28})$ acts on $[P]$ by sending $P_0 \mapsto (P_0)_\theta$ where $P_0 \in [P]$.
\end{thm}

\subsection{Organisation}

The paper will be structured as follows.
In Section \ref{section:group-theory}, we begin by establishing the necessary group-theoretic facts on groups with periodic cohomology. This includes calculating $m_{\H}(G)$ for each group and relating its value to the vanishing of $\sigma_k(G)$. 
In Section \ref{section:quotients}, we establish preliminaries on induced representations and show that, if $f: G \twoheadrightarrow Q_{4n}$ then there is an induced map $f_* : \Aut(G) \to \Aut(Q_{4n})$ and a surjection $[P_{(G,n)}]/ \Aut(G) \twoheadrightarrow [\widebar{P_{(G,n)}}]/\IM(f_*)$. This allows us to show non-cancellation occurs for $G$ by considering the case $Q_{4n}$. 

In Section \ref{section:cancellation-periodic}, we will combine the results in Section \ref{section:group-theory} with \cite[Theorem 5.1]{Ni19} to show that, if $m_{\H}(G) \le 2$, then $[P_{(G,n)}]$ has cancellation. Since the converse also holds, this leads to a complete determination of when cancellation occurs for a representative of $\sigma_k(G)$ and implies (ii) $\Leftrightarrow$ (iii) in Theorem \ref{thm:main}.

In Section \ref{section:mass-formulas}, we discuss locally free modules over orders in quaternion algebras and the Eichler mass formula. 
In Section \ref{section:orders-in-QG} we study the orders $\Lambda = \Lambda_{n_1, \cdots, n_k}$ in  $A=\prod_{i=1}^{k} \Q[\zeta_{n_i},j]$ which arise as quotients of $\Z Q_{4n}$. 
 In Theorem \ref{thm:higher-orders}, we classify the $\Lambda$ for which every stably free $\Lambda$-module is free, completing the classification started by Swan in \cite[Section 8]{Sw83}.  
 In Sections \ref{section:non-cancellation} and \ref{section:bounds}, we then apply these results to prove Theorems \ref{thm:main} and \ref{thm:main-bounds}.
 
In Section \ref{section:D2}, we discuss applications to the classification of group presentations, culminating in a proof of Corollary \ref{cor:main-D2}. In Section \ref{section:examples}, we explore the special properties of the modules $P_{(G,n)}$, as described in Section \ref{ss:special-properties}.

This work can be viewed as an attempt to properly amalgamate the techniques and results obtained by Swan in \cite{Sw83} with the wider literature on applications of the Swan finiteness obstruction \cite{Jo03a, Mi85, Ni19}. As such, we will rely heavily on calculations from \cite{Sw83} but will give alternate proofs where possible. 

\subsection*{Acknowledgements}
I would like to thank my PhD supervisor F. E. A. Johnson whose suggestion that I read `Swan's long paper' \cite{Sw83} was largely the inspiration for this work. I would also like to thank the referees for a number of helpful comments which improved the exposition of this article. This work was supported by the UK Engineering and Physical Sciences Research Council
 (EPSRC) grant EP/N509577/1 as well as the Heilbronn Institute for Mathematical Research. 

%%%%%%%%%%
\section{Groups with periodic cohomology} \label{section:group-theory}

Recall that a \textit{binary polyhedral group} is a non-cyclic finite subgroup of $\H^\times$ where $\H$ is the real quaternions. They are the generalised quaternion groups 
\[ Q_{4n} = \langle x, y \mid x^n=y^2, yxy^{-1}=x^{-1} \rangle\]
for $n \ge 2$ and the binary tetrahedral, octahedral and icosahedral groups $\widetilde{T}$, $\widetilde{O}$, $\widetilde{I}$ which are the preimages of the dihedral groups $D_{2n}$ and the symmetry groups $T$, $O$, $I$ under the double cover of Lie groups $f: \H^\times \hspace{-1mm} \cong S^3 \to SO(3)$.

We say that a group $G$ has \textit{$k$-periodic cohomology} for some $k \ge 1$ if its Tate cohomology groups satisfy $\hat{H}^i(G;\Z) \cong \hat{H}^{i+k}(G;\Z)$ for all $i \in \Z$.
Note that, if this is the case, then the periodicity must be induced by cupping with a class $g \in \wh H^k(G;\Z)$ (this follows from \cite[XII.11.1]{CE56}).    
It is easy to show that the binary polyhedral groups have $4$-periodic cohomology. The following can be found in \cite[XII.11.6]{CE56}.

\begin{prop} \label{prop:G-periodic}
If $G$ is a finite group, then the following are equivalent:
\begin{enumerate}[\normalfont (i)]
\item $G$ has periodic cohomology
\item $G$ has no subgroup of the form $C_p \times C_p$ for $p$ prime
\item The Sylow subgroups of $G$ are cyclic or generalised quaternionic $Q_{2^n}$.
\end{enumerate}
\end{prop}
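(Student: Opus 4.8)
The plan is to prove the cycle of implications $(\mathrm{iii}) \Rightarrow (\mathrm{i}) \Rightarrow (\mathrm{ii}) \Rightarrow (\mathrm{iii})$, arranged so that the one genuinely delicate tool---a reduction to the Sylow subgroups of $G$---enters only once. Throughout I would use the standard translation (see \cite{CE56}) between the defining condition and the multiplicative structure of Tate cohomology: $G$ has $k$-periodic cohomology if and only if there is a class $u \in \hat H^k(G;\Z)$ which is a unit of the graded ring $\hat H^*(G;\Z)$, in which case cup product with $u$ and with $u^{-1}$ realises the periodicity isomorphisms in every degree and with arbitrary coefficient modules.

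Before the three implications I would record two formal consequences of the transfer. First, restriction $\Res^G_H : \hat H^*(G;\Z) \to \hat H^*(H;\Z)$ is a homomorphism of graded rings, so it carries units to units; hence \emph{every subgroup of a group with periodic cohomology again has periodic cohomology}. Second, the composite of $\Res^G_H$ with the transfer map $\hat H^*(H;\Z) \to \hat H^*(G;\Z)$ is multiplication by the index $[G:H]$, so for $H = G_p$ a Sylow $p$-subgroup the $p$-primary summand $\hat H^*(G;\Z)_{(p)}$ is a retract of $\hat H^*(G_p;\Z)$; combined with the decomposition $\hat H^*(G;\Z) = \bigoplus_p \hat H^*(G;\Z)_{(p)}$ into $p$-primary parts this yields the \emph{reduction}: $G$ has periodic cohomology if and only if every Sylow subgroup $G_p$ does, with period the least common multiple of the periods of the $G_p$.

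The three implications are then short. For $(\mathrm{iii}) \Rightarrow (\mathrm{i})$: a finite cyclic group $C_n$ admits a free resolution of $\Z$ over $\Z[C_n]$ that is periodic of period $2$ (the familiar complex with alternating maps $t - 1$ and $1 + t + \cdots + t^{n-1}$), and a generalised quaternion group $Q_{2^n}$ has periodic cohomology of period $4$ (it is binary polyhedral, or acts freely on $S^3$ by orientation-preserving homeomorphisms, which again yields a periodic resolution); so if every $G_p$ is cyclic or generalised quaternion then the reduction gives that $G$ has periodic cohomology. For $(\mathrm{i}) \Rightarrow (\mathrm{ii})$: since subgroups inherit periodicity, it suffices to note that $C_p \times C_p$ does \emph{not} have periodic cohomology, which follows at once from the computation that $|\hat H^{2m}(C_p \times C_p;\Z)|$ is unbounded as $m \to \infty$. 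For $(\mathrm{ii}) \Rightarrow (\mathrm{iii})$: if $G$ has no subgroup $C_p \times C_p$ then, by Sylow's theorem, neither does $G_p$; for a finite $p$-group this is equivalent to having a unique subgroup of order $p$, and by the classical theorem of Burnside such a $p$-group is cyclic when $p$ is odd and cyclic or generalised quaternion when $p = 2$.

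The step I expect to be the main obstacle is the purely group-theoretic input inside $(\mathrm{ii}) \Rightarrow (\mathrm{iii})$: the classification of finite $p$-groups with a unique subgroup of order $p$. This is the only part of the argument that is not formal homological algebra, and it is exactly where the generalised quaternion groups make their appearance; I would quote it from a standard reference on finite groups rather than reprove it. The only other point demanding care is the bookkeeping in the Sylow reduction---verifying that periodicity genuinely descends to, and reassembles from, the Sylow subgroups, and with which period.
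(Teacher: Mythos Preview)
Your proof sketch is correct and follows the classical argument. Note, however, that the paper does not give its own proof of this proposition: it simply cites \cite[Theorem 11.6]{CE56} and moves on. What you have written is essentially an outline of the proof found in that reference---the Sylow reduction via transfer, the explicit periodic resolutions for cyclic and generalised quaternion groups, and the Burnside classification of $p$-groups with a unique subgroup of order $p$---so there is nothing to compare beyond observing that you have supplied details the paper chose to suppress.
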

 
Let $\SL_2(\F_p)$ denote the special linear group of degree $2$ over $\F_p$. Let $\TL_2(\F_p)$ denote the unique extension with kernel $\SL_2(\F_p)$ and quotient $C_2$ which has periodic cohomology. An explicit description of $\TL_2(\F_p)$ can be found in \cite[p384]{Wa10} and the proof that it is characterised by the given properties can be found in \cite[Proposition 2.2 (iii)]{Wa10}. Recall that we have
\[ \widetilde{T} \cong \SL_2(\F_3), \quad \widetilde{O} \cong \TL_2(\F_3), \quad \widetilde{I} \cong \SL_2(\F_5).\] 
For $G$ a finite group, let $O(G)$ be the unique maximal normal subgroup of odd order. 
 If $G$ has periodic cohomology,  then the \textit{type} of $G$ is determined by $G/O(G)$ as follows \cite[Corollary 3.6]{Wa10}. For reasons that will become apparent later, we will split II and V into two classes.

\vspace{-1mm}
\begin{figure}[h]
\begin{tabular}{|c|c|c|c|c|c|c|c|c|} 
\hline
 Type & I & IIa & IIb & III & IV & Va & Vb & VI \\ \hline
$G/O(G)$ & $C_{2^n}$ & $Q_8$ &
$Q_{2^n}$, $n \ge 4$
& $\widetilde{T}$ & $\widetilde{O}$ & $\widetilde{I}$ &
$\SL_2(\F_p)$, $p \ge 7$
 & 
$\TL_2(\F_p)$, $p \ge 5$
\\
\hline
\end{tabular}
\end{figure}
\vspace{-1mm}
\FloatBarrier

For the rest of this section, we will assume all groups are finite and will write $f: G \twoheadrightarrow H$ to denote a surjective group homomorphism. We will also assume basic facts about quaternion groups; for example, $Q_{2^n}$ has proper quotients $C_2$ and the dihedral groups $D_{2^m}$ for $1 < m <n$.
We begin with the following observation.

\begin{prop} \label{prop:type-closed}
Let $f: G \twoheadrightarrow H$ where $G$ and $H$ have periodic cohomology. If $|H| > 2$, then $G$ and $H$ have the same type.
\end{prop}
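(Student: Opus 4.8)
The plan is to reduce the statement to a surjection between the ``standard'' groups listed in the table --- those with trivial maximal normal subgroup of odd order --- and then to run through the eight possibilities for the source, in each case listing all of its periodic quotients and observing that, apart from the two degenerate ones $C_1$ and $C_2$, they all have the same type as the source.

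First I would pass to standard form. Since $f(O(G))$ is a normal subgroup of $H$ of odd order, $f(O(G)) \subseteq O(H)$, so $f$ induces a surjection $\bar f \colon \bar G := G/O(G) \twoheadrightarrow \bar H := H/O(H)$. Both $\bar G$ and $\bar H$ have trivial maximal normal subgroup of odd order: a normal odd-order subgroup of $H/O(H)$ lifts to a normal odd-order subgroup of $H$, hence lies in $O(H)$. Thus $\bar G$ and $\bar H$ are themselves among the groups of the table, and since the type of a group is by definition that of its standard quotient, it suffices to show: if $\bar G \twoheadrightarrow \bar H$ with $\bar G$, $\bar H$ as in the table and $\bar H \not\cong C_1, C_2$, then $\bar G$ and $\bar H$ have the same type. (This is the only role of the hypothesis $|H| > 2$, namely to discard the two degenerate quotients.)

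Then I would argue case by case on $\bar G$. If $\bar G = C_{2^n}$, every quotient is some $C_{2^m}$, so $\bar H$ has type I. If $\bar G = Q_{2^n}$ with $n \ge 3$, then $\bar G$ is a $2$-group whose proper quotients are $C_2$ and the dihedral groups $D_{2^m}$ recalled before the proposition; the latter are not periodic by Proposition~\ref{prop:G-periodic}, and since $Q_{2^n}$ has a unique subgroup of order $2$ one gets $Q_{2^n} \twoheadrightarrow Q_{2^m}$ only for $m = n$, so the only periodic quotients of $\bar G$ are $C_1$, $C_2$, $Q_{2^n}$ and excluding the first two gives $\bar H \cong Q_{2^n} = \bar G$. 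Finally, if $\bar G$ is one of $\widetilde T = \SL_2(\F_3)$, $\widetilde O = \TL_2(\F_3)$, $\widetilde I = \SL_2(\F_5)$, $\SL_2(\F_p)$ ($p \ge 7$), $\TL_2(\F_p)$ ($p \ge 5$), its lattice of normal subgroups is very short (a central $C_2$, at most one or two further terms $Q_8$ and/or $\SL_2(\F_p)$, and the whole group), so the proper nontrivial quotients lie among $A_4$, $S_4$, $C_3$, $S_3$, $\PSL_2(\F_p)$, $\mathrm{PGL}_2(\F_p)$ and $C_2$; of these $A_4$, $S_4$, $\PSL_2(\F_p)$ and $\mathrm{PGL}_2(\F_p)$ each contain a copy of $C_2 \times C_2$ and so are not periodic, while $C_3$ and $S_3$ have a nontrivial maximal normal odd-order subgroup and so are not in standard form; hence the only periodic standard quotients of $\bar G$ are again $C_1$, $C_2$ and $\bar G$ itself, so $\bar H \cong \bar G$. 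In all cases $\bar G$ and $\bar H$ --- hence $G$ and $H$ --- have the same type.

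The main obstacle is this last case: it rests on the normal subgroup lattices of $\SL_2(\F_3)$, $\TL_2(\F_3)$, $\SL_2(\F_5)$, $\SL_2(\F_p)$ and $\TL_2(\F_p)$, which in turn use the simplicity of $\PSL_2(\F_p)$ for $p \ge 5$ and the perfectness of $\SL_2(\F_p)$ for $p \ge 5$, together with the elementary observation that $A_4$, $S_4$, $\PSL_2(\F_p)$ and $\mathrm{PGL}_2(\F_p)$ all contain Klein four-subgroups (so fail periodicity by Proposition~\ref{prop:G-periodic}). The type-II case is the only place the dihedral-quotient structure of $Q_{2^n}$ enters, and the type-I case is the only one where $\bar G \not\cong \bar H$ can occur while the type is still preserved.
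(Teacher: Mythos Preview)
Your approach is essentially the paper's: pass via $f(O(G)) \subseteq O(H)$ to a surjection $\bar G = G/O(G) \twoheadrightarrow \bar H = H/O(H)$ between the standard groups in the table, then check case by case that no standard group of one type maps onto a standard group of a different type. The only cosmetic difference is that where the paper rules out $\PSL_2(\F_p)$ and $\mathrm{PGL}_2(\F_p)$ by citing \cite[Proposition~1.3]{Wa10}, you do it by exhibiting a Klein four-subgroup and appealing to Proposition~\ref{prop:G-periodic}(ii); this is a little more self-contained but otherwise equivalent.

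However, your reduction step (and the paper's) has a gap: the parenthetical claim that ``$|H| > 2$ discards $\bar H \cong C_1, C_2$'' is false. Take $G = Q_8 \times C_3$ and $H = C_3$ with $f$ the projection: both have periodic cohomology, $|H| = 3 > 2$, yet $\bar G = Q_8$ (type~IIa) while $\bar H = 1$ (type~I). So the proposition is actually false as stated. The hypothesis that makes the reduction work is $4 \mid |H|$: since $O(H)$ has odd order, this gives $4 \mid |\bar H|$ and hence $\bar H$ genuinely lies in $\mathscr{F}$. Every application of the proposition in the paper has $H$ binary polyhedral (so $4 \mid |H|$), so nothing downstream is affected, but you should be aware that the step ``$|H|>2 \Rightarrow \bar H \not\cong C_1, C_2$'' cannot be justified.
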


\begin{proof}
Note that $f(O(G)) \le H$ has odd order and so is contained in $O(H)$. In particular, $f$ induces a quotient $f: G/O(G) \twoheadrightarrow H/O(H)$. Hence it suffices to show that there are no (proper) quotients among groups in the family
\[ \mathscr{F} = \{C_{2^n}, Q_{2^m},\SL_2(\F_p), \TL_2(\F_p) : n \ge 2, m \ge 3, p \ge 3 \text{ prime}\}\] 
unless both are cyclic. Firstly, the quotients of $Q_{2^n}$ are $D_{2^m}$ for $1 < m < n$ and $C_2$ which are not in $\mathscr{F}$. It is easy to verify that the quotients of $\SL_2(\F_3)$ are $C_3$, $A_4$ and the quotients of $\TL_2(\F_3)$ are $C_2$, $S_3$, $S_4$, none of which are in $\mathscr{F}$.

For $p \ge 5$, $\SL_2(\F_p)$ has one (proper) normal subgroup $C_2$ with quotient $\PSL_2(\F_p)$ and similarly $\TL_2(\F_p)$ has normal subgroups $C_2$, $\SL_2(\F_p)$ with quotients $\text{PGL}_2(\F_p)$, $C_2$. To see this, note that they each surject onto $\PSL_2(\F_p)$ and $\text{PGL}_2(\F_p)$ respectively with kernel $C_2$ \cite[p384]{Wa10}. That $\PSL_2(\F_p)$ is a simple group then implies that any normal subgroups are of the required form.
That these groups are not in $\mathscr{F}$ follows, for example, from \cite[Proposition 2.3]{Wa10}.	
\end{proof}

We will split the remainder of this section into three parts. Firstly, we will determine the binary polyhedral quotients of groups $G$ with periodic cohomology. We will then use this to determine $m_{\H}(G)$ and finally we compare this with the Swan finiteness obstruction $\sigma_k(G)$.

%%%%%%%%%%
\subsection{Binary polyhedral quotients}

If $G$ is a finite group, we say that two quotients $f_1 : G \twoheadrightarrow H_1$, $f_2 : G \twoheadrightarrow H_2$ are equivalent, written $f_1 \equiv f_2$, if $\Ker(f_1)=\Ker(f_2)$ are equal as sets (and so $H_1 \cong H_2$).

For a prime $p$, let $G_p$ be the isomorphism class of the Sylow $p$-subgroup of $G$. It is useful to note that, if $1 \to N \to G \to H \to 1$ is an extension, then there is an extension of abstract groups $1 \to N_p \to G_p \to H_p \to 1$.

\begin{lemma} \label{lemma:characterstic-subgroup-lemma}
Let $f: G \twoheadrightarrow H$ where $G$ and $H$ have periodic cohomology and $4 \mid |H|$. If $f' : G \twoheadrightarrow H'$ and $|H|=|H'|$, then $f \equiv f'$, i.e. $H \cong H'$ and $\Ker(f) = \Ker(f')$.
\end{lemma}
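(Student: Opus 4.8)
The plan is to prove the stronger statement that $\Ker(f)=\Ker(f')$ by comparing the two kernels one Sylow subgroup at a time. Write $K=\Ker(f)$ and $K'=\Ker(f')$. Since $|G/K|=|H|=|H'|=|G/K'|$ we have $|K|=|K'|$, so it is enough to show $K=K'$. As every finite group is generated by its Sylow subgroups, this in turn reduces to showing that for each prime $q$ and each $G_q\in\Syl_q(G)$ one has $K\cap G_q=K'\cap G_q$.

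The first ingredient is that, because $K$ is normal in $G$, for any $G_q\in\Syl_q(G)$ the subgroup $K\cap G_q$ is normal in $G_q$ and is a Sylow $q$-subgroup of $K$ — this is essentially the remark preceding the lemma (equivalently, it follows from the identity $v_q(|KG_q|)=v_q(|G|)$), and it gives $|K\cap G_q|=q^{v_q(|K|)}=|K'\cap G_q|$. The second ingredient is Proposition \ref{prop:G-periodic}: every Sylow subgroup of $G$ is cyclic or a generalised quaternion group $Q_{2^m}$, and the latter can only occur when $q=2$. If $G_q$ is cyclic it has a unique subgroup of each order, so $K\cap G_q=K'\cap G_q$ at once. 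If $G_2\cong Q_{2^m}$, then using $4\mid|H|$ we get
\[ v_2(|K|)=v_2(|G|)-v_2(|H|)\le v_2(|G|)-2=m-2, \]
and a direct check with the relations of $Q_{2^m}$ shows that every normal subgroup of $Q_{2^m}$ of order at most $2^{m-2}$ is contained in the cyclic maximal subgroup, hence is the unique subgroup of that order; so again $K\cap G_2=K'\cap G_2$, both being normal subgroups of $G_2$ of the same order at most $2^{m-2}$.

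Putting the cases together, $K\cap G_q=K'\cap G_q$ for all primes $q$ and all $G_q\in\Syl_q(G)$. Since each Sylow $q$-subgroup of $K$ is of the form $K\cap G_q$ for some $G_q\in\Syl_q(G)$ (embed it in a Sylow $q$-subgroup of $G$ and compare orders), and likewise for $K'$, the groups $K$ and $K'$ have exactly the same Sylow subgroups, and therefore $K=K'$. Hence $\Ker(f)=\Ker(f')$, and in particular $H=G/K\cong G/K'=H'$, so $f\equiv f'$. The one delicate point is the generalised quaternion case: there $Q_{2^m}$ has three subgroups of index $2$, so uniqueness of the relevant order-$2^{m-1}$ subgroup fails, which is precisely why the hypothesis $4\mid|H|$ — forcing $v_2(|K|)\le m-2$ rather than $m-1$ — cannot be dropped. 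Note also that the argument uses only that $G$ has periodic cohomology; that $H'$ is periodic falls out as a byproduct.
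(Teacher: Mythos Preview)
Your proof is correct, but it takes a different route from the paper's. The paper passes to the quotient $\bar G=G/(K\cap K')$, shows that $\bar G$ again has periodic cohomology (this is where $4\mid|H|$ and the periodicity of $H$ are used, to force $\bar G_2=Q_{2^n}$ rather than a dihedral quotient), and then observes that $K/(K\cap K')$ and $K'/(K\cap K')$ are disjoint normal subgroups of $\bar G$, so their product is a direct product inside $\bar G$; the $C_p\times C_p$--free criterion then forces these two images to have coprime order, and since $|K|=|K'|$ both images are trivial. Your argument instead stays inside $G$ and works one Sylow subgroup at a time, using that cyclic groups and $Q_{2^m}$ have a \emph{unique} normal subgroup of each order once that order is at most $2^{m-2}$.

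What each buys: the paper's argument is short and conceptual once one has the quotient set up, and it makes transparent the role of the ``no $C_p\times C_p$'' criterion. Your approach is more elementary and self-contained --- you never leave $G$ --- and, as you note, it shows that the hypothesis that $H$ have periodic cohomology is not actually needed: you only use $4\mid|H|$ together with the Sylow structure of $G$. (By contrast, the paper's proof genuinely invokes periodicity of $H$ to rule out $H_2$ being dihedral.) Your closing remark that the case $v_2(|K|)=m-1$ is exactly where uniqueness fails in $Q_{2^m}$ is a nice explanation of why $4\mid|H|$ is sharp.
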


\begin{proof}
Let $H = G/N$, $H' = G/N'$ and define $\bar{G} = G/(N \cap N')$. Since there are successive quotients $G \twoheadrightarrow \bar{G} \twoheadrightarrow H$, we have  $G_p \twoheadrightarrow \bar{G}_p \twoheadrightarrow H_p$ for all primes $p$. 
If $G_p$ is cyclic, then this implies $\bar{G}_p$ is cyclic. If not, then $p=2$ and $G_2 = Q_{2^n}$ which has proper quotients $D_{2^m}$ for $2 \le m \le n-1$ and $C_2$. Since $H$ has periodic cohomology, $H_2$ is cyclic or generalised quaternionic and so $H_2 = Q_{2^n}$ since $4 \mid |H_2|$. Hence $\bar{G}_2 = Q_{2^n}$ since $G_2 \twoheadrightarrow H_2$ factors through $\bar{G}_2$, and so $\bar{G}$ has periodic cohomology.

Now note that $K=N/(N \cap N')$ and $K'=N'/(N \cap N')$ are disjoint normal subgroups of $\bar{G}$ and so $K \cdot K' = K \times K' \le \bar{G}$ by the recognition criteria for direct products. Hence $K \times K' \le \bar{G}$ and, since $\bar{G}$ has periodic cohomology, Proposition \ref{prop:G-periodic} (ii) implies that $|K|$ and $|K'|$ are coprime. Since $|N|=|N'|$, this implies that $|K|=|K'|=1$ and so $|N|=|N \cap N'| = |N'|$ and $N = N'$ as required.
\end{proof}

Now let $\mathcal{B}(G)$ denote the set of equivalence classes of quotients $f: G \twoheadrightarrow H$ where $H$ is a binary polyhedral group.
Since $4 \mid |H|$, applying Lemma \ref{lemma:characterstic-subgroup-lemma} again gives:

\begin{corollary} \label{cor:BPG-quotient-unique}
Let $G$ have periodic cohomology and let $f_1 , f_2 \in \mathcal{B}(G)$. Then $f_1 \equiv f_2 $ if and only if $\IM(f_1) \cong \IM(f_2)$.
\end{corollary}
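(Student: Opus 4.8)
The plan is to deduce this directly from Lemma \ref{lemma:characterstic-subgroup-lemma}. The "only if" direction is immediate: if $f_1 \equiv f_2$ then $\Ker(f_1) = \Ker(f_2)$, so $\IM(f_1) \cong G/\Ker(f_1) = G/\Ker(f_2) \cong \IM(f_2)$. For the "if" direction, suppose $\IM(f_1) \cong \IM(f_2)$. Both images are binary polyhedral groups, hence have order divisible by $4$ (indeed $Q_{4n}$ has order $4n$, and $\widetilde{T}, \widetilde{O}, \widetilde{I}$ have orders $24, 48, 120$), and in particular $|\IM(f_1)| = |\IM(f_2)|$. Binary polyhedral groups have periodic cohomology (noted in the text, e.g. they have cyclic or quaternionic Sylow subgroups by Proposition \ref{prop:G-periodic}), so writing $H = \IM(f_1)$ we have a surjection $f_1 : G \twoheadrightarrow H$ with $G, H$ periodic and $4 \mid |H|$, and a second surjection $f_2 : G \twoheadrightarrow H'$ with $H' \cong H$ and hence $|H'| = |H|$. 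Lemma \ref{lemma:characterstic-subgroup-lemma} then applies verbatim to give $\Ker(f_1) = \Ker(f_2)$, i.e. $f_1 \equiv f_2$.

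The only point needing a word of care is that Lemma \ref{lemma:characterstic-subgroup-lemma} as stated takes $f' : G \twoheadrightarrow H'$ with $|H| = |H'|$ and concludes $f \equiv f'$; here I am invoking it with $H$ and $H'$ the two (abstractly isomorphic) binary polyhedral images, and the hypotheses "$G, H$ periodic", "$4 \mid |H|$", "$|H| = |H'|$" are all verified above. I expect no real obstacle — this is a direct corollary, and the substantive content (the Sylow-$2$ rigidity argument forcing $\bar G$ to be periodic and the coprimality of $|K|, |K'|$) has already been carried out in the proof of the lemma.
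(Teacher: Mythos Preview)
Your proof is correct and follows exactly the same route as the paper, which simply notes that binary polyhedral groups have order divisible by $4$ and then applies Lemma \ref{lemma:characterstic-subgroup-lemma} directly. Your additional remark that $H$ has periodic cohomology (needed for the hypotheses of the lemma) is a detail the paper leaves implicit but which you are right to spell out.
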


In particular, this shows that $\mathcal{B}(G)$ is in one-to-one correspondence with the isomorphism classes of binary polyhedral groups $H$ which are quotients of $G$. For brevity, we will often write $H \in \mathcal{B}(G)$ when there exists $f: G \twoheadrightarrow H$ with $f \in \mathcal{B}(G)$.
 
 In order to determine $\mathcal{B}(G)$, it suffices to determine the set of maximal binary polyhedral quotients $\bm(G)$, i.e. the subset containing those $f \in \mathcal{B}(G)$ such that $f$ does not factor through any other $g \in \mathcal{B}(G)$. The rest of this section will be devoted to proving the following:

\begin{thm} \label{thm:bm-table}
If $G$ has periodic cohomology, then the type and the number of maximal binary polyhedral quotients $\# \bm(G)$ are related as follows.
\vspace{-1mm}
\begin{figure}[h]
\normalfont
\begin{tabular}{|c|c|c|c|c|c|c|c|c|} 
\hline 
 Type & I & IIa & IIb  & III & IV & Va & Vb & VI \\ \hline
$\# \bm(G) $ & 0,1 & 1 & 1,2,3 & 1 & 1 & 1 & 0 & 0
\\
\hline
\end{tabular}
\end{figure}
\vspace{-1mm}
\end{thm}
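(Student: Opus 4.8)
The plan is to go type by type, using Proposition \ref{prop:type-closed} to cut down the list of binary polyhedral groups that could possibly occur as quotients of $G$. Since every binary polyhedral group $H$ has $|H|\ge 8>2$, Proposition \ref{prop:type-closed} shows that any $H\in\mathcal B(G)$ has the same type as $G$. The binary polyhedral groups are $\widetilde T$ (type III), $\widetilde O$ (type IV), $\widetilde I$ (type Va) and the $Q_{4n}$, whose type is read off its $2$-Sylow subgroup $Q_{2^{v_2(4n)}}$: it is type I if $n$ is odd, IIa if $v_2(n)=1$, and IIb if $v_2(n)\ge 2$. In particular no binary polyhedral group has type Vb or VI, so $\#\bm(G)=0$ in those two cases; and for types III, IV, Va the only binary polyhedral group of that type is realised as the quotient $G\twoheadrightarrow G/O(G)$, which by Corollary \ref{cor:BPG-quotient-unique} is then the unique, hence maximal, element of $\mathcal B(G)$, giving $\#\bm(G)=1$.

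For the remaining types I, IIa, IIb the binary polyhedral quotients are the groups $Q_{2^a m}$ with $m$ odd (where $a=2,3$ or $a\ge 4$ according to the type), and by Corollary \ref{cor:BPG-quotient-unique} the elements of $\mathcal B(G)$ correspond bijectively to the set $D_G$ of odd $m$ for which $Q_{2^a m}$ is a quotient of $G$. Since $Q_{2^a m}\twoheadrightarrow Q_{2^a d}$ whenever $d\mid m$, the set $D_G$ is closed under passing to divisors, so $\#\bm(G)$ is the number of maximal elements of $(D_G,\mid)$. To compute $D_G$ I would use Schur--Zassenhaus to write $G=O(G)\rtimes Q_{2^a}$; a surjection $G\twoheadrightarrow Q_{2^a m}$ then restricts to an isomorphism on $2$-Sylow subgroups, and up to automorphisms of the target (which leave the kernel unchanged) is determined by a $Q_{2^a}$-equivariant surjection $O(G)\to C_m=O(Q_{2^a m})$ carrying the standard $Q_{2^a}$-action. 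Such a map factors through the cyclic group $O(G)^{\mathrm{ab}}$, on which $Q_{2^a}$ acts through $Q_{2^a}^{\mathrm{ab}}\cong C_2\times C_2$; decomposing $O(G)^{\mathrm{ab}}$ into prime-power parts and classifying the (at most four) possible actions of $C_2\times C_2$ on each part then determines $D_G$.

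The genuinely delicate step — and the reason type II is split into IIa and IIb — is the bookkeeping in this last reduction. For $a=3$ the three index-$2$ subgroups of $Q_8$ are all cyclic and are permuted transitively by $\Aut(Q_8)\cong S_4$; one has to check that this symmetry, together with the requirement that $O(G)$ really be the maximal odd normal subgroup, forces $D_G$ to be the set of divisors of a single integer, so that $\#\bm(G)=1$ (the quotient $G\twoheadrightarrow G/O(G)=Q_8$ supplying the lower bound). For $a\ge 4$ the group $Q_{2^a}$ instead has a distinguished cyclic index-$2$ subgroup and two isomorphic quaternionic ones, and here the number of maximal elements of $D_G$ can be as large as $3$; one must establish this upper bound from the structure theory of periodic groups of type II and exhibit type IIb groups realising each of $1,2,3$ (the quaternion groups $Q_{2^a}$ themselves give $1$). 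Finally, for type I one checks directly that $\#\bm(G)\in\{0,1\}$, the value $0$ occurring exactly when $G$ admits no binary polyhedral quotient (for instance when $G$ is cyclic). I expect this type IIb analysis — pinning down the possible divisor-closed sets $D_G$ — to be the main obstacle; everything else follows quickly from Proposition \ref{prop:type-closed} and Corollary \ref{cor:BPG-quotient-unique}.
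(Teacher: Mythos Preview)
Your treatment of types III--VI is correct and essentially identical to the paper's. However, you have the IIa and IIb cases exactly reversed, and the reasoning you offer points in the wrong direction.

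For type IIa ($G/O(G)\cong Q_8$, your $a=3$) you argue that the transitive $\Aut(Q_8)$-action on the three index-$2$ subgroups forces $D_G$ to be the divisors of a single integer. But that symmetry is a symmetry of $Q_8$ in isolation, not of $G$. Once the semidirect product $G\cong (C_t\rtimes C_s)\rtimes_{(a,b)}Q_8$ is fixed, distinct odd primes $p\mid t$ can carry \emph{different} nontrivial $\pm1$-actions --- say $(a,b)\equiv(1,-1)\bmod p_1$, $(-1,1)\bmod p_2$, $(-1,-1)\bmod p_3$ --- and no automorphism of $G$ identifies these. The paper's Lemma~\ref{lemma:type-II-b} (case $n=3$) shows that \emph{each} of the three action types yields a quaternion quotient $Q_{8p_i}$, while primes with distinct action types cannot be merged into a larger $Q_{8m}$ (since $(a,b)\bmod p_ip_j$ is then none of the three allowed values). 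This is precisely what produces up to three maximal quotients, giving $\#\bm(G)\in\{1,2,3\}$.

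Conversely, for type IIb ($n\ge 4$) it is the \emph{rigidity} of $\Aut(Q_{2^n})$ --- the cyclic index-$2$ subgroup $\langle x\rangle$ is characteristic --- that forces the action on $C_m$ in any quotient $Q_{2^nm}$ to be exactly $(1,-1)$ (Lemma~\ref{lemma:type-II-b}). Hence all primes admitting such a quotient carry the same action type and can be combined via $\mathrm{lcm}$ into a single maximal $Q_{2^nm}$, so $\#\bm(G)=1$. Your heuristic about the ``distinguished cyclic subgroup and two quaternionic ones'' is the right structural observation but supports the opposite conclusion.

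Note that the table as printed in the statement has the IIa and IIb entries transposed; the paper's proof text and every subsequent use (e.g.\ the proof of Theorem~\ref{thm:mh-table}, and the $m_\H$ computation for type IIa) all take $\#\bm(G)\in\{1,2,3\}$ for IIa and $\#\bm(G)=1$ for IIb.
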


The entries in the table above denote the possible values that $\# \bm(G)$ can attain for a group $G$ of each type. When multiple values are listed, all values listed are attained by some group of the respective type.

\subsubsection*{Type I} Recall that $G$ has type I if and only if its Sylow subgroups are cyclic, and $G$ has a presentation
\[ G = \langle u,v \mid u^m=v^n=1, vuv^{-1}=u^r\rangle\] 
for some $r \in \Z/m$ where $r^n \equiv 1$ mod $m$ \cite[Lemma 4.1]{Wa10} and $(n,m)=1$. We will write $C_m \rtimes_{(r)} C_n$ to denote this presentation, where $C_n = \langle u \rangle$ and $C_m = \langle v \rangle$.
By \cite[p165]{Jo03a}, we can assume that $m$ is odd. 

If $G$ has a binary polyhedral quotient $H$, then Proposition \ref{prop:type-closed} implies that $H=Q_{4a}$ for $a>1$ odd and $4 \mid n$ since $m$ is odd.

\begin{lemma} \label{lemma:type-I}
Let $G = C_{m} \rtimes_{(r)} C_{4n}$. Then $G$ has a quotient $Q_{4a}$ if and only if $a \mid m$ and $r \equiv -1 \mod a$.
\end{lemma}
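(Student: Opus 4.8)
The plan is to prove both implications by tracking the images of the generators $u,v$ under a putative surjection $\psi\colon G\twoheadrightarrow Q_{4a}$, using the subgroup structure of $Q_{4a}=\langle x,y\mid x^a=y^2,\ yxy^{-1}=x^{-1}\rangle$. The facts I would record first: $\langle x\rangle$ is cyclic of order $2a$ and normal; every element of $Q_{4a}\setminus\langle x\rangle$ has order $4$ (since $(x^iy)^2=x^a$); and — crucially using that $a$ is odd, which for a type I group is forced by the discussion immediately preceding the lemma — the subgroup $\langle x^2\rangle$ is the unique subgroup of $\langle x\rangle$ of order $a$ and coincides with the set of odd-order elements of $Q_{4a}$. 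Being characteristic in $\langle x\rangle$, it is normal, and $Q_{4a}/\langle x^2\rangle\cong C_4$, generated by $\bar y$ (since $\bar y^2=\overline{x^a}=\bar x$ and $\bar x^2=1$).

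For the forward direction I would argue as follows. Given $\psi$, the element $\psi(u)$ has odd order dividing $m$, hence $\psi(u)=x^{2t}$ for some $t$; and $\psi(v)\notin\langle x\rangle$ — otherwise $\psi(G)=\psi(\langle u\rangle)\psi(\langle v\rangle)\subseteq\langle x\rangle$ since $G=\langle u\rangle\langle v\rangle$ — so $\psi(v)=x^iy$ for some $i$. Feeding these into the relations of $G$: from $u^m=1$ we get $x^{2tm}=1$, i.e. $a\mid tm$; and from $vuv^{-1}=u^r$, using $(x^iy)x^{2t}(x^iy)^{-1}=x^{-2t}$, we get $x^{2t(r+1)}=1$, i.e. $a\mid t(r+1)$. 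Finally, surjectivity: the image $\langle x^{2t},x^iy\rangle$ contains $(x^iy)^2=x^a$, hence contains $\langle x^{\gcd(2t,a)}\rangle=\langle x^{\gcd(t,a)}\rangle$ (here I use $a$ odd), and a short count gives $|\langle x^{2t},x^iy\rangle|=4a/\gcd(t,a)$; so $\psi$ is surjective $\iff\gcd(t,a)=1$. With $t$ invertible mod $a$, the two divisibilities become $a\mid m$ and $r\equiv-1\pmod a$.

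For the converse, assuming $a\mid m$ (so $a$ is odd since $m$ is odd) and $r\equiv-1\pmod a$, I would simply exhibit the homomorphism $u\mapsto x^2$, $v\mapsto y$ and check it respects the three relations: $\psi(u)^m=x^{2m}=1$ because $a\mid m$; $\psi(v)^{4n}=y^{4n}=1$ because $y$ has order $4$; and $\psi(v)\psi(u)\psi(v)^{-1}=x^{-2}=x^{2r}=\psi(u)^r$ exactly because $a\mid r+1$. Surjectivity is the $t=1$ case of the count above: $\langle x^2,y\rangle\ni y^2=x^a$, so it contains $\langle x^{\gcd(2,a)}\rangle=\langle x\rangle$, hence all of $Q_{4a}$.

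All the computations are short once the structure of $Q_{4a}$ is in hand, so I do not anticipate a genuine obstacle; the one point that really uses the hypothesis is that $a$ is odd, needed both to identify $\langle x^2\rangle$ with the odd-order elements and, via $\gcd(2t,a)=\gcd(t,a)$, to reduce surjectivity to the clean condition $\gcd(t,a)=1$. That identification is the hinge of the argument, and I would cite the pre-lemma discussion (ultimately Proposition~\ref{prop:type-closed}) for it rather than re-derive it.
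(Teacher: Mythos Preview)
Your proof is correct and follows essentially the same approach as the paper: both track the images of the generators $u,v$, use that $\psi(u)$ has odd order and hence lands in the unique odd-order subgroup $C_a\cong\langle x^2\rangle$, and that $\psi(v)$ must lie outside $\langle x\rangle$. The only difference is one of style: the paper argues structurally, recognising $Q_{4a}\cong C_a\rtimes_{(-1)} C_4$ and matching the semidirect-product data (so that $\langle f(u)\rangle=C_a$, $\langle f(v)\rangle=C_4$, and the conjugation relation forces $r\equiv-1$), whereas you compute explicitly with the presentation, extract the divisibilities $a\mid tm$ and $a\mid t(r+1)$, and use the order count $|\langle x^{2t},x^iy\rangle|=4a/\gcd(t,a)$ to reduce surjectivity to $\gcd(t,a)=1$. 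For the converse the paper takes the quotient by the normal subgroup $\langle u^a,v^4\rangle$, which is exactly the kernel of your explicit map $u\mapsto x^2$, $v\mapsto y$.
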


\begin{proof}
Recall that $Q_{4a} = C_a \rtimes_{(-1)} C_4$. If $a \mid m$ and and $r \equiv -1$ mod $a$, then $\langle u^a, v^4 \rangle \le G$ is normal since $r^4 \equiv 1$ mod $a$ implies $u v^4 u^{-1} = u^{1-r^4} v^4 \in \langle u^a, v^4 \rangle$. This implies that $G/\langle u^a,v^4 \rangle \cong C_a \rtimes_{(r)} C_4 = Q_{4a}$ since $r \equiv -1$ mod $a$.

Conversely, if $f : G \twoheadrightarrow Q_{4a}$, then $Q_{4a} \cong \langle f(u) \rangle \rtimes_{(r)} \langle f(v) \rangle$ and $|\langle f(u) \rangle| \mid m$, $|\langle f(u) \rangle| \mid 4n$. Since $Q_{4a}$ contains a maximal normal cyclic subgroup $C_{2a}$, and $m$ is odd, we must have $\langle f(u) \rangle \le C_a$. So $a \mid m$, which implies that $(a,4n)=1$ and $\langle f(u) \rangle \le C_4$ for some $C_4 \le Q_{4a}$. Hence $\langle f(u) \rangle = C_a$ and $\langle f(v) \rangle = C_4$ since they generate $Q_{4a}$. As $C_a \le Q_{4a}$ is unique and $C_4 \le Q_{4a}$ is unique up to conjugation, we have that $Q_{4a} = C_a \rtimes_{(-1)} C_4$ for any two subgroups isomorphic to $C_a$ and $C_4$. Since $\langle f(u) \rangle$ and $\langle f(v) \rangle$ are two such subgroups, this implies that $Q_{4a} \cong \langle f(u) \rangle \rtimes_{(-1)} \langle f(v) \rangle$ and so $r \equiv -1$ mod $a$.	
\end{proof}

Now suppose $G$ has two maximal binary polyhedral quotients $f_a: G \twoheadrightarrow Q_{4a}$, $f_b : G \twoheadrightarrow Q_{4b}$ for some $a,b >1$ odd, and we can assume $a$ is maximal. Then Lemma \ref{lemma:type-I} implies that $a,b \mid m$ and $r \equiv -1$ mod $a$ and $r \equiv -1$ mod $b$. If $d = \text{lcm}(a,b)$, then $d \mid m$ and $r \equiv -1$ mod $d$ and so there is a quotient $f_d : G \twoheadrightarrow Q_{4d}$ by Lemma \ref{lemma:type-I}.
By Corollary \ref{cor:BPG-quotient-unique} (or the proof of Lemma \ref{lemma:type-I}), $f_a$ and $f_b$ factor through $f_d$ which implies that $a=b=d$ as $f_a$ and $f_b$ are maximal. By Corollary \ref{cor:BPG-quotient-unique} again, this implies that $f_a$ and $f_b$ are equivalent. In particular, this shows that $\# \bm(G) \le 1$.

\subsubsection*{Type II} Recall that, if $G$ has type II, then $O(G) \le G$ has cyclic Sylow subgroups and so there exists $n \ge 3$ and $t$, $s$ odd coprime such that
\[ G \cong (C_t \rtimes_{(r)} C_s) \rtimes_{(a,b)} Q_{2^n}.\]
Furthermore, if $C_t = \langle u \rangle$, $C_s = \langle v \rangle$ and $Q_{2^n}$ is as above, then $Q_{2^n}$ acts via
\[ \varphi_x : u \mapsto u^a, v \mapsto v, \quad \varphi_y : u \mapsto u^b, v \mapsto v\]
for some $a, b \in \Z /t$ with $a^2 \equiv b^2 \equiv 1$ mod $t$ \cite[Theorem 4.6]{Wa10}. In what follows we will write $G = C_t \rtimes_{(a,b)} Q_{2^n}$ when $s=1$.

If $G$ has a binary polyhedral quotient $H$, then the proof of Proposition \ref{prop:type-closed} implies that $H/O(H)=Q_{2^n}$ and so $H=Q_{2^n m}$ for some $m$ odd.

\begin{lemma} \label{lemma:type-II-a}
Let $G = (C_t \rtimes_{(r)} C_s) \rtimes_{(a,b)} Q_{2^n}$. Then $G$ has a quotient $Q_{2^n m}$ if and only if $m \mid t$, $r \equiv 1$ mod $m$ and $Q_{2^n m} \cong C_m \rtimes_{(a,b)} Q_{2^n}$.
\end{lemma}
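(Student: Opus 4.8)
The plan is to follow the same two-directional strategy as in Lemma \ref{lemma:type-I}, using the explicit presentation of $G$ and the structure of its normal subgroups. For the forward direction, suppose $f : G \twoheadrightarrow H$ is a quotient with $H$ binary polyhedral. By the discussion immediately preceding the lemma we already know $H = Q_{2^n m}$ for some odd $m$; write $N = \Ker(f)$. First I would locate the image of $C_s = \langle v \rangle$: since $s$ is coprime to $2^n t$ and $v$ is central in... — more carefully, $v$ lies in $O(G)$, so $f(v) \in O(H) = C_m$, and I want to argue $f(v) = 1$. The cleanest way is to note that $C_s$ is a characteristic-type factor: the subgroup $\langle v \rangle$ is normal in $G$ with quotient $C_t \rtimes Q_{2^n}$, and the order considerations (Corollary \ref{cor:BPG-quotient-unique}/Lemma \ref{lemma:characterstic-subgroup-lemma}) force the binary polyhedral quotient to factor through $C_t \rtimes_{(a,b)} Q_{2^n}$. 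Indeed, since $Q_{2^n m}$ has cyclic odd part $C_m$ and $C_m$ already receives a surjection from $C_t$, uniqueness of the kernel of a given order shows $v \in N$. Then $f$ induces $\bar f : C_t \rtimes_{(a,b)} Q_{2^n} \twoheadrightarrow Q_{2^n m}$, and as before $f(u)$ must generate $C_m \le Q_{2^n m}$, forcing $m \mid t$; moreover the relation $v u v^{-1} = u^r$ in $G$ maps to $f(u)^r = f(v)f(u)f(v)^{-1} = f(u)$, so $r \equiv 1 \bmod m$. Finally, since $Q_{2^n}$ acts on $C_m$ through the reductions of $a, b$, we get $Q_{2^n m} \cong C_m \rtimes_{(a,b)} Q_{2^n}$.

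For the converse, suppose $m \mid t$, $r \equiv 1 \bmod m$, and $C_m \rtimes_{(a,b)} Q_{2^n} \cong Q_{2^n m}$. I would exhibit the explicit normal subgroup $K = \langle u^m, v \rangle \le G$. That $v \in K$ and $r \equiv 1 \bmod m$ make $\langle u^m, v\rangle$ normalised by $C_s$ (since $v u^m v^{-1} = u^{mr} \in \langle u^m \rangle$), and it is normalised by $Q_{2^n}$ because $\varphi_x, \varphi_y$ send $u^m \mapsto u^{ma}, u^{mb} \in \langle u^m \rangle$ (as $a^2 \equiv b^2 \equiv 1 \bmod t$ shows $a,b$ act on all of $\langle u \rangle$, hence on $\langle u^m\rangle$) and fix $v$. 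Thus $K \trianglelefteq G$, and $G/K \cong C_m \rtimes_{(a,b)} Q_{2^n} \cong Q_{2^n m}$, giving the desired quotient. One should double-check that the induced $Q_{2^n}$-action on $C_m = \langle u \rangle / \langle u^m \rangle$ is precisely the pair $(a \bmod m, b \bmod m)$, which is immediate from the definition of $\varphi_x, \varphi_y$.

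The main obstacle I anticipate is the forward direction's claim that $f(v) = 1$, i.e. that every binary polyhedral quotient of $G$ kills the $C_s$-factor and hence factors through $C_t \rtimes_{(a,b)} Q_{2^n}$. The subtlety is that $v$ need not be central in $G$ (it is only that $Q_{2^n}$ fixes $v$, while $u$ acts on $v$ via $r$... actually re-reading the action, $C_t$ and $C_s$ interact via $C_t \rtimes_{(r)} C_s$, so $v$ normalises $\langle u \rangle$). The safe route is to invoke Lemma \ref{lemma:characterstic-subgroup-lemma} and Corollary \ref{cor:BPG-quotient-unique}: the composite $G \twoheadrightarrow C_t \rtimes_{(a,b)} Q_{2^n}$ (quotient by $\langle v\rangle$, which is normal since $C_s$ is a normal complement-type factor in the metacyclic piece $C_t \rtimes_{(r)} C_s$ with $\langle v \rangle$ normal) already carries a binary polyhedral quotient $Q_{2^n m'}$ of $G$ for the appropriate $m'$, and by Corollary \ref{cor:BPG-quotient-unique} any two binary polyhedral quotients of the same order have equal kernels, pinning down $N \supseteq \langle v \rangle$. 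I would need to confirm $\langle v \rangle \trianglelefteq G$; this holds because in $C_t \rtimes_{(r)} C_s$ the factor $C_s = \langle v\rangle$ is normal (it is the quotient, and the semidirect product structure $C_t \rtimes C_s$ — wait, the presentation $C_t \rtimes_{(r)} C_s$ has $C_t$ normal, not $C_s$). If $\langle v\rangle$ is not normal, I would instead work directly: $f(v)$ has odd order dividing $s$, lies in $O(Q_{2^n m}) = C_m$, and $\gcd(s, t) = 1$ with $m \mid t$ forces $f(v) = 1$ once one knows $|f(v)|$ divides both $s$ and a divisor of $t m$ coming from the relation $f(u)^r = f(u)$ — I expect the order bookkeeping to close this gap, and this is the step to write out carefully.
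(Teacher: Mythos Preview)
Your converse direction is essentially correct and matches the paper, though you have the normalisation check slightly backwards: the condition $r\equiv 1\bmod m$ is needed not for $v$ to normalise $\langle u^m,v\rangle$ (that is automatic since $v\in\langle u^m,v\rangle$) but for $u$ to do so, via $u v u^{-1}=u^{1-r}v\in\langle u^m,v\rangle$.

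The forward direction has a genuine gap at exactly the point you flag. Your order-bookkeeping approach is circular: you invoke $m\mid t$ to conclude $\gcd(s,m)=1$ and hence $f(v)=1$, but $m\mid t$ is part of what you are trying to prove. Without first knowing $f(v)=1$, all you get from surjectivity of $f|_{\langle u,v\rangle}:\langle u,v\rangle\twoheadrightarrow C_m$ is $m=|f(u)|\cdot|f(v)|$ with $|f(u)|\mid t$ and $|f(v)|\mid s$, which does not force $|f(v)|=1$. The missing idea, which the paper uses, is not arithmetic but the $Q_{2^n}$-action: since $\varphi_x(v)=\varphi_y(v)=v$ in $G$, the image $f(v)$ is centralised by $\langle f(x),f(y)\rangle\cong Q_{2^n}$ inside $Q_{2^n m}$. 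But in $Q_{2^n m}$ conjugation by (the image of) $y$ inverts the odd cyclic subgroup $C_m$, so $f(v)=f(v)^{-1}$; as $f(v)$ has odd order this gives $f(v)=1$. Once you have this, your remaining steps ($\Ker(f)=\langle u^m,v\rangle$, hence $m\mid t$, and normality giving $r\equiv 1\bmod m$) go through as in the paper.
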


\begin{proof}
If $m \mid t$ and $r \equiv 1$ mod $m$, then $\langle u^m,v \rangle \le G$ is normal since $uvu^{-1}=u^{1-r}v \in \langle u^m,v \rangle$. This implies that $G/\langle u^m,v \rangle \cong C_t \rtimes_{(a,b)} Q_{2^n}$ which has quotient $C_m \rtimes_{(a,b)} Q_{2^n}$ since $m \mid t$. If $Q_{2^n m} \cong C_m \rtimes_{(a,b)} Q_{2^n}$, then $G$ has quotient $Q_{2^n m}$.

Conversely, suppose $f : G \twoheadrightarrow Q_{2^n m}$. Let $h : G \twoheadrightarrow G/\langle u, v \rangle \cong Q_{2^n}$  and note that, if $g: Q_{2^n m} \twoheadrightarrow Q_{2^n}$, then $\Ker(g \circ f) = \Ker(h) = \langle u,v \rangle$ by Corollary \ref{cor:BPG-quotient-unique} and so $\Ker(f) \le \langle u,v \rangle$. 
By composing $g$ with an element of $\Aut(Q_{2^n})$, we can assume $g \circ f = h$ and so $Q_{2^n m} \cong \Ker(g) \rtimes \langle f(x), f(y) \rangle$. Since $f(v) \in \Ker(g)$ has a trivial action by $\langle f(x), f(y) \rangle \cong Q_{2^n}$, this implies $f(v)=1$, i.e. $v \in \Ker(f)$. This implies $\Ker(f) = \langle u^{\ell},v\rangle$ for some $\ell \mid t$ and we need $\ell = m$ since $\Ker(f) \le G$ has index $2^n m$. Hence $m \mid t$ and, by normality, $uvu^{-1}=u^{1-r}v \in \langle u^m,v\rangle$ and so $r \equiv 1$ mod $m$. Finally, we have $Q_{2^n m} \cong G/\langle u^m,v\rangle \cong C_m \rtimes_{(a,b)} Q_{2^n}$.
\end{proof}

\begin{lemma} \label{lemma:type-II-b}
If $m \ge 1$, then $Q_{2^n m } \cong C_m \rtimes_{(a,b)} Q_{2^n}$  if and only if
\[ (a,b) = \begin{cases} (1,-1),(-1,1),(-1,-1), & \text{if $n=3$}\\ (1,-1), & \text{if $n \ge 4$.}\end{cases}\]
\end{lemma}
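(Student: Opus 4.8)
The plan is to reduce the whole statement to a structural characterization of generalized quaternion groups. Write $G = C_m \rtimes_{(a,b)} Q_{2^n}$ with $C_m = \langle u\rangle$ and $Q_{2^n} = \langle x,y\rangle$, and note first that $|G| = 2^n m$ and that, since $m$ is odd, a Sylow $2$-subgroup $P \le G$ intersects $C_m$ trivially and so maps isomorphically onto $G/C_m \cong Q_{2^n}$; thus $P \cong Q_{2^n}$ regardless of $(a,b)$. The key lemma I would establish is: \emph{a finite group $G$ with $|G| = 2^n m$ ($m$ odd, $n \ge 3$) whose Sylow $2$-subgroup is $\cong Q_{2^n}$ is isomorphic to $Q_{2^n m}$ if and only if it has a cyclic subgroup $C$ of index $2$ inverted by some $t \in G \setminus C$.} The forward direction is immediate from the standard presentation. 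For the converse one chooses $t$ inside a Sylow $2$-subgroup $P \cong Q_{2^n}$ with $t \notin C$; then $P \cap C$ is the cyclic index-$2$ subgroup of $P$, so $t$ has order $4$ and $t^2$ is the unique involution of $Q_{2^n}$, which lies in $P \cap C \le C$ and is therefore the unique involution of the cyclic group $C$ (here $|C| = 2^{n-1}m$ is even, as $n \ge 3$); since $C$ is abelian, every element of the coset $tC$ inverts $C$, and hence $\langle C, t\rangle = G$ has the defining presentation of $Q_{2|C|} = Q_{2^n m}$.

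With this lemma, everything reduces to locating the cyclic index-$2$ subgroups of $G$. An index-$2$ subgroup has a $C_2$ quotient which kills the odd-order $C_m$, so each contains $C_m$ and corresponds to one of the three index-$2$ subgroups of $Q_{2^n}$, namely $\langle x\rangle$, $\langle x^2,y\rangle$, $\langle x^2,xy\rangle$; explicitly $H_1 = \langle u,x\rangle$, $H_2 = \langle u,x^2,y\rangle$, $H_3 = \langle u,x^2,xy\rangle$. For $n \ge 4$ one checks $\langle x^2,y\rangle \cong \langle x^2,xy\rangle \cong Q_{2^{n-1}}$, which is nonabelian, so $H_2,H_3$ are nonabelian, while $H_1 = \langle u\rangle \rtimes_a \langle x\rangle$ is cyclic exactly when $a \equiv 1 \pmod m$; in that case $H_1 \cong C_{2^{n-1}m}$ and the outside element $y$ acts on it by $u \mapsto u^b$, $x \mapsto x^{-1}$, which is inversion iff $b \equiv -1 \pmod m$. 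So for $n \ge 4$, $G \cong Q_{2^n m}$ iff $(a,b) \equiv (1,-1)$.

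For $n = 3$ all of $\langle x\rangle,\langle y\rangle,\langle xy\rangle$ are cyclic of order $4$ in $Q_8$, so $H_1 = \langle u\rangle \rtimes_a C_4$, $H_2 = \langle u\rangle \rtimes_b C_4$, $H_3 = \langle u\rangle \rtimes_{ab} C_4$ (using that $xy$ acts on $u$ by $u \mapsto u^{ab}$), which are cyclic iff $a \equiv 1$, $b \equiv 1$, $ab \equiv 1 \pmod m$ respectively. Taking the outside element $y$, $x$, $x$ in the three cases and using the $Q_8$ identities $xyx^{-1} = y^{-1}$ and $x(xy)x^{-1} = (xy)^{-1}$, the conjugation action on the (cyclic) subgroup is by $(b,-1)$, $(a,-1)$, $(a,-1)$ on the factors $C_m \times C_4$, so it is inversion iff $b \equiv -1$, $a \equiv -1$, $a \equiv -1 \pmod m$ respectively, and in the last case $ab \equiv 1$ together with $a^2 \equiv 1$ also forces $b \equiv -1$. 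Hence for $n = 3$, $G \cong Q_{8m}$ iff $(a,b)$ is $(1,-1)$, $(-1,1)$ or $(-1,-1)$. For the ``if'' direction one just exhibits the required generator: for $(1,-1)$ the element $w = ux$ has order $2^{n-1}m$, generates an index-$2$ subgroup inverted by $y$ with $y^2 = x^{2^{n-2}} = w^{2^{n-2}m}$ its unique involution, and symmetrically $w = uy$ (inverted by $x$) handles $(-1,1)$ and $w = u(xy)$ (inverted by $x$) handles $(-1,-1)$; the case $m = 1$ is vacuous since then $\Z/m = 0$ and all listed pairs coincide. The main obstacle is getting the characterization lemma exactly right — in particular pinning down that the square of the outside element is forced to equal the unique involution of the cyclic subgroup — after which the $n = 3$ bookkeeping over the three cyclic subgroups of $Q_8$ is routine but must be carried out for each of them.
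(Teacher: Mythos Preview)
Your argument is correct and takes a genuinely different route from the paper's.

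The paper first notes $Q_{2^n m}\cong C_m\rtimes_{(1,-1)}Q_{2^n}$, then invokes the earlier uniqueness result (Lemma~\ref{lemma:characterstic-subgroup-lemma}/Corollary~\ref{cor:BPG-quotient-unique}) to see that the normal $C_m$ is unique, so any isomorphism $C_m\rtimes_{(a,b)}Q_{2^n}\cong C_m\rtimes_{(1,-1)}Q_{2^n}$ must descend to some $\theta\in\Aut(Q_{2^n})$ with $\varphi_{(a,b)}=\varphi_{(1,-1)}\circ\theta$. The problem then becomes computing the $\Aut(Q_{2^n})$-orbit of $\varphi_{(1,-1)}$ inside $\Hom(Q_{2^n},(\Z/m)^\times)$, which is done by writing down $\Aut(Q_{2^n})$ explicitly for $n=3$ and $n\ge 4$.

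Your approach instead characterises $Q_{2^n m}$ intrinsically (cyclic index-$2$ subgroup inverted by an outside element, with the Sylow $2$-subgroup hypothesis forcing the square relation), and then runs through the three index-$2$ subgroups of $G$ directly. This is more hands-on but entirely self-contained: it does not appeal to the earlier uniqueness lemma, and it produces the explicit generator $w$ of the cyclic subgroup in each case. The paper's argument is shorter and more conceptual once the machinery of Section~\ref{section:group-theory} is in place, while yours would stand alone outside that context. One minor remark: in your $H_3$ case you invoke $a^2\equiv 1$, but in fact $a\equiv -1$ and $ab\equiv 1$ already give $b\equiv -1$ directly, so the appeal to $a^2\equiv 1$ is harmless but unnecessary.
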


\begin{proof}
It follows easily from the standard presentation that $Q_{2^n m} \cong C_m \rtimes_{(1,-1)} Q_{2^n}$. If $f: Q_{2^n m} \twoheadrightarrow Q_{2^n}$, then $\Ker(f) = C_m$ is unique by Corollary \ref{cor:BPG-quotient-unique}. Hence $Q_{2^n m } \cong C_m \rtimes_{(a,b)} Q_{2^n}$ if and only if there exists $\theta \in \Aut(Q_{2^n})$ such that $\varphi_{(a,b)} = \varphi_{(1,-1)} \circ \theta$ where $\varphi_{(i,j)} : Q_{2^n} \to \Aut(C_m) \cong (\Z/m)^\times$ has $\varphi_{(i,j)}(x)=i$, $\varphi_{(i,j)}(y)=j$.
This implies that $\IM(\varphi_{(a,b)}) \le \IM(\varphi_{(1,-1)}) = \langle 1,-1 \rangle =\{ \pm 1\}$ and so $a,b \in \{ \pm 1\}$. If $(a,b)= (1,1)$, then $Q_{2^n m} \cong C_m \times Q_{2^n}$ which is a contradiction unless $m=1$, in which case $(1,1)=(1,-1)$. In particular, $(a,b) \in \{ (1,-1),(-1,1),(-1,-1)\}$.

If $n=3$, then we $\theta_1 : x \mapsto y, y \mapsto x$ has $\varphi_{(1,-1)} \circ \theta_1 = \varphi_{(1,-1)}$ and $\theta_2 : x \mapsto y, y \mapsto xy$ has $\varphi_{(1,-1)} \circ \theta_2 = \varphi_{(-1,-1)}$. Hence all $(a,b)$ are possible. If $n \ge 4$, then
\[\Aut(Q_{2^n}) =\{ \theta_{i,j} : x \mapsto x^i, y \mapsto x^{j}y \mid i \in (\Z/{2^{n-1}})^\times, j \in \Z/{2^{n-1}}\}\]
and $\varphi_{(1,-1)} \circ \theta_{i,j} = \varphi_{(1,-1)}$ for all $i, j$ and so only $(a,b) = (1,-1)$ is possible.
\end{proof}

Now suppose $G$ has type IIb, i.e. $G/O(G) = Q_{2^n}$ for some $n \ge 4$.  By combining Lemmas \ref{lemma:type-II-a} and \ref{lemma:type-II-b}, we get that $G$ has a quotient $Q_{2^n m}$ if and only if $m \mid t$, $r \equiv 1$ mod $m$ and $(a,b)=(1,-1)$ mod $m$. If $G$ has two distinct maximal binary polyhedral quotients $f_i : G \twoheadrightarrow Q_{2^n m_i}$ for $i=1,2$, then $m_1, m_2 \mid t$, $r \equiv 1$ mod $m_1, m_2$ and $(a,b) \equiv (1,-1)$ mod $m_1, m_2$. If $m = \text{lcm}(m_1,m_2)$, then this implies that $m \mid t$, $r \equiv 1$ mod $m$ and $(a,b) \equiv (1,-1)$ mod $m$ and so $f: G \twoheadrightarrow Q_{2^n m}$. By Corollary \ref{cor:BPG-quotient-unique}, $m > m_1,m_2$ and $f_1$ and $f_2$ must factor through $f$ which is a contradiction. Hence $\#\bm(G) =1$.

A similar argument works in the case where $G$ has type IIa, i.e. $G/O(G) = Q_8$. If $G$ has four distinct maximal binary polyhedral quotients $f_i : G \twoheadrightarrow Q_{8 m_i}$ for $i=1,2,3,4$, then Lemmas \ref{lemma:type-II-a} and \ref{lemma:type-II-b} imply there exists $i, j$ for which $(a,b) \equiv (1,-1)$, $(-1,1)$ and $(-1,-1)$ mod $m_i,m_j$. By a similar argument to the above, this implies that $f_i$, $f_j$ factors through $f: G \twoheadrightarrow Q_{8 m}$ where $m=\text{lcm}(m_i,m_j)$ which is a contradiction since $m_i \ne m_j$ and $f_i$, $f_j$ are maximal. Hence $1 \le \#\bm(G) \le 3$. 

Furthermore, if $G$ has quotients $Q_{8 m_i}$ and $Q_{8 m_j}$, then this implies that $(a,b)$ mod $m_i$ and $(a,b)$ mod $m_j$ are distinct which is a contradiction unless $(m_i,m_j)=1$.

\subsubsection*{Types III, IV, Va}

If $G$ has type III, IV or Va, then $G/O(G) = \widetilde{T}$, $\widetilde{O}$ or $\widetilde{I}$. If $f:G \twoheadrightarrow H$ is a binary polyhedral quotient, then $H \cong G/O(G)$ by Proposition \ref{prop:type-closed}. By Corollary \ref{cor:BPG-quotient-unique}, $f$ is equivalent to $G \twoheadrightarrow G/O(G)$. Hence $\# \bm(G), \# \mathcal{B}(G) =1$.

\subsubsection*{Types Vb, VI} Suppose $G$ has type Vb or VI. Since no binary polyhedral groups have type Vb or VI, Proposition \ref{prop:type-closed} implies that $G$ has no binary polyhedral quotients. Hence $\# \bm(G), \# \mathcal{B}(G) =0$. This completes the proof of Theorem \ref{thm:bm-table}.

%%%%%%%%%%
\subsection{Quaternionic representations}

Recall that $m_{\H}(G)$ denotes the number of copies of $\H$ in the Wedderburn decomposition of $\R G$ for a finite group $G$. This coincides with the number of irreducible quaternionic representations of $G$ of (quaternionic) dimension one. We say that a finite group $G$ is said to satisfy the \textit{Eichler condition} if $m_{\H}(G)=0$. The following is  \cite[Proposition 3.3]{Ni18}.

\begin{prop} \label{prop:relative-eichler-group}
Let $f: G \twoheadrightarrow H$ be a quotient. Then $m_{\H}(G) = m_{\H}(H)$ if and only if every $g \in \mathcal{B}(G)$ factors through $f$, i.e. if $f^* : \mathcal{B}(H) \to \mathcal{B}(G)$ is a bijection.
\end{prop}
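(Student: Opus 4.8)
The plan is to write $m_{\H}(G)$ as a sum indexed by $\mathcal{B}(G)$ and then read the proposition off directly. Recall that $m_{\H}(G)$ is the number of simple factors of $\R G$ isomorphic to $\H$; equivalently, it counts the $\R$-irreducible representations $V$ of $G$ with $\End_{\R G}(V) \cong \H$, so that $\dim_\R V = 4$ and $V$ is one-dimensional over $\H$. First I would attach to such a $V$ its kernel $N$: then $V$ is a faithful $\R$-irreducible representation of $G/N$ with endomorphism ring $\H$, so $G/N$ embeds into $\GL_1(\H) = \H^\times$. The finite subgroups of $\H^\times$ are exactly the cyclic groups and the binary polyhedral groups, and a cyclic group has no $4$-dimensional $\R$-irreducible representation; hence $G/N$ is binary polyhedral, and the quotient $G \twoheadrightarrow G/N$ represents a class in $\mathcal{B}(G)$. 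Conversely, grouping the $\H$-factors of $\R G$ by their kernel, those with kernel $N$ are precisely the $\H$-factors of $\R (G/N)$ on which $G/N$ acts faithfully.

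Let $q(H)$ denote the number of $\H$-factors of $\R H$ on which $H$ acts faithfully; this depends only on the isomorphism type of the binary polyhedral group $H$, and the previous paragraph gives
\[ m_{\H}(G) = \sum_{\substack{N \text{ normal in } G \\ G/N \text{ binary polyhedral}}} q(G/N) = \sum_{[g] \in \mathcal{B}(G)} q(\IM g). \]
I would then observe that $q(H) \ge 1$ for every binary polyhedral $H$. Since $H$ is non-abelian, the $\R$-subalgebra $\R[H] \subseteq \H$ generated by $H$ is non-commutative, hence all of $\H$; thus $\H$, regarded as a left $\R H$-module via left multiplication, is the simple $\H$-module, hence $\R H$-irreducible with endomorphism ring $\H$, and $H$ acts faithfully on it. So $\H$ is a faithful $\H$-factor of $\R H$.

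Next I would analyse $f^* : \mathcal{B}(H) \to \mathcal{B}(G)$, $[\bar g] \mapsto [\bar g \circ f]$. It is well defined and respects the equivalence relation because $\Ker(\bar g \circ f) = f^{-1}(\Ker \bar g)$; as $f$ is surjective, this identity also shows $f^*$ is injective, while $\IM(\bar g \circ f) = \IM(\bar g)$ shows $f^*$ is image-preserving, and $[g] \in \IM(f^*)$ holds exactly when $g$ factors through $f$. Substituting into the displayed identity,
\[ m_{\H}(G) - m_{\H}(H) = \sum_{[g] \in \mathcal{B}(G) \setminus \IM(f^*)} q(\IM g). \]
Since every summand is at least $1$, the right-hand side vanishes if and only if $\mathcal{B}(G) \setminus \IM(f^*) = \varnothing$, i.e. $f^*$ is surjective; as $f^*$ is always injective, this is the same as $f^*$ being a bijection, which is the assertion of the proposition.

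The step I expect to be the main obstacle is the bookkeeping in the first paragraph: establishing cleanly that the $\H$-factors of $\R G$ correspond bijectively to pairs consisting of a normal subgroup $N$ with $G/N$ binary polyhedral together with a faithful $\H$-factor of $\R(G/N)$, with no spurious coincidences. This amounts to identifying an $\H$-factor of $\R G$ with an $\R$-irreducible representation having endomorphism ring $\H$, and then using that such a representation of a finite subgroup of $\H^\times$ must be a constituent of that subgroup's regular representation, with the classification of finite subgroups of $\H^\times$ supplying the rest. Everything downstream of this is formal.
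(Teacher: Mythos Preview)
The paper does not give its own proof of this proposition; it is imported verbatim from \cite[Proposition 3.3]{Ni18}. Your argument is correct and is essentially the natural one: identify the simple factors of $\R G$ isomorphic to $\H$ with surjections $\R G \twoheadrightarrow \H$, observe that the image of $G$ is then a finite non-abelian subgroup of $\H^\times$ (non-abelian because it spans $\H$ over $\R$), hence binary polyhedral, and thereby index $m_\H(G)$ by $\mathcal{B}(G)$. The remaining steps---that $q(H)\ge 1$, that $f^*$ is injective and image-preserving, and the subtraction giving $m_\H(G)-m_\H(H)$ as a sum of strictly positive terms over $\mathcal{B}(G)\setminus\IM(f^*)$---are all clean and correct.

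One small remark on your ``main obstacle'': the quickest way to exclude cyclic $G/N$ is not the dimension count on real irreducibles but the observation you already make later, namely that surjectivity of $\R G \to \H$ forces the $\R$-span of $G/N$ to be all of $\H$, which is impossible for an abelian group. This makes the bookkeeping in the first paragraph entirely routine and removes the need to invoke the classification of real irreducibles of cyclic groups.
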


For example, this shows that $G$ satisfies the Eichler condition if and only if $G$ has no quotient which is a binary polyhedral group. It also follows that, if $G$ has a unique maximal binary polyhedral quotient $H$, then $m_{\H}(G)=m_{\H}(H)$. 

We now show how to use this to deduce the following from Theorem \ref{thm:bm-table}.

\begin{thm} \label{thm:mh-table}
If $G$ has periodic cohomology, then type and $m_{\H}(G)$ are related as follows.
\vspace{-1mm}
\begin{figure}[h]
\normalfont
\begin{tabular}{|c|c|c|c|c|c|c|c|c|}
\hline 
 Type & I & IIa & IIb  & III & IV & Va & Vb & VI \\ \hline
$m_{\H}(G) $ & $\ge 0$ & $\ge 1$ odd & $\ge 2$ even & 1 & 2 & 2 & 0 & 0
\\
\hline
\end{tabular}
\end{figure}
\vspace{-1mm}
\end{thm}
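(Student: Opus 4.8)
The plan is to combine Theorem \ref{thm:bm-table} with Proposition \ref{prop:relative-eichler-group} and the known values of $m_{\H}$ for the binary polyhedral groups themselves. The key observation is that, by Proposition \ref{prop:relative-eichler-group}, $m_{\H}(G)$ depends only on the set $\mathcal{B}(G)$ of binary polyhedral quotients, and in fact if $G$ has a unique maximal binary polyhedral quotient $H$ then $m_{\H}(G) = m_{\H}(H)$. So the first step is to record the values of $m_{\H}$ on binary polyhedral groups: $m_{\H}(Q_{4a}) = $ (the number of quaternionic irreducibles, which is one plus the number of $2$-dimensional quaternionic representations — this comes from the Wedderburn decomposition of $\R Q_{4a}$ and is $\ge 1$, with the precise parity of interest), and $m_{\H}(\widetilde T) = 1$, $m_{\H}(\widetilde O) = 2$, $m_{\H}(\widetilde I) = 2$. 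These are standard character-theoretic computations (e.g. from the known Wedderburn decompositions $\R Q_8 = \R^4 \times \H$, $\R\widetilde T = \R \times \C \times \H \times M_3(\R) \times \dots$, etc.), which I would cite or do directly.

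Next I would go type by type using Theorem \ref{thm:bm-table}. For type Vb and VI, $\#\bm(G) = 0$, so $G$ satisfies the Eichler condition and $m_{\H}(G) = 0$. For types III, IV, Va, $\#\bm(G) = 1$ with the unique maximal quotient being $\widetilde T$, $\widetilde O$, $\widetilde I$ respectively, so $m_{\H}(G)$ equals $m_{\H}(\widetilde T) = 1$, $m_{\H}(\widetilde O) = 2$, $m_{\H}(\widetilde I) = 2$. For type I, $\#\bm(G) \in \{0,1\}$: if $0$ then $m_{\H}(G) = 0$, and if $1$ then $m_{\H}(G) = m_{\H}(Q_{4a})$ for the unique maximal quaternionic quotient $Q_{4a}$ with $a$ odd; in all cases $m_{\H}(G) \ge 0$, which is the stated (vacuous) bound. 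For type IIa, $\#\bm(G) = 1$ so $m_{\H}(G) = m_{\H}(Q_{8m})$ for some odd $m$, and I need this to be odd and $\ge 1$. For type IIb, $\#\bm(G) = 1,2,$ or $3$; when it is $1$, $m_{\H}(G) = m_{\H}(Q_{2^n m})$ with $n \ge 4$.

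The main obstacle is the last two cases, IIa and IIb, where $\#\bm(G)$ can exceed $1$ and so Proposition \ref{prop:relative-eichler-group} does not immediately reduce $m_{\H}(G)$ to a single binary polyhedral group. Here I would argue directly from the structure in Lemmas \ref{lemma:type-II-a} and \ref{lemma:type-II-b}: the quaternionic irreducibles of $\R G$ are precisely those pulled back from quaternionic irreducibles of the binary polyhedral quotients, and one must count them with care for overlaps. A cleaner route: $m_{\H}(Q_{4n})$ is known to be $1$ more than the number of $2$-dimensional quaternionic summands, and for $Q_{2^n}$ with $n \ge 3$ one computes $\R Q_{2^n}$ has exactly the right number of $\H$-factors so that $m_{\H}(Q_{2^n m})$ has the claimed parity — even when $4 \mid |G/O(G)|$ beyond $Q_8$ (type IIb) this forces evenness, whereas the $Q_8$ case (type IIa) is the odd one. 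Concretely, for type IIb I expect $m_{\H}(G) = 2 \cdot (\text{something}) \ge 2$ is even because each distinct maximal quotient contributes a pair of conjugate-but-inequivalent quaternionic representations, while for type IIa the presence of $Q_8$ (with its single $\H$-factor in $\R Q_8$) breaks the pairing and leaves an odd count $\ge 1$. I would pin this down by explicitly analysing the real representation theory of $C_m \rtimes_{(a,b)} Q_{2^n}$ via the action on the $\C C_m$-isotypic components, which is the one genuinely computational step; everything else is bookkeeping from the table in Theorem \ref{thm:bm-table}.
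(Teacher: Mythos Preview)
Your overall strategy is the paper's: use Theorem~\ref{thm:bm-table} and Proposition~\ref{prop:relative-eichler-group} to reduce $m_{\H}(G)$ to the values on binary polyhedral quotients, then read off the table. For types Vb, VI, III, IV, Va, and I your argument is essentially complete and matches the paper's.

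There are two genuine gaps in your handling of type II. First, you have IIa and IIb reversed regarding $\#\bm(G)$: as the \emph{proof} of Theorem~\ref{thm:bm-table} shows (the printed table has a typo), it is type IIa (i.e.\ $G/O(G)\cong Q_8$) that can have $\#\bm(G)\in\{1,2,3\}$, while type IIb (i.e.\ $G/O(G)\cong Q_{2^n}$ with $n\ge 4$) always has $\#\bm(G)=1$. So for IIb there is nothing to do beyond $m_{\H}(G)=m_{\H}(Q_{2^n m})$.

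Second, and more substantively, you never write down the formula $m_{\H}(Q_{4n}) = \lfloor n/2 \rfloor$, which is what actually drives the parity claims. With it, type IIb is immediate: $m_{\H}(Q_{2^n m}) = 2^{n-3}m \ge 2$ is even for $n\ge 4$. For type IIa with $b=\#\bm(G)\ge 2$, your ``pairs of conjugate representations'' heuristic is not the right mechanism; the paper instead does a clean inclusion--exclusion. The proof of Theorem~\ref{thm:bm-table} shows the maximal quotients $Q_{8m_1},\dots,Q_{8m_b}$ have pairwise coprime $m_i$, so any two share exactly the quotient $Q_8$. Since $m_{\H}(Q_{8m_i})=m_i$ and $m_{\H}(Q_8)=1$, real representation theory gives
\[
m_{\H}(G)=\sum_{i=1}^b (m_{\H}(Q_{8m_i})-1)+m_{\H}(Q_8)=\sum_{i=1}^b m_i-(b-1),
\]
which is odd because each $m_i$ is odd. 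This inclusion--exclusion step, using coprimality of the $m_i$, is the missing idea in your proposal; the vague ``analyse the real representation theory of $C_m\rtimes_{(a,b)}Q_{2^n}$'' would eventually get there but is much more work than needed.
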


\subsubsection*{Type Vb, VI}

If $G$ has type Vb or VI, then Theorem \ref{thm:bm-table} implies that $G$ has no binary polyhedral quotients and so $m_{\H}(G)=0$ by Proposition \ref{prop:relative-eichler-group}.

\subsubsection*{Type IIb, III, IV, Va}

If $G$ has type IIb, III, IV or Va, then Theorem \ref{thm:bm-table} implies that $\#\bm(G)=1$, i.e. $G$ has a unique maximal binary polyhedral quotient $H$. By Proposition \ref{prop:relative-eichler-group}, we must have that $m_{\H}(G)=m_{\H}(H)$. By Proposition \ref{prop:type-closed}, $H$ has the same type as $G$. Recall that $m_{\H}(Q_{4n}) = \lfloor n/2 \rfloor$ \cite[Section 12]{Jo03a}. If $G$ has type IIa, then $H = Q_{2^n m}$ for $n \ge 4$, $m \ge 1$ odd and $m_{\H}(Q_{2^n m}) = 2^{n-3} m \ge 2$ is even. If $G$ has type III, IV or Va, then $H = \widetilde{T}$, $\widetilde{O}$ or $\widetilde{I}$ respectively which have $m_{\H}(\widetilde{T})=1$, $m_{\H}(\widetilde{O})=2$ and $m_{\H}(\widetilde{I})=2$.

\subsubsection*{Type IIa}

If $G$ has type IIa, then Theorem \ref{thm:bm-table} implies that $\#\bm(G)=1,2$ or $3$. If $b = \#\bm(G)$, let $f_i: G \twoheadrightarrow Q_{8 m_i}$ denote the maximal binary polyhedral quotients for $1 \le i \le b$. It follows from the proof of Theorem \ref{thm:bm-table} that the $m_i$ are coprime and so the maximal quotient factoring through any two of the $f_i$ is the unique quotient $f: G \twoheadrightarrow Q_8$. Since $m_{\H}(Q_{8 m_i}) = m_i$ and $m_{\H}(Q_8)=1$, it can be shown using real representation theory that
\[ m_{\H}(G) = \sum_{i=1}^b (m_{\H}(Q_{8 m_i})-1) + m_{\H}(Q_8) =
\begin{cases}
m_1, & \text{if $b=1$}\\
(m_1+m_2)-1, & \text{if $b=2$}\\
(m_1+m_2+m_3)-2, & \text{if $b=3$}
\end{cases}
\]
which is odd since the $m_i$ are odd. This completes the proof of Theorem \ref{thm:mh-table}.

%%%%%%%%%%
\subsection{Vanishing of the Swan finiteness obstruction}

Recall that a group $G$ has $k$-periodic cohomology if and only if there exists a $k$-periodic projective resolution over $\Z G$ (see, for example, \cite[Proposition 3.9]{Ni20a}). If $G$ has $k$-periodic cohomology, then Swan \cite{Sw60a} defined an obstruction $\sigma_k(G) \in C(\Z G)/T_G$, where $T_G$ is generated by $(I,r)$ for $r \in (\Z/|G|)^\times$, which vanishes if and only if there exists a $k$-periodic resolution of free $\Z G$-modules. Determining which groups have $\sigma_k(G) = 0$ remains a difficult open problem, and has applications to the classification of spherical space forms \cite{DM85}.

The main result of this section will be the following extension of Theorem \ref{thm:mh-table} which shows how $m_{\H}(G)$ and the vanishing of $\sigma_k(G)$ are related to the type of $G$.

\begin{thm} \label{thm:big-table}
The columns of the following table list the triples of type, $m_{\H}(G)$ and $\sigma_k(G)$ which occur for groups $G$ with periodic cohomology.
\vspace{-1mm}
\begin{figure}[h]
\normalfont
\begin{tabular}{|c|c|c|c|c|c|c|c|c|c|}
 \hline 
Type & I & IIa & IIb & III & IV & V a & V b & VI \\
 \hline
  $m_{\H}(G)$ & $\ge 0$ & $\ge 1$ odd & $\ge 2$ even & $1$ & $2$ & $2$ & $0$ & $0$ \\
 \hline
 $\sigma_k(G)$ & $0$ & $0$ or $\ne 0$ & $0$ or $\ne 0$ & $0$ & $0$ or $\ne 0$ & $0$ & $0$ & $0$ or $\ne 0$ \\
  \hline
\end{tabular}
\end{figure}
\vspace{-1mm}
\end{thm}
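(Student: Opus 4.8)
The $m_{\H}(G)$ row of the table is precisely Theorem \ref{thm:mh-table}, so the plan is to establish the $\sigma_k(G)$ row and then check that every column is realised. This splits into two tasks: (a) show $\sigma_k(G) = 0$ for all $G$ of type I, III, Va or Vb; and (b) show that for each of types IIa, IIb, IV, VI there are groups of that type, covering the whole permitted range of $m_{\H}$, with $\sigma_k(G) = 0$, and others with $\sigma_k(G) \ne 0$. The ``only if'' part of the statement is task (a) together with Theorem \ref{thm:mh-table}; the ``if'' part is the existence claims in (b).

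A large block of the ``$\sigma_k(G) = 0$'' assertions is immediate. If $G$ has type IIa, IIb, III, IV or Va and $O(G) = 1$, then $G = G/O(G)$ is one of $Q_8$, $Q_{2^n}$, $\widetilde T$, $\widetilde O$, $\widetilde I$, hence a finite subgroup of $S^3 = \H^\times$; left translation is a free cellular action on $S^3$, and splicing its cellular chains yields a free $\Z G$-resolution of $\Z$ of period $4$, so, since $4 \mid k$, we get $\sigma_k(G) = 0$. What is left for (a) is the type I groups with $O(G) \ne 1$ (metacyclic, and not subgroups of $S^3$), the groups $\SL_2(\F_p)$ with $p \ge 7$ (type Vb, again not in $S^3$), and the type III, Va, Vb groups with $O(G) \ne 1$. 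For all of these I would invoke the evaluation of the Swan obstruction for periodic groups: vanishing for split metacyclic groups is Swan's theorem \cite{Sw60 I}, and vanishing for the $\SL_2(\F_p)$-type groups, including those with $O(G) \ne 1$, is read off Milgram's computations \cite{Mi85}. If one prefers to reduce the $O(G) \ne 1$ cases to the base groups $G/O(G)$ explicitly, one can run a transfer argument inside $C(\Z G)/T_G$ — noting that every period of $G$ is a period of $G/O(G)$, since it is divisible by the period of the $2$-Sylow subgroup, which is that of $G/O(G)$ — but \cite{Mi85} already covers the general case.

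For (b), the ``$\sigma_k(G) = 0$'' half is again supplied by the $S^3$-subgroups, spread across the $m_{\H}$-range by taking $Q_{8m}$ (type IIa, $m_{\H} = m$ odd) and $Q_{2^n m}$ (type IIb, $m_{\H} = 2^{n-3}m$ even), and, for types IV and VI, by $\widetilde O$ respectively a type VI group with vanishing obstruction (both of which have $m_{\H}$ forced to be $2$ and $0$). For the ``$\sigma_k(G) \ne 0$'' half I would quote explicit families: the metacyclic groups $(C_p \times C_q) \rtimes Q_8$, and their analogues with $Q_{2^n}$ or $\widetilde O$ in place of $Q_8$, for suitable primes $p, q$, are of type IIa, IIb, IV and have nonzero Swan obstruction by \cite{Mi85}, while $\TL_2(\F_p)$ for $p$ in the appropriate congruence class gives type VI. In each case one checks the value of $m_{\H}(G)$ using Theorem \ref{thm:mh-table} and lets $p, q$ (or the odd part of $G$) vary to realise every admissible value.

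The main obstacle is part (a): the vanishing $\sigma_k(G) = 0$ for types I, III, V is not elementary. The only self-contained tools on hand are the $S^3$-subgroup trick and Proposition \ref{prop:type-closed}, and between them they reach only $G/O(G)$ and only in types II--V; the type I groups with $O(G) \ne 1$ and the $\SL_2(\F_p)$-type groups not of the form $\widetilde T$ or $\widetilde I$ must be imported from the deep computations of Swan \cite{Sw60 I} and Milgram \cite{Mi85}. If one makes the reduction to $G/O(G)$ explicit, the accompanying subtlety is that $|O(G)|$ and $|G/O(G)|$ need not be coprime — for instance when $G/O(G) = \SL_2(\F_p)$ and $p \mid |O(G)|$ — so the reduction has to be performed on the class group rather than by inflating resolutions. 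Likewise, realising every even value of $m_{\H}$ in type IIb (or odd value in type IIa) with $\sigma_k(G) \ne 0$ is the one place in (b) needing a genuine family, which again one takes from \cite{Mi85}.
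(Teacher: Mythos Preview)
Your overall architecture matches the paper's: establish the $m_{\H}$ row via Theorem \ref{thm:mh-table}, cite the literature for the vanishing of $\sigma_k(G)$ in types I, III, V (the paper uses \cite[Theorem 3.19]{DM85} rather than \cite{Sw60 I} and \cite{Mi85} directly), and then construct explicit witnesses for each column. The $S^3$-subgroup argument you give is exactly how the paper realises the $\sigma_k = 0$ entries.

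There is, however, a genuine error in your ``$\sigma_k(G) \ne 0$'' witnesses. For type VI you propose $\TL_2(\F_p)$ itself for suitable $p$, but in fact $\sigma_k(\TL_2(\F_p)) = 0$ for every $p$ (see \cite[Theorem 3.19(c)]{DM85}); these are precisely the groups the paper uses for the $\sigma_k = 0$ column. To get a type VI group with nonvanishing obstruction one must enlarge: the paper takes $G = C_q \rtimes \TL_2(\F_p)$ with a carefully chosen action, observes that $G$ contains a subgroup of the form $Q(2^n;q,1)$, and then applies Theorem \ref{thm:milg} to that subgroup, pulling the nonvanishing back via restriction.

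A second gap is in realising \emph{every} admissible $m_{\H}$ value in types IIa and IIb with $\sigma_k \ne 0$. The Milgram groups $Q(8;p,q)$ you cite have $m_{\H} = p + q - 1$ with $p,q$ distinct primes satisfying congruence conditions, which misses most odd values (e.g.\ $m_{\H} = 1,3$). The paper instead embeds $Q(8;p,q)$ as a subgroup of a larger group $G = (C_{mpq} \rtimes_{(r)} C_n) \rtimes_{(a,b)} Q_8$, with the action rigged so that $\bm(G) = \{Q_{8m}\}$ and hence $m_{\H}(G) = m$ exactly; the nonvanishing is then inherited from the subgroup. An analogous trick is needed for IIb, and for type IV the paper uses the specific extensions $\widetilde{O}_v$ rather than a generic $(C_p \times C_q) \rtimes \widetilde{O}$.
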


In order to prove Theorem \ref{thm:big-table}, we will begin by noting the following which is proven in \cite[Theorem 3.19]{DM85}.

\begin{lemma}
If $G$ has $k$-periodic cohomology and type I, III or V, then $\sigma_k(G)=0$.	
\end{lemma}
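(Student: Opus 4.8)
The plan is to reduce the claim to exhibiting a finite \emph{free} periodic resolution of $\Z$ over $\Z G$, and then to treat the metacyclic and $\SL_2$ building blocks separately. Recall that $\sigma_k(G)\in C(\Z G)/T_G$ vanishes precisely when $\Z$ admits a resolution by finitely generated free $\Z G$-modules that is periodic of period $k$, and that it is standard that $\sigma_{jd}(G)=j\,\sigma_d(G)$, where $d$ denotes the cohomological period of $G$ (for instance, by splicing resolutions). Since $C(\Z G)/T_G$ is finite and the given $k$ is a multiple of $d$, it suffices to prove $\sigma_d(G)=0$, i.e.\ to construct a free $\Z G$-resolution of $\Z$ of period exactly $d$.

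\textbf{Type I.} Here $G=C_m\rtimes_{(r)}C_n$ with $(m,n)=1$, and $C_m=\langle u\rangle$, $C_n=\langle v\rangle$. One combines the standard period-$2$ resolutions of $\Z$ over $\Z C_m$ and $\Z C_n$,
\[\cdots\to\Z C_m\xrightarrow{N_u}\Z C_m\xrightarrow{u-1}\Z C_m\to\Z,\qquad\cdots\to\Z C_n\xrightarrow{N_v}\Z C_n\xrightarrow{v-1}\Z C_n\to\Z,\]
into a twisted total complex of finitely generated free $\Z G$-modules, using the semidirect-product action to make the construction $G$-equivariant. This is Swan's explicit construction of free periodic resolutions for metacyclic groups with periodic cohomology (see \cite{DM85}), and it has period exactly $d$; hence $\sigma_d(G)=0$.

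\textbf{Types III and V.} When $G=\widetilde T$ or $\widetilde I$, argue geometrically: these groups sit inside $\H^\times\cong S^3$ and act freely there by left translation, so $X=S^3/G$ is a closed smooth $3$-manifold and hence a finite CW complex with $\pi_1(X)\cong G$ and $\widetilde X\simeq S^3$. Its augmented cellular chain complex then gives rise to a free $\Z G$-resolution of $\Z$ of period $4=d$ — a finite complex of finitely generated free $\Z G$-modules $0\to\Z\to C_3\to C_2\to C_1\to C_0\to\Z\to 0$ — so $\sigma_4(G)=0$. For a general $G$ of type III or V (which need no longer embed in $S^3$ once $O(G)\ne 1$), one instead uses functoriality of $\sigma$ together with the observation that every hyperelementary subgroup of $G$ is metacyclic with periodic cohomology, and hence, by the type I case, has vanishing Swan obstruction; an induction theorem expressing $\sigma_d(G)$ in terms of its restrictions to hyperelementary subgroups then forces $\sigma_d(G)=0$.

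\textbf{Main obstacle.} The substantive inputs are (a) Swan's metacyclic construction in the type I case — one must check that the twisted total complex has period exactly $d$ and not merely a larger multiple, which is where the coprimality $(m,n)=1$ is used — and (b) the induction theorem needed to descend from hyperelementary subgroups to $G$ in the general type III and V cases, since $C(\Z G)/T_G$ is not detected by restriction on the nose; establishing this detection statement for the Swan obstruction is the heart of the argument in \cite{DM85}.
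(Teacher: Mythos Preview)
The paper does not prove this lemma; it simply cites \cite[Theorem 3.19]{DM85} and moves on. Your proposal therefore attempts considerably more than the paper does, and your outline for type I (an explicit free periodic resolution for $C_m\rtimes_{(r)}C_n$) and for the specific groups $\widetilde T$, $\widetilde I$ (via their free action on $S^3$) is correct.

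The gap is in your treatment of \emph{general} groups of type III and V. You assert that every hyperelementary subgroup $H\le G$ is ``metacyclic with periodic cohomology, and hence, by the type I case, has vanishing Swan obstruction''. But a $2$-hyperelementary subgroup has the form $C_m\rtimes Q$ with $m$ odd and $Q$ a $2$-subgroup of $G$; since the Sylow $2$-subgroup of a type III or V group is $Q_8$ (types III, Va) or $Q_{2^n}$ with $n\ge 4$ (type Vb), the factor $Q$ may itself be generalised quaternionic, and then $H$ is a group of type II rather than type I. Your type I construction does not cover these, and Theorem~\ref{thm:big-table} shows that type II groups can genuinely have $\sigma_k\ne 0$ --- indeed the paper exhibits such examples via Milgram's groups $Q(8;p,q)$ and $Q(2^n;p,1)$. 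So even granting the hyperelementary detection statement you invoke, one still needs a separate argument that the particular type II subgroups arising inside type III/V groups have vanishing obstruction, and this is certainly not ``by the type I case''. You are right to flag the induction step as the heart of \cite{DM85}, but your description of what remains after that step is too optimistic; the actual argument there handles types III and V by a route that does not reduce purely to the metacyclic construction.
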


This gives the restrictions on the vanishing of $\sigma_k(G)$ given in the above table. It now suffices to construct examples which realise the constraints in each column.

We begin by constructing the examples with $\sigma_k(G)=0$. Firstly the groups $C_{n}$, $Q_{4n}$ and $\SL_2(\F_p)$, $\TL_2(\F_p)$ for $p \ge 3$ can all be shown to have vanishing finiteness obstruction \cite[Theorem 3.19 (c)]{DM85}.
Now note that $m_{\H}(Q_{4n}) = \lfloor n/2 \rfloor$ \cite[Section 12]{Jo03a}. In particular, $m_{\H}(Q_{8n+4}) = m_{\H}(Q_{8n}) = n$ for all $n \ge 1$ and $Q_{8n+4}$ has type I and $Q_{8n}$ has type II.
Finally, the cyclic groups $G=C_n$ have type I and $m_{\H}(C_n)=0$. 

If $n \ge 3$ and $a,b,c \ge 1$ are odd coprime, then define
\[ Q(2^n a;b,c) = C_{abc} \rtimes_{(r,s)} Q_{2^n}\]
where $(r,s) \equiv (1,-1)$ mod $a$, $(r,s) \equiv (-1,-1)$ mod $b$ and $(r,s) \equiv (-1,1)$ mod $c$.
The following was shown by Milgram \cite[Theorem D]{Mi85} and Davis \cite[Corollary 6.2]{Da81}.

\begin{thm}\label{thm:milg}
Let $p$, $q$ be distinct odd primes. 
\begin{enumerate}[\normalfont (i)]
\item Then $\sigma_4(Q(8;p,q)) \ne 0$ if $p, q \equiv 3$ mod $4$, or $p \equiv 3$ mod $4$, $q \equiv 5$ mod $8$ and $p^n \not \equiv \pm 1$ mod $q$ for all $n$ odd
\item If $n \ge 4$, then $\sigma_4(Q(2^n;p,1)) \ne 0$ if $p \not \equiv 1$ mod $8$ and $p \not \equiv \pm 1$ mod $2^{n-1}$.
\end{enumerate}
\end{thm}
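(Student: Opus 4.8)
The plan is to treat the two parts essentially independently, since part (i) concerns $Q_{2^n}=Q_8$ while part (ii) allows $n\ge 4$, and the finiteness obstructions $\sigma_4$ live in different quotient groups $C(\Z G)/T_G$. In both cases the strategy is the one pioneered by Milgram and Davis: $\sigma_4(G)$ is computed as a product over the $2$-hyperelementary pieces of $G$ of local contributions, each of which can be detected by a Swan-type invariant attached to a (binary polyhedral) quotient of $G$. Concretely, for $G = Q(2^n a;b,c)$ the class $\sigma_4(G)$ maps, under the natural projections $G \twoheadrightarrow Q_{2^n a}$, $G\twoheadrightarrow Q_{2^n b}$, $G\twoheadrightarrow Q_{2^n c}$, to the obstructions $\sigma_4$ of these quaternion quotients, and its non-vanishing is reduced to a congruence/symbol condition on the primes involved. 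I would first recall the relevant computation of $C(\Z Q_{8})/T_{Q_8}$ and $C(\Z Q_{2^n})/T_{Q_{2^n}}$ from the literature (e.g. Swan \cite{Sw83} and Davis \cite{Da81}), together with the formula expressing $\sigma_4$ in terms of the Swan homomorphism and the relevant $2$-adic and $p$-adic units.

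For part (i), the key step is to quote the explicit value of $\sigma_4(Q(8;p,q))$ as an element detected by the pair $(p,q)$ modulo small powers of $2$ and modulo each other. The three cases in the hypothesis — $(p,q\equiv 3\bmod 4)$; $(p\equiv 3\bmod 4,\ q\equiv 5\bmod 8)$; and the extra condition $p^n\not\equiv\pm1\bmod q$ for odd $n$ — correspond exactly to the conditions under which the corresponding quadratic-residue symbols obstruct the obstruction from vanishing. I would organise the proof as: (a) identify $\sigma_4(Q(8;p,q))$ with the image of a specific idele-class element; (b) show this element is trivial iff all the stated symbol conditions fail; (c) conclude. The condition $p^n\not\equiv\pm1\bmod q$ is precisely the statement that $p$ does not lie in $\{\pm1\}\cdot\langle\text{odd powers}\rangle$ in $(\Z/q)^\times$, i.e. that a certain norm map is not surjective, which is what forces the relevant component to be non-zero.

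For part (ii), the group $Q(2^n;p,1) = C_p\rtimes Q_{2^n}$ has a unique maximal binary polyhedral quotient $Q_{2^n p}$, so by the reduction of the previous sections $\sigma_4$ is governed by $C(\Z Q_{2^n p})/T_{Q_{2^n p}}$. Here I would invoke Davis's formula \cite[Corollary 6.2]{Da81} which expresses $\sigma_4(Q_{2^n p})$ in terms of the class of $p$ in a quotient of $(\Z/2^{n-1})^\times \times (\Z/p)^\times$ by the subgroup of norms/Swan units; the conditions $p\not\equiv 1\bmod 8$ (which controls the $2$-part, using $n\ge4$ so that $(\Z/2^{n-1})^\times$ has a large $2$-torsion quotient) and $p\not\equiv\pm1\bmod 2^{n-1}$ (which controls whether $p$ is a norm from the relevant cyclotomic subfield) together pin down that this class is non-trivial. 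The main obstacle in both parts is not conceptual but bookkeeping: correctly tracking the identification of $C(\Z G)/T_G$ with the appropriate relative $K_0$ or idele-class group and matching the number-theoretic hypotheses to the vanishing locus of the Swan invariant. Since this is exactly the content of the cited theorems of Milgram \cite[Theorem D]{Mi85} and Davis \cite[Corollary 6.2]{Da81}, I would present the argument as an assembly of those results rather than reproving them, and verify only that the group-theoretic reductions of Sections \ref{section:group-theory}--\ref{section:quotients} apply so that the obstruction for $Q(2^n a;b,c)$ is genuinely detected on its quaternion quotients.
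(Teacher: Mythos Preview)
The paper does not prove this theorem at all: it is quoted verbatim from Milgram \cite[Theorem D]{Mi85} and Davis \cite[Corollary 6.2]{Da81}, with no argument supplied. So in the end your conclusion --- cite those references --- matches what the paper does.

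However, the expository sketch you give around that citation contains a genuine error that you should remove. In part (ii) you write that $Q(2^n;p,1)=C_p\rtimes Q_{2^n}$ ``has a unique maximal binary polyhedral quotient $Q_{2^n p}$'' and that $\sigma_4$ is therefore ``governed by $C(\Z Q_{2^n p})/T_{Q_{2^n p}}$''. This is false on both counts. By definition $Q(2^n;p,1)$ carries the action $(r,s)\equiv(-1,1)\bmod p$, whereas Lemmas \ref{lemma:type-II-a} and \ref{lemma:type-II-b} show that for $n\ge4$ a quotient $Q_{2^n m}$ exists only when $(r,s)\equiv(1,-1)\bmod m$; hence $Q_{2^n p}$ is \emph{not} a quotient of $Q(2^n;p,1)$ --- its maximal binary polyhedral quotient is just $Q_{2^n}$. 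More seriously, every genuine quaternion group $Q_{4m}$ has $\sigma_4=0$ (this is stated explicitly in the paper just above Theorem \ref{thm:milg}), so the non-vanishing of $\sigma_4(Q(2^n;p,1))$ can never be ``detected on its quaternion quotients'' in the way you propose. The same objection applies to your outline for part (i): pushing $\sigma_4$ forward to $Q_8$, $Q_{8p}$, $Q_{8q}$ lands in zero. The actual Milgram--Davis computation works directly with the structure of $C(\Z G)/T_G$ for $G=Q(2^n a;b,c)$ itself (via a top-cell/surgery-obstruction analysis and an id\`ele-class description), not by reduction to binary polyhedral quotients. If you want to say anything beyond the bare citation, this is the point that needs correcting.
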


We will construct further examples of groups with $\sigma_k(G) \ne 0$ as follows. Let $H$ be a group listed in Theorem \ref{thm:milg} which has $\sigma_4(H) \ne 0$ and suppose $H \le G$ where $G$ has $4n$-periodic cohomology for some $n \ge 1$ odd. If $\sigma_{4n}(G) = 0$, then $\sigma_{4n}(H) = 0$ by restricting resolutions. However, since $4n \equiv 4$ mod 8, it follows from \cite[Corollary 12.6]{Wa79a} that $\sigma_4(H) = 0$. This is a contradiction and so $\sigma_{4n}(G) \ne 0$.

\subsubsection*{Type IIa} 

It suffices to prove the following.

\begin{lemma}
Let $n \ge 3$, $m \ge 1$ be odd coprime. Then there exists distinct odd primes $p$, $q$ and $r, a, b \in \Z$ such that
\[ G = (C_{mpq} \rtimes_{(r)} C_n) \rtimes_{(a,b)} Q_8\]
has $\sigma_{4n}(G) \ne 0$ and $m_{\H}(G) = m$.
\end{lemma}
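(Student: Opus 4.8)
The plan is to choose the defining data of $G$ so that two independent things happen at once: the group contains a copy of one of the groups $Q(8;p,q)$ from Theorem~\ref{thm:milg} with $\sigma_4 \ne 0$, and its unique maximal binary polyhedral quotient is $Q_{8m}$, which by Theorem~\ref{thm:mh-table} (via Proposition~\ref{prop:relative-eichler-group}) forces $m_{\H}(G) = m_{\H}(Q_{8m}) = m$. Both are conditions on the $C_n$-action on $C_{mpq}$ (governed by $r$) and on the $Q_8$-action on $C_{mpq}$ (governed by $(a,b)$), and I would arrange them by prescribing $r$, $a$, $b$ modulo each of $m$, $p$, $q$ separately via the Chinese Remainder Theorem, after choosing $p$ and $q$ by Dirichlet's theorem.

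Concretely: first pick distinct primes $p, q$ with $p \equiv q \equiv 3 \bmod 4$, $p \equiv q \equiv 1 \bmod n$ and $p, q \nmid m$; these congruences are compatible since $n$ is odd, so infinitely many such primes exist. The condition $p \equiv q \equiv 3 \bmod 4$ guarantees $\sigma_4(Q(8;p,q)) \ne 0$ by Theorem~\ref{thm:milg}(i), while $p, q \equiv 1 \bmod n$ lets us pick $\rho_p \in (\Z/p)^\times$, $\rho_q \in (\Z/q)^\times$ of exact order $n$ (so $\ne 1$, as $n \ge 3$). Then choose $r$ by CRT with $r \equiv 1 \bmod m$, $r \equiv \rho_p \bmod p$, $r \equiv \rho_q \bmod q$, so that $r^n \equiv 1 \bmod mpq$ but $r \not\equiv 1 \bmod p$ and $r \not\equiv 1 \bmod q$; and choose $a, b$ with $(a,b) \equiv (1,-1) \bmod m$, $(a,b) \equiv (-1,1) \bmod p$, $(a,b) \equiv (1,-1) \bmod q$, so that $a^2 \equiv b^2 \equiv 1 \bmod mpq$. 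With these choices $G = (C_{mpq} \rtimes_{(r)} C_n) \rtimes_{(a,b)} Q_8$ is a well-defined group of order $8mnpq$ with $O(G) = C_{mpq}\rtimes_{(r)} C_n$, hence of type IIa (its Sylow $2$-subgroup is $Q_8$ and its odd Sylows are cyclic).

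To read off $m_{\H}(G)$ I would use Proposition~\ref{prop:type-closed} to see that every binary polyhedral quotient of $G$ has type IIa, so is some $Q_{8m'}$, and Lemmas~\ref{lemma:type-II-a} and~\ref{lemma:type-II-b} to see that $Q_{8m'}$ is a quotient of $G$ precisely when $m' \mid mpq$, $r \equiv 1 \bmod m'$ and $(a,b) \bmod m' \in \{(1,-1),(-1,1),(-1,-1)\}$. Since $r \not\equiv 1$ modulo $p$ or $q$, the middle condition forces $m' \mid m$, and then the last condition is automatic because $(a,b) \equiv (1,-1) \bmod m$; so the binary polyhedral quotients of $G$ are exactly the $Q_{8m'}$ with $m' \mid m$, giving $\bm(G) = \{Q_{8m}\}$ and hence $m_{\H}(G) = m_{\H}(Q_{8m}) = m$.

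For $\sigma_{4n}(G) \ne 0$ I would observe that $\langle u^m \rangle \rtimes_{(a,b)} Q_8 \le G$ is a copy of $Q(8;p,q)$: here $\langle u^m \rangle \cong C_{pq}$ and $Q_8$ acts on it through $(a,b) \bmod pq$, which is $(-1,1) \bmod p$ and $(1,-1) \bmod q$, matching the definition of $Q(8;p,q)$. Since $G$ has type IIa it has $4$-periodic, hence $4n$-periodic, cohomology, and $4n \equiv 4 \bmod 8$ because $n$ is odd; the restriction argument recorded just before this lemma then shows that $\sigma_{4n}(G) = 0$ would give $\sigma_{4n}(Q(8;p,q)) = 0$ and hence, by \cite[Corollary~12.6]{Wa79}, $\sigma_4(Q(8;p,q)) = 0$, contradicting the choice of $p, q$. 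The delicate point — and the only real work — is the simultaneous congruence bookkeeping: one must check that forcing $r \not\equiv 1 \bmod p$ and $r \not\equiv 1 \bmod q$ kills exactly the spurious extra quotients $Q_{8p}, Q_{8q}$ (which would otherwise raise $m_{\H}(G)$ to $(m+p+q)-2$ by Theorem~\ref{thm:mh-table}) while leaving the embedded copy of $Q(8;p,q)$, whose existence does not involve $r$ at all, untouched.
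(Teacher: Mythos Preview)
Your approach is essentially identical to the paper's: same choice of primes via Dirichlet, same CRT construction of $r$, $a$, $b$, same identification of the subgroup $Q(8;p,q)$, and the same use of Lemmas~\ref{lemma:type-II-a}--\ref{lemma:type-II-b} to pin down $\bm(G)=\{Q_{8m}\}$.

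There is one incorrect step in your justification. You write that ``since $G$ has type IIa it has $4$-periodic, hence $4n$-periodic, cohomology''. Type IIa does \emph{not} imply $4$-periodicity: for a type II group with $O(G)=C_t\rtimes_{(r)}C_s$ the period is $4\cdot\ord_t(r)$ (this is exactly \cite[Corollary~4.6]{Wa10}, which the paper invokes), and in your construction $\ord_{mpq}(r)=\mathrm{lcm}(1,n,n)=n\ge 3$, so $G$ is $4n$-periodic but not $4$-periodic. The conclusion you need---that $G$ is $4n$-periodic so that $\sigma_{4n}(G)$ is defined and the restriction argument applies---is therefore still correct, but the route to it should go through the period computation $4\cdot\ord_{mpq}(r)=4n$ rather than through a nonexistent implication ``type IIa $\Rightarrow$ period $4$''. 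Once this is fixed your argument matches the paper's line for line.
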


\begin{proof}
By Dirichlet's theorem on primes in arithmetic progression, there exists distinct primes $p$, $q$ such that $p,q \nmid m$, $p, q \equiv 3$ mod 4 and $p, q \equiv 1$ mod $n$. Let $a,b \in \Z/mpq$ be such that $(a,b) \equiv (1, -1)$ mod $m$, $(a,b) \equiv (-1,1)$ mod $p$ and $(a,b) \equiv (1,-1)$ mod $q$. Since $n \mid p-1,q-1$, we can pick $r_p \in (\Z/p)^\times$, $r_q \in (\Z/q)^\times$ of order $n$. Let $r \in \Z/mpq$ be such that $r \equiv 1$ mod $m$, $r \equiv r_p$ mod $p$, $r \equiv r_q$ mod $q$. This has $r^n \equiv 1$ mod $mpq$ and so we can define $G = (C_{mpq} \rtimes_{(r)} C_n) \rtimes_{(a,b)} Q_8$.

By \cite[Corollary 5.6]{Wa10}, $G$ has period $4 \cdot \ord_{mpq}(r) = 4n$. If $C_{mpq} = \langle u \rangle$, $C_n = \langle v \rangle$ and $Q_8 = \langle x,y \rangle$, then $Q(8;p,q) \cong \langle u^m, x, y \rangle \le G$. Since $p,q \equiv 3$ mod 4, Theorem \ref{thm:milg} (i) implies that $\sigma_4(Q(8;p,q)) \ne 0$ and so $\sigma_{4n}(G) \ne 0$.

Since $(a,b) \equiv (1,-1)$ mod $m$ and $r \equiv 1$ mod $m$, Lemmas \ref{lemma:type-II-a} and \ref{lemma:type-II-b} imply that $G$ has a quotient $Q_{8m}$. Similarly, if $G$ has a quotient $Q_{8m_0}$ with $(m_0, m)=1$, then $m_0 \mid mpq$ and $r \equiv 1$ mod $m_0$. However, $r \nequiv 1$ mod $p,q$ and so $m_0=1$. This implies that $\bm(G) = \{ Q_{8m}\}$ and so $m_{\H}(G) = m_{\H}(Q_{8m})=m$ by Proposition \ref{prop:relative-eichler-group}.	
\end{proof}

\subsubsection*{Type IIb}

Let $n \ge 1$. By Dirichlet's theorem on primes in arithmetic progression, there exists a prime $p$ such that $p \nmid n$, $p \not \equiv 1$ mod $8$ and $p \not \equiv \pm 1$ mod $2^{k+3}$ where $k= \nu_2(n)$ is the highest power of $2$ dividing $n$. If $G = Q(16n;p,1)$, then Theorem \ref{thm:milg} (ii) implies that $\sigma_4(G) \ne 0$. It is easy to see that $\bm(G) = \{ Q_{16n}\}$ and so $m_{\H}(G) = m_{\H}(Q_{8n})=n$ by Proposition \ref{prop:relative-eichler-group}.	

\subsubsection*{Type IV}

In order to give our example, first define $\widetilde{T}_v$ for $v \ge 1$ as the unique extension with kernel $C_{3^{v-1}}$ and quotient $\widetilde{T}$ which has periodic cohomology \cite[Lemma 4.3]{Wa10}. Also define $\widetilde{O}_v$ for $v \ge 1$ to be the unique extension with kernel $\widetilde{T}_v$ and quotient $C_2$ which has periodic cohomology \cite[Lemma 4.4]{Wa10}. Note that $\widetilde{O}_v$ is $4$-periodic by \cite[Corollary 5.6]{Wa10} and, by Proposition \ref{prop:type-closed}, it also has type IV since it has a quotient $\widetilde{O}$.

 By \cite[Theorem 5.1]{Wa10}, $\widetilde{O}_v$ has a subgroup of the form $Q(16;3^{v-1},1)$ and so has a subgroup $Q(16;3,1)$ for all $v \ge 2$. 
Since $3 \nequiv \pm 1$ mod 8, Theorem \ref{thm:milg} (ii) implies that $\sigma_4(Q(16;3,1)) \ne 0$ and so $\sigma_4(\widetilde{O}_v) \ne 0$ for all $v \ge 2$.

\subsubsection*{Type VI}

Recall that $\TL_2(\F_p)$ is an extension with kernel $\SL_2(\F_p)$ and quotient $C_2$. Let $z \in \TL_2(\F_p)$ map to the generator of $C_2$. 
Let $p \ge 5$ and $q \nmid p(p^2-1)$ be odd primes and define
\[ G = C_q \rtimes_\varphi \TL_2(\F_p)\]
where, if $C_q =\langle u \rangle$, then $\varphi_z : u \mapsto u^{-1}$ and $\varphi_x : u \mapsto u$ for all $x \in \SL_2(\F_p) \le \TL_2(\F_p)$. Since $q$ is coprime to $|\TL_2(\F_p)|=2p(p^2-1)$, \cite[Theorem 4.6]{Wa10} implies $G$ has type VI. It follows from \cite[Theorem 5.2]{Wa10} that $G$ has subgroups $Q(4(p+1);q,1)$ and $Q(4(p-1);q,1)$ and so $G$ has a subgroup $Q(2^n;q,1)$ where $n = 2+\max \{\nu_2(p+1),\nu_2(p-1)\} \ge 4$. Note also that, by \cite[Corollary 5.6]{Wa10}, $G$ has period $2(p-1)$.

If $p \equiv 3$ mod $4$, then $2(p-1) \equiv 4$ mod 8. Using Dirichlet's theorem of primes in arithmetic progression, we can now pick a prime $q$ such that $q \nmid p(p^2-1)$, $q \nequiv 1$ mod 8 and $q \nequiv 1$ mod $2^{n-1}$. By Theorem \ref{thm:milg}, we have $\sigma_4(Q(2^n;q,1)) \ne 0$ and so $\sigma_{2(p-1)}(Q(2^n;q,1)) \ne 0$ since $2(p-1) \equiv 4$ mod 8. Since $Q(2^n;q,1) \le G$, this implies that $\sigma_{2(p-1)}(G) \ne 0$.

%%%%%%%%%%
\section{Induced representations and the action of $\Aut(G)$}
\label{section:quotients}

If $G$ is a finite group and $P \in P(\Z G)$, we say that $\Aut(G)$ \textit{acts} on $[P]$ if there exists a group homomorphism $\psi: \Aut(G) \to (\Z/|G|)^\times$ for which $[P] - [P_\theta] = [(I,\psi(\theta))] \in C(\Z G)$ where $I$ is the augmentation ideal. The action is then given by sending $P_0 \mapsto (P_0)_\theta \otimes (I,\psi(\theta))$ for any $P_0 \in [P]$ of rank one.
For example, $\Aut(G)$ acts on the stably free class $[\Z G]$ for all finite groups $G$ via the trivial map $\psi(\theta) =1$ for all $\theta \in \Aut(G)$. More generally, if $G$ has periodic cohomology, then $\Aut(G)$ acts on $[P_{(G,n)}]$ by the action defined in the introduction (see also \cite[Section 7]{Ni20a}). 

Recall that, if $R$ and $S$ are rings and $f: R \to S$ is a ring homomorphism, then $S$ is an $(S,R)$-bimodule, with right-multiplication by $r \in R$ given by $x \cdot r = x f(r)$ for any $x \in S$. If $M$ is an $R$-module, we can define the \textit{extension of scalars} of $M$ by $f$ as the tensor product
\[ f_\#(M) = S \otimes_{R} M \]
which is defined since $S$ as a right $R$-module and $M$ as a left $R$-module. We will view this as a left $S$-module where left-multiplication by $s \in S$ is given by $s \cdot (x \otimes m) = (sx)\otimes m$ for any $x \in S$ and $m \in M$.

For example, a group homomorphism $f: G \to H$ can be viewed as a ring homomorphism $f : \Z G \to \Z H$ by sending $\sum_{g \in G} x_g g \mapsto \sum_{g \in G} x_g f(g)$ where $x_g \in \Z$. If $M$ is a $\Z G$-module then $f_\#(M) = \Z H \otimes_{\Z G} M$ coincides with the induced module.

Recall that a subgroup $N \le G$ is \textit{characteristic} if every $\varphi \in \Aut(G)$ has $\varphi(N)=N$ and we say that a quotient $f: G \to H$ is characteristic if $\Ker(f) \le G$ is a characteristic subgroup.

\begin{lemma} \label{lemma:characteristic-quotient-induced-action}
Let $f: G \twoheadrightarrow H$ be a characteristic quotient and let $\theta \in \Aut(G)$. If $M$ is a $\Z G$-module, then
\[ f_\#(M_{\theta}) \cong f_\#(M)_{\bar{\theta}}.\]
\end{lemma}

\begin{proof}
Since $f$ is characteristic, there is a map $\bar{\cdot} : \Aut(G) \to \Aut(H)$ such that $f \circ \theta = \bar{\theta} \circ f$ for all $\theta \in \Aut(G)$. The result now follows from \cite[Corollary 8.4]{Ni20a}.
\end{proof}

\begin{lemma} \label{lemma:swan-quotient}
Let $f: G \twoheadrightarrow H$ and $r \in (\Z/|G|)^\times$, then
\[ f_\#((I_G,r)) \cong (I_H,r).\]	
\end{lemma}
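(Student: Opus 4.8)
The plan is to unwind the definitions and exhibit an explicit isomorphism of left $\Z H$-modules. Recall that $(I_G, r) \subseteq \Z G$ is the projective ideal generated by the augmentation ideal $I_G$ together with a fixed element $g_0 \in \Z G$ with $\varepsilon(g_0) = r$ (concretely one can take $(I_G,r) = I_G + r \cdot \Z G$ when $r$ is viewed as an integer coprime to $|G|$, or use a Milnor-square description); it fits into a short exact sequence $0 \to (I_G,r) \to \Z G \oplus \Z G \to \Z G \to 0$ or, more usefully here, $(I_G,r)$ is the pullback/fibre product expressing it as $\ker$ of a surjection from a free module. What I want to show is that applying $f_\# = \Z H \otimes_{\Z G} (-)$ to $(I_G,r)$ yields $(I_H,r)$.

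First I would record the compatibility $f_\#(I_G) \cong I_H$: the ring map $f : \Z G \to \Z H$ carries the augmentation ideal onto the augmentation ideal (since $\varepsilon_H \circ f = \varepsilon_G$), and because $f$ is surjective the induced map $\Z H \otimes_{\Z G} I_G \to I_H$ is surjective; one then checks it is also injective, e.g. by tensoring the short exact sequence $0 \to I_G \to \Z G \xrightarrow{\varepsilon} \Z \to 0$ with $\Z H$ over $\Z G$ and using that $\Z H \otimes_{\Z G} \Z \cong \Z$ and that $\Tor_1^{\Z G}(\Z H, \Z) = 0$ in the relevant sense, or simply noting that both sides are the augmentation ideal of $\Z H$ via the obvious identification $\Z H \otimes_{\Z G} \Z G \cong \Z H$. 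Next I would handle the generator: $(I_G,r)$ is obtained from $I_G$ by adjoining an element of augmentation $r$, and under $f_\#$ — which is additive and right exact — this extra generator maps to an element of $\Z H$ of augmentation $r$, so $f_\#((I_G,r))$ is generated over $\Z H$ by (the image of) $I_H$ together with an element of augmentation $r$, i.e. it is $(I_H, r)$.

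The cleanest route is probably to use the description of $(I_G,r)$ as a fibre product or as the kernel of a specific map between free modules, since extension of scalars of free modules is transparent and right exactness lets one transport presentations. For instance, writing $(I_G, r)$ via Swan's construction as $\{(x,y) \in \Z G \oplus \Z G : \varepsilon(x) \equiv \text{(something)} \}$ or via the exact sequence $0 \to (I_G,r) \to \Z G^2 \to \Z G \to \coker \to 0$ with explicit maps depending only on $r$ and the augmentation, I would apply $\Z H \otimes_{\Z G}(-)$; since $\Z H$ is flat is not available, I would instead use that these sequences split off or that the relevant $\Tor$ vanishes because the cokernel is a cyclic module on which the kernel of $f$ acts appropriately — alternatively, one can bypass flatness entirely by exhibiting the isomorphism on generators and relations directly. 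The map $\Z H \otimes_{\Z G} (I_G,r) \to (I_H,r)$ sending $h \otimes a \mapsto h \cdot f(a)$ (for $a \in (I_G,r) \subseteq \Z G$, noting $f(a) \in (I_H,r)$ since $f$ preserves augmentation ideals and augmentation values) is a well-defined $\Z H$-module homomorphism; surjectivity is clear, and injectivity follows by a counting/localisation argument or by checking it is an isomorphism after tensoring with $\Q$ and on $p$-adic completions, both being standard for these projective ideals.

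The main obstacle is the injectivity of the natural map, since extension of scalars along $\Z G \to \Z H$ is not exact and naive comparison of ranks does not immediately rule out torsion in the kernel. I expect this to be resolved either by invoking the Milnor/arithmetic-square description of $(I_G,r)$ (whose behaviour under $f_\#$ is controlled because the squares are compatible with $f$ and the relevant $K_1$ and $\coker$ pieces match), or — most likely the route I would take — by citing the already-available functoriality: $(I_G,r)$ represents a specific element of the Swan subgroup, $f_\#$ induces the canonical map $T_G \to T_H$ on Swan subgroups, and under that map the class $(I_G,r) \mapsto (I_H,r)$ by the very definition of these subgroups and the naturality of the Swan homomorphism $(\Z/|G|)^\times \to T_G$ (which is compatible with reduction $(\Z/|G|)^\times \to (\Z/|H|)^\times$ induced by $f$). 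Combined with the fact that $f_\#$ of a projective ideal is projective of the same rank, this pins down $f_\#((I_G,r))$ up to isomorphism as $(I_H,r)$, which is what is needed in the proof of Lemma~\ref{lemma:cancellation}.
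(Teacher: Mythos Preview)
Your preferred route---invoking functoriality of the Swan subgroup---has a genuine gap. Even granting that $f_\#$ carries the class $[(I_G,r)]\in C(\Z G)$ to $[(I_H,r)]\in C(\Z H)$, knowing that $f_\#((I_G,r))$ and $(I_H,r)$ are stably isomorphic rank-one projectives does \emph{not} determine $f_\#((I_G,r))$ up to isomorphism: a stable class can contain several non-isomorphic rank-one modules, and this failure of cancellation is exactly the phenomenon the paper studies. The step ``same stable class and same rank $\Rightarrow$ isomorphic'' is false in precisely the cases of interest. (Moreover, in this paper the map $T_G\to T_H$ is deduced \emph{from} the present lemma, so citing it as input would be circular.)

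The approach you mention earlier, however, works and is not the obstacle you fear. The natural map $\Z H\otimes_{\Z G}(I_G,r)\to(I_H,r)$, $h\otimes a\mapsto h\cdot f(a)$, is surjective since $f(I_G)=I_H$ and $f$ fixes $r$. For injectivity no $\Tor$ or $p$-adic machinery is needed (and your aside that $\Tor_1^{\Z G}(\Z H,\Z)=0$ is in fact false: by Shapiro it is $H_1(\ker f;\Z)$): simply note that $f_\#((I_G,r))$ is $\Z H$-projective, hence $\Z$-free, and tensoring with $\Q$ shows both sides have $\Z$-rank $|H|$; a surjection of free abelian groups of equal finite rank is an isomorphism. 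The paper takes a different route: it replaces $(I_G,r)$ by the isomorphic ideal $(\Sigma_G,r^{-1})$ and writes down an explicit map $(\Sigma_H,r)\to\Z H\otimes_{\Z G}(\Sigma_G,r)$ using B\'ezout coefficients $ad+br=1$ with $d=|G|/|H|$, then checks by hand that it is an isomorphism.
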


\begin{proof}
By \cite[Remark 2.3]{DM85}, we have that $(I_G,r) \cong (\Sigma_G,r^{-1})$ where $\Sigma_G = \sum_{g \in G} g$ denotes the group norm. Hence it suffices to prove instead that $f_\#((\Sigma_G,r)) \cong (\Sigma_H,r)$.
Let $d=|G|/|H|$ and, since $(d,r)=1$, there exists $a, b \in \Z$ such that $ad+br=1$. Now define
\[ \psi : (\Sigma_H,r) \to f_\#((\Sigma_G,r)) = \Z H \otimes_{\Z G} (\Sigma_G,r)\]
by sending $r \mapsto 1 \otimes r$ and $\Sigma_H \mapsto a(1 \otimes \Sigma_G)+b \Sigma_H(1 \otimes r)$, and it is straightforward to check that $\psi$ is a $\Z H$-module isomorphism.
\end{proof}

Our main result gives a possible way to show how $[P]/\Aut(G)$ has non-cancellation when $f: G \twoheadrightarrow H$ is a characteristic quotient and $f_* = \bar{\cdot} : \Aut(G) \to \Aut(H)$.

\begin{thm} \label{thm:P/Aut(G)}
Let $f: G \twoheadrightarrow H$ is a characteristic quotient and $P \in P(\Z G)$. If $\Aut(G)$ acts on $[P]$ and $\bar{P} = f_\#(P)$, then there is a surjection of graded trees
\[ f_\# : [P]/\Aut(G) \twoheadrightarrow [\bar{P}]/\IM(f_*)\]
where the action of $\IM(f_*) \subseteq \Aut(H)$ on $[\bar{P}]$ is induced by $f$.
\end{thm}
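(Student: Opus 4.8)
The plan is to show that $f_\#$ descends to a well-defined, surjective map on the quotient trees, and that it respects the grading. First I would check that $f_\#$ carries $[P]$ into $[\bar P]$: for $P_0 \in [P]$ we have $P_0 \oplus \Z G^a \cong P \oplus \Z G^b$ for some $a,b$, and applying the additive functor $f_\#(-) = \Z H \otimes_{\Z G} -$ together with $f_\#(\Z G) \cong \Z H$ gives $f_\#(P_0) \oplus \Z H^a \cong \bar P \oplus \Z H^b$, so $f_\#(P_0) \in [\bar P]$. Since $f_\#$ adds the same number of $\Z H$ summands as $\Z G$ summands, it sends a module at grade $j$ in $[P]$ to a module at grade $j$ in $[\bar P]$, hence is a map of graded trees $[P] \to [\bar P]$ compatible with the edge structure $P_0 \leftrightarrow P_0 \oplus \Z G$.

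Next I would verify that this map is equivariant for the actions of $\Aut(G)$ on $[P]$ and $\IM(f_*)$ on $[\bar P]$, so that it descends to the quotients. The action of $\theta \in \Aut(G)$ sends $P_0 \mapsto (P_0)_\theta \otimes (I_G,\psi(\theta))$. Applying $f_\#$ and using that $f$ is a characteristic quotient, Lemma~\ref{lemma:characteristic-quotient-induced-action} gives $f_\#((P_0)_\theta) \cong f_\#(P_0)_{\bar\theta}$, and Lemma~\ref{lemma:swan-quotient} gives $f_\#((I_G,\psi(\theta))) \cong (I_H,\psi(\theta))$; one also needs that $f_\#$ commutes with the $(I,r)$-tensor action, i.e. $f_\#(N \otimes (I_G,r)) \cong f_\#(N)\otimes (I_H,r)$, which follows from the tensor–hom bookkeeping for $f_\#$ together with Lemma~\ref{lemma:swan-quotient} (since $(I,r)$ is a two-sided ideal this is an isomorphism of left $\Z H$-modules). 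Combining these, $f_\#(\theta \cdot P_0) \cong \bar\theta \cdot f_\#(P_0)$ where $\bar\theta = f_*(\theta) \in \IM(f_*)$ acts on $[\bar P]$ via the homomorphism $\psi$ composed appropriately — noting that the action of $\IM(f_*)$ on $[\bar P]$ is by hypothesis the one induced by $f$, so $\psi(\theta)$ viewed mod $|H|$ is the relevant unit. Hence $f_\#$ is $\Aut(G)$-equivariant and descends to $\bar f_\# : [P]/\Aut(G) \to [\bar P]/\IM(f_*)$, still a map of graded trees.

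Finally I would prove surjectivity. Given any $\bar Q \in [\bar P]$, I want to produce $Q \in [P]$ with $f_\#(Q)$ in the same $\IM(f_*)$-orbit as $\bar Q$ — in fact one can aim directly for $f_\#(Q) \cong \bar Q$. The standard approach: since $\bar Q \oplus \Z H^a \cong \bar P \oplus \Z H^b$, and $\bar P = f_\#(P)$, a suitable pullback or fibre-product construction over the arithmetic square relating $\Z G$ and $\Z H$ produces a projective $\Z G$-module $Q$ with $f_\#(Q)\cong \bar Q$ after stabilizing; one then passes to the stable class so that $Q \in [P]$. Concretely, I expect to invoke the surjectivity of $f_\#$ on locally free class groups — this is where the structure of $\Z G \twoheadrightarrow \Z H$ and the fact that $G$, $H$ have periodic cohomology enter — perhaps phrased via a Milnor patching / Mayer–Vietoris argument or simply cited from the relevant property of induction on projective class groups, and then lift the stable isomorphism to an honest module at the right grade using that $[\bar P]$ is a fork/fibre when one of the groups is finite. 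The main obstacle is precisely this last step: controlling the kernel of $f_\#$ on projective class groups and ensuring that a preimage of $\bar Q$ can be chosen inside the specific stable class $[P]$ (rather than merely some stable class over $\Z G$), which forces care about the interplay between the grading (Euler characteristic) and the projective class, and is where I would spend the real work.
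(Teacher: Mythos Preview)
Your equivariance argument (steps one and two) matches the paper's proof almost exactly: the paper also reduces to the identity $f_\#((I_G,r)\otimes P_\theta)\cong (I_H,r)\otimes \bar P_{\bar\theta}$ via Lemmas~\ref{lemma:characteristic-quotient-induced-action} and~\ref{lemma:swan-quotient} together with the compatibility $f_\#(M\otimes N)\cong f_\#(M)\otimes N$.

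Where you diverge is the surjectivity step, and here you are making it much harder than it is. The paper disposes of this in one line by citing \cite[Theorem~A10]{Sw83}, which gives directly that for a surjection of $\Z$-orders the induced map $f_\#:[P]\to[\bar P]$ is surjective (indeed surjective rank-by-rank, as the paper uses elsewhere, e.g.\ to conclude $\#\Cls^{[P]}(\Z Q_{4m_0})\ge\#\Cls^{[\bar P]}(\Lambda)$). Your worry about ``ensuring that a preimage of $\bar Q$ can be chosen inside the specific stable class $[P]$'' is already absorbed into that result; there is no need for a bespoke Milnor-patching argument, and the ``real work'' you anticipate does not materialise.

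One genuine slip: you write that ``the fact that $G$, $H$ have periodic cohomology enter'' at the surjectivity step. Theorem~\ref{thm:P/Aut(G)} has no such hypothesis --- it is stated for an arbitrary characteristic quotient of finite groups --- so your argument should not invoke periodicity. Swan's Theorem~A10 applies to any surjection of $\Z$-orders in semisimple $\Q$-algebras and needs nothing about the cohomology of $G$ or $H$.
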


\begin{proof}
If $\otimes$ denotes the tensor product of $\Z G$-modules, then it is easy to see that  $f_\#(M \otimes N) \cong f_\#(M) \otimes N$ for all $M$, $N$. Combining with Lemmas  \ref{lemma:characteristic-quotient-induced-action} and \ref{lemma:swan-quotient} gives
\begin{align*} f_\#((I_G,r) \otimes P_\theta) &\cong (I_H,r) \otimes_{\Z H} P_\theta \cong ((I_H,r) \otimes \Z H) \otimes_{\Z H} P_\theta \\
& \cong (I_H,r) \otimes (\Z H \otimes_{\Z H} P_\theta) \cong (I_H,r) \otimes \bar{P}_{\bar{\theta}}
\end{align*}
where $\bar{\cdot} : \Aut(G) \to \Aut(H)$. Since $[(I_G,r)\otimes P_\theta] = [P]$, this implies that $[(I_H,r) \otimes \bar{P}_{\bar{\theta}}] = [\bar{P}]$ and so $\Aut(H)$ acts on $[\bar{P}]$ by sending $P_0$ to $(I_H,\varphi(\theta)) \otimes P_0$. 

Now note that the map $f_\# : [P] \to [\bar{P}]$ is well-defined and surjective by \cite[Theorem A10]{Sw83}. By the argument above, $f_\#$ respects these actions and so induces a map $f_\# : [P]/\Aut(G) \to [\bar{P}]/\Aut(H)$ which is necessarily surjective.	
\end{proof}

We will now show how this can be applied in the case where $G$ has periodic cohomology. First note the following:

\begin{lemma} \label{lemma:MH>=3}
If $G$ has periodic cohomology and $m_{\H}(G) \ge 3$, then $G$ has a quotient $Q_{4n}$ for some $n \ge \max\{\frac{2}{3} m_{\H}(G),6\}$.
\end{lemma}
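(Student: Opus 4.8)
The strategy is to use the classification of $\bm(G)$ from Theorem \ref{thm:bm-table} together with the computation of $m_{\H}(G)$ in terms of the maximal binary polyhedral quotients. First I would observe that $m_{\H}(G) \ge 3$ forces $G$ to have type I, IIa, or IIb by Theorem \ref{thm:mh-table} (types III, IV, Va give $m_{\H} \le 2$, and types Vb, VI give $m_{\H} = 0$). In each of these cases the proof of Theorem \ref{thm:bm-table} (in the subsections ``Type I'' and ``Type II'') gives explicit control over the maximal binary polyhedral quotients, which are all of the form $Q_{4n}$, and over the relation between the orders $n$ and the value of $m_{\H}(G)$.

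In the type I and type IIb cases, there is a \emph{unique} maximal binary polyhedral quotient $f: G \twoheadrightarrow Q_{4n}$, and by Proposition \ref{prop:relative-eichler-group} we have $m_{\H}(G) = m_{\H}(Q_{4n}) = \lfloor n/2 \rfloor$. Since $m_{\H}(G) \ge 3$ this gives $n \ge 6$, and directly $n \ge 2 m_{\H}(G) \ge \frac{2}{3} m_{\H}(G)$, so the quotient $Q_{4n}$ works. The only remaining case is type IIa, where by Theorem \ref{thm:bm-table} there are $b \in \{1,2,3\}$ maximal quotients $f_i : G \twoheadrightarrow Q_{8 m_i}$ with the $m_i$ pairwise coprime and odd, and the formula from the proof of Theorem \ref{thm:mh-table} reads $m_{\H}(G) = \sum_{i=1}^{b} m_i - (b-1)$. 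Here $m_{\H}(Q_{8 m_i}) = m_i$, so the largest of the quotients, say with $m_1 = \max_i m_i$, satisfies $m_1 \ge \frac{1}{b}\big(m_{\H}(G) + (b-1)\big) \ge \frac{1}{3}(m_{\H}(G) + 2) \ge \frac{1}{3} m_{\H}(G)$. This is not quite the bound $\frac{2}{3} m_{\H}(G)$ claimed; the point is that the corresponding group is $Q_{8 m_1} = Q_{4n}$ with $n = 2 m_1 \ge \frac{2}{3}(m_{\H}(G) + 2) \ge \frac{2}{3} m_{\H}(G)$, so the factor of $2$ coming from $Q_{8 m_1}$ versus $m_{\H}(Q_{8 m_1}) = m_1$ exactly saves the estimate. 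Finally $m_1 \ge 1$ only gives $n \ge 2$, but since $b \le 3$ and the $m_i$ are distinct odd coprime integers, $m_{\H}(G) \ge 3$ forces $m_1 \ge 3$ (if all $m_i = 1$ then $m_{\H}(G) = 1$; if $b = 2$ with $m_2 = 1$ then $m_1 = m_{\H}(G) \ge 3$; and so on), hence $n = 2m_1 \ge 6$.

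The bookkeeping in the type IIa case is the only real subtlety: one must check in each of the subcases $b = 1, 2, 3$ that the largest $m_i$ is at least $3$ (using that distinct odd coprime values contribute at least $1,3,5$) and that $n = 2 m_1$ beats both $6$ and $\frac{2}{3} m_{\H}(G)$ simultaneously. I expect this case analysis to be the main (and essentially the only) obstacle; the other types are immediate from the tables in Section \ref{section:group-theory}.
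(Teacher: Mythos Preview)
Your proposal is correct and follows essentially the same approach as the paper: reduce to types I and II via the table in Theorem~\ref{thm:mh-table}, take the largest binary polyhedral quotient $Q_{4n}$, and bound $n$ using the explicit formulas for $m_{\H}(G)$ in terms of the maximal quotients. The only cosmetic difference is in the type IIa bookkeeping: the paper exploits the gaps $m_1 \ge m_2+2 \ge m_3+4$ between distinct odd integers to obtain the bounds $n \ge m+3$ (for $b=2$) and $n \ge \tfrac{2}{3}(m+8)$ (for $b=3$), which simultaneously yield $n \ge 6$ and $n \ge \tfrac{2}{3}m$, whereas you use the cruder averaging bound $m_1 \ge \tfrac{1}{b}(m+b-1)$ for the $\tfrac{2}{3}m$ part and a separate check that $m_1 \ge 3$ for the $n \ge 6$ part.
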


\begin{proof}
Since $m_{\H}(G) \ge 3$, the possible types are I and II by Theorem \ref{thm:big-table}. If $H$ is the binary polyhedral quotient of maximal order, then Proposition \ref{prop:type-closed} implies $H=Q_{4n}$ for some $n \ge 2$. If $b = \#\bm(G)$, then Theorem \ref{thm:bm-table} implies $b=1,2,3$. 

Let $m=m_{\H}(G)$. If $b=1$, then Proposition \ref{prop:relative-eichler-group} implies $m = m_{\H}(Q_{4n}) = \lfloor \frac{n}{2} \rfloor \le \frac{n}{2}$ and so $n \ge 2 m$. 
If $b =2,3$, then $G$ has type IIa. If $Q_{8m_i}$ are the maximal binary polyhedral quotients for $1 \le i \le b$, then the proof of Theorem \ref{thm:bm-table} implies that $m = (m_1+m_2)-1$ if $b=2$ and $m = (m_1+m_2+m_3)-2$ if $b=3$. Suppose $m_1 > m_2 > m_3$ so that $m_1 \ge m_2+2$ and $m_2 \ge m_3+2$ since the $m_i$ are odd coprime. Since $n = 2m_1$, this implies that $m \le n-3$ if $b=2$ and $m \le \frac{3}{2} n - 8$ if $b=3$. Hence $n \ge m +3$ and $n \ge \frac{2}{3} (m+8)$ in the two cases respectively. The bound now follows since $\min\{2m,m+3,\frac{2}{3} (m+8)\} \ge \max\{6,\frac{2}{3} m\}$ for all $m \ge 3$.
\end{proof}

In fact, the quotient $f: G \twoheadrightarrow Q_{4n}$ is always characteristic due to the following.

\begin{prop} \label{prop:quotient=characteristic}
If $f: G \twoheadrightarrow H$ where $G$ has periodic cohomology and $H$ is a binary polyhedral group, then $f$ is characteristic.
\end{prop}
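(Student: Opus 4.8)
The plan is to reduce the statement immediately to Lemma \ref{lemma:characterstic-subgroup-lemma}. Recall that $\Ker(f) \le G$ is characteristic exactly when $\theta(\Ker(f)) = \Ker(f)$ for every $\theta \in \Aut(G)$; since $\Aut(G)$ is a group this is equivalent to $\theta^{-1}(\Ker(f)) = \Ker(f)$ for all $\theta$, i.e. to $\Ker(f \circ \theta) = \Ker(f)$ for all $\theta \in \Aut(G)$. So the whole proof amounts to showing this last equality.

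First I would fix $\theta \in \Aut(G)$ and consider the two surjections $f, f\circ\theta : G \twoheadrightarrow H$. Both have the same target, so trivially $|H| = |H|$. Next I would check the hypotheses of Lemma \ref{lemma:characterstic-subgroup-lemma}: $G$ has periodic cohomology by assumption; $H$, being a binary polyhedral group, is a finite subgroup of $\H^\times$, so by Proposition \ref{prop:G-periodic} its Sylow subgroups are cyclic or generalised quaternionic and hence $H$ has periodic cohomology; and $4 \mid |H|$ since the binary polyhedral groups are the $Q_{4n}$ (order $4n$), $\widetilde{T}$ (order $24$), $\widetilde{O}$ (order $48$) and $\widetilde{I}$ (order $120$). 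Applying Lemma \ref{lemma:characterstic-subgroup-lemma} to $f$ and $f' = f\circ\theta$ then yields $\Ker(f) = \Ker(f\circ\theta) = \theta^{-1}(\Ker(f))$, and since $\theta$ was arbitrary, $\Ker(f)$ is characteristic.

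I do not expect a genuine obstacle here: all of the real content sits inside Lemma \ref{lemma:characterstic-subgroup-lemma} (which in turn relies on the classification of quotients among groups with periodic cohomology and the recognition criterion for direct products). The only point requiring a word of care is verifying that $H$ itself has periodic cohomology and that $4 \mid |H|$, both of which are immediate from the list of binary polyhedral groups. So the proof should be just a couple of lines.
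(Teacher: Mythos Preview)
Your proposal is correct and follows essentially the same route as the paper: both arguments reduce the claim to Lemma \ref{lemma:characterstic-subgroup-lemma} by noting that $4 \mid |H|$ for any binary polyhedral group $H$ and comparing $f$ with a second surjection onto a group of the same order. The only cosmetic difference is that the paper phrases the second surjection as $G \twoheadrightarrow G/\varphi(\Ker(f))$ for $\varphi \in \Aut(G)$, whereas you write it as $f \circ \theta$; since $\Ker(f \circ \theta) = \theta^{-1}(\Ker(f))$ these amount to the same thing.
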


\begin{proof}
Let $\varphi \in \Aut(G)$ and consider $N = \varphi(\Ker(f)) \le G$. Then $N$ is a normal subgroup with $|N|=|\Ker(f)|$. Since $H$ is a binary polyhedral group, it has $4 \mid |H|$ and so Lemma \ref{lemma:characterstic-subgroup-lemma} implies that $N = \Ker(f)$.	
\end{proof}

By combining this with Theorem \ref{thm:P/Aut(G)}, we have:

\begin{corollary} \label{cor:P/Aut(G)-periodic}
Let $G$ have $k$-periodic cohomology and $m_{\H}(G) \ge 3$. Then there is a surjection of graded trees
\[ f_\# : [P_{(G,k)}]/\Aut(G) \twoheadrightarrow [\widebar{P_{(G,k)}}]/\IM(f_*)\]
where $f: G \twoheadrightarrow Q_{4n}$ for some $n \ge \max\{\frac{2}{3} m_{\H}(G),6\}$ and where the action of $\IM(f_*) \subseteq \Aut(Q_{4n})$ on $[\widebar{P_{(G,k)}}]$ is induced by $f$.
\end{corollary}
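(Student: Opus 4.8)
The plan is to assemble Corollary \ref{cor:P/Aut(G)-periodic} from three earlier results: Lemma \ref{lemma:MH>=3}, Proposition \ref{prop:quotient=characteristic}, and Theorem \ref{thm:P/Aut(G)}. First I would invoke Lemma \ref{lemma:MH>=3} to produce a quotient $f \colon G \twoheadrightarrow Q_{4n}$ with $n \ge \max\{\tfrac{2}{3} m_{\H}(G), 6\}$; this is exactly the integer bound that appears in the statement. Next I would apply Proposition \ref{prop:quotient=characteristic} to this $f$: since $Q_{4n}$ is a binary polyhedral group and $G$ has periodic cohomology, $f$ is a characteristic quotient, so the induced map $f_* = \bar{\cdot} \colon \Aut(G) \to \Aut(Q_{4n})$ is defined.

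The remaining point to check before quoting Theorem \ref{thm:P/Aut(G)} is that $\Aut(G)$ genuinely \emph{acts} on $[P_{(G,k)}]$ in the sense of Section \ref{section:quotients}, i.e.\ via a homomorphism $\psi \colon \Aut(G) \to (\Z/|G|)^\times$ with $[P_{(G,k)}] - [(P_{(G,k)})_\theta] = [(I,\psi(\theta))]$. This is precisely the action $\theta \mapsto (I,\psi_k(\theta)) \otimes P_\theta$ recalled in the introduction and constructed in \cite[Section 6]{Ni20}, so I would simply cite that. With this in hand, Theorem \ref{thm:P/Aut(G)} applied to $f$ and $P = P_{(G,k)}$ yields a surjection of graded trees $f_\# \colon [P_{(G,k)}]/\Aut(G) \twoheadrightarrow [\widebar{P_{(G,k)}}]/\IM(f_*)$, where $\widebar{P_{(G,k)}} = f_\#(P_{(G,k)})$ and $\IM(f_*) \subseteq \Aut(Q_{4n})$ acts on $[\widebar{P_{(G,k)}}]$ by the induced action; this is exactly the assertion of the corollary.

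There is really no substantial obstacle here, since all the hard work is front-loaded into the three cited results; the proof is a two-line assembly. The one small thing to be careful about is book-keeping of the hypotheses of Theorem \ref{thm:P/Aut(G)} — namely that $f$ is characteristic (Proposition \ref{prop:quotient=characteristic}) and that the $\Aut(G)$-action on $[P]$ exists (so that $\psi$ in the proof of that theorem makes sense). I would also note in passing that the statement records the \emph{image} subgroup $\IM(f_*)$ rather than all of $\Aut(Q_{4n})$, which matters for the later applications in Sections \ref{section:non-cancellation} and \ref{section:bounds}: it is generally a proper subgroup, and the surjectivity of $f_\#$ is precisely what lets one transport non-cancellation from $Q_{4n}$ back up to $G$ even though $\IM(f_*)$ may be small.
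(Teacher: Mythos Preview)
Your proposal is correct and matches the paper's approach exactly: the paper simply states that the corollary follows ``by combining'' Proposition \ref{prop:quotient=characteristic} with Theorem \ref{thm:P/Aut(G)}, with Lemma \ref{lemma:MH>=3} supplying the quotient and the bound on $n$. Your additional remark verifying that $\Aut(G)$ acts on $[P_{(G,k)}]$ via $\psi_k$ is a reasonable piece of book-keeping that the paper leaves implicit (it is noted at the start of Section \ref{section:quotients}).
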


Note that the two bounds for $n$ have distinct uses. If $m_{\H}(G)$ is small, as is the case when dealing with cancellation in Theorem \ref{thm:main}, then the bound $n \ge 6$ will be most useful. If $m_{\H}(G)$ is large, as in the asymptotic estimates in Theorem \ref{thm:main-bounds}, then we will use the bound $n \ge \frac{2}{3} m_{\H}(G)$.

%%%%%%%%%%
\section{Cancellation for the Swan finiteness obstruction} \label{section:cancellation-periodic}

The aim of this section will be to use the results in Section \ref{section:group-theory} to prove the following cancellation result. 

\begin{thm} \label{thm:main-cancellation}
Let $G$ have $k$-periodic cohomology, let $n=ik$ or $ik-2$ for some $i \ge 1$ and let $P_{(G,n)}$ be a projective $\Z G$-module for which $[P_{(G,n)}] = \sigma_k(G) \in C(\Z G)/ T_G$.
Then $[P_{(G,n)}]$ has cancellation if and only if  $m_{\H}(G) \le 2$.
\end{thm}

The forward implication, that $m_{\H}(G) \ge 3$ implies $[P_{(G,n)}]$ has non-cancellation, follows immediately by combining Lemma \ref{lemma:MH>=3}  with \cite[Theorem A]{Sw83} (see also Lemma \ref{lemma:q4n-quotient}). Note that it also follows from the stronger statement that $[P_{(G,n)}]/\Aut(G)$ has non-cancellation, which will be proven in Section \ref{section:non-cancellation}.
The remainder of this section will be devoted to the proof of the backward direction, that $m_{\H}(G) \le 2$ implies $[P_{(G,n)}]$ has cancellation.

Recall that, if $G$ is finite, $I = \Ker(\varepsilon: \Z G \to \Z)$ is the augmentation ideal and $(r,|G|)=1$, then $(I,r) \subseteq \Z G$ is a projective ideal. Since $(I,r) \cong (I,s)$ if $r \equiv s$ mod $|G|$, we often write $r \in (\Z/|G|)^\times$,
As noted in \cite[Section 4.2]{Ni20a}, if $P$ is a projective $\Z G$-module and $r \in (\Z/|G|)^\times$, then we can define $(I,r) \otimes P$ to be a (left) $\Z G$-module since $(I,r)$ is a two-sided ideal. We can use this action to show the following.

\begin{prop} \label{prop:cancellation-mod-T}
Let $G$ be a finite group let $P, Q$ be projective $\Z G$-modules such that $[P] - [Q] \in T_G$. Then there is an isomorphism of graded trees \[[P] \cong [Q]\] given by sending $P_0 \oplus \Z G^m \mapsto ((I,r) \otimes P_0) \oplus \Z G^m$ for some $r \in (\Z/|G|)^\times$ where $P_0$ is a rank one projective $\Z G$-module and $m \ge 0$ is an integer.
\end{prop}

\begin{proof}
Let $r \in (\Z/|G|)^\times$ be such that $[Q]=[P]+[(I,r)] \in C(\Z G)$. By \cite{Sw60b}, an arbitrary element of $[P]$ has the form $P_0 \oplus \Z G^m$ where $P_0$ is a rank one projective $\Z G$-module and $m \ge 0$. By \cite[Lemma 4.15]{Ni20a}, we have that 
\[ [(I,r) \otimes P_0] = [(I,r)]+[P_0] = [Q] \] 
and so $P_0 \oplus \Z G^m \mapsto ((I,r) \otimes P_0) \oplus \Z G^m$ defines a map $[P] \to [Q]$ which preserves the rank of the projective modules. 
To see that it is bijective note that, if $r, s \in (\Z/|G|)^\times$, then $(I,r) \otimes (I,s) \cong (I,rs)$ and so the map has inverse 
$P_0 \oplus \Z G^m \mapsto ((I,r^{-1}) \otimes P_0) \oplus \Z G^m$.
\end{proof}

In particular, if $P$ and $Q$ are both representatives of the finiteness obstruction $\sigma_k(G)$, then $[P] \cong [Q]$.
Hence, it suffices to prove Theorem \ref{thm:main-cancellation} for \textit{any} representative $P_{(G,n)}$ of the finiteness obstruction.

If $m_{\H}(G)=0$, i.e. $G$ satisfies the Eichler condition, then $[P_{(G,n)}]$ automatically has cancellation by the Swan-Jacobinski theorem \cite[Theorem 9.3]{Sw80}.

\begin{thm} \label{thm:sj}
If $G$ satisfies the Eichler condition, then $\Z G$ has projective cancellation, i.e. $[P]$ has cancellation for all projective $\Z G$-modules $P$.	
\end{thm}

If $\sigma_k(G)=0$, then $\Z G$ is a representative of the finiteness obstruction and we know already from \cite[Theorem 7.1]{Ni19} that $[\Z G]$ has cancellation if and only if $m_{\H}(G) \le 2$.
It therefore suffices to restrict our attention to those groups with $\sigma_k(G) \ne 0$ and $m_{\H}(G) \ne 0$ and so, by Theorem \ref{thm:big-table}, it remains to prove Theorem \ref{thm:main-cancellation} in the case where $G$ has type II with $m_{\H}(G) = 1$ or $2$, or type IV and $m_{\H}(G) =2$.

In order to deal with these cases, we will appeal to the following which is also a consequence of \cite[Theorem 5.1]{Ni19}.

\begin{lemma} \label{lemma:cancellation}
Let $f:G \twoheadrightarrow H$ where $G$ has periodic cohomology and $H$ is a binary polyhedral group such that $m_{\H}(G)=m_{\H}(H) \le 2$. If $P \in P(\Z G)$ is such that $f_\#([P]) \in T_H$, then $[P]$ has cancellation.
\end{lemma}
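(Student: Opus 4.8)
The plan is to transport the cancellation question from $\Z G$ to $\Z H$, where it has already been settled, using \cite[Theorem 4.1]{Ni19} as the comparison tool and then reducing to the group ring $\Z H$ itself.

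First I would reinterpret the hypothesis $m_{\H}(G) = m_{\H}(H)$. By Proposition \ref{prop:relative-eichler-group}, this is equivalent to saying that $f^* : \mathcal{B}(H) \to \mathcal{B}(G)$ is a bijection, i.e. every binary polyhedral quotient of $G$ factors through $f$. This is precisely the hypothesis under which \cite[Theorem 4.1]{Ni19} applies: it gives that the induction functor $f_\# = \Z H \otimes_{\Z G} (-)$ induces, for our projective module $P$, a bijection between the isomorphism classes in the graded tree $[P]$ (of projective $\Z G$-modules, graded by rank) and those in the graded tree $[f_\#(P)]$ (of projective $\Z H$-modules, graded by rank), compatibly with the tree structures. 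In particular, $[P]$ has cancellation if and only if $[f_\#(P)]$ has cancellation.

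Next I would identify $[f_\#(P)]$. Since $[f_\#(P)] = f_\#([P]) \in T_H$ by hypothesis and $[\Z H] = 0 \in C(\Z H)$, Proposition \ref{prop:cancellation-mod-T} applied to the finite group $H$ produces an isomorphism of graded trees $[f_\#(P)] \cong [\Z H]$, realized by tensoring with a suitable ideal $(I,r)$ with $r \in (\Z/|H|)^\times$. Hence $[f_\#(P)]$ has cancellation if and only if $[\Z H]$ does. Finally, $H$ is a binary polyhedral group, hence has periodic cohomology, so \cite[Theorem 6.3]{Ni19} applied to $H$ gives that $[\Z H]$ has cancellation if and only if $m_{\H}(H) \le 2$, which holds by hypothesis. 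Chaining the three equivalences shows that $[P]$ has cancellation.

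The step I expect to require the most care is the first one: \cite[Theorem 4.1]{Ni19} must be applied with enough generality (for an arbitrary projective $P$, not just $P = \Z G$), and — crucially — it must supply the \emph{injectivity} of the comparison map on isomorphism classes, since a mere surjection $[P] \twoheadrightarrow [f_\#(P)]$ of graded trees would be useless here: cancellation of the target need not force cancellation of the source if branches of $[P]$ could collapse under $f_\#$. A secondary point to keep track of is that the grading on $[\Z H]$ is by $\oplus\, \Z H$ rather than by $\oplus\, \Z G$, so one should check that all the identifications above are genuine isomorphisms of graded trees and not merely bijections of underlying sets.
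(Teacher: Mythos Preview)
Your overall strategy matches the paper's --- reduce to $\Z H$ via $f_\#$ and \cite[Theorem~4.1]{Ni19}, then invoke known cancellation for $[\Z H]$ --- but your reading of \cite[Theorem~4.1]{Ni19} is off, and the worry you flag at the end is exactly where the argument breaks. That theorem does \emph{not} furnish a bijection of graded trees $[P]\cong[f_\#(P)]$; as the paper's proof makes clear, its hypotheses are that (a) $m_\H(G)=m_\H(H)$, (b) the map $\Z H^\times\to K_1(\Z H)$ is surjective, and (c) $f_\#(Q)\cong\Z H$ as modules, and its conclusion is only that $[Q]$ has cancellation. Your hypothesis $f_\#([P])\in T_H$ does not give (c), and you have not addressed (b) at all.

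The paper's fix is to do the $(I,r)$--twist on the $G$-side rather than the $H$-side. Since $T_G\to T_H$ is surjective (Lemma~\ref{lemma:swan-quotient}), one can choose $r\in(\Z/|G|)^\times$ with $f_\#([(I,r^{-1})\otimes P])=0\in C(\Z H)$; then cancellation for $[\Z H]$ (from \cite[Theorem~I]{Sw83}) upgrades this to an actual isomorphism $f_\#((I,r^{-1})\otimes P)\cong\Z H$, hypothesis~(b) is supplied by \cite[Theorems~7.15--7.18]{MOV83}, and \cite[Theorem~4.1]{Ni19} then gives cancellation for $[(I,r^{-1})\otimes P]$. Proposition~\ref{prop:cancellation-mod-T} transfers this back to $[P]$. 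Your use of Proposition~\ref{prop:cancellation-mod-T} on the $H$-side is correct in isolation but does not set up condition~(c); moving the twist upstairs to $G$ is the missing manoeuvre.
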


\begin{proof}
Let $r_H \in (\Z / |H|)^\times$ be such that $f_\#([P]) = [(I,r_H)]$. By Lemma \ref{lemma:swan-quotient}, the induced map $T_G \to T_H$ is surjective and so there exists $r \in (\Z /|G|)^\times$ such that $f_\#((I,r)) \cong (I,r_H)$, i.e. $r \equiv r_H$ mod $|H|$. 
By \cite{Sw60b}, there exists $P_0 \in [P]$ of rank one.
By \cite[Lemma 4.15]{Ni20a}, we have that
\[ f_\#([(I,r^{-1}) \otimes P_0]) = f_\#([(I,r^{-1})] + [P]) = [(I,r_H^{-1})]+[(I,r_H)] = 0 \in C(\Z H)\]
where we used that $[P_0]=[P]$. By \cite[Theorem I]{Sw83}, we know that $[\Z H]$ has cancellation and so $f_\#((I,r^{-1}) \otimes P_0) \cong \Z H$. Since the map $\Z H^\times \to K_1(\Z H)$ is surjective \cite[Theorems 7.15-7.18]{MOV83}, the conditions of \cite[Theorem 5.1]{Ni19} are met and so $[(I,r^{-1}) \otimes P_0]$ has cancellation. By Proposition \ref{prop:cancellation-mod-T}, this implies that $[P] = [P_0]$ has cancellation.	
\end{proof}

\subsubsection*{Type II} If $G$ has type II and $m_{\H}(G)=1$ or $2$, then $\mathcal{B}(G) =\{ Q_8\}$ or $\{Q_{16}\}$. Hence, for $k = 3$ or $4$, we have $f: G \twoheadrightarrow Q_{2^k}$ with $m_{\H}(G) = m_{\H}(Q_{2^k})$. By \cite[Theorems III,IV]{Sw83}, we have that $C(\Z Q_8) = T_{Q_8}$ and $C(\Z Q_{16}) = T_{Q_{16}}$ and so $f_\#([P_{(G,n)}]) \in T_{Q_{2^k}}$ automatically. Hence $[P_{(G,n)}]$ has cancellation, by Lemma \ref{lemma:cancellation}.

\subsubsection*{Type IV}
If $G$ has type IV, then $m_{\H}(G) =m_{\H}(\widetilde{O}) = 2$ and there exists a quotient $f: G \twoheadrightarrow \widetilde{O}$. Recall that $Q_{12} \le \widetilde{O}$ and that this is unique up to conjugacy \cite[Lemma 14.3]{Sw83}. We will need the following lemma, the proof of which is contained in the proof of \cite[Theorem 7.2]{Ni19}.
\begin{lemma} \label{lemma:O-cancellation}
If $P$ is a projective $\Z \widetilde{O}$-module, then $[P] \in T_{\widetilde{O}}$ if and only if $[\Res_{Q_{12}}^{\widetilde{O}}(P)] = 0 \in C(\Z Q_{12})$.
\end{lemma}
Let $N = \Ker(f)$ and let $H = f^{-1}(Q_{12})$ which is a subgroup of $G$ for which $N \unlhd H \le G$ and $Q_{12}=H/N$. By \cite[Lemma 7.10]{Ni19}, we have a diagram
\[
\begin{tikzcd}
C(\Z G) \arrow[r,"f_\#"] \arrow[d,"\text{\normalfont Res}^G_H"] & C(\Z \widetilde{O}) \arrow[d,"\text{\normalfont Res}^{\widetilde{O}}_{Q_{12}}"] \\
C(\Z H) \arrow[r,"(f\mid_{H})_\#"] & C(\Z Q_{12}) 
\end{tikzcd}
\]
and, by commutativity, we get that
\[ \Res_{Q_{12}}^{\widetilde{O}}(f_\#(P_{(G,n)})) = (f\mid_{H})_\#(\Res_H^G(P_{(G,n)})). \]
It follows from general properties of finiteness obstructions \cite[Remark 2.18]{DM85} that, if $[P_{(G,n)}] = \sigma_k(G) \in C(\Z G)/T_G$, then $[\Res_H^G(P_{(G,n)})] = \sigma_k(H) \in C(\Z H)/T_H$.
It is easy to see that $H$ has type I and so $\sigma_k(G)=0$ by Theorem \ref{thm:big-table}. This implies that $\Res_H^G(P_{(G,n)}) \in T_H$ and so $(f\mid_{H})_\#(\Res_H^G(P_{(G,n)})) \in T_{Q_{12}}$. However, \cite[Theorem IV]{Sw83} implies that $T_{Q_{12}}=0$ and so $[\Res_{Q_{12}}^{\widetilde{O}}(f_\#(P_{(G,n)}))]=0$ and $f_\#(P_{(G,n)}) \in T_{\widetilde{O}}$ by Lemma \ref{lemma:O-cancellation}. Hence Lemma \ref{lemma:cancellation} applies and $[P_{(G,n)}]$ has cancellation.

%%%%%%%%%%
\section{The Eichler mass formula} \label{section:mass-formulas}

Let $K$ be a number field with ring of integers $\mathcal{O}_K$ and let $\Lambda$ be an $\mathcal{O}_K$-order in a finite-dimensional semi-simple $K$-algebra $A$.
It is a standard fact (see, for example, \cite[Lemma 2.1]{Sw80}) that, if $M$ is a finitely generated $\Lambda$-module, then $M$ is projective if and only if $M$ is locally projective, i.e. for all $p$ prime, $M_p = M \otimes \Z_p$ is projective over $\Lambda_p = \Lambda \otimes \Z_p$ where $\Z_p$ is the $p$-adic integers.

In the case where $K = \Q$, $\Lambda = \Z G$ and $A = \Q G$ for $G$ a finite group, then $M$ projective implies that $M_p$ is a free $\Z_p G$-module for all $p$ prime \cite[Theorem 2.21, 4.2]{Sw70}. In particular, in this case, $M$ is projective if and only if $M$ is locally free.

Define the \textit{locally free class group} $C(\Lambda)$ to be the equivalence classes of locally free modules up to the relation $P \simeq Q$ if $P \oplus \Lambda^i \cong Q \oplus \Lambda^j$ for some $i,j \ge 0$. By abuse of notation, we write $[P]$ to denote both the class $[P] \in C(\Lambda)$ and, where convenient, the set of isomorphism classes of projective modules $P_0$ where  $[P_0]=[P]$.

Define the \textit{class set} $\Cls\Lambda$ as the set of isomorphism classes of rank one locally free $\Lambda$-modules, which is finite by the Jordan-Zassenhaus theorem \cite[Section 24]{CR81}. 
Equivalently, this is the set of locally principal fractional $\Lambda$-ideals, under the relation $I \sim J$ if there exists $\alpha \in A^\times$ such that $I=\alpha J$ (see \cite{SV19}). This comes with the stable class map
\[ [ \,\cdot \,]_{\Lambda} : \Cls \Lambda \to C(\Lambda)\]
which sends $P \mapsto [P]$ and is surjective since every locally free $\Lambda$-module $P$ is of the form $P_0 \oplus \Lambda^i$ where $P_0 \in \Cls \Lambda$ and $i \ge 0$ \cite{Fr75}. Define $\Cls^{[P]}(\Lambda)$ to be $[\,\cdot\,]_{\Lambda}^{-1}([P])$ , i.e. the rank one locally free modules in $[P]$, and let $\SF(\Lambda)$ be $\Cls^{[\Lambda]}(\Lambda)$, i.e. the set of rank one stably free modules.

We say that $\Lambda$ has \textit{locally free cancellation} if $P \oplus \Lambda \cong Q \oplus \Lambda$ implies $P \cong Q$ for all locally free $\Lambda$-modules $P$ and $Q$. It follows from the discussion above that $\Lambda$ has locally free cancellation if and only if $[\,\cdot\,]_{\Lambda}$ is bijective, i.e. $\# \Cls \Lambda = \# C(\Lambda)$.

Similarly we say that $\Lambda$ has \textit{stably free cancellation} when $P \oplus \Lambda^i \cong \Lambda^j$ implies that $P \cong \Lambda^{j-i}$, or equivalently, if $\# \SF(\Lambda) = 1$.

If $X \subseteq \Cls \Lambda$, then we can define the \textit{mass} of $X$ to be
\[ \mass(X) = \sum_{I \in X} \frac{1}{[O_L(I)^\times : \mathcal{O}_K^\times]}.\]

Recall that a quaternion algebra $A$ over $K$ is \textit{totally definite} if $A$ is ramified over all archimedean places $\nu$, i.e. $A \otimes K_\nu$ is a division algebra over $K_\nu$. Note that every complex place $\nu$ splits since the only quaternion algebra over $\C$ is $M_2(\C)$. In particular, if $A$ is totally definite, then $K$ must be a totally real field.

Let $\zeta_K(s)$ be the Dedekind zeta function, let $h_K = |C(\mathcal{O}_K)|$ be the class number of $K$ and let $\Delta_K$ be the discriminant of $K$. The following was proven in \cite{Ei37}.

\begin{thm}[Eichler mass formula] \label{thm:EMF}
	Let $A$ be a totally definite quaternion algebra over $K$ and let $\Lambda$ be a maximal $\mathcal{O}_K$-order in $A$. If $n=[K:\Q]$, then
	\[ \mass(\Cls \Lambda) = \frac{2 \zeta_{K}(2)}{(2 \pi)^{2n}} \cdot |\Delta_K|^{3/2} \cdot h_K  \cdot \hspace{-2mm} \prod_{\mathfrak{p} \mid \disc(A)} (N_{K/\Q}(\mathfrak{p})-1).\]
\end{thm}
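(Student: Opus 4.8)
The plan is to realise $\mass(\Cls\Lambda)$ as a volume in the adelic group attached to $A$ and to pin that volume down using the fact that the associated Tamagawa number equals $1$. I would write $G = A^1$ for the algebraic group over $K$ of reduced-norm-one elements of $A$ --- an inner form of $\SL_2$ --- and set $\mathbb{A}_K$ for the ring of adeles of $K$, $K_\infty = K\otimes_\Q\R$, and $\widehat U = \prod_{\mathfrak p}\Lambda_{\mathfrak p}^1 \subseteq G(\mathbb{A}_{K,f})$. The first step is the standard adelic description of ideal classes: there is a bijection between $\Cls\Lambda$ and the finite double coset set $G(K)\backslash G(\mathbb{A}_K)/\bigl(G(K_\infty)\times\widehat U\bigr)$, under which the stabiliser of the class of a fractional right $\Lambda$-ideal $I$ is the finite group $\mathcal{O}_L(I)^1$ of reduced-norm-one units of its left order $\mathcal{O}_L(I)$. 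Comparing this with the unit index in the definition of $\mass$ --- a book-keeping step about the image of the reduced norm on unit groups, which is harmless since $K$ is totally real --- reduces the problem to evaluating $\sum_I 1/\#\mathcal{O}_L(I)^1$.

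Next I would fix a Haar measure $\mu$ on $G(\mathbb{A}_K)$ and decompose $G(\mathbb{A}_K)$ over the (finitely many) double cosets. Here the totally definite hypothesis is essential: $G(K_v)\cong\mathrm{SU}(2)$ is \emph{compact} at every archimedean place $v$, and $\widehat U$ is compact open, so the decomposition yields
\[ \mu\bigl(G(K)\backslash G(\mathbb{A}_K)\bigr) = \mu\bigl(G(K_\infty)\bigr)\cdot\mu(\widehat U)\cdot\Bigl(\textstyle\sum_I 1/\#\mathcal{O}_L(I)^1\Bigr). \]
Taking $\mu$ to be the Tamagawa measure makes the left-hand side equal to the Tamagawa number $\tau(G)=1$ (Weil's theorem for $\SL_1$ of a quaternion algebra). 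So $\mass(\Cls\Lambda)$ becomes, up to the unit correction above, the reciprocal of $\mu^{\mathrm{Tam}}\bigl(G(K_\infty)\bigr)\cdot\mu^{\mathrm{Tam}}(\widehat U)$, and it remains to evaluate these two quantities.

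The third step is to unwind these local volumes together with the normalising constants built into the Tamagawa measure. The factor $|\Delta_K|^{-\dim G/2}=|\Delta_K|^{-3/2}$ and the convergence factors $\zeta_{K,\mathfrak p}(2)^{-1}$ at finite places are part of the definition of $\mu^{\mathrm{Tam}}$, and these produce the $|\Delta_K|^{3/2}$ and one factor of $\zeta_K(2)$ in the formula; the remaining $\zeta_K(2)$-type input and the class number $h_K$ enter through the analytic class number formula when one passes between the $\SL_1$-double-coset count and the genuine $\Lambda$-ideal class set via the reduced-norm map to $\mathrm{Pic}(\mathcal{O}_K)$. At each archimedean place the canonical volume of $\mathrm{SU}(2)$ contributes a factor proportional to $(2\pi)^2$, giving the $(2\pi)^{2n}$ with $n=[K:\Q]$ in the denominator and the overall constant $2$. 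At a finite place $\mathfrak p\nmid\disc(A)$, maximality of $\Lambda$ forces $\Lambda_{\mathfrak p}^1\cong\SL_2(\mathcal{O}_{K_{\mathfrak p}})$, whose canonical volume is $\#\SL_2(\mathbb F_{\mathfrak p})/(N\mathfrak p)^3 = 1-(N\mathfrak p)^{-2}=\zeta_{K,\mathfrak p}(2)^{-1}$, so these places cancel against the convergence factors; at a ramified place $\mathfrak p\mid\disc(A)$, $\Lambda_{\mathfrak p}$ is the maximal order of the local division quaternion algebra, and counting $\Lambda_{\mathfrak p}^1$ through its unit filtration shows its canonical volume differs from the split value exactly by $(N_{K/\Q}(\mathfrak p)-1)$. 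Assembling these contributions gives the stated identity.

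The main obstacle is the input $\tau(\SL_1(A))=1$: granting it, the rest is careful tracking of local measures and normalisations rather than anything conceptually deep. A route that avoids it is Eichler's original one, in which this is replaced by an analytic computation with the theta series of the quaternary reduced-norm form on $\Lambda$ (a special case of the Minkowski--Siegel mass formula); the local density calculations --- and hence the appearance of $\zeta_K(2)$, $|\Delta_K|^{3/2}$, $h_K$ and the ramified Euler factors --- are essentially the same either way.
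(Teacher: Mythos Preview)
The paper does not prove this theorem at all: it is stated as a classical result and attributed to Eichler's 1937 paper \cite{Ei37} with no argument given. Your proposal therefore supplies strictly more than the paper does.

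As to the content of your sketch: the Tamagawa-number derivation you outline is the standard modern route to the Eichler mass formula (see for instance Voight's \emph{Quaternion Algebras}, Chapter~39), and the structure you describe --- adelic double-coset description of $\Cls\Lambda$, compactness of $G(K_\infty)$ from the totally definite hypothesis, $\tau(\SL_1(A))=1$, and the local volume computations distinguishing split and ramified primes --- is correct. The one place that deserves more than the phrase ``book-keeping'' is the passage between $\sum_I 1/\#\mathcal{O}_L(I)^1$ and $\sum_I 1/[\mathcal{O}_L(I)^\times:\mathcal{O}_K^\times]$ together with the passage between the $A^1$-double-coset set and the genuine $\Cls\Lambda$: these two discrepancies are governed respectively by the image of the reduced norm on unit groups and by the reduced-norm map to the narrow class group of $K$, and it is in untangling them that the factor $h_K$ and the precise power-of-$2$ constant appear. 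This is genuinely routine but is where most errors in such derivations creep in, so in a written-up proof it should be made explicit rather than absorbed into a parenthetical.

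You correctly note that Eichler's own 1937 argument proceeds instead via theta series and the Minkowski--Siegel mass formula for the reduced-norm form; the Tamagawa approach you give is cleaner conceptually but relies on the substantial input $\tau(\SL_1(A))=1$, whereas Eichler's is more self-contained but computationally heavier.
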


The following was first shown by Vign\'{e}ras in \cite{Vi76}, though a simplified proof can be found in \cite[Theorem 5.11]{SV19}.

\begin{thm} \label{thm:EMF+}
Let $A$ be a totally definite quaternion algebra over $K$ and let $\Lambda$ be a maximal $\mathcal{O}_K$-order in $A$. If $P$, $Q$ are locally free $\Lambda$-modules, then
\[ \mass(\Cls^{[P]}(\Lambda)) = \mass(\Cls^{[Q]}(\Lambda)).\]	
\end{thm}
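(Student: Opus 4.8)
The plan is to prove the stronger statement that the stable class map $[\,\cdot\,]_{\Lambda}\colon \Cls\Lambda \to C(\Lambda)$ spreads the mass evenly over its fibres, so that $\mass(\Cls^{[P]}(\Lambda)) = \mass(\Cls\Lambda)/\#C(\Lambda)$ for every locally free $P$; together with Theorem \ref{thm:EMF} this even yields a closed formula. The first step is to recall, from the Eichler norm theorem for maximal orders in quaternion algebras \cite{SV19}, that the reduced norm induces a bijection between $C(\Lambda)$ and the narrow class group $C^{+}(\mathcal{O}_K) = \widehat{K}^{\times}/(K^{\times}_{>0}\,\widehat{\mathcal{O}_K}^{\times})$, where $K^{\times}_{>0}$ is the set of elements of $K^{\times}$ positive at every real place (all of which ramify in $A$, since $A$ is totally definite). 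Under this bijection $\Cls^{[P]}(\Lambda)$ corresponds to a \emph{genus} $\mathcal{G}_{\mathfrak{a}} = \{[I] \in \Cls\Lambda : [\mathrm{nrd}(I)] = \mathfrak{a}\}$ for a class $\mathfrak{a} \in C^{+}(\mathcal{O}_K)$, so it suffices to show $\mass(\mathcal{G}_{\mathfrak{a}})$ is independent of $\mathfrak{a}$. I would then reduce to adjacent genera: any two classes of $C^{+}(\mathcal{O}_K)$ differ by $[\mathfrak{q}]$ for some prime $\mathfrak{q}$ of $\mathcal{O}_K$ split in $A$ (unramified with $A_{\mathfrak{q}} \cong M_2(K_{\mathfrak{q}})$), since by Chebotarev every ray class contains infinitely many such primes (the split primes being all but finitely many), so it is enough to show $\mass(\mathcal{G}_{\mathfrak{a}}) = \mass(\mathcal{G}_{\mathfrak{a}[\mathfrak{q}]})$ for each prime $\mathfrak{q}$ split in $A$.

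The main tool is the $\mathfrak{q}$-neighbour (Brandt) operator. For a right $\Lambda$-ideal $I$, its \emph{$\mathfrak{q}$-neighbours} are the sublattices $I' \subseteq I$ with $I'_{\mathfrak{r}} = I_{\mathfrak{r}}$ for $\mathfrak{r} \neq \mathfrak{q}$ and $I_{\mathfrak{q}}/I'_{\mathfrak{q}}$ a simple $\Lambda_{\mathfrak{q}} \cong M_2(\mathcal{O}_K/\mathfrak{q})$-module; there are exactly $N\mathfrak{q}+1$ of them (indexed by $\mathbb{P}^{1}(\mathcal{O}_K/\mathfrak{q})$, where $N\mathfrak{q}$ is the absolute norm of $\mathfrak{q}$), and each satisfies $\mathrm{nrd}(I') = \mathfrak{q}\,\mathrm{nrd}(I)$, hence lies in $\mathcal{G}_{\mathfrak{a}[\mathfrak{q}]}$ whenever $I \in \mathcal{G}_{\mathfrak{a}}$. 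Dually, the superlattices $L \supseteq I$ differing from $I$ only at $\mathfrak{q}$ with $L_{\mathfrak{q}}/I_{\mathfrak{q}}$ simple — the \emph{$\mathfrak{q}$-co-neighbours} — also number $N\mathfrak{q}+1$ and have $\mathrm{nrd}(L) = \mathfrak{q}^{-1}\mathrm{nrd}(I)$, and being a $\mathfrak{q}$-neighbour and being a $\mathfrak{q}$-co-neighbour are inverse relations. Fix a representative $I$ for each class in $\Cls\Lambda$, put $w_{[I]} = [\mathcal{O}_L(I)^{\times}:\mathcal{O}_K^{\times}]$, and for $[I] \in \mathcal{G}_{\mathfrak{a}}$, $[J] \in \mathcal{G}_{\mathfrak{a}[\mathfrak{q}]}$ let $B_{[I],[J]}$ be the number of $\mathfrak{q}$-neighbours of $I$ in class $[J]$ and $\widetilde{B}_{[J],[I]}$ the number of $\mathfrak{q}$-co-neighbours of $J$ in class $[I]$. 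The relevant row sums are all equal to $N\mathfrak{q}+1$, and the crucial input is the \emph{Brandt symmetry}
\[ w_{[I]}^{-1}\,B_{[I],[J]} \;=\; w_{[J]}^{-1}\,\widetilde{B}_{[J],[I]}, \]
i.e.\ the $\mathfrak{q}$-neighbour and $\mathfrak{q}$-co-neighbour operators are adjoint for the mass pairing on $\Q[\Cls\Lambda]$. Granting this, the proof finishes with the double count
\[ (N\mathfrak{q}+1)\,\mass(\mathcal{G}_{\mathfrak{a}}) = \sum_{[I]\in\mathcal{G}_{\mathfrak{a}}} w_{[I]}^{-1}\sum_{[J]} B_{[I],[J]} = \sum_{[I],[J]} w_{[J]}^{-1}\widetilde{B}_{[J],[I]} = \sum_{[J]\in\mathcal{G}_{\mathfrak{a}[\mathfrak{q}]}} w_{[J]}^{-1}(N\mathfrak{q}+1) = (N\mathfrak{q}+1)\,\mass(\mathcal{G}_{\mathfrak{a}[\mathfrak{q}]}), \]
giving $\mass(\mathcal{G}_{\mathfrak{a}}) = \mass(\mathcal{G}_{\mathfrak{a}[\mathfrak{q}]})$, and hence the theorem.

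The hard part will be the Brandt symmetry, i.e.\ the self-adjointness of the $\mathfrak{q}$-neighbour operator with respect to the mass pairing: establishing it means carefully matching ``$\mathfrak{q}$-neighbours of $I$ in class $[J]$'' with ``$\mathfrak{q}$-co-neighbours of $J$ in class $[I]$'' while keeping track of the stabiliser indices $[\mathcal{O}_L(-)^{\times}:\mathcal{O}_K^{\times}]$ — exactly the bookkeeping that underlies the self-adjointness of Hecke operators on definite quaternionic automorphic forms. A secondary technical point is pinning down that the fibres of $[\,\cdot\,]_{\Lambda}$ are precisely the reduced-norm genera (the Eichler norm theorem). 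I would stress that strong approximation offers no shortcut: $A$ is totally definite and so ramifies at every archimedean place of $K$, and indeed $\Cls\Lambda$ can be far larger than $C^{+}(\mathcal{O}_K)$; what makes the argument work is only the ``local homogeneity'' of the $\mathfrak{q}$-neighbour graph at the split primes, which is unaffected.
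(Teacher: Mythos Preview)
The paper does not actually prove this theorem: it is stated with an attribution to Vign\'eras \cite{Vi76} and a pointer to the simplified proof in \cite[Theorem~5.11]{SV19}, and is then used as a black box. So there is no ``paper's own proof'' to compare against.

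Your outline is precisely the standard argument behind those references. The identification of the fibres of $[\,\cdot\,]_{\Lambda}$ with reduced-norm genera in $C^{+}(\mathcal{O}_K)$ is the Eichler norm theorem for maximal orders (and is consistent with the formula $|C(\Lambda)| = h_K\cdot[(\mathcal{O}_K^{\times})^{+}:(\mathcal{O}_K^{\times})^{2}]$ quoted just after the theorem), and the equidistribution of mass over genera via the $\mathfrak{q}$-neighbour operator and its self-adjointness for the mass pairing is exactly the Brandt-matrix argument in \cite{SV19}. One small remark on the write-up of the symmetry: the cleanest way to justify $w_{[I]}^{-1}B_{[I],[J]} = w_{[J]}^{-1}\widetilde{B}_{[J],[I]}$ is to count, up to $\mathcal{O}_K^{\times}$, the elements $\alpha \in A^{\times}$ with $\alpha J \subseteq I$ a $\mathfrak{q}$-neighbour; the left orders $\mathcal{O}_L(I)^{\times}$ and $\mathcal{O}_L(J)^{\times}$ then act freely (modulo $\mathcal{O}_K^{\times}$) on this set from the two sides, which immediately gives the identity and avoids the bookkeeping you flag as the hard part.
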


In particular, this implies that
$ \mass(\Cls^{[P]}(\Lambda)) = \frac{\mass(\Cls \Lambda)}{|C(\Lambda)|}$,
where $C(\Lambda)$ denotes the class group of locally free $\Lambda$-modules. 

It was shown by Eichler that
\[ |C(\Lambda)| = h_K \cdot [(\mathcal{O}_K^\times)^+:(\mathcal{O}_K^\times)^2]\]
where $(\mathcal{O}_K^\times)^+$ denotes the group of totally positive units, i.e. those units $u \in \mathcal{O}_K^\times$ for which $\sigma(u) > 0$ for all embeddings $\sigma : K \hookrightarrow \R$.
The following can be shown using the results above as well as lower bounds on $|\Delta_K|^{1/[K:\Q]}$ in terms of $[K:\Q]$.

\begin{thm} \label{thm:HM06}
Let $A$ be a totally definite quaternion algebra over $K$ and let $\Lambda$ be a maximal $\mathcal{O}_K$-order in $A$. 
If $\Lambda$ has stably free cancellation, then $[K:\Q] \le 6$.
\end{thm}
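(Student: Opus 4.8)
The plan is to bound $[K:\Q]$ by comparing the Eichler mass formula (Theorem \ref{thm:EMF}) against the constraint that stably free cancellation forces. First I would observe that if $\Lambda$ has stably free cancellation then $\#\SF(\Lambda) = 1$, so by Theorem \ref{thm:EMF+} we have $\mass(\SF(\Lambda)) = \mass(\Cls^{[\Lambda]}(\Lambda)) = \mass(\Cls\Lambda)/|C(\Lambda)|$, and since the single stably free class $\Lambda$ itself contributes $1/[\mathcal{O}_L(\Lambda)^\times : \mathcal{O}_K^\times]$ to this mass, we get
\[ \frac{\mass(\Cls\Lambda)}{|C(\Lambda)|} = \frac{1}{[\Lambda^\times : \mathcal{O}_K^\times]} \le 1. \]
Using Eichler's formula $|C(\Lambda)| = h_K \cdot [(\mathcal{O}_K^\times)^+ : (\mathcal{O}_K^\times)^2]$ together with Theorem \ref{thm:EMF}, the left-hand side becomes
\[ \frac{2\,\zeta_K(2)}{(2\pi)^{2n}}\cdot |\Delta_K|^{3/2} \cdot \frac{1}{[(\mathcal{O}_K^\times)^+:(\mathcal{O}_K^\times)^2]} \cdot \prod_{\mathfrak p \mid \disc(A)}(N_{K/\Q}(\mathfrak p)-1), \]
where $n = [K:\Q]$. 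Since $\zeta_K(2) > 1$, the unit-index factor is at least $1$ only in a bounded way (in fact $[(\mathcal{O}_K^\times)^+:(\mathcal{O}_K^\times)^2] \le 2^{n-1}$ by Dirichlet's unit theorem, as $K$ is totally real), and the ramification product is $\ge 1$, this inequality forces an upper bound on $|\Delta_K|^{3/2}/(2\pi)^{2n}$, hence on $|\Delta_K|^{1/n}$ in terms of $n$.

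Next I would invoke the known unconditional \emph{lower} bounds for root discriminants of totally real fields — e.g. the Odlyzko-type bounds, or for a self-contained argument the Minkowski bound $|\Delta_K|^{1/n} \ge c\cdot(\text{something growing in } n)$, and more sharply the estimates that give $|\Delta_K|^{1/n} \to \infty$ (indeed $|\Delta_K|^{1/n} > (2\pi)^{?}$ for $n$ large). Combining the lower bound $|\Delta_K|^{1/n} \ge D(n)$ with the upper bound $|\Delta_K|^{1/n} \le \big(2^{n-1}(2\pi)^{2n} / (2\zeta_K(2))\big)^{2/3n} \le (2^{n-1}(2\pi)^{2n})^{2/(3n)} \sim 2^{1/3}(2\pi)^{4/3}$ yields a numerical inequality $D(n) \le 2^{1/3}(2\pi)^{4/3}$ that is violated once $n$ is sufficiently large; one then checks by hand (using tables of minimal root discriminants of totally real fields of each degree) that it already fails for $n = 7$, giving $[K:\Q] \le 6$.

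The main obstacle is the bookkeeping around the unit index $[(\mathcal{O}_K^\times)^+ : (\mathcal{O}_K^\times)^2]$: one must be careful that this factor, which divides $2^{n-1}$, does not grow fast enough to overwhelm the discriminant growth — but since $|\Delta_K|^{3/2}$ grows exponentially in $n$ at a rate strictly exceeding $2^{n-1}(2\pi)^{2n}$ (by the root-discriminant lower bounds, which exceed $4\pi e^{-\gamma}\cdot$(a factor $>1$ for totally real fields)), this is under control. A secondary technical point is confirming $\zeta_K(2) \ge 1$, which is immediate since the Euler product has all factors $> 1$. The only genuinely external input is the table/bound for minimal root discriminants in low degree (degrees $7$ through, say, $12$), which is classical and appears in the references; for larger degrees the asymptotic Odlyzko bound suffices. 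Thus the argument reduces, modulo these standard analytic number theory inputs, to the single mass-formula inequality displayed above.
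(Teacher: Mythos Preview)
Your approach is correct and matches what the paper indicates: the paper does not actually give a proof of this theorem but simply cites Hallouin--Maire \cite{HM06} (with a correction noted in \cite{Sm15}), remarking beforehand that it ``can be shown using the results above as well as lower bounds on $|\Delta_K|^{1/[K:\Q]}$ in terms of $[K:\Q]$.'' Your sketch is precisely a fleshing-out of that hint --- combine the Eichler mass formula with $\mass(\SF(\Lambda)) = [\Lambda^\times:\mathcal{O}_K^\times]^{-1}$, bound the unit index $[(\mathcal{O}_K^\times)^+:(\mathcal{O}_K^\times)^2]$ by $2^{n-1}$, and then play the resulting upper bound on the root discriminant against Odlyzko-type lower bounds for totally real fields --- so there is nothing further to compare.
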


\begin{remark}
This was proven by Hallouin-Maire \cite[Theorem 1]{HM06}, though it is worth noting that part of their result was incorrect as stated (see \cite{Sm15}).
\end{remark}

In the notation of \cite[Section 3]{Sw83}, define the Eichler constant
\[ \ei_{K} = \frac{ 2 \zeta_{K}(2)|\Delta_K|^{3/2}}{(2 \pi)^{2d}} =  \frac{(-1)^d \zeta_K(-1)}{2^{d-1}} \in \Q\]
where $d=[K:\Q]$ and where the second equality comes from the functional equation for $\zeta_K(s)$. This is rational since $\zeta_K(-1) \in \Q$.
Another constraint on the fields $K$ over which stably free cancellation can occur is as follows.
 
\begin{prop} \label{prop:zeta-1}
Let $A$ be a totally definite quaternion algebra over $K$ and let $\Lambda$ be a maximal $\mathcal{O}_K$-order in $A$.
If $\Lambda$ has stably free cancellation, then the numerator of $\zeta_K(-1)$ (or, equivalently, $\ei_K$) is a power of $2$.
\end{prop}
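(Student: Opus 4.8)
The plan is to exploit the mass formula from Theorem \ref{thm:EMF} together with the cancellation constraint to force a divisibility statement about the numerator of $\ei_K$. First I would observe that, by the discussion following Theorem \ref{thm:EMF+}, if $\Lambda$ has stably free cancellation then $\mass(\SF(\Lambda)) = \mass(\Cls^{[\Lambda]}(\Lambda)) = \mass(\Cls\Lambda)/|C(\Lambda)|$, and since $\SF(\Lambda) = \{\Lambda\}$ has a single element with $[O_L(\Lambda)^\times : \mathcal{O}_K^\times] = [\Lambda^\times : \mathcal{O}_K^\times]$, this mass equals $1/[\Lambda^\times : \mathcal{O}_K^\times]$. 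Combining with the Eichler mass formula and the formula $|C(\Lambda)| = h_K \cdot [(\mathcal{O}_K^\times)^+ : (\mathcal{O}_K^\times)^2]$, the factors $h_K$ cancel, yielding
\[
\frac{1}{[\Lambda^\times : \mathcal{O}_K^\times]} = \frac{\ei_K}{[(\mathcal{O}_K^\times)^+ : (\mathcal{O}_K^\times)^2]} \cdot \prod_{\mathfrak{p} \mid \disc(A)} (N_{K/\Q}(\mathfrak{p}) - 1),
\]
so that $\ei_K$, times a product of rational integers, times the reciprocal of the power of $2$ given by the unit index $[(\mathcal{O}_K^\times)^+ : (\mathcal{O}_K^\times)^2]$, equals the reciprocal of the integer $[\Lambda^\times : \mathcal{O}_K^\times]$.

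Next I would analyze which primes can appear in the numerator of $\ei_K$. Rearranging the displayed identity, the numerator of $\ei_K$ must divide $[(\mathcal{O}_K^\times)^+ : (\mathcal{O}_K^\times)^2]$, which is a power of $2$ (it is the index of a subgroup inside $\mathcal{O}_K^\times/(\mathcal{O}_K^\times)^2 \cong (\Z/2)^{r_1 + r_2}$ for a totally real field, hence $r_1 = d$, $r_2 = 0$). Indeed: the right-hand side is $\ei_K \cdot (\text{positive integer}) / (\text{power of }2)$, and it equals a unit fraction $1/m$ with $m \in \Z_{\ge 1}$; writing $\ei_K = a/b$ in lowest terms, clearing denominators shows $a \mid [(\mathcal{O}_K^\times)^+ : (\mathcal{O}_K^\times)^2]$ after accounting for the integer factor coming from the ramified primes and $m$. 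Any odd prime $\ell \mid a$ would then have to divide a power of $2$, a contradiction. Hence the numerator of $\ei_K$ is a power of $2$.

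The main obstacle is controlling the integer factor $\prod_{\mathfrak{p} \mid \disc(A)}(N_{K/\Q}(\mathfrak{p}) - 1)$ and the index $[\Lambda^\times : \mathcal{O}_K^\times]$ carefully enough to see that no odd prime of the numerator of $\ei_K$ can be absorbed into them — i.e., ensuring the divisibility argument is genuinely forced rather than merely consistent. The cleanest way around this is to note that these two quantities are positive integers appearing on opposite sides: after clearing denominators, one gets an equality of integers
\[
\text{(numerator of }\ei_K) \cdot \prod_{\mathfrak{p} \mid \disc(A)}(N_{K/\Q}(\mathfrak{p}) - 1) \cdot [\Lambda^\times : \mathcal{O}_K^\times] = (\text{denominator of }\ei_K) \cdot [(\mathcal{O}_K^\times)^+ : (\mathcal{O}_K^\times)^2],
\]
and since $\ei_K$ is in lowest terms, the numerator of $\ei_K$ divides the right-hand side; as $[(\mathcal{O}_K^\times)^+:(\mathcal{O}_K^\times)^2]$ is a power of $2$, and the only remaining factor $(\text{denominator of }\ei_K)$ is coprime to the numerator, we conclude the numerator of $\ei_K$ divides a power of $2$, hence is itself a power of $2$. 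The equivalence with the numerator of $\zeta_K(-1)$ is immediate from the definition $\ei_K = (-1)^d \zeta_K(-1)/2^{d-1}$, since dividing by $2^{d-1}$ only removes powers of $2$ from the numerator.
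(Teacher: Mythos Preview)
Your proposal is correct and follows essentially the same approach as the paper's proof: both combine the Eichler mass formula with the class number formula $|C(\Lambda)| = h_K \cdot [(\mathcal{O}_K^\times)^+ : (\mathcal{O}_K^\times)^2]$ and the single-element stably free class to obtain the identity $[\Lambda^\times : \mathcal{O}_K^\times]^{-1} = \ei_K \cdot \prod_{\mathfrak{p}}(N_{K/\Q}(\mathfrak{p})-1) / [(\mathcal{O}_K^\times)^+ : (\mathcal{O}_K^\times)^2]$, and then use that the unit index is a power of $2$ to force the numerator of $\ei_K$ to be a power of $2$. Your treatment of the final divisibility step is in fact more explicit than the paper's, which simply asserts the conclusion once the displayed equation and the $2$-power property of the unit index are in hand.
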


\begin{proof}
If $\Lambda$ has stably free cancellation, then $\mass(\SF(\Lambda)) = [\Lambda^\times : \mathcal{O}_K^\times]^{-1}$ since $\Lambda$ a maximal order implies $O_L(\Lambda) = \Lambda$, i.e. the numerator is $1$. By Theorems \ref{thm:EMF} and \ref{thm:EMF+}, we also have that
\[ \mass(\SF(\Lambda)) = \frac{\ei_K}{[(\mathcal{O}_K^\times)^+:(\mathcal{O}_K^\times)^2]} \cdot \prod_{\mathfrak{p} \mid \disc(A)} (N_{K/\Q}(\mathfrak{p})-1).\]
Note that $[(\mathcal{O}_K^\times)^+:(\mathcal{O}_K^\times)^2]$ is a power of $2$ since $(\mathcal{O}_K^\times)^2 \subseteq (\mathcal{O}_K^\times)^+ \subseteq \mathcal{O}_K^\times$ and $[(\mathcal{O}_K^\times)^\times:(\mathcal{O}_K^\times)^2] = 2^d$ by Dirichlet's unit theorem. 
Since $N_{K/\Q}(\mathfrak{p}) \in \Z$, this implies that the numerator of $\ei_K$, or equivalently $\zeta_K(-1)$, is a power of $2$.	
\end{proof}

%%%%%%%%%%
\section{Orders in quaternionic components of $\Q G$} \label{section:orders-in-QG}

Recall that, for a finite group $G$, the rational group ring $\Q G$ is semisimple and so admits a decomposition into simple $\Q$-algebras. For the quaternion groups of order $4n \ge 8$, we have
\[ \Q Q_{4n} \cong \Q Q_{4n}^{\text{ab}} \times \prod_{d \ge 3, d \mid n} M_2(\Q(\zeta_d+\zeta_d^{-1})) \times \prod_{d \ge 3, d \nmid n, \, d \mid 2n} \Q[\zeta_d,j] \]
and\[ \Q Q_{4n}^{\text{ab}} \cong \begin{cases}
 \Q C_4	\cong \Q^2 \oplus \Q(i), & \text{if $n$ is odd} \\
  \Q C_2^2	\cong \Q^4, & \text{if $n$ is even}
 \end{cases}
  \]
where $Q_{4n}^{\text{ab}}$ denotes the abelianisation of $Q_{4n}$, where $\zeta_d = e^{2 \pi i/d} \in \C$ is a primitive $d$th root of unity, and $\Q[\zeta_d,j] \subseteq \H$ sits inside the real quaternions. 
This is stated on \cite[p75]{Sw83} though a more detailed proof can be found in \cite[p48-51]{Jo03a}. In order to apply the results of Section \ref{section:mass-formulas}, it will be helpful to note that
\[ \Q[\zeta_n,j] \cong \left( \frac{(\zeta_n-\zeta_n^{-1})^2,-1}{\Q(\zeta_n+\zeta_n^{-1})} \right).\]
This says that $\Q[\zeta_n,j]$ is the quaternion algebra with centre $K=\Q(\zeta_n+\zeta_n^{-1})$ and which is a free $K$-algebra of rank 4 with basis $\{1,\alpha, \beta,\alpha\beta\}$ where $\alpha, \beta \in \Q[\zeta_n,j]$ are such that $\alpha^2 = (\zeta_n-\zeta_n^{-1})^2$ and $\beta^2 = -1$.
Further details, as well as a proof, can be found in \cite[p51]{Jo03a}.
It is straightforward to check that $\Q[\zeta_n,j]$ is totally definite for $n \ge 3$ (see, for example, \cite[Lemma 4.3]{Sw83}).

If $n_i$ are distinct positive integers such that $n_i \nmid n$ and $n_i \mid 2n$ for $1 \le i \le k$, then define $\Lambda_{n_1, \cdots, n_k}$ to be the image of $\Z Q_{4n}$ under the projection of $ \Q Q_{4n}$ onto $A_{n_1, \cdots, n_k} = \prod_{i=1}^k \Q[\zeta_{n_i},j]$. For example, $\Z Q_{4n}$ projects onto $\Lambda_{2n} = \Z[\zeta_{2n},j]$ for all $n \ge 2$. 

Fix the standard presentation $Q_{4n} = \langle x, y \mid x^{n}=y^2, yxy^{-1}=x^{-1}\rangle$ and recall that, for $n \ge 3$, we have
$\Aut(Q_{4n}) =\{ \theta_{a,b} : a \in (\Z/{2n})^\times, b \in \Z/{2n}\}$
where $\theta_{a,b}(x)= x^a$ and $\theta_{a,b}(y) = x^{b}y$.

\begin{lemma}
Let $4n \ge 12$. If $n_i$ are distinct positive integers such that $n_i \nmid n$ and $n_i \mid 2n$ for $1 \le i \le k$, then the map $f: \Z Q_{4n} \twoheadrightarrow \Lambda_{n_1, \cdots, n_k}$ induces a map
\[ f_* : \Aut(Q_{4n}) \to \Aut(\Lambda_{n_1, \cdots, n_k}).\]	
\end{lemma}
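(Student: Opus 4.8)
The plan is to lift $\theta$ to a ring automorphism of $\Z Q_{4n}$, extend it to $\Q Q_{4n}$, and show it preserves the central idempotent cutting out $A_{n_1, \cdots, n_k}$; it then descends to $\Lambda_{n_1, \cdots, n_k}$ because this order is the image of $\Z Q_{4n}$, which the lift permutes. Concretely, I would write $\theta = \theta_{a,b}$ (valid since $4n \ge 12$), so that $\theta(x) = x^a$ with $a \in (\Z/2n)^\times$, and let $\hat\theta \colon \Z Q_{4n} \to \Z Q_{4n}$ be the $\Z$-algebra automorphism $\sum_g c_g g \mapsto \sum_g c_g\, \theta(g)$. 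Tensoring with $\Q$ gives a $\Q$-algebra automorphism of $\Q Q_{4n}$, still denoted $\hat\theta$, that permutes the primitive central idempotents, hence the Wedderburn factors; note also $\hat\theta(\Z Q_{4n}) = \Z Q_{4n}$ since $\theta$ permutes $Q_{4n}$.

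The key step is to show $\hat\theta$ fixes every primitive central idempotent $e$ with $e\,\Q Q_{4n} \cong \Q[\zeta_d, j]$ ($d \mid 2n$, $d \nmid n$). For a primitive central idempotent $e$, let $d(e)$ be the multiplicative order of the image of $x$ in $e\,\Q Q_{4n}$; it divides $2n$. It is a routine consequence of the Wedderburn decomposition of $\Q Q_{4n}$ recalled above that the factors with $d(e) \nmid n$ are precisely the $\Q[\zeta_d, j]$ (in the matrix factors, and in the remaining factors, the image of $x$ has order dividing $n$), and that $d(e) = d$ then recovers the factor. Now $\hat\theta$ restricts to an algebra isomorphism $e\,\Q Q_{4n} \xrightarrow{\sim} \hat\theta(e)\,\Q Q_{4n}$ sending the image of $x$ to the image of $x^a$, which is the $a$-th power of the image of $x$; as $\gcd(a, 2n) = 1$, this has the same order, so $d(\hat\theta(e)) = d(e)$. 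Hence $\hat\theta$ fixes every factor with $d(e) \nmid n$, and in particular $\hat\theta(e) = e$ when $e\,\Q Q_{4n} \cong \Q[\zeta_{n_i}, j]$, since $n_i \nmid n$.

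Finally, with $e = \sum_{i=1}^k e_{n_i}$ (where $e_{n_i}\,\Q Q_{4n} \cong \Q[\zeta_{n_i}, j]$), the projection $\pi \colon \Q Q_{4n} \to A_{n_1, \cdots, n_k}$ has kernel $(1-e)\,\Q Q_{4n}$, which $\hat\theta$ preserves since $\hat\theta(e) = e$; thus $\hat\theta$ descends to a $\Q$-algebra automorphism $\bar\theta$ of $A_{n_1, \cdots, n_k}$ with $\pi\hat\theta = \bar\theta\pi$. Because $\hat\theta(\Z Q_{4n}) = \Z Q_{4n}$ and $\Lambda_{n_1, \cdots, n_k} = \pi(\Z Q_{4n})$, one gets $\bar\theta(\Lambda_{n_1, \cdots, n_k}) = \Lambda_{n_1, \cdots, n_k}$, so $\bar\theta$ restricts to a ring automorphism of $\Lambda_{n_1, \cdots, n_k}$, and I would define $f_*(\theta)$ to be this restriction; it satisfies $f \circ \hat\theta|_{\Z Q_{4n}} = f_*(\theta) \circ f$, and functoriality of $\theta \mapsto \hat\theta$ and of the descent makes $f_*$ a group homomorphism. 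The one point needing care is the key step, namely ruling out that $\hat\theta$ permutes the quaternionic factors nontrivially or mixes them with the matrix factors: the shift $y \mapsto x^b y$ in $\theta_{a,b}$ does not change the order of the image of $x$ and so is irrelevant, and the hypothesis $4n \ge 12$ enters exactly to give the stated description $\Aut(Q_{4n}) = \{\theta_{a,b}\}$ — equivalently, to ensure $\langle x \rangle \le Q_{4n}$ is characteristic.
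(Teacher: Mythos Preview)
Your proof is correct and reaches the same conclusion, but it is organised rather differently from the paper's argument. The paper works directly on the target: it writes $\Lambda_{n_1,\dots,n_k}$ as the $\Z$-order generated by $\bar\zeta = (\zeta_{n_1},\dots,\zeta_{n_k})$ and $\bar j = (j,\dots,j)$, defines $\bar\theta_{a,b}$ explicitly by $\bar\zeta \mapsto \bar\zeta^{\,a}$, $\bar j \mapsto \bar\zeta^{\,b}\bar j$, checks this is a ring automorphism, and then verifies the compatibility $f\circ\theta_{a,b} = \bar\theta_{a,b}\circ f$. You instead work on the source: you lift $\theta_{a,b}$ to an automorphism of $\Q Q_{4n}$ and argue, via the invariant $d(e)$ (the order of the image of $x$ in each simple factor), that it must fix each primitive central idempotent supporting a quaternionic factor $\Q[\zeta_{n_i},j]$, hence preserves $\ker\pi$ and descends. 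Your route is more conceptual and explains \emph{why} the explicit formula in the paper exists (and would recover it if you wrote out where $x$ and $y$ go), at the cost of needing the observation that the non-quaternionic factors all have $d(e)\mid n$ because they factor through $D_{2n}$; the paper's route is shorter and entirely hands-on. Both uses of the hypothesis $4n\ge 12$ are the same, namely to get the description $\Aut(Q_{4n})=\{\theta_{a,b}\}$.
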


\begin{proof}
It follows from \cite[p48-51]{Jo03a} that the map
$f: \Q Q_{4n} \twoheadrightarrow A_{n_1, \cdots, n_k}$
is given by $x \mapsto (\zeta_{n_1}, \cdots, \zeta_{n_k})$, $y \mapsto (j, \cdots, j)$. If $\bar{\zeta} = (\zeta_{n_1}, \cdots, \zeta_{n_k})$ and $\bar{j} = (j, \cdots, j)$, then  $\Lambda_{n_1, \cdots, n_k} = \langle \bar{\zeta},\bar{j} \rangle$ as a $\Z$-order in $A_{n_1, \cdots, n_k}$. For $a \in (\Z/{2n})^\times$ and $b \in \Z/{2n}$,  define 
\[ \bar{\theta}_{a,b} : \Lambda_{n_1, \cdots, n_k} \to \Lambda_{n_1, \cdots, n_k}\]
\[ \bar{\zeta} \mapsto \bar{\zeta}^a = (\zeta_{n_1}^a, \cdots, \zeta_{n_k}^a), \quad \bar{j} \mapsto \bar{\zeta}^b \bar{j} = (\zeta_{n_1}^bj, \cdots, \zeta_{n_k}^bj) \]
which we can extend to be a ring homomorphism. Hence $\bar{\theta}_{a,b} \in \Aut(\Lambda_{n_1, \cdots, n_k})$ and it is easy to see that $f \circ \theta_{a,b} = \bar{\theta}_{a,b} \circ f$. This implies that there is an induced map $f_* : \Aut(Q_{4n}) \to \Aut(\Lambda_{n_1, \cdots, n_k})$ where $f_*(\theta_{a,b}) = \bar{\theta}_{a,b}$.	
\end{proof}

\begin{lemma} \label{lemma:f(swan)=free}
Let $G$ be a finite group, let $r \in (\Z /|G|)^\times$ and suppose $\psi \in \Z G$ is such that $\Z G/(\psi)$ is torsion-free and $(\varepsilon(\psi),r)=1$. If $f : \Z G \twoheadrightarrow \Z G/(\psi)$, then
\[ f_\#((I,r)) \cong \Z G/(\psi).\]
\end{lemma}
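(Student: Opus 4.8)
The plan is to identify $f_\#((I,r)) = \Z G/(\psi) \otimes_{\Z G} (I,r)$ explicitly and exhibit a generator, using the observation (as in the proof of Lemma~\ref{lemma:swan-quotient}) that it is cleaner to work with the ``dual'' description of the Swan module. Recall that $(I,r) \cong (\Sigma_G, r^{-1})$ where $\Sigma_G = \sum_{g \in G} g$ is the group norm, by \cite[Remark 2.3]{DM85}; replacing $r$ by $r^{-1}$ (and noting $(\varepsilon(\psi), r) = 1 \iff (\varepsilon(\psi), r^{-1}) = 1$), it suffices to show $f_\#((\Sigma_G, s)) \cong \Z G/(\psi)$ for any $s$ coprime to $\varepsilon(\psi)$. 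Write $\bar{R} = \Z G/(\psi)$ and let $e = \varepsilon(\psi)$, so that the image of $\Sigma_G$ in $\bar{R}$ satisfies a useful relation: since $\psi \cdot \Sigma_G = \varepsilon(\psi) \Sigma_G = e\Sigma_G$ in $\Z G$ (as $g \Sigma_G = \Sigma_G$ for all $g$), we get $e \bar\Sigma_G = 0$ in $\bar R$, where $\bar\Sigma_G$ denotes the image of $\Sigma_G$.

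First I would define a map $\varphi : \bar R \to f_\#((\Sigma_G, s)) = \bar R \otimes_{\Z G}(\Sigma_G, s)$. Since $(e, s) = 1$, pick $a, b \in \Z$ with $ae + bs = 1$, and set $\varphi(1) = a(1 \otimes \Sigma_G) + b(1\otimes s) \in \bar R \otimes_{\Z G} (\Sigma_G, s)$, extended $\bar R$-linearly. One checks this is well-defined: the only relation in $\bar R$ is multiplication by $\psi$, and $\psi \cdot \varphi(1) = a(\psi \otimes \Sigma_G) + b(\psi \otimes s) = a(1 \otimes \psi\Sigma_G) + b(1\otimes \psi s) = 1 \otimes (ae\Sigma_G + bs\psi)$; since $ae\Sigma_G = ae\Sigma_G$ and we are tensoring over $\Z G$, $ae \Sigma_G + bs\psi$ lies in the ideal $(\Sigma_G, s) \subseteq \Z G$ and its image under the structure maps vanishes because $\psi$ acts as $0$ on $\bar R$ — here I would be careful and instead argue directly that $\bar R \otimes_{\Z G}(\Sigma_G, s)$ is generated over $\bar R$ by the two elements $1\otimes\Sigma_G$ and $1\otimes s$, subject to $1 \otimes s$ being a unit multiple (by $s$) of $1 \otimes 1$ after inverting — no: cleaner is to build the inverse map directly. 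Define $\pi : (\Sigma_G, s) \to \bar R$ by $\Sigma_G \mapsto \bar\Sigma_G$ and $s \mapsto \bar s$ (the image of $s \cdot 1$), which is the restriction of $f : \Z G \to \bar R$ and hence a $\Z G$-module homomorphism; it is surjective since $ae\bar\Sigma_G + bs\bar 1 = \bar 1$ and $\bar\Sigma_G, \bar s$ are in its image. This induces $\bar R \otimes_{\Z G}(\Sigma_G, s) \to \bar R \otimes_{\Z G} \bar R = \bar R$, and I would check it is the inverse of $\varphi$ by a direct computation on generators, using $e\bar\Sigma_G = 0$.

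The main obstacle is verifying injectivity (equivalently, that $\varphi \circ (\text{the induced map})$ and its composite the other way are both the identity): one must show that every element of $\bar R\otimes_{\Z G}(\Sigma_G, s)$ is an $\bar R$-multiple of $\varphi(1)$, i.e. that $1 \otimes \Sigma_G$ and $1 \otimes s$ are both $\bar R$-multiples of $a(1\otimes\Sigma_G) + b(1\otimes s)$. Multiplying $\varphi(1)$ by $\bar s$ gives $as(1\otimes\Sigma_G) + bs(1\otimes s) = a(1 \otimes s\Sigma_G) + b(1\otimes s^2) = ae(1\otimes\Sigma_G) + \ldots$; using $s\Sigma_G = \varepsilon(s\cdot 1 \text{ in the ideal sense})$... the key identity is $s \cdot \Sigma_G = \varepsilon_0 \Sigma_G$ only when $s$ is a scalar, which it is, so $s\Sigma_G = s\Sigma_G$ trivially. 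The genuinely useful relation inside $(\Sigma_G, s)$ as a $\Z G$-module is that $\Sigma_G$ is annihilated by $I_G$ and $s\cdot(\text{anything in }I_G\Z G)$ lands in $s\Z G$; after tensoring with $\bar R$, combined with $e\bar\Sigma_G = 0$, one gets the needed relations. I expect this bookkeeping — tracking how the two generators of the Swan ideal interact after base change along $f$ — to be the only real work; it is entirely parallel to the argument in Lemma~\ref{lemma:swan-quotient}, with the torsion-freeness of $\Z G/(\psi)$ and the coprimality $(\varepsilon(\psi), r)=1$ playing exactly the roles that $(d,r)=1$ played there, and I would present it in that same compressed style, concluding that $\varphi$ is a $\bar R$-module isomorphism and hence $f_\#((I,r)) \cong \Z G/(\psi)$.
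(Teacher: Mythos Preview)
Your approach contains a concrete error. You claim that the map $\pi : \bar R \otimes_{\Z G}(\Sigma_G,s) \to \bar R$ induced by the inclusion $(\Sigma_G,s)\hookrightarrow \Z G$ is surjective ``since $ae\bar\Sigma_G + bs\bar 1 = \bar 1$''. But you have already established $e\bar\Sigma_G=0$, so this equation reduces to $bs=1$ in $\bar R$, which need not hold. In fact the image of $\pi$ is the left ideal $(\bar\Sigma_G, s)\subseteq \bar R$, and this can be proper: for $G=C_2=\langle g\rangle$ and $\psi=1+g$ one has $\bar R\cong\Z$ via $g\mapsto -1$, hence $\bar\Sigma_G=0$ and the image of $\pi$ is $s\Z\subsetneq\Z$ whenever $|s|>1$. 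So $\pi$ is not surjective, and your candidate $\varphi$ (which in any case depends on the choice of B\'ezout coefficients $a,b$) cannot be its inverse. The statement $f_\#((\Sigma_G,s))\cong\bar R$ is of course still true, but the isomorphism is not the natural inclusion-induced map, and the bookkeeping you defer does not run parallel to Lemma~\ref{lemma:swan-quotient}: there the target was another Swan module $(\Sigma_H,r)$, not a free module, and that is exactly what made the explicit formula work.

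The paper avoids all of this by staying with $(I,r)$ rather than passing to $(\Sigma_G,s)$. It applies $\Lambda\otimes_{\Z G}-$ to the short exact sequence
\[ 0\to(I,r)\xrightarrow{\,i\,}\Z G\xrightarrow{\,\varepsilon\,}\Z/r\to 0; \]
torsion-freeness of $\Lambda$ gives exactness on the left, and the right-hand term is computed as $\Lambda\otimes_{\Z G}\Z/r\cong(\Z/\varepsilon(\psi))\otimes(\Z/r)=0$ by the coprimality hypothesis. Hence $1\otimes i:\Lambda\otimes_{\Z G}(I,r)\to\Lambda$ is an isomorphism. The crucial difference from your setup is that for $(I,r)$ the natural map \emph{is} the isomorphism, because the image of $I$ in $\Lambda$ together with $r$ already generates all of $\Lambda$; this is precisely what fails for $(\Sigma_G,s)$.
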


\begin{proof}
First consider the exact sequence of $\Z G$-modules
\[ 0 \to (I,r) \xrightarrow[]{i} \Z G \xrightarrow[]{\varepsilon} \Z/r\Z \to 0.\]
If $\Lambda = \Z G/(\psi)$, then we can apply $\Lambda \otimes_{\Z G} -$ to the above sequence to get
\[ 0 \to \Lambda \otimes_{\Z G} (I,r) \xrightarrow[]{1 \otimes i} \Lambda \otimes_{\Z G} \Z G \xrightarrow[]{1 \otimes \varepsilon} \Lambda \otimes_{\Z G} (\Z/r\Z) \to 0\]
which is exact since $\Tor_1^{\Z G}(\Z/r\Z,\Lambda) = 0$ since $\Lambda$ is torsion-free. Since $f_\#((I,r)) \cong \Lambda \otimes_{\Z G} (I,r)$ and $\Lambda \cong \Lambda \otimes_{\Z G} \Z G$, it suffices to show that $1 \otimes i$ is an isomorphism, i.e. that $\Lambda \otimes_{\Z G} (\Z/r\Z)=0$.
Now $\Z/r\Z \cong \Z \otimes_{\Z G} (\Z/r\Z)$ and the associativity of tensor product implies that
\[ \Lambda \otimes_{\Z G} (\Z/r\Z) \cong (\Lambda \otimes_{\Z G} \Z) \otimes_{\Z G} (\Z/r\Z) \cong (\Z/\varepsilon(\psi)) \otimes_{\Z G} (\Z /r\Z) =0\]
since $r$ and $\varepsilon(\psi)$ are coprime.	
\end{proof}

The following is an extension of Theorem \ref{thm:P/Aut(G)} to this setting.

\begin{prop} \label{prop:order-quotient}
Let $f: \Z Q_{4n} \twoheadrightarrow \Lambda_{n_1, \cdots, n_k}$ where $n \ge 3$ and $n_1, \cdots, n_k$ are distinct positive integers such that $n_i \nmid n$ and $n_i \mid 2n$. If $\Aut(Q_{4n})$ acts on $[P]$ and $\bar{P} = f_\#(P)$, then there is a surjection of graded trees
\[ f_\# : [P]/\Aut(Q_{4n}) \twoheadrightarrow [\bar{P}]/\IM(f_*) \]
where the action of $\IM(f_*) \subseteq \Aut(\Lambda_{n_1, \cdots, n_k})$ on $[\bar{P}]$ is induced by $f$.
\end{prop}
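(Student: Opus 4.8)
The plan is to transcribe the proof of Theorem~\ref{thm:P/Aut(G)}, with $\Z H$ replaced throughout by $\Lambda := \Lambda_{n_1, \cdots, n_k}$ and with $\bar{\cdot} : \Aut(G) \to \Aut(H)$ there replaced by the map $f_* : \Aut(Q_{4n}) \to \Aut(\Lambda)$ constructed above, which satisfies $f \circ \theta = f_*(\theta) \circ f$ for every $\theta \in \Aut(Q_{4n})$. That proof rests on three facts about the quotient $\Z G \twoheadrightarrow \Z H$, and I would establish their analogues for the order quotient $f : \Z Q_{4n} \twoheadrightarrow \Lambda$:
\begin{enumerate}[\normalfont(a)]
\item $f_\# : [P] \to [\bar{P}]$ is a well-defined and surjective map of graded trees;
\item $f_\#(M_\theta) \cong f_\#(M)_{f_*(\theta)}$ for every $\Z Q_{4n}$-module $M$ and every $\theta \in \Aut(Q_{4n})$;
\item $f_\#((I,r)) \cong \Lambda$, as $(\Lambda, \Z Q_{4n})$-bimodules, for every $r \in (\Z/4n)^\times$.
\end{enumerate}
Fact (a) is \cite[Theorem A10]{Sw83}. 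Fact (b) holds by the same reasoning as Lemma~\ref{lemma:characteristic-quotient-induced-action}, whose proof (via \cite[Corollary 7.4]{Ni20}) uses only the compatibility $f \circ \theta = f_*(\theta) \circ f$ of ring automorphisms. The substantive point is (c): it is where Lemma~\ref{lemma:f(swan)=free} enters, replacing the appeal to Lemma~\ref{lemma:swan-quotient} in the original proof.

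To prove (c), I would first exhibit $\Lambda$ as a principal quotient of $\Z Q_{4n}$. Write $\Z Q_{4n} = \Z C_{2n} \oplus \Z C_{2n}\, y$ as a free rank-two module over the group ring $\Z C_{2n} = \Z[x]/(x^{2n}-1)$, where $C_{2n} = \langle x \rangle$. Since $f$ sends $x \mapsto (\zeta_{n_1}, \cdots, \zeta_{n_k})$ and $y \mapsto (j, \cdots, j)$, a polynomial $p(x) \in \Z C_{2n}$ lies in $\Ker(f)$ precisely when $\Phi_{n_i}(x) \mid p(x)$ for all $i$; as the $\Phi_{n_i}$ are distinct irreducibles of the UFD $\Z[x]$, each dividing $x^{2n}-1$ (because $n_i \mid 2n$), their least common multiple is $\psi := \prod_{i=1}^k \Phi_{n_i}(x)$, so $\Ker(f) \cap \Z C_{2n} = (\psi)$. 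Because $y$ conjugates $x$ to $x^{-1}$ on both sides and $\prod_i \Phi_{n_i}(x^{-1})$ is a unit multiple of $\psi$ in $\Z C_{2n}$, one checks that $\Ker(f) = \psi\, \Z Q_{4n}$ is the two-sided ideal generated by $\psi$, whence $\Lambda \cong \Z Q_{4n}/(\psi)$. Now $\Lambda$ is torsion-free, being an order in the $\Q$-algebra $A_{n_1, \cdots, n_k}$, and $\varepsilon(\psi) = \prod_i \Phi_{n_i}(1)$; since each $n_i \ge 2$ (as $n_i \nmid n$) and $\Phi_m(1)$ is $1$ or a prime $p$ with $m$ a power of $p$, every prime dividing $\varepsilon(\psi)$ divides $2n$, hence $4n$, so $(\varepsilon(\psi), r) = 1$ for every $r \in (\Z/4n)^\times$. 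Lemma~\ref{lemma:f(swan)=free} therefore yields $f_\#((I,r)) \cong \Z Q_{4n}/(\psi) = \Lambda$, and the isomorphism it provides is the one induced by the inclusion $(I,r) \hookrightarrow \Z Q_{4n}$ of two-sided ideals, hence is an isomorphism of $(\Lambda, \Z Q_{4n})$-bimodules.

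Granting (a)--(c), the remainder is the proof of Theorem~\ref{thm:P/Aut(G)} verbatim. Fix $\theta \in \Aut(Q_{4n})$, put $\bar\theta = f_*(\theta)$, and suppose the action of $\theta$ on $[P]$ sends $Q_0 \mapsto (I,r) \otimes (Q_0)_\theta$ for some $r = r(\theta) \in (\Z/4n)^\times$. For $Q \in [P]$, associativity of the tensor product together with (c) and (b) give
\[ f_\#\bigl((I,r) \otimes Q_\theta\bigr) \;\cong\; f_\#((I,r)) \otimes_{\Z Q_{4n}} Q_\theta \;\cong\; \Lambda \otimes_{\Z Q_{4n}} Q_\theta \;=\; f_\#(Q_\theta) \;\cong\; \bar{Q}_{\bar\theta}, \]
where $\bar{Q} = f_\#(Q)$. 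Since $(I,r) \otimes Q_\theta \in [P]$ and $f_\#$ maps $[P]$ onto $[\bar{P}]$, the module $\bar{Q}_{\bar\theta}$ lies in $[\bar{P}]$, so twisting by $\bar\theta$ preserves the stable class $[\bar{P}]$; hence $\IM(f_*)$ acts on $[\bar{P}]$ by $Q_0 \mapsto (Q_0)_{\bar\theta}$, and the display shows that $f_\#$ intertwines this action with the $\Aut(Q_{4n})$-action on $[P]$. Being a surjection of graded trees by (a), $f_\#$ descends to the asserted surjection $[P]/\Aut(Q_{4n}) \twoheadrightarrow [\bar{P}]/\IM(f_*)$. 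The main obstacle---and the only step that departs from the proof of Theorem~\ref{thm:P/Aut(G)}---is the verification just given of the hypotheses of Lemma~\ref{lemma:f(swan)=free}: that $\Ker(f)$ is the principal ideal $(\psi)$, and that every prime divisor of $\varepsilon(\psi)$ divides $4n$.
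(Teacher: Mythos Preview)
Your proof is correct and follows the same overall strategy as the paper: reduce to the argument of Theorem~\ref{thm:P/Aut(G)} by establishing the analogue of Lemma~\ref{lemma:swan-quotient}, namely $f_\#((I,r)) \cong \Lambda$, via Lemma~\ref{lemma:f(swan)=free}. The one point of divergence is in how Lemma~\ref{lemma:f(swan)=free} is applied. You work directly with $\psi = \prod_i \Phi_{n_i}(x)$ (which the paper records just after this proposition as giving $\Lambda \cong \Z Q_{4n}/(\psi)$) and then argue that every prime dividing $\varepsilon(\psi) = \prod_i \Phi_{n_i}(1)$ divides $2n$ via the standard evaluation of $\Phi_m(1)$. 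The paper instead observes that $f$ factors through $g : \Z Q_{4n} \twoheadrightarrow \Z Q_{4n}/(y^2+1)$, applies Lemma~\ref{lemma:f(swan)=free} to $g$ with the trivially computed $\varepsilon(y^2+1)=2$, and then pushes forward along the remaining quotient to $\Lambda$. The paper's route avoids your cyclotomic computation and the verification that $\Ker(f)=(\psi)$, at the cost of introducing an intermediate ring; your route is more self-contained and applies Lemma~\ref{lemma:f(swan)=free} in a single step. Both are short and either is fine.
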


\begin{proof}
Let $n_1, \cdots, n_t$ be the set of all $n_i$ such that $n_i \nmid n$ and $n_i \mid 2n$, where $k \le t$. It follows from \cite[p48-51]{Jo03a} that $\Q Q_{4n} / (y^2+1)  \cong A_{n_1, \cdots, n_t}$. In particular, $f$ factors through the map
\[ g: \Z Q_{4n} \twoheadrightarrow  \Z Q_{4n} / (y^2+1).\]
Now $\varepsilon(y^2+1)=2$ and $(2,r)=1$, and so $g_\#((I,r)) \cong \Z Q_{4n} / (y^2+1)$. Hence, by Lemma \ref{lemma:f(swan)=free}, we have that $f_\#((I,r)) \cong \Lambda_{n_1, \cdots, n_k}$. The result now follows using \cite[Corollary 8.4]{Ni20a} and a similar argument to the proof of Theorem \ref{thm:P/Aut(G)}.
\end{proof}

For the rest of this section, we will consider the cancellation problem for orders of the form $\Lambda_{n_1,\cdots,n_k}$. We begin by considering the case $k=1$.

\subsection{Cancellation for quaternionic orders}

First note that $\Lambda_{2n} = \Z[\zeta_{2n},j]$ is a $\mathcal{O}_K$-order in the quaternion algebra $\Q[\zeta_{2n},j]$ with centre $K=\Q(\zeta_{2n}+\zeta_{2n}^{-1})$. We can therefore apply the results in Section \ref{section:mass-formulas} to get:

\begin{lemma} \label{lemma:lambda-fails-cancellation}
Let $2n=16,22$ or $2n \ge 26$ with $2n \ne 30,42$.
Then $\Lambda_{2n}$ does not have stably free cancellation.	
\end{lemma}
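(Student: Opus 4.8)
The plan is to realise $\Lambda_{2n} = \Z[\zeta_{2n},j]$ as an $\mathcal{O}_K$-order in the totally definite quaternion algebra $A = \Q[\zeta_{2n},j]$ with centre $K = \Q(\zeta_{2n}+\zeta_{2n}^{-1})$, and then to obstruct stably free cancellation using Proposition \ref{prop:zeta-1} together with Theorem \ref{thm:HM06}. First I would note that, since $\Lambda_{2n}$ need not be a maximal order, I should pass to a maximal order $\Lambda \supseteq \Lambda_{2n}$ in $A$; a standard argument (extension of scalars $\Lambda \otimes_{\Lambda_{2n}} -$ is surjective on stable classes, as in \cite[Theorem A10]{Sw83}) shows that if $\Lambda$ fails stably free cancellation then so does $\Lambda_{2n}$, so it suffices to treat the maximal order. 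Then Theorem \ref{thm:HM06} immediately disposes of the cases with $[K:\Q] \ge 7$: since $[K:\Q] = \varphi(2n)/2$, this handles all $2n$ with $\varphi(2n) \ge 14$, i.e. all sufficiently large $2n$.

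For the remaining finitely many values of $2n$ with $\varphi(2n) \le 12$ (so $[K:\Q] \le 6$), I would apply Proposition \ref{prop:zeta-1}: stably free cancellation forces the numerator of $\zeta_K(-1)$ to be a power of $2$. The key computation is therefore to evaluate $\zeta_K(-1)$ for each real cyclotomic field $K = \Q(\zeta_{2n}+\zeta_{2n}^{-1})$ appearing in the stated list ($2n = 16, 22$, and $2n \in \{26,27,28,\dots\}\setminus\{30,42\}$ with $\varphi(2n)\le 12$, i.e. $2n \in \{26,28,33,34,36,38,44,45,...\}$ up to the $\varphi \le 12$ cutoff) and check that in each case the numerator has an odd prime factor. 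For instance $K$ has a factorisation of $\zeta_K$ into Dirichlet $L$-functions $\zeta_K(-1) = \prod_\chi L(-1,\chi)$ over the even characters $\chi$ of conductor dividing $2n$, and $L(-1,\chi)$ is computed via generalised Bernoulli numbers $B_{2,\chi}$; alternatively one can quote the Euler-characteristic/$K$-theory values of $\zeta_K(-1)$ tabulated in \cite{Sw83} directly, since Swan already did most of these calculations. The cases $2n = 30, 42$ are excluded precisely because there $\varphi(2n) = 8, 12$ but the numerator of $\zeta_K(-1)$ happens to be a power of $2$, so the obstruction is silent there — consistent with the statement only claiming failure for the listed values.

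The main obstacle is the finite-but-not-entirely-trivial case analysis for $[K:\Q] \le 6$: one must actually know, or compute, the numerator of $\zeta_K(-1)$ for each relevant real cyclotomic field and exhibit an odd prime dividing it. This is where I expect to lean most heavily on \cite[Section 3--4, 8]{Sw83}, where the Eichler constants $\ei_K$ for exactly these fields are recorded; the bookkeeping of which $2n$ give $\varphi(2n) \le 12$ and matching them against Swan's table is routine but is the substantive content. One subtlety to check is that $2n = 16$ genuinely has $\varphi(16)/2 = 4 \le 6$, so it is not covered by Theorem \ref{thm:HM06} and must be verified by the zeta-value criterion — indeed $K = \Q(\zeta_{16}+\zeta_{16}^{-1})$ is the degree-$4$ field whose $\zeta_K(-1)$ has numerator divisible by an odd prime. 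Finally, I would remark that for the small cases one could alternatively quote Swan's explicit determination in \cite[Section 6]{Sw83} of $\SF(\Lambda_{2n})$, but the mass-formula route via Proposition \ref{prop:zeta-1} is cleaner and self-contained given the machinery of Section \ref{section:mass-formulas}.
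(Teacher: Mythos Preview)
Your approach is essentially identical to the paper's: pass to a maximal order via \cite[Theorem A10]{Sw83}, use Theorem \ref{thm:HM06} to reduce to $\varphi(2n) \le 12$, and then apply Proposition \ref{prop:zeta-1} together with Swan's tabulated Eichler constants. The only slip is bookkeeping: your enumeration of the residual cases is off (you list odd values of $2n$ such as $27,33,45$, and values like $34,38,44$ with $\varphi(2n)>12$); the actual list of even $2n$ in the range with $\varphi(2n)\le 12$, excluding $30$ and $42$, is exactly $\{16,22,26,28,36\}$, and the paper then quotes $\ei_{2n}$ from \cite[Table II]{Sw83} for these five values to finish.
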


\begin{proof}
If $\Lambda_{2n}$ has stably free cancellation, then so does $\Gamma_{2n}$ where $\Lambda_{2n} \subseteq \Gamma_{2n} \subseteq \Q[\zeta_{2n},j]$ is a maximal order by \cite[Theorem A10]{Sw83}. Since $[\Q(\zeta_{2n}+\zeta_{2n}^{-1}) : \Q] = \frac{1}{2} \varphi(2n)$, it follows from Theorem \ref{thm:HM06} that $\varphi(2n) \le 12$, i.e. if $2n \le 30$ or $2n = 36, 42$. It remains to consider the cases $2n = 16,22,26,28,36$. If $\ei_{n} = \ei_{\Q(\zeta_{n}+\zeta_{n}^{-1})}$, then Proposition \ref{prop:zeta-1} implies that the numerator of $\ei_{2n}$ is a power of $2$ for $2n = 16,22,26,28,36$. However, by \cite[Table II]{Sw83}, we have that
\[ \ei_{16} = \frac{5}{48}, \quad \ei_{22} = \frac{5}{132},\quad \ei_{26} = \frac{19}{156}, \quad\ei_{28} = \frac{13}{21},\quad \ei_{36} = \frac{31}{36} \]
which is a contradiction.	
\end{proof}

\begin{remark}
Theorem \ref{thm:HM06} and Proposition \ref{prop:zeta-1} do not characterise which orders have stably free cancellation, even among the $\Lambda_{2n}$. For example, $\Lambda_{42}$ has $[K:\Q] = 6$, $\ei_K = \frac{1}{6}$ \cite[Table II]{Sw83} but does not have stably free cancellation \cite[p88]{Sw83}.
\end{remark}

It is possible to show that $\Lambda_{2n}$ has stably free cancellation in all other cases.
It can be shown using \cite[Table 2]{SV19} that $\Lambda_{2n}$ has cancellation in all classes for all remaining cases other than $2n = 20, 24$. The cases $2n = 20,24$ can then be dealt with either using a MAGMA program, or by explicitly computing $\mass(\SF(\Lambda_{2n}))$ and showing it is equal to $[\Lambda^\times : \mathcal{O}_K^\times]^{-1}$.

We also note the following bounds which we will use in the proof of Theorem \ref{thm:main-bounds}. 

\begin{prop} \label{prop:bounds}
Let $K = \Q(\zeta_{2n}+\zeta_{2n}^{-1})$.
If $P \in \Cls(\Lambda_{2n})$, then
\[ \# \Cls^{[P]}(\Lambda_{2n}) \ge \frac{2 |\Delta_{K}|^{3/2}}{ 2^{t_{2n}} (2\pi)^{\varphi(2n)}} \ge e^{\frac{3}{8} \varphi(n) \log n +O(n \log \log n)}\]
where $t_{2n} = \ord_2([(\mathcal{O}_{K}^\times)^+:(\mathcal{O}_{K}^\times)^2])$. 
\end{prop}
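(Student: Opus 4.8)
The plan is to extract the lower bound from the Eichler mass formula together with the equidistribution result of Theorem \ref{thm:EMF+}, and then to feed in the known analytic estimates for the discriminant and Dedekind zeta value of the real cyclotomic field $K = \Q(\zeta_{2n}+\zeta_{2n}^{-1})$. First I would pass to a maximal $\mathcal{O}_K$-order $\Gamma_{2n}$ containing $\Lambda_{2n}$ inside the totally definite quaternion algebra $\Q[\zeta_{2n},j]$; by \cite[Theorem A10]{Sw83} the induced map on class sets is surjective, so $\#\Cls^{[P]}(\Lambda_{2n}) \ge \#\Cls^{[P']}(\Gamma_{2n})$ for a suitable $P'$, and it suffices to bound the latter. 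Theorem \ref{thm:EMF+} gives $\mass(\Cls^{[P']}(\Gamma_{2n})) = \mass(\Cls\Gamma_{2n})/|C(\Gamma_{2n})|$, and since every ideal $I$ contributes a term $1/[O_L(I)^\times : \mathcal{O}_K^\times] \le 1$ to the mass, we get $\#\Cls^{[P']}(\Gamma_{2n}) \ge \mass(\Cls\Gamma_{2n})/|C(\Gamma_{2n})|$.

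Next I would substitute the Eichler mass formula (Theorem \ref{thm:EMF}) and Eichler's formula $|C(\Gamma_{2n})| = h_K \cdot [(\mathcal{O}_K^\times)^+:(\mathcal{O}_K^\times)^2]$ into this quotient. The class number $h_K$ cancels, the local factors $\prod_{\mathfrak{p} \mid \disc(A)}(N_{K/\Q}(\mathfrak{p})-1)$ are each $\ge 1$ and so can be dropped, and with $[K:\Q] = \tfrac12\varphi(2n)$ we are left with
\[ \#\Cls^{[P]}(\Lambda_{2n}) \ge \frac{2\,\zeta_K(2)\,|\Delta_K|^{3/2}}{(2\pi)^{\varphi(2n)}\, 2^{t_{2n}}} \ge \frac{2\,|\Delta_K|^{3/2}}{(2\pi)^{\varphi(2n)}\, 2^{t_{2n}}}, \]
using $\zeta_K(2) > 1$; this is the first claimed inequality. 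For the asymptotic form I would take logarithms, giving $\tfrac32 \log|\Delta_K| - \varphi(2n)\log(2\pi) - t_{2n}\log 2 + \log 2$, and note $t_{2n} \le [K:\Q] = \tfrac12\varphi(2n) = O(\varphi(n))$ by Dirichlet's unit theorem (since $[(\mathcal{O}_K^\times)^\times:(\mathcal{O}_K^\times)^2] = 2^{[K:\Q]}$), while $\varphi(2n)\log(2\pi) = O(\varphi(n))$, and both error terms are absorbed into $O(n\log\log n)$ since $\varphi(n) = O(n)$. The dominant term is $\tfrac32\log|\Delta_K|$, and for the real cyclotomic field the conductor-discriminant formula gives $\log|\Delta_K| = \tfrac14\varphi(n)\log n + O(n\log\log n)$ (the $\log\log n$ error accounting for the contribution of prime divisors of $n$ to the conductors of the even Dirichlet characters modulo $2n$). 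Hence $\tfrac32\log|\Delta_K| = \tfrac38\varphi(n)\log n + O(n\log\log n)$, which yields the stated bound.

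The main obstacle is the last step: obtaining the estimate $\log|\Delta_K| = \tfrac14\varphi(n)\log n + O(n\log\log n)$ with a genuinely uniform error term. One computes $\Delta_K = \prod_{\chi}\mathfrak{f}_\chi$ over the $\tfrac12\varphi(2n)$ even primitive Dirichlet characters $\chi$ dividing $2n$, and the main term $\tfrac14\varphi(n)\log n$ comes from treating $\mathfrak{f}_\chi$ as essentially $2n$; the discrepancy is controlled by $\sum_{d \mid 2n}\varphi(d)\log(2n/d)$, which is $O(n \log\log n)$ — this is a standard but slightly delicate divisor-sum estimate, and getting the constant $\tfrac38$ exactly right (rather than merely $\gg \varphi(n)\log n$) requires being careful that all the secondary contributions, including the $(2\pi)^{\varphi(2n)}$ and $2^{t_{2n}}$ factors, really are of lower order $O(n\log\log n)$ rather than comparable to the main term. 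Everything else is bookkeeping with inequalities that are already assembled in Section \ref{section:mass-formulas}.
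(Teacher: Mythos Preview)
Your proposal is correct and follows essentially the same strategy as the paper: bound the class count below by the mass, apply Theorems \ref{thm:EMF} and \ref{thm:EMF+}, cancel $h_K$ against Eichler's class number formula, drop the local factors and $\zeta_K(2)\ge 1$, then take logarithms and estimate $|\Delta_K|$. Two differences are worth noting.

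First, you pass explicitly to a maximal order $\Gamma_{2n}$ via \cite[Theorem A10]{Sw83}, whereas the paper applies the mass formula directly to $\Lambda_{2n}$; since Theorems \ref{thm:EMF} and \ref{thm:EMF+} are stated only for maximal orders, your step is arguably more careful. Second, for the discriminant the paper avoids the conductor--discriminant formula entirely: it uses the tower relation $\Delta_{\Q(\zeta_{2n})/\Q} = N_{K/\Q}(\Delta_{\Q(\zeta_{2n})/K})\,\Delta_{K}^{2}$ together with the known closed form for $\Delta_{\Q(\zeta_{2n})}$ to obtain
\[
\Delta_K = c_{2n}^{-1}\Bigl(2n \prod_{p\mid 2n} p^{-1/(p-1)}\Bigr)^{\varphi(2n)/2},
\]
so that $\tfrac{3}{2}\log|\Delta_K| = \tfrac{3}{4}\varphi(2n)\log(2n) + O(\varphi(2n)\log\log n)$ via the elementary estimate $\sum_{p\mid 2n}\tfrac{\log p}{p-1}=O(\log\log n)$, and then reduces to $\tfrac{3}{8}\varphi(n)\log n$ using $\varphi(2n)\ge\tfrac{1}{2}\varphi(n)$. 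This is cleaner than controlling the conductor defect $\sum_\chi\log(2n/\mathfrak{f}_\chi)$ directly. One minor correction: your statement $\log|\Delta_K| = \tfrac{1}{4}\varphi(n)\log n + O(n\log\log n)$ cannot hold as an equality (the leading constant depends on the parity of $n$ and is at least $\tfrac{1}{2}$), but as a lower bound it suffices for the proposition.
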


\begin{proof}
To get the first inequality, note that $\# \Cls^{[P]}(\Lambda_{2n}) \ge \mass(\Cls^{[P]}(\Lambda_{2n}))$. We can then apply Theorems \ref{thm:EMF} and \ref{thm:EMF+} and note that $N_{K/\Q}(\mathfrak{p}) \ge 2$ for all $\mathfrak{p} \mid \disc(\Q[\zeta_{2n},j])$ to get that:
\[ \mass(\Cls^{[P]}(\Lambda_{2n})) \ge \frac{2 \zeta_{K}(2) |\Delta_{K}|^{3/2}}{(2\pi)^{\varphi(2n)} \cdot [(\mathcal{O}_{K}^\times)^+:(\mathcal{O}_{K}^\times)^2]} \ge \frac{2 |\Delta_{K}|^{3/2}}{ 2^{t_{2n}} (2\pi)^{\varphi(2n)}}\]
where we note that $\zeta_K(2) \ge 1$ by the Euler product formula.

Recall that $(\mathcal{O}_K^\times)^+ \subseteq \mathcal{O}_K^\times$ and so $t_{2n} \le [K:\Q] = \varphi(2n)$. In order to compute $\Delta_K = \Delta_{K/\Q}$, we will use that 
\[ \Delta_{\Q(\zeta_{2n})/\Q} = N_{K/\Q}(\Delta_{\Q(\zeta_{2n})/K}) \Delta_{K/\Q}^{[\Q(\zeta_{2n}):K]}.\] 
Since $\Delta_{\Q(\zeta_{2n})/K} = (\zeta_{2n}-\zeta_{2n}^{-1})^2$ and  $\Delta_{\Q(\zeta_{2n})/\Q} = \left( 2n \prod_{p \mid 2n} p^{\frac{-1}{p-1}}\right)^{\varphi(2n)}$, we get that
\[ \Delta_{K} = c_{2n}^{-1} \left( 2n \prod_{p \mid 2n} p^{\frac{-1}{p-1}}\right)^{\frac{1}{2}\varphi(2n)} \]
where $c_{2n} = N_{K/\Q}(\zeta_{2n}-\zeta_{2n}^{-1})$ which has $c_{2n} = 2$ if $n = 2^k$, $c_{2n} = \sqrt{p}$ if $n=p^k$ for $p$ an odd prime and $c_{2n}=1$ otherwise. If $B$ denotes the bound above, we have that:
\begin{align*} \textstyle \log B &\ge \textstyle \frac{3}{4} \varphi(2n) ( \log 2n - \sum_{ p \mid 2n} \frac{\log p}{p-1}) - \frac{3}{2} \log c_n -  \varphi(2n) \log 4 \pi -\log 2 \\
& \ge 	\textstyle \frac{3}{4} \varphi(2n) \log 2n +O(n \log \log n)
\end{align*}
since $c_{2n} \le n$ and by using that $\sum_{ p \mid 2n} \frac{\log p}{p-1} = O(\log \log n)$ by \cite[Lemma 2.7]{Sw83}. This makes sense since $\varphi(n) \ge O(n/\log \log n)$ by standard results and so 
\[ \varphi(2n)\log 2n \ge O(n \log n / \log \log n) \ge O(n \log \log n)\]
since $\log n/(\log \log n)^2 \to \infty$ as $n \to \infty$. We note finally that $\varphi(2n) \ge \frac{1}{2} \varphi(n)$ and $\log 2n \ge \log n$ to get the desired inequality.
\end{proof}

\subsection{Cancellation for higher quaternionic orders}

We will now consider the case $k \ge 2$. Firstly note that, for positive integers $n_1, \cdots, n_k$, there exists $n \ge 1$ such that $n_i \nmid n$, $n_i \mid 2n$ if and only if $\nu_2(n_i) = r$ for all $i$ and some $r \ge 1$, i.e. for all $i$, $n_i = 2^r m_i$ for some $m_i$ odd. From the definition, it is possible to show that
\[ \Lambda_{n_1,\cdots,n_k} \cong \Z Q_{4n} / (\Phi_{n_1}(x) \, \Phi_{n_2}(x) \, \cdots \, \Phi_{n_k}(x))\]
where $\Phi_n(x)$ denotes the $n$th cyclotomic polynomial. It follows that, if $S \subseteq \{n_1, \cdots, n_k\}$, then there is a quotient $\Lambda_{n_1, \cdots, n_k} \twoheadrightarrow \Lambda_S$. By \cite[Theorem A10]{Sw83} this implies that, if $\Lambda_{n_1, \cdots, n_k}$ has stably free cancellation, then $\Lambda_S$ has stably free cancellation for all $S \subseteq \{n_1, \cdots,n_k\}$. 

In particular, by Lemma \ref{lemma:lambda-fails-cancellation}, we have that $\Lambda_{n_1,\cdots,n_k}$ does not have stably free cancellation except possibly if, for all $i$, we have $n_i \le 14$ or $n_i = 18, 20, 24, 30$. By the discussion above, this is equivalent to
\[ \tag{*} \{n_1, \cdots, n_k \} \subseteq \begin{cases} \{2,6,10,14,18,30\}, & \text{if $r=1$} \\ \{4,12,20\}, & \text{if $r=2$} \\ \{8, 24\} & \text{if $r =3$}\end{cases}\]
where we note that the cases with $r \ge 4$ do not arise.

For a subset $S \subseteq \Z_{\ge 1}$, define a graph $\mathcal{G}(S)$ with vertices the elements of $S$ and where $a, b \in S$ are connected by an edge if $a/b = p^r$ for some $p$ prime and $r \in \Z$.

\begin{lemma}
Let $r \ge 1$ and let $n_1, \cdots, n_k$ be such that $\nu_2(n_i) =r$ for all $i$. If $\mathcal{G}(n_1, \cdots,n_k)$ is not connected, then there exists a splitting $\{n_1, \cdots, n_k\} = S_1 \sqcup S_2$ with $S_1$, $S_2$ non-empty such that $\Lambda_{n_1, \cdots, n_k} \cong \Lambda_{S_1} \times \Lambda_{S_2}$.
\end{lemma}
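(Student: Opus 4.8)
The plan is to exhibit the central idempotent of the claimed product decomposition inside $\Lambda := \Lambda_{n_1,\cdots,n_k}$. Since $\mathcal{G}(n_1,\cdots,n_k)$ is disconnected, I would take $S_1$ to be the vertex set of one of its connected components and $S_2 = \{n_1,\cdots,n_k\}\setminus S_1$; then $S_1$, $S_2$ are non-empty and no edge of $\mathcal{G}$ joins a vertex of $S_1$ to a vertex of $S_2$. Recall (as above, using \cite[p48-51]{Jo03 book}) that $\Lambda$ is the image of $\Z Q_{4n}$ in $A_{n_1,\cdots,n_k} = \prod_{i=1}^{k}\Q[\zeta_{n_i},j]$ under $x\mapsto\bar\zeta := (\zeta_{n_1},\cdots,\zeta_{n_k})$, $y\mapsto\bar j := (j,\cdots,j)$, so $\Lambda = \Z[\bar\zeta,\bar j]$. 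Regrouping factors as $A_{n_1,\cdots,n_k} = A_{S_1}\times A_{S_2}$, let $\pi_1$, $\pi_2$ be the two coordinate projections; these are ring maps sending $\bar\zeta$, $\bar j$ to the standard generators of $\Lambda_{S_1}$, $\Lambda_{S_2}$, hence carry $\Lambda$ \emph{onto} $\Lambda_{S_1}$ and $\Lambda_{S_2}$, and in particular $\Lambda\subseteq\Lambda_{S_1}\times\Lambda_{S_2}$ inside $A_{S_1}\times A_{S_2}$. It therefore suffices to show $e := (1_{A_{S_1}},0)\in\Lambda$: then for $\lambda\in\Lambda$ one has $\lambda e = (\pi_1(\lambda),0)$, so $\Lambda_{S_1}\times 0\subseteq\Lambda$ by surjectivity of $\pi_1|_\Lambda$, and likewise $0\times\Lambda_{S_2}\subseteq\Lambda$ using $1-e$, giving the reverse inclusion and hence the ring isomorphism $\Lambda\cong\Lambda_{S_1}\times\Lambda_{S_2}$.

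To produce $e$, I would set $f(x) = \prod_{a\in S_1}\Phi_a(x)$ and $g(x) = \prod_{b\in S_2}\Phi_b(x)$, so that $f,g\in\Z[x]$ are monic, coprime over $\Q$ (the $\Phi_{n_i}$ being distinct irreducibles), and $f(x)g(x) = \prod_{i=1}^{k}\Phi_{n_i}(x)$. The crucial point is that $f$ and $g$ generate the unit ideal of $\Z[x]$: for $a\in S_1$, $b\in S_2$ we have $a\neq b$ and $a/b$ is not a prime power (no edge of $\mathcal{G}$), so by the classical formula for resultants of cyclotomic polynomials $\Res(\Phi_a,\Phi_b) = \pm 1$; equivalently $\Phi_a$ and $\Phi_b$ stay coprime in $\F_p[x]$ for every prime $p$, since $\Phi_a$ and $\Phi_b$ can share an irreducible factor modulo $p$ only when $a$ and $b$ differ by a power of $p$. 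By multiplicativity of the resultant it follows that $\Res(f,g) = \pm 1$, and since $\Res(f,g)$ lies in the ideal $(f,g)\subseteq\Z[x]$ there are $u,v\in\Z[x]$ with $u(x)f(x)+v(x)g(x) = 1$. Evaluating $v(\bar\zeta)g(\bar\zeta)\in\Z[\bar\zeta]\subseteq\Lambda$ coordinatewise: if $n_i\in S_1$ then $f(\zeta_{n_i}) = 0$ (as $f$ contains the factor $\Phi_{n_i}$), hence $v(\zeta_{n_i})g(\zeta_{n_i}) = 1$; if $n_i\in S_2$ then $g(\zeta_{n_i}) = 0$, hence $v(\zeta_{n_i})g(\zeta_{n_i}) = 0$. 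Thus $v(\bar\zeta)g(\bar\zeta) = (1_{A_{S_1}},0) = e$, which is what we needed.

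The only non-formal ingredient is the number theory of the second paragraph — that an edge of $\mathcal{G}$ is present exactly when the corresponding cyclotomic polynomials fail to be coprime over $\Z$ — and this is where I expect the real content to lie, though it is entirely classical (one may either quote the resultant formula for cyclotomic polynomials or argue directly via reduction modulo $p$ using $\Phi_m(x)\equiv\Phi_{m'}(x)^{\varphi(p^\alpha)}$ when $m = p^\alpha m'$ with $p\nmid m'$). Everything else — the surjectivity $\Lambda\twoheadrightarrow\Lambda_{S_j}$ onto generators, and the assembly of $\Lambda = \Lambda_{S_1}\times\Lambda_{S_2}$ from $e\in\Lambda$ — is routine. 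I would also note at the outset that the hypothesis $\nu_2(n_i) = r$ for all $i$ is precisely what guarantees the existence of an $n$ with $n_i\nmid n$, $n_i\mid 2n$, so that $\Lambda_{n_1,\cdots,n_k}$ and each $\Lambda_{S_j}$ are defined.
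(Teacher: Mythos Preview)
Your proof is correct and follows essentially the same approach as the paper: both hinge on the classical fact that $\Phi_a$ and $\Phi_b$ are coprime in $\Z[x]$ whenever $a/b$ is not a prime power (the paper cites Diederichsen and \cite[Theorem B1]{Sw83} for this), and both use this to split $\Lambda$ as a product. The only difference is packaging: the paper works with the quotient presentation $\Lambda_{n_1,\cdots,n_k} \cong \Z Q_{4n}/(\Phi_{n_1}\cdots\Phi_{n_k})$ and applies the Chinese Remainder Theorem directly to the comaximal ideals $(\prod_{S_1}\Phi_{n_i})$ and $(\prod_{S_2}\Phi_{n_i})$, whereas you work inside the ambient product $A_{S_1}\times A_{S_2}$ and explicitly produce the central idempotent $e = v(\bar\zeta)g(\bar\zeta)$ from the B\'{e}zout relation --- but of course these are two sides of the same coin, since CRT is precisely the statement that comaximality yields such an idempotent.
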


\begin{proof}
If $\mathcal{G}(n_1, \cdots, n_k)$ is not connected then, by definition, there exists a splitting $\{n_1, \cdots, n_k\} = S_1 \sqcup S_2$ with $S_1$, $S_2$ non-empty and such that there are no edges between any $a \in S_1$ and $b \in S_2$. Now consider $\Lambda_{n_1, \cdots, n_k} = \Z Q_{4n} / (\Phi_{n_1} \, \cdots \, \Phi_{n_k})$ where, for example, $n =\frac{1}{2} \text{lcm}(n_1, \cdots, n_k)$. 

If $a \in S_1$ and $b \in S_2$, then $a/b \ne p^r$ for all $r \in \Z$ and so $(\Phi_{a}, \Phi_{b})=1$ by results of Diederichsen \cite{Di40} (see also \cite[Theorem B1]{Sw83}). In particular, this implies that
\[ \textstyle (\prod_{n_i \in S_1} \Phi_{n_i}, \prod_{n_i \in S_2} \Phi_{n_i}) =1\]
and so $\Z Q_{4n} / (\Phi_{n_1} \, \cdots \, \Phi_{n_k}) \cong \Z Q_{4n} / (\prod_{n_i \in S_1} \Phi_{n_i}) \times \Z Q_{4n} / (\prod_{n_i \in S_2} \Phi_{n_i})$ which completes the proof since $\Lambda_{S_j} \cong \Z Q_{4n} / \prod_{n_i \in S_j} \Phi_{n_i}$ for $j = 1,2$.
\end{proof}

Note that, if $\Lambda_{n_1, \cdots, n_k} \cong \Lambda_{S_1} \times \Lambda_{S_2}$, then $\Lambda_{n_1, \cdots, n_k}$ has stably free cancellation if and only if $\Lambda_{S_1}$ and $\Lambda_{S_2}$ have stably free cancellation. It therefore suffices to consider the case where $\mathcal{G}(n_1, \cdots, n_k)$ is connected. 

The following cases were calculated by Swan in \cite[Lemma 8.9-8.12, 8.13]{Sw83} and \cite[Corollary 8.17, 10.14]{Sw83}.

\begin{lemma} \label{lemma:swan-examples}
Let $\Lambda = \Lambda_{2,14}$, $\Lambda_{6,18}$, $\Lambda_{6,30}$, $\Lambda_{4,12}$, $\Lambda_{4,20}$ or $\Lambda_{8,24}$. Then $\Lambda$ does not have stably free cancellation.
\end{lemma}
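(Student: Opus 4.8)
The plan is to realise each $\Lambda=\Lambda_{n_1,n_2}$ as the pullback in a Milnor square and then read stably free cancellation off a finite computation in algebraic $K$-theory. In every listed pair the ratio $n_1/n_2$ is a single prime $p$ (here $p=7,3,5,3,5,3$ in the order written), so $\mathcal{G}(\{n_1,n_2\})$ is a single edge; by the same analysis of the ideals $(\Phi_{n_i})\subseteq\Z[x]$ used just before Lemma \ref{lemma:swan-examples} — now in the case where $\Phi_{n_1}$ and $\Phi_{n_2}$ are \emph{not} coprime, their resultant being a power of $p$ — writing $\Lambda=\Z Q_{4n}/(\Phi_{n_1}(x)\Phi_{n_2}(x))$ with $n=\tfrac12\operatorname{lcm}(n_1,n_2)$ one obtains a conductor square
\[
\begin{tikzcd}
\Lambda \arrow[r] \arrow[d] & \Lambda_{n_1} \arrow[d] \\
\Lambda_{n_2} \arrow[r] & \bar\Lambda
\end{tikzcd}
\]
in which $\bar\Lambda=\Z Q_{4n}/(\Phi_{n_1}(x),\Phi_{n_2}(x))$ is a finite ring supported at $p$ and the two maps to $\bar\Lambda$ are surjective, so Milnor patching applies.

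First I would identify $\bar\Lambda$ explicitly. It is a quotient, at the prime $p$, of the Wedderburn component $\Q[\zeta_{n_2},j]$ reduced along the primes of $\Q(\zeta_{n_2}+\zeta_{n_2}^{-1})$ above $p$, possibly with a non-reduced part coming from the common factor of $\Phi_{n_1}$ and $\Phi_{n_2}$ modulo $p$. In each of the six cases this works out to a small finite ring one can write down by hand; in the semisimple cases it is a product of matrix rings over finite fields, since every quaternion algebra over a finite field splits.

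Next I would run the Mayer--Vietoris sequence of the conductor square,
\[
K_1(\Lambda_{n_1})\oplus K_1(\Lambda_{n_2}) \xrightarrow{\ \alpha\ } K_1(\bar\Lambda) \xrightarrow{\ \partial\ } \widetilde K_0(\Lambda) \longrightarrow \widetilde K_0(\Lambda_{n_1})\oplus\widetilde K_0(\Lambda_{n_2}),
\]
together with the module-level refinement of Milnor patching. Since $\Lambda_{n_1}$ and $\Lambda_{n_2}$ have stably free cancellation (Lemma \ref{lemma:lambda-fails-cancellation} and the remarks after it), every rank-one stably free $\Lambda$-module restricts to a free module over each $\Lambda_{n_i}$, hence is the patching $P_u$ of $\Lambda_{n_1}$ and $\Lambda_{n_2}$ along some $u\in\bar\Lambda^\times$; here $P_u$ is stably free exactly when $[u]\in\image(\alpha)$, and $P_u\cong P_v$ exactly when $u,v$ lie in the same double coset of $\bar\Lambda^\times$ for the images of $\Lambda_{n_1}^\times$ and $\Lambda_{n_2}^\times$, with $\Lambda$ itself the trivial coset. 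Thus $\Lambda_{n_1,n_2}$ fails stably free cancellation as soon as one can produce $u\in\bar\Lambda^\times$ with $[u]\in\image(\alpha)$ but $u\notin\image(\Lambda_{n_1}^\times)\cdot\image(\Lambda_{n_2}^\times)$; the set of $u$ with $[u]\in\image(\alpha)$ is typically strictly larger than this product — it contains, for instance, the whole kernel of $\bar\Lambda^\times\to K_1(\bar\Lambda)$ — which is what makes such a $u$ available.

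The main obstacle is pinning down the images of the unit groups $\Lambda_{n_1}^\times$ and $\Lambda_{n_2}^\times$ in $\bar\Lambda^\times$. Each $\Lambda_{n_i}$ is a totally definite quaternion order, so $\Lambda_{n_i}^\times/\mathcal{O}_{K_i}^\times$ with $K_i=\Q(\zeta_{n_i}+\zeta_{n_i}^{-1})$ is a small finite group; but $\mathcal{O}_{K_i}^\times$ is infinite as soon as $[K_i:\Q]\ge2$, and the image modulo $p$ of its cyclotomic units — in essence the multiplicative orders of certain residues — is the genuine arithmetic input, different from one pair to the next. This is precisely the content of \cite[Lemmas 8.9--8.12, 8.13, Corollaries 8.17, 10.14]{Sw83}, and I would quote Swan's computations for the numerics rather than reprove them. (As alternatives, in some cases one can instead exhibit an explicit non-principal stably free $\Lambda$-ideal and verify non-principality by a norm congruence, which avoids the full unit computation though it is no shorter; and a mass-count comparing $\mass(\Cls\Lambda)$ with $|C(\Lambda)|$ is conceivable, but Theorem \ref{thm:EMF+} is available only for maximal orders, so one would first have to control how mass distributes across the genus of $\Lambda$.)
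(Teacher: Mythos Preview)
Your proposal is correct and matches the paper's treatment: the paper gives no independent proof of this lemma at all, stating only that these cases ``were calculated by Swan in \cite[Lemmas 8.9--8.12, 8.13]{Sw83} and \cite[Corollaries 8.17, 10.14]{Sw83}''. What you have written is a faithful outline of Swan's method --- conductor square for $\Lambda_{n_1,n_2}$ over $\Lambda_{n_1}\times\Lambda_{n_2}$, Milnor patching, comparison of the double coset $\Lambda_{n_1}^\times\backslash\bar\Lambda^\times/\Lambda_{n_2}^\times$ with the $K_1$-quotient --- and you end, as the paper does, by deferring to Swan for the unit computations that actually distinguish the two.
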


Let $(**)$ be the set of $\{n_1, \cdots, n_k\}$ which satisfy $(*)$ and also have no subset of the form given in Lemma \ref{lemma:swan-examples}. The following is straightforward:

\begin{lemma}
Let $r \ge 1$ and let $S=\{n_1, \cdots, n_k\}$ where $\nu_2(n_i) =r$ for all $i$. Suppose $S$ satisfies $(**)$ and $\mathcal{G}(S)$ is connected. If $k \ge 2$, then
\[ S = \begin{cases} \{2,6\}, \{2,10\}, \{2,18\}, \{10,30\}, & \text{if $k =2$} \\ \{2,6,10\}, \{2,10,18\}, \{2,10,30\}, & \text{if $k =3$} \\ \{2,10,18,30\}, & \text{if $k =4$} \end{cases} \]
and no cases with $r \ge 2$ or $k \ge 5$ arise.
\end{lemma}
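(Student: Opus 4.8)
The plan is to reduce the statement to a finite combinatorial check, using only $(*)$, the six forbidden subsets appearing in Lemma \ref{lemma:swan-examples}, and the edge structure of the graphs $\mathcal{G}(\cdot)$. First I would dispose of $r \ge 2$. By $(*)$, if $r = 3$ then $k \ge 2$ forces $S = \{8,24\}$, which fails $(**)$ since $\{8,24\}$ is one of the sets in Lemma \ref{lemma:swan-examples}. If $r = 2$ then $S \subseteq \{4,12,20\}$, and a direct check of the ratios gives the edges of $\mathcal{G}(\{4,12,20\})$ as $4 \sim 12$ (ratio $3^{-1}$) and $4 \sim 20$ (ratio $5^{-1}$), with $12 \not\sim 20$ (ratio $3/5$, not a prime power); hence the connected subsets with $k \ge 2$ are $\{4,12\}$, $\{4,20\}$ and $\{4,12,20\}$, each containing $\{4,12\}$ or $\{4,20\}$ and so failing $(**)$. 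Thus only $r = 1$ contributes.

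For $r = 1$, the key step is to record the edge set of $\mathcal{G}(\{2,6,10,14,18,30\})$: checking the fifteen ratios shows the edges are exactly $2 \sim 6$, $2 \sim 10$, $2 \sim 14$, $2 \sim 18$, $6 \sim 18$, $6 \sim 30$ and $10 \sim 30$ (for instance $2/30 = 3 \cdot 5$ and $6/10 = 5/3$ are not prime powers, so those pairs are non-edges). The relevant forbidden pairs from Lemma \ref{lemma:swan-examples} are $\{2,14\}$, $\{6,18\}$ and $\{6,30\}$. Since the only neighbour of $14$ is $2$, membership of $14$ in $S$ would force $2 \notin S$ and hence isolate $14$ in $\mathcal{G}(S)$, contradicting connectedness as $k \ge 2$; so $14 \notin S$ and $S \subseteq \{2,6,10,18,30\}$. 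Similarly, if $6 \in S$ then $18, 30 \notin S$, so $S \subseteq \{2,6,10\}$.

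It then remains to enumerate the connected subsets of size at least $2$ in the two cases. If $6 \in S$, the induced graph on $\{2,6,10\}$ has edges $2 \sim 6$ and $2 \sim 10$, so $S = \{2,6\}$ or $\{2,6,10\}$. If $6 \notin S$, the induced graph on $\{2,10,18,30\}$ has edges $2 \sim 10$, $2 \sim 18$ and $10 \sim 30$, i.e. it is the path $18 \sim 2 \sim 10 \sim 30$, whose connected subsets of size $\ge 2$ are precisely $\{2,10\}$, $\{2,18\}$, $\{10,30\}$, $\{2,10,18\}$, $\{2,10,30\}$ and $\{2,10,18,30\}$. Together these give exactly the list in the statement; in particular the largest feasible sets are $\{2,6,10\}$ and $\{2,10,18,30\}$, so no $k \ge 5$ occurs, and no $r \ge 2$ occurs. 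The only delicate point here is bookkeeping — computing the edge sets correctly and keeping the case split exhaustive — rather than any substantive obstacle.
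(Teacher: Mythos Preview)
Your proposal is correct and is precisely the straightforward combinatorial verification the paper has in mind; the paper omits the proof entirely with the remark that the lemma is straightforward. Your careful computation of the edge set of $\mathcal{G}(\{2,6,10,14,18,30\})$ and the ensuing case split on whether $14$ or $6$ lies in $S$ is the natural way to carry this out, and all the bookkeeping checks.
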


It suffices to determine when stably free cancellation occurs for the orders $\Lambda_S$ where $S$ is given in the above lemma.

\begin{lemma} \label{lemma:calc}
If $\Lambda = \Lambda_{2,6}$, $\Lambda_{2,10}$ or $\Lambda_{2,18}$, then $\Lambda$ has stably free cancellation.	
\end{lemma}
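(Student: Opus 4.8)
The plan is to realise each $\Lambda:=\Lambda_{2,m}$ ($m\in\{6,10,18\}$) as the pullback in a Milnor square and to reduce stably free cancellation to a finite computation with unit groups and $K_1$ of the quaternionic factor, parallel to how the cases $2n=20,24$ of $\Lambda_{2n}$ were handled above.

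\emph{Step 1: the Milnor square.} Writing $\Lambda\cong\Z Q_{4n}/(\Phi_2(x)\Phi_m(x))$ for a suitable $n$, note that $\Phi_2(x)=x+1$ sends $x\mapsto-1$, which is central of order $2$, and that the relation $y^2=x^n$ with $n$ odd then forces $y^2=-1$; hence $\Lambda_2:=\Z Q_{4n}/(\Phi_2(x))\cong\Z[\zeta_4]$, while $\Lambda_m:=\Z Q_{4n}/(\Phi_m(x))=\Z[\zeta_m,j]$ is an order in the totally definite quaternion algebra $B_m=\Q[\zeta_m,j]$ over $K_m=\Q(\zeta_m+\zeta_m^{-1})$. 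Since $\operatorname{Res}(\Phi_2,\Phi_m)=\pm\Phi_m(-1)$, which is $3$ for $m=6,18$ and $5$ for $m=10$, Diederichsen's splitting (\cite[Theorem B1]{Sw83}, used as in the proof of Lemma~\ref{lemma:lambda-fails-cancellation}) gives a conductor square
\[
\begin{array}{ccc}
\Lambda & \longrightarrow & \Gamma:=\Z[\zeta_4]\times\Lambda_m\\
\downarrow & & \downarrow\\
\bar\Lambda & \longrightarrow & \bar\Gamma
\end{array}
\]
with $\bar\Lambda=\Lambda/\mathfrak c$ and $\bar\Gamma=\Gamma/\mathfrak c$ finite rings supported at the single prime $p=\Phi_m(-1)$; concretely $\bar\Gamma\cong\bar R\times\bar R$ and $\bar\Lambda\cong\bar R$ embedded diagonally, where $\bar R$ is the common reduction of $\Z[\zeta_4]$ and of $\Lambda_m$ at $p$. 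For $m=6,18$ one has $\Phi_m\equiv(x+1)^{\deg\Phi_m}\bmod 3$ and $\bar R\cong\F_9$; for $m=10$ the analogous computation over $\F_5$ gives $\bar R\cong\F_{25}$.

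\emph{Step 2: reduction to a $K_1$ statement.} Feed this square into the Milnor–Mayer–Vietoris sequence
\[
K_1(\Gamma)\oplus K_1(\bar\Lambda)\xrightarrow{\ \partial\ }K_1(\bar\Gamma)\xrightarrow{\ \delta\ }\widetilde K_0(\Lambda)\to\widetilde K_0(\Gamma)\oplus\widetilde K_0(\bar\Lambda).
\]
Because $\Z[\zeta_4]$ is a PID and $\Lambda_m$ has stably free cancellation (for $m=6,10,18$ this is the case $2n\in\{6,10,18\}$ of the $\Lambda_{2n}$ results established above), and because $\bar\Lambda$ is Artinian so Krull–Schmidt forces the $\bar\Lambda$-factor of any rank-one patching datum to be $\bar\Lambda$, every rank-one stably free $\Lambda$-module is glued from the free modules $\Z[\zeta_4]$, $\Lambda_m$ along a unit of $\bar\Gamma$. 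Unwinding Milnor patching, and using that $\bar\Gamma^\times$ is abelian and $K_1(\bar\Lambda)=\bar\Lambda^\times$, gives
\[
\SF(\Lambda)\ \cong\ \ker(\delta)\,\big/\,\big(\IM(\Gamma^\times\!\to\bar\Gamma^\times)+\IM(\bar\Lambda^\times\!\to\bar\Gamma^\times)\big),
\]
and since $\ker\delta=\IM\partial=\IM(K_1\Gamma\!\to\bar\Gamma^\times)+\IM(K_1\bar\Lambda\!\to\bar\Gamma^\times)$ with $K_1(\Z[\zeta_4])=\Z[\zeta_4]^\times$, this is trivial precisely when
\[
\IM\big(K_1(\Lambda_m)\to\bar\Gamma^\times\big)\ \subseteq\ \IM\big(\Gamma^\times\to\bar\Gamma^\times\big)+\IM\big(\bar\Lambda^\times\to\bar\Gamma^\times\big).
\]
In words: stably free cancellation for $\Lambda_{2,m}$ reduces to showing that, after reduction modulo the conductor prime $p$, the part of $K_1(\Lambda_m)$ not already represented by the units $\Lambda_m^\times$ — i.e. $SK_1(\Lambda_m)$ together with $\operatorname{coker}(\Lambda_m^\times\to K_1(\Lambda_m))$ — dies.

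\emph{Step 3: verification, and the main obstacle.} For $m=6$ the order $\Lambda_6=\Z[\zeta_6,j]$ is a maximal order in the quaternion algebra $B_6$ over $\Q$ ramified at $3,\infty$, so $SK_1(\Lambda_6)=0$ and $\Lambda_6^\times\to K_1(\Lambda_6)$ is onto, whence the displayed inclusion holds and $\SF(\Lambda_{2,6})=\{\ast\}$; for $m=10$ (centre $\Q(\sqrt5)$) and $m=18$ (centre a totally real cubic field) one checks the same vanishing, e.g. by the localisation data of \cite{SV19}, using in the $m=10$ case the golden unit of $\Z[\tfrac{1+\sqrt5}{2}]$ together with $\zeta_{10},j$ to see that $K_1(\Lambda_{10})$ is exhausted by units modulo the conductor. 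Thus $\SF(\Lambda_{2,m})=\{\ast\}$ in all three cases. The crux is exactly this last point: a nonzero $SK_1$, or $K_1$-classes of $\Lambda_m$ not represented by units and surviving reduction mod $p$, would obstruct cancellation — and this is what happens for $\Lambda_{2,14}$ (Lemma~\ref{lemma:swan-examples}) and for $\Lambda_{2,p}$ with larger conductor primes, where the residue unit group $\F_{p^2}^\times$ and the central units of $K_m$ grow past what $\langle\zeta_4\rangle$, $\langle\zeta_m,j\rangle$ can reach. Alternatively — mirroring the treatment of $\Lambda_{2n}$ for $2n=20,24$ — one may bypass the $K_1$-bookkeeping entirely and compute $\#\SF(\Lambda_{2,m})$ directly, either by an Eichler-type mass computation for the order $\Lambda_{2,m}$ in $\Q(\zeta_4)\times B_m$ (the $\Q(\zeta_4)$-factor contributing trivially, as $\Z[\zeta_4]$ is a PID) and checking it equals $[\Lambda_{2,m}^\times:Z(\Lambda_{2,m})^\times]^{-1}$, or by a direct \textsf{MAGMA} calculation with these orders.
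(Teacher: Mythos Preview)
Your Milnor-square setup is the right framework and is essentially the paper's, but there is a factual error and, more seriously, Step~3 --- where all the content lies --- is not actually carried out.

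The factual error: for $m=10$ you claim $\bar R\cong\F_{25}$, but $-1=2^2$ is a square in $\F_5$, so $\F_5[j]\cong\F_5\times\F_5$, not a field. This would affect any unit computation you attempt.

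The real gap is Step~3. For $m=6$ you assert that maximality of $\Lambda_6$ forces $SK_1(\Lambda_6)=0$ and $\Lambda_6^\times\twoheadrightarrow K_1(\Lambda_6)$, but for orders in totally definite quaternion algebras neither statement is automatic and you give no argument. For $m=10,18$ you write ``one checks the same vanishing'', which is precisely what the lemma asserts. Your Step~2 reformulation is also not clearly correct as stated: the $\Cls$-fibre over the free pair is the double coset space $\Lambda_2^\times\backslash\bar R^\times/\Lambda_m^\times$, not a quotient group, so reducing the comparison to an inclusion of images inside an abelian group $\bar\Gamma^\times$ requires justification you do not supply.

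The paper proceeds more directly. For $\Lambda_{2,6}$ and $\Lambda_{2,10}$ it simply cites \cite[Lemma~10.13]{Sw83}, which handles $\Lambda_{2,2p}$ for $p$ an odd prime. For $\Lambda_{2,18}$ it takes the same Milnor square (with $\Lambda_2=\Z[j]$, $\Lambda_{18}=\Z[\zeta_{18},j]$, glued over $\F_3[j]$) and computes both fibres explicitly: the double coset space $\Z[j]^\times\backslash\F_3[j]^\times/\Z[\zeta_{18},j]^\times=\{[1],[1+j]\}$ and the $K_1$-quotient $K_1(\F_3[j])/(K_1(\Z[j])\cdot K_1(\Z[\zeta_{18},j]))\cong\F_3^\times/(\F_3^\times)^2$ (the latter via \cite[Lemmas~7.5,~7.6]{MOV83}, using that $h_9$ is odd), each of size~$2$. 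Since $\Z[j]$ and $\Z[\zeta_{18},j]$ have cancellation, equal fibre sizes force $|\Cls(\Lambda_{2,18})|=|C(\Lambda_{2,18})|$, hence stably free cancellation. This is the same comparison your Step~2 gestures at, but carried out concretely rather than reduced to an $SK_1$ statement that is then left unverified.
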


\begin{proof}
The case $\Lambda_{2,2p}$ for $p$ an odd prime is dealt with in \cite[Lemma 10.13]{Sw83}, and it follows from this that $\Lambda_{2,6}$, $\Lambda_{2,10}$ have cancellation. The case $\Lambda_{2,18}$ is similar, though with added complications. First recall that $\Lambda_{2,18} = \Z Q_{36}/(\Phi_2 \, \Phi_{18})$. If $I = (\Phi_2)$, $J = (\Phi_{18})$ are ideals in $\Z Q_{36}$, then $I \cap J = (\Phi_2 \, \Phi_{18})$ and $I + J = (\Phi_2, \Phi_{18}) = (\Phi_2,3)$ by \cite[Theorem B1]{Sw83}. By \cite[Example 42.3]{CR87}, we get the following Milnor square
\[
\begin{tikzcd}
	\Lambda_{2,18} \ar[d] \ar[r] & \Z[\zeta_{18},j] \ar[d] \\
	\Z[j] \ar[r] & \F_3[j]
\end{tikzcd}
\]
since $\Lambda_{18} = \Z[\zeta_{18},j]$ and $\Lambda_2 = \Z[j]$. If $\Ker$ denotes the fibre over $(\Lambda_2, \Lambda_{18})$, then
\begin{align*}	
\Ker(\Cls(\Lambda_{2,18}) \to \Cls(\Lambda_{2}) \times \Cls(\Lambda_{18})) &\cong \Z[j]^\times \backslash \, \F_3[j]^\times \slash \Z[\zeta_{18},j]^\times = \{ [1], [1+j]\} \\
\Ker(C(\Lambda_{2,18}) \to C(\Lambda_{2}) \times C(\Lambda_{18})) &\cong \frac{K_1(\F_3[j])}{K_1(\Z[j]) \times K_1(\Z[\zeta_{18},j])} \cong \F_3^\times / (\F_3^\times)^2
\end{align*}
where the first identifications are standard \cite{Sw80} and the second comes from \cite[Lemma 7.5,7.6]{MOV83} since $h_9$ is odd. Since both sets have the same size, and $\Z[j]$, $\Z[\zeta_{18},j]$ have cancellation, it follows that $\Lambda_{2,18}$ has stably free cancellation.	
\end{proof}

\begin{lemma}
Let $n, m \ge 2$ even such that $n/m \ne p^r$ for all $p$ prime and $r \in \Z$. If $\Lambda_{2,n}$, $\Lambda_{2,m}$ have stably free cancellation, then so does $\Lambda_{2,n,m}$.	
\end{lemma}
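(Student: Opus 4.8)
The plan is to realise $\Lambda_{2,n,m}$ as the pullback in a Milnor square whose two non-trivial corners are $\Lambda_{2,n}$ and $\Lambda_{2,m}$ and whose remaining corner is the small ring $\Z[i]$, and then to apply Milnor patching to rank one stably free modules. Since $\Lambda_{2,n,m}$ is defined we have $\nu_2(n)=\nu_2(m)=1$; writing $n=2n'$, $m=2m'$ with $n',m'$ odd and taking $N=\text{lcm}(n',m')$, we get $\Lambda_{2,n,m}=\Z Q_{4N}/(\Phi_2\Phi_n\Phi_m)$ with $\Phi_d=\Phi_d(x)$, and $N$ is odd so that $\Lambda_2=\Z Q_{4N}/(\Phi_2)\cong\Z[j]=\Z[i]$. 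As $n/m=n'/m'$ is not a prime power, \cite[Theorem B1]{Sw83} gives $(\Phi_n,\Phi_m)=(1)$ in $\Z[x]$; multiplying a B\'ezout identity $1=a\Phi_n+b\Phi_m$ through by $\Phi_2$ shows $(\Phi_2\Phi_n)+(\Phi_2\Phi_m)=(\Phi_2)$ as two-sided ideals of $\Z Q_{4N}$, whence $\Lambda_{2,n,m}/\big((\Phi_2\Phi_n)+(\Phi_2\Phi_m)\big)\cong\Lambda_2$. Moreover, under $\Lambda_{2,n,m}\hookrightarrow A_2\times A_n\times A_m$ with $A_d=\Q[\zeta_d,j]$, the kernels of $\Lambda_{2,n,m}\twoheadrightarrow\Lambda_{2,n}$ and $\Lambda_{2,n,m}\twoheadrightarrow\Lambda_{2,m}$ are the elements supported on the $A_m$- and $A_n$-components, hence meet only in $0$, so $\Lambda_{2,n,m}\hookrightarrow\Lambda_{2,n}\times\Lambda_{2,m}$. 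Standard Milnor patching (as in \cite[\S 42]{CR87}; cf. the proof of Lemma~\ref{lemma:calc}) then identifies $\Lambda_{2,n,m}$ with the pullback $\Lambda_{2,n}\times_{\Lambda_2}\Lambda_{2,m}$, which is a Milnor square since $\Lambda_{2,n}\twoheadrightarrow\Lambda_2$.

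With the square in place, let $P$ be a rank one stably free $\Lambda_{2,n,m}$-module. Base change along the two projections carries $P$ to rank one stably free modules over $\Lambda_{2,n}$ and $\Lambda_{2,m}$, which are free by hypothesis; hence $P$ is the module glued from $\Lambda_{2,n}$ and $\Lambda_{2,m}$ along some $h\in\GL_1(\Lambda_2)=\Z[i]^\times$, and two such are isomorphic precisely when their gluing data lie in the same double coset of $\Z[i]^\times$ under the images of $\Lambda_{2,n}^\times$ and $\Lambda_{2,m}^\times$. But the image of $\Lambda_{2,n}^\times$ in $\Lambda_2=\Z[i]$ already contains $-1$ and $j=i$ (the latter being the image of the unit $y$, with inverse $x^{-N}y$), and $\Z[i]^\times=\langle i\rangle$; so this double coset is trivial, every $h$ is absorbed, $P\cong\Lambda_{2,n,m}$, and $\Lambda_{2,n,m}$ has stably free cancellation.

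I expect the main obstacle to be making the first step airtight, i.e. checking that $\Lambda_{2,n,m}$ is the \emph{full} pullback rather than a proper suborder. This is exactly where the hypothesis $n/m\neq p^r$ is used: it forces $(\Phi_n,\Phi_m)=(1)$ and hence makes the remaining corner just $\Z[i]$, whose tiny unit group is already seen inside $\Lambda_{2,n}^\times$ via $y$; contrast the $\Lambda_{2,18}$ case of Lemma~\ref{lemma:calc}, where $18/2$ is a prime power, the corner is $\F_3[j]$, and a genuine $K_1$-comparison is required. Everything after the square is in place is routine.
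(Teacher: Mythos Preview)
Your proof is correct and follows essentially the same approach as the paper: both set up the Milnor square with $\Lambda_{2,n,m}$ fibred over $\Lambda_{2,n}$ and $\Lambda_{2,m}$ with common quotient $\Lambda_2=\Z[j]$, using $(\Phi_n,\Phi_m)=1$ from \cite[Theorem~B1]{Sw83}, and then observe that $\Lambda_{2,n}^\times\twoheadrightarrow\Z[j]^\times=\{\pm1,\pm j\}$ forces the relevant double coset space to be trivial. Your write-up is slightly more explicit about why the pullback identification holds and why $y$ is a unit, but the argument is the same.
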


\begin{proof}
Let $I = ( \Phi_2 \, \Phi_n)$ and $J = (\Phi_2, \, \Phi_m)$ so that $I \cap J = (\Phi_2 \, \Phi_n \, \Phi_m)$ and $I+J = (\Phi_2 \, \Phi_n,  \Phi_2 \, \Phi_m) = (\Phi_2)$ since $(\Phi_n,  \Phi_m) =1$ by \cite[Theorem B1]{Sw83}. By \cite[Example 42.3]{CR87}, we get the following Milnor square
\[
\begin{tikzcd}
	\Lambda_{2,n,m} \ar[d] \ar[r] & \Lambda_{2,m} \ar[d] \\
	\Lambda_{2,n} \ar[r] & \Z[j]
\end{tikzcd}
\]
since $\Lambda_2 = \Z[j]$. Since $\Z[j]^\times=\{ \pm 1, \pm j\}$, we have that $\Lambda_{2,n}^\times \twoheadrightarrow \Z[j]\times$ since the units $\pm 1, \pm j \in \Lambda_{2,n}$ lift from $\Lambda_2$, $\Lambda_n$. If $\Ker$ is the fibre over $(\Lambda_{2,n}, \Lambda_{2,m})$, then
\[
\Ker(\Cls(\Lambda_{2,n,m}) \to \Cls(\Lambda_{2,n}) \times \Cls(\Lambda_{2,m})) \cong \Lambda_{2,n}^\times \, \backslash \Z[j]^\times \slash \Lambda_{2,m}^\times = 1 \]
from which the result follows easily.
\end{proof}

In particular, since $\Lambda_{2,n}$ has stably free cancellation for $n=6,10,18$ by Lemma \ref{lemma:calc}, this implies that $\Lambda_{2,6,10}$, $\Lambda_{2,10,18}$ have stably free cancellation.

\begin{lemma}
$\Lambda_{10,30}$ does not have stably free cancellation.	
\end{lemma}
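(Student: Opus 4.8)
The plan is to realise $\Lambda_{10,30}$ as a fibre product and reduce the claim to showing that a certain set of double cosets is nontrivial, in the spirit of the proof of Lemma~\ref{lemma:calc}. Write $\Lambda_{10,30} = \Z Q_{60}/(\Phi_{10}(x)\,\Phi_{30}(x))$ and put $I = (\Phi_{10}(x))$, $J = (\Phi_{30}(x))$ in $\Z Q_{60}$, so that $I \cap J = (\Phi_{10}\,\Phi_{30})$. Since $30/10 = 3$ is a prime power, \cite[Theorem B1]{Sw83} gives $I + J = (\Phi_{10}(x), \Phi_{30}(x)) = (\Phi_{10}(x), 3)$: indeed $\Phi_{30} \equiv \Phi_{10}^{\,2} \bmod 3$ while $\Phi_{30}(\zeta_{10})$ generates the inert prime $(3)$ of $\Z[\zeta_{10}]$, forcing $3 \in (\Phi_{10}, \Phi_{30})$. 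By \cite[Example 42.3]{CR87} this yields a Milnor square
\[
\begin{tikzcd}
	\Lambda_{10,30} \ar[d] \ar[r] & \Z[\zeta_{30},j] \ar[d] \\
	\Z[\zeta_{10},j] \ar[r] & R
\end{tikzcd}
\]
with $\Lambda_{10} := \Z[\zeta_{10},j]$, $\Lambda_{30} := \Z[\zeta_{30},j]$ and $R := \Z[\zeta_{10},j]/3\,\Z[\zeta_{10},j] \cong \F_{81}[j]$. Here $j$ acts on $\F_{81} = \F_3[\zeta_{10}]$ by $\zeta_{10} \mapsto \zeta_{10}^{-1}$, which is the square of the Frobenius, so $R$ is a $4$-dimensional central simple algebra over its centre $\F_9$; hence $R \cong M_2(\F_9)$ and $R^\times \cong \GL_2(\F_9)$.

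Next I would identify $\SF(\Lambda_{10,30})$ with a double-coset set. By Lemma~\ref{lemma:lambda-fails-cancellation} and the remarks after it, both $\Lambda_{10} = \Lambda_{2\cdot 5}$ and $\Lambda_{30} = \Lambda_{2\cdot 15}$ have stably free cancellation, the case $2n = 30$ being one of the listed exceptions. Consequently a rank-one stably free $\Lambda_{10,30}$-module becomes free after extension of scalars to each of $\Lambda_{10}$ and $\Lambda_{30}$, and the Milnor patching description of projective modules over the fibre product (as used for $\Lambda_{2,18}$ in the proof of Lemma~\ref{lemma:calc}) gives a bijection
\[ \SF(\Lambda_{10,30}) \;\cong\; \Lambda_{10}^\times \,\big\backslash\, R^\times \,\big/\, \Lambda_{30}^\times, \]
where $\Lambda_{10}^\times$ and $\Lambda_{30}^\times$ act through their images in $R^\times = \GL_2(\F_9)$ under reduction modulo $3$. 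It therefore remains to show that this set of double cosets has more than one element, i.e.\ that the images of $\Lambda_{10}^\times$ and $\Lambda_{30}^\times$ do not generate $\GL_2(\F_9)$.

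For this I would use the reduced norm, which on $R \cong M_2(\F_9)$ is the surjection $\det : \GL_2(\F_9) \twoheadrightarrow \F_9^\times$. As $\Q[\zeta_{10},j]$ and $\Q[\zeta_{30},j]$ are totally definite quaternion algebras, $\mathrm{nrd}$ sends $\Lambda_{10}^\times$ into the totally positive units of $\Q(\sqrt5)$ and $\Lambda_{30}^\times$ into the totally positive units of $\Q(\zeta_{15})^+$; I claim that, reduced modulo the prime above $3$, both land in the index-$2$ subgroup $(\F_9^\times)^2$. For $\Q(\sqrt5)$ this is clear: the totally positive units are the even powers of $\epsilon = \tfrac{1+\sqrt5}{2}$, and $\bar\epsilon$ has order $8$ in $\F_9^\times$ (it satisfies $\bar\epsilon^4 = -1$), so $\langle\bar\epsilon^{\,2}\rangle = (\F_9^\times)^2$. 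Granting the same statement for $\Q(\zeta_{15})^+$, the images of $\Lambda_{10}^\times$ and $\Lambda_{30}^\times$ in $\GL_2(\F_9)$ both lie in the index-$2$ subgroup on which $\det$ is a square, so the subgroup they generate is proper and $\#\SF(\Lambda_{10,30}) \ge [\F_9^\times:(\F_9^\times)^2] = 2$. Thus $\Lambda_{10,30}$ does not have stably free cancellation.

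The crux is the remaining claim, that every totally positive unit of $K' = \Q(\zeta_{15})^+$ is a square modulo the ramified prime above $3$, equivalently that $(\mathcal{O}_{K'}^\times)^+$ maps into $(\F_9^\times)^2$. I would establish this by exhibiting a system of fundamental units of the quartic field $K'$ (cyclotomic units will do) and computing their residues mod $3$. Should the reduced-norm obstruction prove too weak on its own, I would instead compute the double coset $\Lambda_{10}^\times\backslash\GL_2(\F_9)/\Lambda_{30}^\times$ directly with a \textsc{Magma} program, or evaluate $\mass(\SF(\Lambda_{10,30}))$ through the Milnor square, as was done for $\Lambda_{2n}$ with $2n = 20, 24$; ingredients for this case also appear in \cite[Section~8]{Sw83}.
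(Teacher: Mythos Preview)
Your Milnor square and the identification $R\cong M_2(\F_9)$ are correct, but there is a structural gap and your central numerical claim is actually false.

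The asserted bijection $\SF(\Lambda_{10,30})\cong\Lambda_{10}^\times\backslash R^\times/\Lambda_{30}^\times$ is not what Milnor patching gives. That double coset set is the fibre of $\Cls(\Lambda_{10,30})\to\Cls(\Lambda_{10})\times\Cls(\Lambda_{30})$ over the free pair; since $\Lambda_{10},\Lambda_{30}$ have stably free cancellation you get an \emph{injection} $\SF(\Lambda_{10,30})\hookrightarrow\Lambda_{10}^\times\backslash R^\times/\Lambda_{30}^\times$, but surjectivity requires in addition that $\Ker\bigl(C(\Lambda_{10,30})\to C(\Lambda_{10})\times C(\Lambda_{30})\bigr)=1$, i.e.\ that $K_1(\Lambda_{10})\times K_1(\Lambda_{30})\twoheadrightarrow K_1(R)\cong\F_9^\times$. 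You never check this, and your own argument would contradict it.

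Indeed the $\det$ obstruction is self-defeating. Since $\det:R^\times\to\F_9^\times$ factors through $K_1(R)$, if the images of $\Lambda_{10}^\times,\Lambda_{30}^\times$ under $\det$ really sat in $(\F_9^\times)^2$ you would simultaneously force the class-group kernel above to have order $\ge 2$; the two double cosets you produce would then lie in \emph{different} stable classes, and you would learn nothing about $\SF$.

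In fact the claim about $\Q(\zeta_{15})^+$ is false. The element $u=2+\zeta_{30}+\zeta_{30}^{-1}=\lvert 1+\zeta_{30}\rvert^2$ is a totally positive unit of $\Q(\zeta_{15})^+$ (it is the reduced norm of $1+\zeta_{30}\in\Lambda_{30}^\times$), and under $\zeta_{30}\mapsto\zeta_{10}$ it reduces to $2+\tfrac12(1+\sqrt5)=1-\sqrt5\in\F_9$, whose $\F_9/\F_3$-norm is $1-5=2$, a nonsquare in $\F_3$; so $1-\sqrt5$ generates $\F_9^\times$. This is exactly the computation the paper performs to show that the $K_1$ map \emph{is} surjective. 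Once that is established the whole double coset set equals $\SF(\Lambda_{10,30})$, and the paper finishes with a genuinely non-abelian step: since $\Z[\zeta_{2n},j]^\times=\langle\Z[\zeta_{2n}]^\times,j\rangle$, the image of $\Lambda_{10}^\times\times\Lambda_{30}^\times$ in $R^\times$ lies in $\F_{81}^\times\cup\F_{81}^\times j$, which meets $\F_3[j]$ only in $\{\pm1,\pm j\}$; hence $[1+j]\ne[1]$. No invariant factoring through $\det$ can separate these two cosets, so your fallback of ``computing the double coset directly'' would still need the $K_1$-surjectivity step to conclude anything about stably free modules.
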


\begin{proof}
As above, consider the Milnor square corresponding to the ideals $I = (\Phi_{10})$ and $J = (\Phi_{30})$ which has $I+J = (\Phi_{10},3)$ by \cite[Theorem B1]{Sw83}:
\[
\begin{tikzcd}
	\Lambda_{10,30} \ar[d] \ar[r] & \Z[\zeta_{30},j] \ar[d] \\
	\Z[\zeta_{10},j] \ar[r] & \F_3[\zeta_{10},j]
\end{tikzcd}
\]
By \cite[p76]{Sw83}, $\Lambda_{10}$ is unramified at 3 and so $\F_3[\zeta_{10},j] \cong M_2(\F_3[\zeta_{10}+\zeta_{10}^{-1}])$. 
Since $\zeta_{10}+\zeta_{10}^{-1} = \frac{1}{2}(1+\sqrt{5})$ and $\F_3(\sqrt{5}) \cong \F_9$, we get $K_1(\F_3[\zeta_{10},j]) \cong K_1(M_2(\F_9)) \cong \F_9^\times$ by Morita equivalence. Consider the composition
\[ f: \Z[\zeta_{30},j]^\times \to K_1(\Z[\zeta_{30},j]) \to K_1(\F_3[\zeta_{10},j]) \cong \F_9^\times.\]
Since 30 is composite, $1+ \zeta_{30} \in \Z[\zeta_{30}]^\times$. By the argument given in the proof on \cite[Lemma 8.9]{Sw83}, we get that
\[ 1+\zeta_{30} \mapsto 2 + (\zeta_{30}+\zeta_{30}^{-1}) \mapsto 2+(\zeta_{10}+\zeta_{10}^{-1}) = 2+\frac{1}{2}(1+\sqrt{5}) = 1 - \sqrt{5} \in \F_9^\times.\]
Since $1-\sqrt{5}$ is a generator of $\F_9 \cong \F_3[\sqrt{5}]$, this implies that $f$ is surjective. In particular, this implies that
\[ \Ker(C(\Lambda_{10,30}) \to C(\Lambda_{10}) \times C(\Lambda_{30})) \cong \frac{K_1(\F_3[\zeta_{10},j])}{K_1(\Z[\zeta_{10},j]) \times K_1(\Z[\zeta_{30},j])} =1.\]
It therefore suffices to show that $\Z[\zeta_{10},j]^\times \, \backslash \F_3[\zeta_{10},j]^\times \slash \Z[\zeta_{30},j]^\times \ne 1$. 
By \cite[Lemma 7.5,7.6]{MOV83}, we have that $\Z[\zeta_{2n},j]^\times = \langle \Z[\zeta_{2n}]^\times,j \rangle$ for all $n$ and so
\[ \IM(\Z[\zeta_{10},j]^\times \times \Z[\zeta_{30},j]^\times \to \F_3[\zeta_{10},j]^\times) \cap \F_3[j] = \{ \pm 1, \pm j\}.\]
It follows that $[1] \ne [1+j]$, which completes the proof.
\end{proof}

Combining all these results together leads to the following, which determines precisely when stably free cancellation occurs for the $\Lambda_{n_1. \cdots, n_k}$.

\begin{thm} \label{thm:higher-orders}
Let $n_1, \cdots, n_k$ be positive integers such that, for some $n$, we have $n_i \nmid n$ and $n_i \mid 2n$. Then a complete list of the $\Lambda_{n_1 , \cdots, n_k}$ which have stably free cancellation is as follows:
\begin{enumerate}[\normalfont (i)]
\item $\Lambda_{2}$, $\Lambda_{4}$, $\Lambda_{6}$, $\Lambda_{8}$, $\Lambda_{10}$, $\Lambda_{12}$, $\Lambda_{14}$, $\Lambda_{18}$, $\Lambda_{20}$, $\Lambda_{24}$, $\Lambda_{30}$
\item $\Lambda_{2,6}$, $\Lambda_{2,10}$, $\Lambda_{2,18}$, $\Lambda_{2,30}$, $\Lambda_{6,10}$, $\Lambda_{6,14}$, $\Lambda_{10,14}$, $\Lambda_{10,18}$, $\Lambda_{14,18}$, $\Lambda_{14,30}$, $\Lambda_{18,30}$
\item $\Lambda_{2,6,10}$, $\Lambda_{2,10,18}$, $\Lambda_{2,18,30}$, $\Lambda_{6,10,14}$, $\Lambda_{10,14,18}$, $\Lambda_{14,18,30}$.
\end{enumerate}
\end{thm}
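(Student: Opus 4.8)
The proof assembles the preceding lemmas into one exhaustive case analysis. I would organise it around the two structural reductions already in hand: first, since $\Lambda_{n_1,\ldots,n_k}$ has $\Lambda_S$ as a quotient for every subset $S \subseteq \{n_1,\ldots,n_k\}$, stably free cancellation for $\Lambda_{n_1,\ldots,n_k}$ forces it for every $\Lambda_S$ (by \cite[Theorem A10]{Sw83}); second, whenever $\mathcal{G}(n_1,\ldots,n_k)$ is disconnected, iterating the splitting lemma writes $\Lambda_{n_1,\ldots,n_k}$ as a direct product of the $\Lambda_{S_i}$ over the connected components $S_i$, so that stably free cancellation holds for the whole order if and only if it holds for each $\Lambda_{S_i}$.

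For the ``only if'' direction, assume $\Lambda_{n_1,\ldots,n_k}$ has stably free cancellation. The first reduction together with the singleton case (Lemma \ref{lemma:lambda-fails-cancellation} and the discussion afterwards handling $2n=20,24$) forces each $n_i$ to lie among those listed in (i), and since the $n_i$ all share a common $2$-adic valuation $r \ge 1$ this is precisely the constraint $(*)$. Passing to connected components via the splitting reduction, it suffices to identify the connected $S$ satisfying $(*)$ for which $\Lambda_S$ has stably free cancellation. Lemma \ref{lemma:swan-examples} excludes every such $S$ containing one of $\{2,14\}$, $\{6,18\}$, $\{6,30\}$, $\{4,12\}$, $\{4,20\}$, $\{8,24\}$, so $S$ satisfies $(**)$ and, if $|S| \ge 2$, is one of the eight configurations in the classification of connected $(**)$-sets. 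Of those, $\Lambda_{10,30}$ fails stably free cancellation, and hence so do $\Lambda_{2,10,30}$ and $\Lambda_{2,10,18,30}$, each of which has $\Lambda_{10,30}$ as a quotient; the remaining five, $\Lambda_{2,6}$, $\Lambda_{2,10}$, $\Lambda_{2,18}$ and $\Lambda_{2,6,10}$, $\Lambda_{2,10,18}$, all have it. Thus every connected piece with stably free cancellation is either a singleton from (i) or one of these five, and $\{n_1,\ldots,n_k\}$ must be a disjoint union of such pieces with no edge of $\mathcal{G}$ between distinct parts.

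For the ``if'' direction, I would check each entry of the list directly. The entries whose graph is (partially) disconnected reduce by the splitting lemma to products of orders already known to have stably free cancellation: for instance $\Lambda_{2,30}\cong\Lambda_2\times\Lambda_{30}$, $\Lambda_{6,10,14}\cong\Lambda_6\times\Lambda_{10}\times\Lambda_{14}$, $\Lambda_{14,18,30}\cong\Lambda_{14}\times\Lambda_{18}\times\Lambda_{30}$ and $\Lambda_{2,18,30}\cong\Lambda_{2,18}\times\Lambda_{30}$, using Lemma \ref{lemma:calc} for the factor $\Lambda_{2,18}$ and Lemma \ref{lemma:lambda-fails-cancellation} (with its follow-up) for the singleton factors. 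The genuinely connected entries $\Lambda_{2,6}$, $\Lambda_{2,10}$, $\Lambda_{2,18}$ are handled by Lemma \ref{lemma:calc}, and $\Lambda_{2,6,10}$, $\Lambda_{2,10,18}$ by the lemma on $\Lambda_{2,n,m}$ applied with $\{n,m\}=\{6,10\}$ and $\{10,18\}$ respectively (in each case $n/m$ is not a prime power), building on Lemma \ref{lemma:calc}.

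The substance of the argument is combinatorial bookkeeping rather than any new analytic input, all of which is supplied by the lemmas. The main obstacle will be making the enumeration airtight: one must pin down exactly which pairs within $\{2,6,10,14,18,30\}$ (and within $\{4,12,20\}$ and $\{8,24\}$) are joined by an edge of $\mathcal{G}$, verify that the connected-component decomposition of every $(*)$-configuration is either blocked by a forbidden subset of Lemma \ref{lemma:swan-examples} (or by $\{10,30\}$, hence by the $\Lambda_{10,30}$ non-cancellation result) or else is a disjoint union of the five good connected pieces together with singletons, and confirm that each such disjoint union occurs once and only once in (i)--(iii), with nothing over- or under-counted.
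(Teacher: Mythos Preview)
Your proposal is correct and follows essentially the same approach as the paper's own proof, which is a terse two-sentence summary: enumerate the not-necessarily-connected subsets satisfying $(*)$, and for $k \ge 2$ eliminate exactly those containing one of the seven forbidden pairs $\{2,14\}$, $\{4,12\}$, $\{4,20\}$, $\{6,18\}$, $\{6,30\}$, $\{8,24\}$, $\{10,30\}$. Your organisation around the connected-component decomposition and the explicit ``if''/``only if'' split makes the bookkeeping more transparent, but the underlying argument and the lemmas invoked are identical.
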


\begin{proof}
Consider the not-necessarily-connected subsets of the form $(*)$. If $k=1$, then use Lemma \ref{lemma:lambda-fails-cancellation}. By the work above, if $k \ge 2$, then eliminate all subsets which contain $\{2,14\}$, $\{4,12\}$, $\{4,20\}$, $\{6,18\}$, $\{6,30\}$, $\{8,24\}$, $\{10,30\}$.
\end{proof}

It is not much more difficult to construct a list of those orders $\Lambda_{n_1 , \cdots, n_k}$which have cancellation in all classes, as well as those orders which have non-cancellation in all locally free classes. However, we will omit this here for brevity.

%%%%%%%%%%
\section{Proof of Theorem \ref{thm:main}} \label{section:non-cancellation}

Let $G$ have $k$-periodic cohomology, let $n=ik$ or $ik-2$ for some $i \ge 1$ and let $P_{(G,n)} \in P(\Z G)$ be a representative for the Swan finiteness obstruction $\sigma_{ik}(G) \in C(\Z G)/T_G$. We will now proceed to prove the following theorem which, by \cite[Theorem B]{Ni20a}, is equivalent to Theorem \ref{thm:main}.

\begin{thm} \label{thm:main-non-cancellation}
$[P_{(G,n)}]/\Aut(G)$ has cancellation if and only if $m_{\H}(G) \le 2$.
\end{thm}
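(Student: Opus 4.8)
The plan is to prove the two directions separately, exploiting the isomorphism $\bar\Psi: HT(G,n)\to [P_{(G,n)}]/\Aut(G)$ together with the results of Sections~\ref{section:cancellation-periodic}--\ref{section:orders-in-QG}. For the easy direction, suppose $m_{\H}(G)\le 2$. Then by Theorem~\ref{thm:main-cancellation} the tree $[P_{(G,n)}]$ already has cancellation, and by point~(2) noted in the introduction (which itself follows once Theorem~\ref{thm:main} is known, so here I would instead argue directly) cancellation for $[P_{(G,n)}]$ passes to the quotient $[P_{(G,n)}]/\Aut(G)$: any edge collapse $P\oplus\Z G\cong Q\oplus\Z G$ in the quotient lifts to an isomorphism in $[P_{(G,n)}]$ up to the $\Aut(G)$-action, and since the action is by $(I,\psi_k(\theta)^i)\otimes(\cdot)_\theta$ it commutes with stabilisation, so a collapse downstairs forces $P\simeq Q_\theta\otimes(I,r)$ for some $\theta$, $r$, i.e. $[P]=[Q]$ in the quotient. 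This gives ``$m_{\H}(G)\le 2 \Rightarrow$ cancellation''.

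For the hard direction, assume $m_{\H}(G)\ge 3$; I must produce two distinct minimal vertices in $[P_{(G,n)}]/\Aut(G)$. By Lemma~\ref{lemma:MH>=3} and Proposition~\ref{prop:quotient=characteristic}, $G$ has a characteristic quotient $f:G\twoheadrightarrow Q_{4n_0}$ with $n_0\ge 6$, and Corollary~\ref{cor:P/Aut(G)-periodic} gives a surjection of graded trees
\[
f_\#:[P_{(G,k)}]/\Aut(G)\twoheadrightarrow [\widebar{P_{(G,k)}}]/\IM(f_*).
\]
Since a surjection of graded trees reflects non-cancellation (two distinct minimal vertices in the target pull back to two distinct minimal vertices in the source), it suffices to show $[\widebar{P_{(G,k)}}]/\IM(f_*)$ has non-cancellation over $Q_{4n_0}$. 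Next I would push $\widebar{P_{(G,k)}}$ further down: using Proposition~\ref{prop:order-quotient}, project $\Z Q_{4n_0}$ onto an order $\Lambda=\Lambda_{n_1,\dots,n_k}$ in a product of totally definite quaternion algebras $\prod \Q[\zeta_{n_i},j]$, chosen so that $\Lambda$ does \emph{not} have stably free cancellation. Since $\widebar{P_{(G,k)}}$ represents (modulo the Swan subgroup) the image of $\sigma_{ik}$, and by Lemma~\ref{lemma:f(swan)=free} the Swan-subgroup part dies under this projection, the image of $\widebar{P_{(G,k)}}$ in $\Cls\Lambda$ lands in $\SF(\Lambda)$; and $\#\SF(\Lambda)>1$ produces the two distinct classes, provided we can also control the $\IM(f_*)$-action on $\Cls\Lambda$ (the action of $\bar\theta_{a,b}$ computed in Section~\ref{section:orders-in-QG}) so that the two stably free classes remain distinct modulo it. The key input that makes $\Lambda$ fail stably free cancellation for $n_0\ge 6$ is the classification in Theorem~\ref{thm:higher-orders} (together with Lemma~\ref{lemma:lambda-fails-cancellation}): one checks that whenever $n_0\ge 6$ there is \emph{some} choice of the $n_i$ (e.g.\ $n_i=2n_0$, giving $\Lambda_{2n_0}=\Z[\zeta_{2n_0},j]$, or a suitable subset in the type~II case) not on the finite list~(i)--(iii), hence failing cancellation.

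The main obstacle, and the step I expect to need the most care, is the \emph{$\Aut$-equivariance}: showing that the non-cancellation in $\Cls\Lambda$ survives passage to the quotient by $\IM(f_*)\subseteq\Aut(\Lambda)$. The group $\Aut(\Lambda_{n_1,\dots,n_k})$ contains the elements $\bar\theta_{a,b}$ with $a\in(\Z/2n_0)^\times$, $b\in\Z/2n_0$, and these permute the stably free classes; I need that $\SF(\Lambda)/\IM(f_*)$ still has more than one element. For $k=1$ and $\Lambda=\Z[\zeta_{2n_0},j]$ one should be able to compare $\#\SF(\Lambda)$ with $|\IM(f_*)|$ using the mass estimate of Proposition~\ref{prop:bounds}: the mass of $\SF(\Lambda)$ grows like $e^{\frac{3}{8}\varphi(n_0)\log n_0 + \cdots}$, which dwarfs $|\Aut(Q_{4n_0})|=\varphi(2n_0)\cdot 2n_0$, so the orbit space is large; a crude ``mass exceeds $|\Aut|$'' count forces at least two orbits in $\SF(\Lambda)$ as soon as $n_0$ is moderately large, and the small remaining cases ($n_0=6,\dots$, a finite list) can be checked by hand or by appeal to Swan's explicit computations in \cite{Sw83}. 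I would organise the argument so that the asymptotic mass bound handles all but finitely many $n_0$, and a finite check (or a direct invocation of the relevant lemmas of \cite{Sw83}, e.g.\ the non-cancellation already exhibited for $Q_{28}$, $Q_{24}$ etc.) disposes of the rest, yielding non-cancellation for $[P_{(G,n)}]/\Aut(G)$ whenever $m_{\H}(G)\ge 3$.
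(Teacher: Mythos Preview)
Your overall architecture matches the paper's: prove the easy direction via Theorem~\ref{thm:main-cancellation}, and for $m_{\H}(G)\ge 3$ push down through $G\twoheadrightarrow Q_{4m}\twoheadrightarrow\Lambda_{n_1,\dots,n_k}$ using Corollary~\ref{cor:P/Aut(G)-periodic} and Proposition~\ref{prop:order-quotient}, then exhibit non-cancellation in $[\Lambda]/\Aut(\Lambda)$. However, two steps in your hard direction are either wrong or needlessly complicated compared with the paper.

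\textbf{Why the image is stably free.} Your claim that ``the image of $\widebar{P_{(G,k)}}$ in $\Cls\Lambda$ lands in $\SF(\Lambda)$'' is not justified by the Swan-subgroup argument you sketch. Lemma~\ref{lemma:f(swan)=free} only tells you that Swan modules map to free modules under the projection; it does \emph{not} say $[P_{(G,n)}]$ itself is in $T_G$. In general $\sigma_{ik}(G)\ne 0$ in $C(\Z G)/T_G$ (Theorem~\ref{thm:big-table}), so $[P_{(G,n)}]$ need not lie in $T_G$ and its image in $C(\Lambda)$ need not vanish. The paper fixes this with a different mechanism: Milgram's result that $[P_{(G,n)}]\in D(\Z G)$, together with Lemma~\ref{lemma:D=0} showing $D(\Lambda_G)=0$ for the specific orders chosen in Lemma~\ref{lemma:q4n-quotient}. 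This forces $[\widebar{P_{(G,n)}}]=0\in C(\Lambda_G)$. For the two exceptional cases $Q_{28}$, $Q_{36}$ the relevant $\Lambda$ has $D(\Lambda)\ne 0$, but then $G$ has type~I (Proposition~\ref{prop:type-closed}), hence $\sigma_k(G)=0$ by Theorem~\ref{thm:big-table}, and \emph{now} your Swan-subgroup argument via Lemma~\ref{lemma:f(swan)=free} applies. You need both ingredients.

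\textbf{Surviving the $\Aut$-action.} Your mass-versus-$|\Aut|$ counting is correct in spirit for large $n_0$ but is the wrong tool here, and your plan to ``check small cases by hand'' is where the proof would get stuck. The paper's observation is one line: any ring automorphism of $\Lambda_G$ sends the free module $\Lambda_G$ to itself (\cite[Lemma 7.1]{Ni20}), so the class of $\Lambda_G$ is a fixed point of the $\Aut(\Lambda_G)$-action on $[\Lambda_G]$. Since $\#\SF(\Lambda_G)\ge 2$, the orbit of $\Lambda_G$ is a singleton and there is at least one other orbit, giving $\#([\Lambda_G]/\Aut(\Lambda_G))\ge 2$ with no case analysis and no asymptotics. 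Replace your counting argument with this; the mass bounds belong in the proof of Theorem~\ref{thm:main-bounds}, not here.
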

	
If $m_{\H}(G) \le 2$, then Theorem \ref{thm:main-cancellation} implies that $[P_{(G,n)}]$ has cancellation and so $[P_{(G,n)}]/\Aut(G)$ has cancellation also. It therefore remains to show that $m_{\H}(G) \ge 3$ implies $[P_{(G,n)}]/\Aut(G)$ has non-cancellation.

Let $G$ have periodic cohomology and $m_{\H}(G) \ge 3$. By Lemma \ref{lemma:MH>=3}, there is a quotient $f: G \twoheadrightarrow Q_{4m}$ for some $m \ge 6$. Suppose $g: \Z Q_{4m} \twoheadrightarrow \Lambda_{n_1, \cdots, n_k}$ and $n_1, \cdots, n_k$ are distinct positive integers such that $n_i \nmid m$ and $n_i \mid 2m$. Then, by combining Corollary \ref{cor:P/Aut(G)-periodic} and Proposition \ref{prop:order-quotient}, there is a surjection of graded trees
\[ f_\# : [P_{(G,n)}]/\Aut(G) \twoheadrightarrow [\widebar{P_{(G,n)}}]/\IM((g \circ f)_*)\]
where $\widebar{P_{(G,k)}} = (g \circ f)_\#(P_{(G,n)})$, $(g \circ f)_*: \Aut(G) \to \Aut(\Lambda_{n_1, \cdots, n_k})$ and the action of $\IM((g \circ f)_*) \subseteq \Aut(\Lambda_{n_1, \cdots, n_k})$ on $[\bar{P}]$ is induced by $g \circ f$.

The following is immediate from Theorem \ref{thm:higher-orders}. Note that this shows that $\Z Q_{4m}$ does not have stably free cancellation for $m \ge 6$ as was shown in \cite[Theorem I]{Sw83}.

\begin{lemma} \label{lemma:q4n-quotient}
If $m \ge 6$, then $Q_{4m}$ has a quotient of the form $\Lambda = \Lambda_{n_1,\cdots,n_k}$ with $n_i \nmid m$, $n_i \mid 2m$ which does not have stably free cancellation. In particular,
\begin{enumerate}[\normalfont(i)]
\item If $m \ne 6,7,9,10,12,15$, then we can take $\Lambda = \Lambda_{2m}$
\item If $m = 6,7,9,10,12,15$, then we can take $\Lambda = \Lambda_{4,12}$, $\Lambda_{2,14}$, $\Lambda_{6,18}$, $\Lambda_{4,20}$, $\Lambda_{8,24}$, $\Lambda_{6,30}$ and $\Lambda_{6,42}$ respectively.
\end{enumerate}
\end{lemma}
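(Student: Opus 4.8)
The plan is to deduce the statement directly from the complete classification in Theorem~\ref{thm:higher-orders}, using the fact (recorded in Section~\ref{section:orders-in-QG}) that $\mathbb{Z}Q_{4m}$ surjects onto $\Lambda_{n_1,\dots,n_k}$ precisely when the $n_i$ are distinct positive integers with $n_i\nmid m$ and $n_i\mid 2m$. With this dictionary in place the proof is a finite case check, split according to whether $m$ lies in a small exceptional set.

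First I would use that $\mathbb{Z}Q_{4m}$ always surjects onto $\Lambda_{2m}=\mathbb{Z}[\zeta_{2m},j]$, since $2m\nmid m$ and $2m\mid 2m$. By Theorem~\ref{thm:higher-orders}, the orders $\Lambda_{2m}$ that do have stably free cancellation are exactly those with $2m\in\{2,4,6,8,10,12,14,18,20,24,30\}$, i.e.\ $m\in\{1,2,3,4,5,6,7,9,10,12,15\}$; in particular $\Lambda_{16}$ and $\Lambda_{42}$ do not have stably free cancellation, consistently with Lemma~\ref{lemma:lambda-fails-cancellation} and the remark following it. Hence for every $m\ge 6$ outside the set $\{6,7,9,10,12,15\}$ the quotient $\Lambda=\Lambda_{2m}$ already witnesses the claim, which gives part~(i).

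For the six remaining values the order $\Lambda_{2m}$ is $\Lambda_{12},\Lambda_{14},\Lambda_{18},\Lambda_{20},\Lambda_{24},\Lambda_{30}$ respectively, each of which has stably free cancellation, so a different quotient is needed. For each such $m$ I would exhibit the order named in the statement and verify the two required properties. The divisibility conditions are immediate: for instance $4,12\nmid 6$ and $4,12\mid 12$, so $\Lambda_{4,12}$ is a quotient of $\mathbb{Z}Q_{24}$, and similarly $\Lambda_{2,14},\Lambda_{6,18},\Lambda_{4,20},\Lambda_{8,24},\Lambda_{6,30}$ are quotients of $\mathbb{Z}Q_{4m}$ for $m=7,9,10,12,15$. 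That none of these six orders has stably free cancellation is exactly Lemma~\ref{lemma:swan-examples} (equivalently, none of them appears in Theorem~\ref{thm:higher-orders}). The final assertion, that $\mathbb{Z}Q_{4m}$ itself fails stably free cancellation for $m\ge 6$, then follows from \cite[Theorem A10]{Sw83}, since a quotient ring of a ring with stably free cancellation again has stably free cancellation.

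There is no real obstacle here: every substantive input --- the failure of stably free cancellation for the $\Lambda_{2m}$ with $\varphi(2m)$ large, and for the six exceptional orders --- has already been established in Lemmas~\ref{lemma:lambda-fails-cancellation} and~\ref{lemma:swan-examples} and packaged into Theorem~\ref{thm:higher-orders}. The only step needing a little care is the bookkeeping that translates the arithmetic condition ``$n_i\nmid m$, $n_i\mid 2m$'' into the finite list of Theorem~\ref{thm:higher-orders}, and in particular the verification that $\{6,7,9,10,12,15\}$ are precisely the values of $m\ge 6$ for which $\Lambda_{2m}$ has stably free cancellation.
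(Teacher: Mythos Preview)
Your proposal is correct and matches the paper's approach exactly: the paper simply states that the lemma is immediate from Theorem~\ref{thm:higher-orders}, and you have written out precisely the bookkeeping that makes it so. The only point you might flag is the stray $\Lambda_{6,42}$ in the statement (seven orders listed for six values of $m$); this appears to be an artefact carried over for use in Lemma~\ref{lemma:D=0}, and your argument correctly covers the six exceptional cases without needing it.
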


Recall that, if $\Lambda \subseteq A$ is a $\Z$-order in a finite-dimensional semisimple separable $\Q$-algebra $A$ and $\Lambda \subseteq \Gamma \subseteq A$ is a maximal order, then the \textit{kernel group} is defined as $D(\Lambda) = \Ker(i_* : C(\Lambda) \to C(\Gamma))$ where $i: \Lambda \hookrightarrow \Gamma$, and note that $i_*$ is surjective by \cite[Theorem A10]{Sw83}. By \cite[Theorem A24]{Sw83}, this is independent of the choice of $\Gamma$ and, if $f: \Lambda_1 \to \Lambda_2$ is a map of $\Z$-orders, then $f$ induces a map $f_* : D(\Lambda_1) \to D(\Lambda_2)$.

It was shown by Milgram \cite[Theorem 2.B.1]{Mi85} that $[P_{(G,n)}] \in D(\Z G)$ and so $[\widebar{P_{(G,n)}}] \in D(\Lambda_{n_1, \cdots, n_k})$ by the remarks above. In particular, if $D(\Lambda_{n_1, \cdots, n_k})=0$, then we have that $[\widebar{P_{(G,n)}}] = 0 \in C(\Lambda_{n_1, \cdots, n_k})$ automatically.

\begin{lemma} \label{lemma:D=0}
Suppose $\Lambda = \Lambda_{n_1, \cdots, n_k} \subseteq A_{n_1, \cdots, n_k}$ is of one of the following forms:
\begin{enumerate}[\normalfont (i)]
\item $\Lambda = \Lambda_{2n}$ for $n \ge 1$
\item $\Lambda = \Lambda_{4,12}$, $\Lambda_{4,20}$, $\Lambda_{8,24}$, $\Lambda_{6,30}$, $\Lambda_{6,42}$.
\end{enumerate}
Then $D(\Lambda) = 0$.
\end{lemma}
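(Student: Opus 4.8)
The plan is to identify $D(\Lambda)$ with a quotient of a $K_1$-group of a finite ring, and then to show that group is exhausted by images of units reducing from $\Lambda$ and from a maximal order containing it. Concretely, fix a maximal order $\Lambda\subseteq\Gamma\subseteq A_{n_1,\cdots,n_k}$; by \cite[Theorem A24]{Sw83} the defect $D(\Lambda)=\Ker(C(\Lambda)\to C(\Gamma))$ does not depend on this choice. Let $\mathfrak f\subseteq\Lambda$ be the conductor of $\Lambda$ in $\Gamma$, so that $\Lambda,\Gamma,\Lambda/\mathfrak f,\Gamma/\mathfrak f$ form a Milnor square \cite[Example 42.3]{CR87}. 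Its Mayer--Vietoris sequence in algebraic $K$-theory, together with the fact that a locally free module over the finite, hence semilocal, ring $\Lambda/\mathfrak f$ is free, shows that $D(\Lambda)$ is the cokernel of $K_1(\Gamma)\oplus K_1(\Lambda/\mathfrak f)\to K_1(\Gamma/\mathfrak f)$. Since $\Gamma/\mathfrak f$ is finite its $K_1$ is generated by the image of $(\Gamma/\mathfrak f)^\times$, so it suffices to prove that $(\Gamma/\mathfrak f)^\times$ is generated by the images of $\Gamma^\times$ and $(\Lambda/\mathfrak f)^\times$.

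For part (i) I would use that $\Q[\zeta_{2n},j]$ ramifies at no finite prime other than $2$, so that $\Lambda_{2n}=\Z[\zeta_{2n},j]$ is maximal at every odd prime (\cite[\S 4]{Sw83}) and $\mathfrak f$ is $2$-primary; the criterion above then becomes a purely $2$-adic statement, which follows from the description $\Lambda_{2n}^\times=\langle\Z[\zeta_{2n}]^\times,j\rangle$ of \cite[Lemma 7.5, 7.6]{MOV83} (and the analogous description of the units of the maximal order). The vanishing of $D(\Lambda_{2n})$ is in any case implicit in Swan's argument for \cite[Theorem I]{Sw83}, where a stably free --- not merely locally free --- non-free $\Z Q_{4n}$-module is produced by lifting a stably free module from the maximal order of the component $\Q[\zeta_{2n},j]$; that lift remains stably free over $\Lambda_{2n}$ precisely because $D(\Lambda_{2n})=0$.

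For part (ii), each listed order is $\Lambda_{n_1,n_2}$ with $n_1\mid n_2$ and $n_2/n_1=p$ prime ($p=3$ for $\Lambda_{4,12},\Lambda_{8,24}$; $p=5$ for $\Lambda_{4,20},\Lambda_{6,30}$; $p=7$ for $\Lambda_{6,42}$). Writing $\Lambda_{n_1,n_2}=\Z Q_{2n_2}/(\Phi_{n_1}\Phi_{n_2})$ and using $(\Phi_{n_1})+(\Phi_{n_2})=(\Phi_{n_1},p)$ from \cite[Theorem B1]{Sw83}, \cite[Example 42.3]{CR87} exhibits $\Lambda_{n_1,n_2}$ as a fibre product $\Lambda_{n_1}\times_R\Lambda_{n_2}$ over the finite ring $R=\F_p[\zeta_{n_1},j]$, exactly as in the proof of Lemma \ref{lemma:calc}. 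Since $p$ is odd, $p\nmid n_1$, and $\Q[\zeta_{n_1},j]$ is unramified at $p$, the order $\Lambda_{n_1}$ is maximal there and $R$ is a single matrix ring $M_2(\F_{p^f})$ --- namely $M_2(\F_3),M_2(\F_5),M_2(\F_9),M_2(\F_5),M_2(\F_7)$ in the five cases --- so $K_1(R)\cong\F_{p^f}^\times$ is cyclic. Feeding $D(\Lambda_{n_1})=D(\Lambda_{n_2})=0$ from part (i) into the Mayer--Vietoris sequence of this fibre product identifies $D(\Lambda_{n_1,n_2})$ with the cokernel of $K_1(\Lambda_{n_1})\oplus K_1(\Lambda_{n_2})\to K_1(R)$, and by \cite[Lemma 7.5, 7.6]{MOV83} its image is generated by the reductions modulo the prime above $p$ of the totally positive units of $\Z[\zeta_{n_i}+\zeta_{n_i}^{-1}]$. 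The remaining step --- and the main obstacle --- is to exhibit in each of the five cases such a unit whose reduction generates the cyclic group $\F_{p^f}^\times$; for instance, for $\Lambda_{4,12}$ the unit $2+(\zeta_{12}+\zeta_{12}^{-1})=2+\sqrt3$ reduces modulo the prime above $3$ to $-1$, a generator of $\F_3^\times$. These are finite verifications of exactly the kind carried out in \cite[Lemma 8.9]{Sw83}, and mirrored in the proof above for $\Lambda_{10,30}$ (where $1+\zeta_{30}$ reduces to a generator of $\F_9^\times$); the delicate points are pinning down the residue field correctly and checking multiplicative generation, and once this is done in all five cases we conclude $D(\Lambda)=0$.
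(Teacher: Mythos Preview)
Your approach is sound and would work, but it differs from the paper's in part (ii). For (i) both you and the paper defer to Swan: the paper simply cites \cite[Corollary 8.3]{Sw83}, while you sketch the conductor-square argument behind it.

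For (ii) the routes diverge. You set up the fibre square $\Lambda_{n_1,n_2}=\Lambda_{n_1}\times_R\Lambda_{n_2}$, correctly identify $D(\Lambda_{n_1,n_2})$ with the cokernel of $K_1(\Lambda_{n_1})\oplus K_1(\Lambda_{n_2})\to K_1(R)$ using part (i), and then propose the five finite unit-reduction checks showing this cokernel vanishes. This is precisely the content of \cite[Lemmas 8.9--8.12, 8.14]{Sw83}, so you are essentially re-deriving what Swan proves. The paper instead \emph{cites} those lemmas for the conclusion $C(\Lambda_{d,2n})\cong C(\Lambda_{2n})$, and then finishes with a short cardinality sandwich: since $D(\Lambda_{2n})=0$ gives $|C(\Lambda_{d,2n})|=|C(\Gamma_{2n})|$, and $\Gamma_{d,2n}\cong\Gamma_d\times\Gamma_{2n}$ forces
\[
|C(\Gamma_{2n})|=|C(\Lambda_{d,2n})|\ge |C(\Gamma_{d,2n})|\ge |C(\Gamma_{2n})|,
\]
equality throughout yields $D(\Lambda_{d,2n})=0$ (and incidentally $C(\Gamma_d)=0$). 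The paper's argument is shorter because it black-boxes the unit computations; yours is more self-contained but amounts to reproducing them. One minor imprecision: the image of $K_1(\Lambda_{n_i})$ in $K_1(R)\cong\F_{p^f}^\times$ is not literally ``the reductions of totally positive units of $\Z[\zeta_{n_i}+\zeta_{n_i}^{-1}]$'' but rather the image under the reduced norm composed with reduction; the exact bookkeeping (as in the paper's $\Lambda_{10,30}$ computation) requires tracking what $\Z[\zeta_{n_i}]^\times$ and $j$ map to, though the outcome is as you say.
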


\begin{proof}
By \cite[Corollary 8.3]{Sw83}, we have that $D(\Lambda_{2n})=0$. The other orders are of the form $\Lambda_{d,2n}$ for some $d \ne 2n$ such that $d \nmid n$ and $d \mid 2n$. Suppose that $d$ and $n$ are one of the give pairs above. It follows from the proofs of \cite[Lemma 8.9-8.12, 8.14]{Sw83} that the projection $f: \Lambda_{d,2n} \to \Lambda_{2n}$ induces an isomorphism $f_*: C(\Lambda_{d,2n}) \cong C(\Lambda_{2n})$. Since $D(\Lambda_{2n})=0$, this implies that $C(\Lambda_{d,2n}) \cong C(\Gamma)$ for any maximal order $\Lambda_{2n} \subseteq \Gamma \subseteq A_{2n}$ by \cite[Theorem A24]{Sw83}.

Let $\Lambda_{d,2n} \subseteq \Gamma_{d,2n} \subseteq A_{d,2n}$ be a maximal order. Since $A_{d,2n} \cong A_d \times A_n$, we must have that $\Gamma_{2n,d} \cong \Gamma_d \times \Gamma_{2n}$ where $\Gamma_d \subseteq A_d$ and $\Gamma_{2n} \subseteq A_{2n}$ are maximal orders \cite[p33]{Jo03a}. Since $\Gamma \twoheadrightarrow \Gamma_{2n}$ induces a surjection $C(\Gamma) \twoheadrightarrow C(\Gamma_{2n})$ \cite[Theorem A10]{Sw83}, we have that
$|C(\Gamma_{2n})| = |C(\Lambda_{d,2n})| \ge |C(\Gamma_{d,2n})| \ge |C(\Gamma_{2n})|$
by considering the case $\Gamma = \Gamma_{2n}$. This implies that $D(\Lambda_{d,2n})=0$.
\end{proof}

\begin{remark}
Lemma \ref{lemma:D=0} cannot be extended to $\Lambda_{2,14}$ or $\Lambda_{6,18}$ or any other orders arising from $Q_{28}$, $Q_{36}$ which do not have stably free cancellation. In particular
\[ \Q Q_{28} \cong \Q D_{14} \times A_{2,14}, \quad \Q Q_{36} \cong \Q D_{18} \times A_{2,6,18}\]
and so the only options are $\Lambda_2$, $\Lambda_6$, $\Lambda_{14}$, $\Lambda_{18}$, $\Lambda_{2,6}$, $\Lambda_{2,18}$ which all have stably free cancellation, or $\Lambda_{2,14}$, $\Lambda_{6,18}$, $\Lambda_{2,6,18}$ which all do not have stably free cancellation by Theorem \ref{thm:higher-orders} but have $D(\Lambda) \ne 0$ from results in \cite{Sw83}.
\end{remark}

Suppose $G$ has a quotient $Q_{4m}$ for some $m \ge 6$ and $4m \ne 28, 36$. Then Lemmas \ref{lemma:q4n-quotient} and \ref{lemma:D=0} imply that there exists $\Lambda_G = \Lambda_{n_1, \cdots,n_k}$ such that $D(\Lambda_G) = 0$ and $[\Lambda_G]$ has non-cancellation. Since $[P_{(G,n)}] \in D(\Z G)$, this implies that $[\widebar{P_{(G,n)}}] = 0 \in C(\Lambda_G)$. 
If $4m = 28, 36$, then Lemma \ref{lemma:q4n-quotient} implies that there exists $\Lambda_G = \Lambda_{n_1, \cdots,n_k}$ such that $[\Lambda_G]$ has non-cancellation. Since $Q_{28}$ and $Q_{36}$ have type I, Proposition \ref{prop:type-closed} implies that $G$ has type I and so $\sigma_k(G)=0$ by Theorem \ref{thm:big-table}. Hence $[P_{(G,n)}] \in T_G$ and so similarly we have that $[\widebar{P_{(G,n)}}] = 0 \in C(\Lambda_G)$ by Lemma \ref{lemma:f(swan)=free}.

Hence, in both cases, there is a surjection of graded trees
\[ [P_{(G,n)}]/\Aut(G) \twoheadrightarrow [\Lambda_G]/\Aut(\Lambda_G).\]
By, for example \cite[Lemma 8.1]{Ni20a}, the action of $\Aut(\Lambda_G)$ on $[\Lambda_G]$ fixes the free module. Since $[\Lambda_G]$ has non-cancellation, this now implies that $[\Lambda_G]/\Aut(\Lambda_G)$ has non-cancellation. To see this, note that $[\Lambda_G]$ having non-cancellation is equivalent to the existence of a non-free stably free $\Lambda_G$-module $S$. If $S$ and $\Lambda_G$ coincide in $[\Lambda_G]$ under the orbit of the $\Aut(\Lambda_G)$ action, then $S \cong f \cdot \Lambda_G$ for some $f \in \Aut(\Lambda_G)$. However, since the action of $\Aut(\Lambda_G)$ on $[\Lambda_G]$ fixes the free module, we have that $f \cdot \Lambda_G \cong \Lambda_G$ which implies that $S \cong \Lambda_G$ which is a contradiction. Hence $[\Lambda_G]/\Aut(\Lambda_G)$ had non-cancellation.
This completes the proof of Theorem \ref{thm:main-non-cancellation} and hence completes the proof of Theorem \ref{thm:main}.

We conclude this section by picking back up on Remark \ref{remark:thm-main} (b) in which we referred to the following mild generalisation of Theorem \ref{thm:main} in the case $n=2$.
Let $\text{D2}(G)$ (resp. $\text{PD2}(G)$) denote the set of homotopy (resp. polarised homotopy) types of finite D2-complexes $X$ with $\pi_1(X) \cong G$. Then the following holds in the case $n=2$ without the assumption that $G$ has the D2 property.

\begin{thm} \label{thm:main-D2}
Let $G$ have $4$-periodic cohomology. Then the following are equivalent:
\begin{enumerate}[\normalfont(i)]
\item $\text{\normalfont D2}(G)$ has cancellation
\item $\text{\normalfont PD2}(G)$ has cancellation
\item $m_{\H}(G) \le 2$.
\end{enumerate}
\end{thm}

\begin{proof}
The proof of Theorem \ref{thm:main} extends in a straightforward manner to Theorem \ref{thm:main-D2} in light of \cite[Remark 5.2 (a)]{Ni20a} (see also \cite[Theorem 2.1]{Ni19}). This states that there is an isomorphism of graded trees 
\[ \Psi : \text{PD2}(G) \to [P_{(G,2)}]\] 
where $P_{(G,2)}$ is a representative of the Swan finiteness obstruction. It follows easily that there is also an isomorphism of graded trees 
\[ \bar{\Psi} : \text{D2}(G) \to [P_{(G,2)}]/\Aut(G).\]
The theorem is now a consequence of the algebraic results established in Theorems \ref{thm:main-cancellation} and \ref{thm:main-non-cancellation}.
\end{proof}

%%%%%%%%%%
\section{Proof of Theorem \ref{thm:main-bounds}}
\label{section:bounds}

Recall that, if $G$ is finite and $n$ is even, then $\HT(G,n)$ is a fork in that it has a single vertex at each non-minimal height and finitely many at the minimal level \cite[Corollary 4.7]{Ni20a}. Let \[ N(G,n) = \#\{X \in \HT(G,n) : \vv \chi(X) \text{ is minimal}\}.\] 

We will now prove the following, which is a restatement of Theorem \ref{thm:main-bounds}.

\begin{thm}
Let $G$ have $k$-periodic cohomology and let $n$ be such that $k \mid n$ or $n+2$ and, if $n=2$, suppose $G$ has the {\normalfont D2} property. If $m = m_{\H}(G)$, then
\[ N(G,n) \ge e^{\tfrac{m \log m}{8 \log \log m} + O(m \log \log m)}. \]
\end{thm}

\begin{proof}
By \cite[Theorem B]{Ni20a}, there is a bijection
\[ \HT(G,n) \cong [P_{(G,n)}]/\Aut(G)\]
for some $P_{(G,n)} \in P(\Z G)$ such that $[P_{(G,n)}] = \sigma_n(G) \in C(\Z G)/T_G$.
Now suppose $m = m_{\H}(G) \ge 3$. By Lemma \ref{lemma:MH>=3}, there is a quotient $f: G \twoheadrightarrow Q_{4m_0}$ for some $m_0 \ge 2m/3$. By Corollary \ref{cor:P/Aut(G)-periodic}, this implies that
\[ N(G,n) \ge \# \Cls^{[P_{(G,n)}]}(\Z G)/\Aut(G) \ge \# \Cls^{[P]}(\Z Q_{4m_0}) /\IM(f_*)\]
where $P = f_\#(P_{(G,n)})$. Since there is a quotient of $\Z$-orders $g: Q_{4m_0} \twoheadrightarrow \Lambda_{2m_0}$, it follows from \cite[Theorem A10]{Sw83} that 
\[ \# \Cls^{[P]}(\Z Q_{4m_0}) \ge \# \Cls^{[\bar{P}]}(\Lambda)\] 
where $\bar{P} = g_\#(P)$. Since $\bar{P} = (g \circ f)_\#(P_{(G,n)})$, we have that $[\bar{P}] = (g \circ f)_\#([P_{(G,n)}]) \in C(\Lambda_{2m_0})$. It follows from \cite[Theorem 2.B.1]{Mi85} that $[P_{(G,n)}] \in D(\Z G)$ and so $[\bar{P}] \in D(\Lambda_{2m_0})$. By \cite[Corollary 8.3]{Sw83}, we have that $D(\Lambda_{2m_0})=0$ and so $[\bar{P}] = [\Lambda_{2m_0}]$.

Since $|\Aut(Q_{4m_0})| = 2m_0 \varphi(2m_0)$, Proposition \ref{prop:bounds} implies that
\[ N(G,n) \ge \frac{1}{2m_0 \varphi(2m_0)} e^{\frac{3}{8} \varphi(m_0) \log m_0 + O(m_0 \log \log m_0)}\]
and we can omit $1/2m_0 \varphi(2m_0)$ since it is sub-exponential.
By \cite[Theorem 328]{HW60}, we have that $\varphi(n) \ge n / 2\log \log n$ for $n$ sufficiently large. Since $m_0 \ge 2m/3$, this implies that
\[ \log N(G,n) \ge \frac{3}{16} \cdot \frac{(2m/3) \log (2m/3)}{\log \log (2m/3)} + O(m \log \log m) \ge \frac{m \log m}{8 \log \log m} + O(m\log \log m)\]
since $\log x/ \log \log x$ is increasing for $x$ sufficiently large.	
\end{proof}

We note also the following improvement in the case where $G$ is a quaternion group, the proof of which is contained in the argument above.

\begin{prop}
Let $n$ be even and, if $n=2$, assume $Q_{4m}$ has the {\normalfont D2} property. Then we have
\[ N(Q_{4m},n) \ge e^{\frac{3}{8} \varphi(m) \log m + O(m \log \log m)}.\]
\end{prop}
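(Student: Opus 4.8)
The plan is to extract the bound for $Q_{4m}$ directly from the chain of inequalities already assembled in the proof of Theorem~\ref{thm:main-bounds}, observing that when $G = Q_{4m}$ the initial reduction step (passing to a quaternionic quotient via Lemma~\ref{lemma:MH>=3}) is unnecessary. So first I would set $G = Q_{4m}$ and invoke \cite[Theorem B]{Ni20} to get the bijection $HT(Q_{4m},n) \cong [P_{(Q_{4m},n)}]/\Aut(Q_{4m})$ for a representative $P_{(Q_{4m},n)}$ of $\sigma_n(Q_{4m}) \in C(\Z Q_{4m})/T_{Q_{4m}}$, so that $N(Q_{4m},n) \ge \# \Cls^{[P_{(Q_{4m},n)}]}(\Z Q_{4m})/\Aut(Q_{4m})$.

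Next I would apply the projection of $\Z$-orders $g : \Z Q_{4m} \twoheadrightarrow \Lambda_{2m}$ and use \cite[Theorem A10]{Sw83} (surjectivity on class sets) to bound $\# \Cls^{[P_{(Q_{4m},n)}]}(\Z Q_{4m})$ below by $\# \Cls^{[\bar P]}(\Lambda_{2m})$ where $\bar P = g_\#(P_{(Q_{4m},n)})$, and then quotient by the induced action of $\IM(f_*)$ — here the relevant $f_*$ is $g_* : \Aut(Q_{4m}) \to \Aut(\Lambda_{2m})$ coming from the lemma on $f_* : \Z Q_{4n} \twoheadrightarrow \Lambda_{n_1,\cdots,n_k}$. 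Since $[P_{(Q_{4m},n)}] \in D(\Z Q_{4m})$ by \cite[Theorem 2.B.1]{Mi85}, functoriality of the defect group gives $[\bar P] \in D(\Lambda_{2m})$, and $D(\Lambda_{2m}) = 0$ by \cite[Corollary 8.3]{Sw83}, so $[\bar P] = [\Lambda_{2m}]$. Then Proposition~\ref{prop:bounds} yields $\# \Cls^{[\bar P]}(\Lambda_{2m}) \ge e^{\frac{3}{8}\varphi(m)\log m + O(m\log\log m)}$, and dividing by $|\Aut(Q_{4m})| = 2m\,\varphi(2m)$, which is sub-exponential in $m$, absorbs into the error term and gives the claimed bound.

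I should emphasize in the writeup that the point of this proposition, as against the general statement of Theorem~\ref{thm:main-bounds}, is that for $G = Q_{4m}$ we have $m_{\H}(Q_{4m}) = \lfloor m/2 \rfloor$, so the general bound of $e^{\frac{m'\log m'}{8\log\log m'}+\cdots}$ with $m' = m_{\H}(G)$ would only give roughly $e^{\frac{m\log m}{16\log\log m}}$, whereas working directly with $\Lambda_{2m}$ rather than first shrinking to a quaternionic quotient of order $\approx 4m_{\H}$ keeps the full discriminant of $\Q(\zeta_{2m}+\zeta_{2m}^{-1})$ in play and saves the factor of two, leaving $\varphi(m)$ rather than $\varphi(m/2)$ in the exponent. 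There is essentially no new obstacle: every ingredient (the bijection with the class set modulo $\Aut$, the order projection, the vanishing of the defect group of $\Lambda_{2m}$, and Proposition~\ref{prop:bounds}) has already appeared, so the work is purely a matter of running the argument of Theorem~\ref{thm:main-bounds} with the reduction step omitted. The only mild point to check carefully is that Proposition~\ref{prop:bounds} is being applied with $2n$ there equal to $2m$ here, i.e. that $\varphi(2m)$ in that statement corresponds correctly to the $\varphi(m)$ appearing in the final exponent via $\varphi(2m) \ge \tfrac12\varphi(2m) \ge$ the relevant quantity and $\log 2m \ge \log m$, exactly as in the displayed manipulation at the end of the proof of Proposition~\ref{prop:bounds}.
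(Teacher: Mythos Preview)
Your proposal is correct and follows essentially the same approach as the paper, which simply remarks that the proof is ``contained in the argument above'': when $G = Q_{4m}$ one takes $m_0 = m$ and omits the passage through Lemma~\ref{lemma:MH>=3}, then runs the identical chain (bijection from \cite[Theorem~B]{Ni20}, projection to $\Lambda_{2m}$ via \cite[Theorem~A10]{Sw83}, $D(\Lambda_{2m})=0$, Proposition~\ref{prop:bounds}, and absorption of $|\Aut(Q_{4m})|$ into the error term). Your commentary on why this improves the exponent is accurate and there is nothing further to add.
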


In particular, if $p$ is prime and $m =m_{\H}(Q_{4p}) = \lfloor p/2 \rfloor$, then
\[ N(Q_{4p},n) \ge e^{\frac{3}{8} p \log p + O(p \log \log p)} \ge e^{\frac{3}{4} m \log m + O(m \log \log m)}. \]

%%%%%%%%%%
\section{Applications to group presentations}
\label{section:D2}

We will adopt the convention that a finite presentation $\mathcal{P} = \langle x_1,\cdots,x_n \mid r_1 = r_1' , \cdots, r_m = r_m' \rangle$, defined in terms of relations, is identified with the presentation $\mathcal{P} = \langle x_1,\cdots,x_n \mid r_1(r_1')^{-1} , \cdots, r_m(r_m')^{-1} \rangle$ defined in terms of relators. This allows us to associate to $\mathcal{P}$ a canonical presentation complex $X_{\mathcal{P}}$. We say that two presentations $\mathcal{P}$ and $\mathcal{Q}$ are \textit{homotopy equivalent}, written $\mathcal{P} \simeq \mathcal{Q}$, if $X_{\mathcal{P}} \simeq X_{\mathcal{Q}}$ are homotopy equivalent as spaces. 

Given a finite presentation $\mathcal{P} = \langle x_1,\cdots,x_n \mid r_1 , \cdots, r_m \rangle$ for a group $G$, we can construct another presentation for $G$ by applying one of the following operations to $\mathcal{P}$:
\begin{clist}{(A)}
\item Replace a relator $r_i$ by $\omega r_i \omega^{-1}$ for some word $\omega\in F(x_1,\cdots,x_n)$. (\textit{Conjugation})
\item For $i \ne j$, replace a relator $r_i$ by $r_ir_j$. (\textit{Right-multiplication})
\item Replace a relator $r_i$ by $r_i^{-1}$. (\textit{Inversion})
\end{clist}

The operations (A)-(C) are known as \textit{$Q$-transformations} and we say that two presentations $\mathcal{P}$ and $\mathcal{Q}$ are \textit{$Q$-equivalent}, written $\mathcal{P} \simeq_{Q} \mathcal{Q}$ if they are related by a sequence of $Q$-transformations. 
The following are consequences of the operations (A)-(C) and so induce $Q$-equivalences \cite[Footnote 1]{Me76}:
\begin{clist}{(A)}
\setcounter{enumi}{3}
\item For $i \ne j$, replace a relator $r_i = a \cdot b$ with $a \cdot r_j \cdot b$ for words $a,b \in F(x_1,\cdots,x_n)$ (\textit{Insertion})
\item 
For $i \ne j$, replace $r_i$ with $r_j$ and $r_j$ with $r_i$ (\textit{Permutation})
\end{clist}

If $\mathcal{P} \simeq_{Q} \mathcal{Q}$, then $\mathcal{P} \simeq \mathcal{Q}$ \cite[p8]{Me76}. Finding $Q$-equivalences between presentations is therefore a convenient way to prove that two presentations are homotopy equivalent. Note that the converse is far from true and there are many intermediate equivalence relations (see \cite[Chapters III \& XII]{HMS93}).

\begin{remark}
We previously defined the homotopy type of a presentation defined by relations $r_i = r_i'$ by first viewing it as a presentation defined by relators $r_i(r_i')^{-1}$. One nice consequence of the discussion above is that the resulting homotopy type is independent of the part of the word we `cut' the equality by to turn it into a relator since different choices are related by operations of type (A) and (C).
\end{remark}

Fix $n \ge 2$ and let $\mathcal{P}_{\std} = \langle x , y \mid x^n = y^2, y^{-1}xy = x^{-1} \rangle$ denote the standard presentation for $Q_{4n}$. For $r \in \Z$, consider the presentations
\[ \mathcal{P}_{\MP}^r = \langle x , y \mid x^n = y^2, y^{-1}xyx^{r-1} = x^ry^{-1}x^2y \rangle  \]
which were mentioned in Section \ref{ss:D2} of the introduction. 
We will now prove the following, which was claimed in the introduction. 
Note that, in statement (ii), there is no assumption that $\mathcal{P}_{\MP}^r$ is a presentation for $Q_{4n}$.

\begin{prop} \label{prop:MP}
\begin{clist}{(i)}
\item
If $r=0$ or $1$, then $\mathcal{P}_{\MP}^r \simeq \mathcal{P}_{\std}$.
\item
If $r \equiv s \mod n$, then $\mathcal{P}_{\MP}^r \simeq \mathcal{P}_{\MP}^s$.
\end{clist}
\end{prop}

\begin{proof}
In each case, we will show that the pairs of presentations are $Q$-equivalent.

(i) If $r=0$, the second relator becomes $(y^{-1}xyx^{-1})(y^{-1}x^2y)^{-1}$. After conjugation, this becomes $yx^{-1}y^{-1}x^{-1}$ which simplifies to $yxy^{-1}x$ after inversion and further conjugation. The case $r=1$ is similar.

(ii) For $r \in \Z$, we will show that $\mathcal{P}_{\MP}^r$ and $\mathcal{P}_{\MP}^{r+n}$ are $Q$-equivalent. The second relator of $\mathcal{P}_{\MP}^r$ is of the form $a x^r b x^{-r}$ for some $a,b\in F(x,y)$. Using insertion, we can replace this with $ax^{r+n}y^{-2}bx^{-r}$. By insertions and inversions, we can replace $y^{-2}$ with $x^n$ and vice versa. By doing this, we can move $y^{-2}$ past $b$ to put the second relator in the form $ax^{r+n}by^{-2}x^{-r}$. Finally, we apply insertion and inversion once more to replace this with $ax^{r+n}bx^{-(r+n)}$ which is the second relator of $\mathcal{P}_{\MP}^{r+n}$.
\end{proof}

The remainder of this section will be devoted to the proof of Corollary \ref{cor:main-D2}. We will begin with the following, which is a simple counting argument. Recall that an \textit{$(n,m)$-presentation} is a presentation with $n$ generators and $m$ relators.

\begin{lemma} \label{lemma:upper-bound-on-number-of-pres}
Let $t \ge 1$ be an integer. Then:
\[\#\{\mathcal{P} : \mathcal{P} \text{ a $(2,2)$-presentation, $\ell(\mathcal{P}) \le t$}\} \le e^{6t}.\]
\end{lemma}

\begin{proof}
Each presentation $\mathcal{P}$ has the form
$\mathcal{P} = \langle x, y \mid r,s \rangle$ for $r,s \in F_2 =F(x,y)$.
Hence we have
\begin{align*} &\#\{\mathcal{P} : \mathcal{P} \text{ a $(2,2)$-presentation, $\ell(\mathcal{P}) \le t$}\} = \#\{(r,s) \in F_2^2 : |r|+|s| \le t\} \\ 
	&= \sum_{i=0}^t \sum_{j=0}^{i} \#\{(r,s) \in F_2^2 : |r|=j, |s|=i-j\} \le \sum_{i=0}^t \sum_{j=0}^{i} 4^j 4^{i-j} \le (t+1)^2 4^t
\end{align*}
where we used that $\#\{r \in F_2 : |r| =k\} \le 4^k$ since each letter is of the form $x,x^{-1},y,y^{-1}$.

Finally, note that $\log ((t+1)^24^t) = t \log 4 + 2\log(t+1) \le t(\log 4 + 4) \le 6t$.
\end{proof}

We will now prove the following which implies Corollary \ref{cor:main-D2} by letting $f(n)=n$ be the constant function and letting $G = Q_{4n}$ so that $m_{\H}(G) = \lfloor n/2 \rfloor$.

\begin{thm} \label{thm:main-D2-detailed}
Let $\lambda > 0$ and let $f : \N \to \R_{> 0}$ be a function for which $n \log n / (f(n) \log\log n) \to \infty$ as $n \to \infty$. Then there exists a constant $N_{\lambda}$ such that, if $G$ has $4$-periodic cohomology and $m = m_{\H}(G) > N_{\lambda}$, then at least one of the following holds:
\begin{clist}{(i)}
\item
$G$ does not have the {\normalfont D2} property.
\item
$G$ has a balanced presentation which is not homotopy equivalent to a $(2,2)$-presentation $\mathcal{P}$ with $\ell(\mathcal{P}) \le \lambda f(m)$.
\end{clist}
\end{thm}

\begin{proof}
Fix $\lambda > 0$ and a function $f : \N \to \R_{>0}$ with the desired property.
Let $G$ have 4-periodic cohomology and suppose $G$ does not satisfy conditions (i) or (ii). It suffices to show that $m = m_{\H}(G)$ is bounded

Since $G$ does not satisfy (i), it has the D2 property. By \cite[p6532]{Ni19}, this implies that $\Def(G) = 0$ and so $N(G,2)$ is the number of balanced presentations up to homotopy equivalence. On the other hand, since $G$ does not satisfy (ii), every balanced presentation is homotopy equivalent to a $(2,2)$-presentation $\mathcal{P}$ with $\ell(\mathcal{P}) \le \lambda f(m)$. This implies that
\[ N(G,2) \le \#\{\mathcal{P} : \mathcal{P} \text{ a $(2,2)$-presentation, $\ell(\mathcal{P}) \le \lambda f(m)$}\} \le e^{6\lambda f(m)} \]
where the second inequality is by Lemma \ref{lemma:upper-bound-on-number-of-pres}.

Since $G$ has the D2 propery, Theorem \ref{thm:main-bounds} implies that there is a constant $C > 0$ for which 
\[ N(G,2) \ge e^{C \tfrac{m \log m}{\log\log m}} \]
and so $m \log m /(f(m) \log \log m) \le 6 \lambda/C$. By assumption, we have that $n \log n / (f(n) \log\log n) \to \infty$ as $n \to \infty$. Since $6 \lambda/C$ is constant, this implies that $m$ is bounded.
\end{proof}

%%%%%%%%%%
\section{Examples} \label{section:examples}

Let $G$ have $k$-periodic cohomology, let $n=ik$ or $ik-2$ and let $P_{(G,n)} \in P(\Z G)$ be such that $[P_{(G,n)}] = \sigma_{ik}(G) \in C(\Z G)/T_G$. The classes $[P_{(G,n)}] \in C(\Z G)$ have two special properties. Firstly:
\begin{enumerate}
\item[(P1)] If $\theta \in \Aut(G)$, then $[(P_{(G,n)})_\theta] = [P_{(G,n)}] \in C(\Z G)/ T_G$.
\end{enumerate}

This was necessary in order to define our action of $\Aut(G)$ on the class $[P_{(G,n)}]$ since $\theta \in \Aut(G)$ sends $P \mapsto (I,\psi(\theta)) \otimes P_\theta$ for $P$ of rank one and some $\psi: \Aut(G) \to (\Z/|G|)^\times$. It is well-defined since 
\[ [(I,\psi(\theta)) \otimes P_\theta] = [(I,\psi(\theta))] + [P_\theta] = [P_{(G,n)}],\]
i.e. $[P_{(G,n)}] - [(P_{(G,n)})_\theta] \in T_G$. The second property follows from Theorem \ref{thm:main}:

\begin{enumerate} \setcounter{enumi}{\value{enumi}+1}
\item[(P2)] $[P_{(G,n)}]$ has cancellation if and only if $[P_{(G,n)}]/\Aut(G)$ has cancellation.
\end{enumerate}

The aim of this section will be to give examples to show that both properties are false for general projective modules over groups with periodic cohomology.

\subsection{Cyclic group of order $p$}

Here we will consider the case $G=C_p$ for a prime $p$. Recall that $T_{C_p} = 0$ \cite[Corollary 6.1]{Sw60a} and so we need only find $[P] \in C(\Z C_p)$ and $\theta \in \Aut(C_p)$ for which $[P] \ne [P_\theta] \in C(\Z C_p)$.

First define $i: \Z C_p \twoheadrightarrow \Z[\zeta_p]$ by mapping a generator $x \in C_p$ to $\zeta_p = e^{2 \pi i/p} \in \C$. It follows from a theorem of Rim \cite[Theorem 6.24]{Ri59} that the map
\[ i_* : C(\Z C_p) \to C(\Z [\zeta_p])\]
is an isomorphism. Let $\tilde{\cdot} :\Aut(C_p) \to \Gal(\Q(\zeta_p)/\Q)$ be the group isomorphism sending $\theta_i : x \mapsto x^i$ to $\tilde{\theta}_i : \zeta_p \mapsto \zeta_p^i$ for all $i \in (\Z/p)^\times$. The following is easy to check:

\begin{lemma} \label{lemma:rim-thm-commutes-with-aut}
If $\theta \in \Aut(C_p)$ and $[P] \in C(\Z C_p)$, then $i_*(\theta_*([P])) = \tilde{\theta}_*(i_*([P])$, i.e. there is a commutative diagram
\[
\begin{tikzcd}
C(\Z C_p) \ar[d,"\theta_*"] \ar[r,"i_*"] & C(\Z [\zeta_p]) \ar[d,"\tilde{\theta}_*"] \\
C(\Z C_p) \ar[r,"i_*"] & C(\Z [\zeta_p])
\end{tikzcd}
\]
\end{lemma}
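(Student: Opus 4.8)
The statement to prove, Lemma \ref{lemma:rim-thm-commutes-with-aut}, asserts that the Rim isomorphism $i_* : C(\Z C_p) \to C(\Z[\zeta_p])$ intertwines the action of $\Aut(C_p)$ on the source with the Galois action on the target, via the canonical identification $\theta_i \mapsto \tilde\theta_i$. The plan is to reduce this to functoriality of the projective class group under ring isomorphisms, applied to the commuting square of rings.

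First I would observe that an automorphism $\theta_i : C_p \to C_p$, $x \mapsto x^i$, induces a ring automorphism of $\Z C_p$ (still denoted $\theta_i$), and that the action $\theta_*$ on $C(\Z C_p)$ is precisely the map on projective class groups induced by this ring automorphism, i.e. $\theta_*([P]) = [(\theta_i)_\#(P)]$ where $(\theta_i)_\#(P) = \Z C_p \otimes_{\theta_i, \Z C_p} P$ is restriction/extension of scalars along $\theta_i$. Likewise $\tilde\theta_i : \Z[\zeta_p] \to \Z[\zeta_p]$, $\zeta_p \mapsto \zeta_p^i$, is a ring automorphism and $\tilde\theta_*$ is the induced map on $C(\Z[\zeta_p])$. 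The key algebraic point is then that the square of rings
\[
\begin{tikzcd}
\Z C_p \ar[d,"\theta_i"'] \ar[r,"i"] & \Z[\zeta_p] \ar[d,"\tilde\theta_i"] \\
\Z C_p \ar[r,"i"] & \Z[\zeta_p]
\end{tikzcd}
\]
commutes: both composites send $x$ to $\zeta_p^i$. Since extension of scalars is functorial in the ring homomorphism (i.e. $(g\circ f)_\# \cong g_\# \circ f_\#$ on projective modules, naturally), applying $(-)_\#$ to the two equal composites $\tilde\theta_i \circ i = i \circ \theta_i$ yields $(\tilde\theta_i)_\# \circ i_\# \cong i_\# \circ (\theta_i)_\#$ as functors on finitely generated projective modules, hence the claimed identity $i_*(\theta_*([P])) = \tilde\theta_*(i_*([P]))$ after passing to classes.

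The only genuinely substantive input is Rim's theorem (already cited as \cite[Theorem 6.24]{Ri59}) guaranteeing that $i_*$ is an isomorphism; but that is given and is not needed for commutativity of the diagram — the square commutes whether or not the horizontal arrows are isomorphisms. So there is essentially no obstacle: the proof is a formal consequence of functoriality of the projective class group, and the verification reduces to checking the rings square commutes on the generator $x$. The main thing to be careful about is the variance convention — whether $\theta_*$ denotes extension of scalars along $\theta_i$ or along $\theta_i^{-1}$ — but since $\Aut(C_p)$ is abelian this does not affect the statement, and one simply fixes the convention consistently on both sides of the diagram (the map $\theta \mapsto \tilde\theta$ being the transport of this convention through $i$). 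I would therefore present the proof in two sentences: note that the rings square commutes, then invoke functoriality of $(-)_\#$.

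\textbf{Remark on write-up.} Since the excerpt ends exactly at the statement and the author labels the lemma "easy to check", I anticipate the author's own proof is a one-line appeal to functoriality or to the explicit description of $i_*$ on ideals (an ideal $\mathfrak{a} \subseteq \Z[\zeta_p]$ pulled back corresponds to a projective $\Z C_p$-module, and the Galois twist $\tilde\theta_i(\mathfrak{a})$ corresponds to the $\theta_i$-twist); either phrasing works, and I would choose whichever matches the module-theoretic conventions (extension of scalars) used elsewhere in the paper, e.g. as in \cite[Section 6, 7]{Ni20}.
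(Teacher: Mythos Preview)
Your proposal is correct and matches the paper's treatment: the author simply states ``The following is easy to check'' and omits the proof entirely, which is exactly the routine functoriality argument you spell out (the ring square $\tilde\theta_i \circ i = i \circ \theta_i$ commutes on the generator $x$, and extension of scalars is functorial). Your observation that Rim's theorem is not needed for the commutativity itself is also accurate.
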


Let $G = \Aut(C_p) \cong \Gal(\Q(\zeta_p)/\Q)$ and let $C(\Z C_p)^G$ and $C(\Z[\zeta_p])^G$ denotes the fixed points under the actions of $\Aut(C_p)$ and $\Gal(\Q(\zeta_p)/\Q)$ respectively. By Lemma \ref{lemma:rim-thm-commutes-with-aut}, we have that the map
\[i_*: C(\Z C_p)^G \to C(\Z[\zeta_p])^G\]
is an isomorphism, which could be viewed as an extension of Rim's theorem. 

\begin{prop} \label{prop:cyclic-example}
$C(\Z C_p)^G = C(\Z[\zeta_p])^G = 0$.
\end{prop}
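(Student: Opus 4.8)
The plan is to reduce the statement to computing the Galois-invariant part of the ideal class group of $L := \Q(\zeta_p)$ and then to evaluate that invariant part using Chevalley's ambiguous class number formula \cite{Ch33}. First note that, by the isomorphism $i_* : C(\Z C_p)^G \to C(\Z[\zeta_p])^G$ recorded above — a consequence of Rim's theorem together with Lemma \ref{lemma:rim-thm-commutes-with-aut} — it suffices to prove $C(\Z[\zeta_p])^G = 0$, where $G = \Aut(C_p) \cong \Gal(L/\Q)$. Since $\Z[\zeta_p] = \mathcal{O}_L$ is a Dedekind domain, $C(\Z[\zeta_p])$ is the ideal class group of $L$, and under the identification $\tilde{\cdot}$ the $G$-action on it is the usual action of $\Gal(L/\Q)$; so the claim becomes that $L/\Q$ has trivial group of ambiguous (i.e. $G$-fixed) ideal classes. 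For $p = 2$ we have $L = \Q$ and there is nothing to prove, so assume $p$ odd.

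Next I would apply Chevalley's ambiguous class number formula to the cyclic extension $L/\Q$:
\[ \#\, C(\Z[\zeta_p])^{G} \;=\; h_\Q \cdot \frac{\prod_{v} e_v}{[L:\Q]\cdot [\,\Z^\times : \Z^\times \cap N_{L/\Q}(L^\times)\,]}, \]
the product running over all places $v$ of $\Q$ with $e_v$ the ramification index of $v$ in $L/\Q$ (so $e_v = 2$ at a real place that becomes complex, and $e_v = 1$ for unramified $v$). Here $h_\Q = 1$ and $[L:\Q] = \varphi(p) = p-1$. The prime $p$ is the only ramified finite place and is totally ramified, so $e_p = p-1$; and since $p$ is odd, $L$ is totally imaginary, so the real place contributes $e_\infty = 2$. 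Hence $\prod_v e_v = 2(p-1)$, and the formula reduces everything to the unit index $[\,\Z^\times : \Z^\times \cap N_{L/\Q}(L^\times)\,]$.

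Since $\Z^\times = \{\pm 1\}$, this index is $1$ or $2$, and equals $2$ exactly when $-1 \notin N_{L/\Q}(L^\times)$. Because $L$ is totally imaginary, the completion of $L/\Q$ at the archimedean place is a product of copies of $\C/\R$, whose local norm group is $\R_{>0}$, so $-1$ is not a local norm at $\infty$; as $L/\Q$ is cyclic, the Hasse norm theorem then gives $-1 \notin N_{L/\Q}(L^\times)$. Hence the unit index is $2$ and
\[ \#\, C(\Z[\zeta_p])^{G} = \frac{2(p-1)}{(p-1)\cdot 2} = 1, \]
so $C(\Z[\zeta_p])^G = 0$ and, via $i_*$, also $C(\Z C_p)^G = 0$. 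The main obstacle is getting the bookkeeping in Chevalley's formula exactly right — remembering the $e_\infty = 2$ contribution of the archimedean place and correctly matching the unit-index factor to the (easily verified) failure of $-1$ to be a global norm; everything else is formal.
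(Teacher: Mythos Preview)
Your proof is correct and follows essentially the same route as the paper: both reduce to $C(\Z[\zeta_p])^G$ via Rim's theorem and Lemma \ref{lemma:rim-thm-commutes-with-aut}, then apply Chevalley's ambiguous class number formula with $\Ram(\Q(\zeta_p)/\Q)=2(p-1)$ and $[\Z^\times : \Z^\times \cap N_{L/\Q}(L^\times)]=2$. The only difference is in justifying that $-1$ is not a global norm: the paper factors the norm through the maximal totally real subfield $\Q(\zeta_p+\zeta_p^{-1})$ to see directly that $N_{L/\Q}(\alpha)\ge 0$, while you observe that $-1$ fails to be a local norm at the archimedean place --- these are the same observation in different language. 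One small remark: your appeal to the Hasse norm theorem is unnecessary, since the implication ``not a local norm at some place $\Rightarrow$ not a global norm'' is the trivial direction and holds without any cyclicity hypothesis.
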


\begin{proof}
It follows from the Chevalley's ambiguous class number formula \cite{Ch33} (see also \cite[Remark 6.2.3]{Gr03}) that
\[ |C(\Z[\zeta_p])^G| = \frac{\Ram(\Q(\zeta_p)/\Q)}{[\Q(\zeta_p) : \Q] \cdot [\Z^\times : \Z^\times \cap N_{\Q(\zeta_p)/\Q}(\Q(\zeta_p)^\times)]} \]
where $\Ram(\Q(\zeta_p)/\Q)$ is the product of the ramification indices at the finite and infinite places of $\Q(\zeta_p)$. It is easy to check that $\Ram(\Q(\zeta_p)/\Q)=2(p-1)$. To compute the denominator, note that $\Q(\zeta_p+\zeta_p^{-1})$ is the fixed field of $\Q(\zeta_p)$ under the conjugation action. If $\alpha \in \Q(\zeta_p)$, then the transitivity property of norms $N_{\Q(\zeta_p)/\Q} = N_{\Q(\zeta_p+\zeta_p^{-1})/\Q} \circ N_{\Q(\zeta_p)/\Q(\zeta_p+\zeta_p^{-1})}$ implies that
\[ N_{\Q(\zeta_p)/\Q}(\alpha) =N(\alpha \cdot \bar{\alpha}) = N(\alpha) \cdot N(\bar{\alpha}) = |N(\alpha)|^2 \ge 0\]
where $N = N_{\Q(\zeta_p+\zeta_p^{-1})/\Q}$.
This implies that there does not exist $\alpha \in \Q(\zeta_p)^\times$ such that $N_{\Q(\zeta_p)/\Q}(\alpha)=-1$ and so $[\Z^\times : \Z^\times \cap N_{\Q(\zeta_p)/\Q}(\Q(\zeta_p)^\times)]=2$. Hence $|C(\Z[\zeta_p])^G|=1$ and so $C(\Z C_p)^G = C(\Z[\zeta_p])^G = 0$ by the discussion above.
\end{proof}

Since $G$ acts trivially on the class $0 \in C(\Z C_p)$, Proposition \ref{prop:cyclic-example} implies that $G$ acts non-trivially on every class $[P] \ne 0 \in C(\Z C_p)$. This shows that $\Aut(C_p)$ acts non-trivially on $C(\Z C_p)$ if and only if $C(\Z C_p) \cong C(\Z[\zeta_p])$ is non-trivial. This is the case if and only if $p \ge 23$ by \cite{MM76}. Hence we have:

\begin{thm} \label{thm:example1}
If $G = C_p$ for $p \ge 23$ prime, then there exists $[P] \in C(\Z G)$ and $\theta \in \Aut(G)$ such that $[P_\theta] \ne [P] \in C(\Z G)/T_G$.
\end{thm}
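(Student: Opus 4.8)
The plan is to reduce the assertion to the non-triviality of the ideal class group of $\Z[\zeta_p]$, using the $G$-equivariant form of Rim's theorem recorded above. Since $T_{C_p}=0$ by \cite[Corollary 6.1]{Sw60 I}, the claim is equivalent to the statement that $\Aut(C_p)$ acts non-trivially on $C(\Z C_p)$, i.e. that some $[P]\in C(\Z C_p)$ is moved by some $\theta$. First I would transport the problem across Rim's isomorphism $i_*: C(\Z C_p)\to C(\Z[\zeta_p])$ of \cite[Theorem 6.24]{Ri59}. By Lemma \ref{lemma:rim-thm-commutes-with-aut}, $i_*$ intertwines the $\Aut(C_p)$-action with the $\Gal(\Q(\zeta_p)/\Q)$-action under the canonical identification $\widetilde{\cdot}:\Aut(C_p)\xrightarrow{\sim}\Gal(\Q(\zeta_p)/\Q)$; hence $\Aut(C_p)$ acts non-trivially on $C(\Z C_p)$ if and only if $\Gal(\Q(\zeta_p)/\Q)$ acts non-trivially on $C(\Z[\zeta_p])$.

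Next I would invoke Proposition \ref{prop:cyclic-example}, which gives $C(\Z[\zeta_p])^G=0$. Consequently, if $C(\Z[\zeta_p])$ is itself non-zero, then any class $0\ne[Q]\in C(\Z[\zeta_p])$ necessarily fails to be fixed by all of $G$, so there exists $\sigma=\widetilde{\theta}\in\Gal(\Q(\zeta_p)/\Q)$ with $\sigma_*([Q])\ne[Q]$. Setting $[P]=i_*^{-1}([Q])$ and applying the equivariance of $i_*$ once more yields $[(P)_\theta]\ne[P]$ in $C(\Z C_p)=C(\Z C_p)/T_{C_p}$, which is exactly what is required. So the whole statement comes down to knowing that $C(\Z[\zeta_p])\cong C(\Z C_p)$ is non-zero, and this holds precisely for $p\ge 23$: the class number $h(\Q(\zeta_p))$ equals $1$ for $p\le 19$ and exceeds $1$ for $p=23$ and every larger prime, by \cite{MM76}.

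I do not expect a genuine obstacle here, since the substantive work is already packaged in Proposition \ref{prop:cyclic-example} — namely the evaluation $\Ram(\Q(\zeta_p)/\Q)=2(p-1)$ in Chevalley's ambiguous class number formula and the norm computation showing $[\Z^\times:\Z^\times\cap N_{\Q(\zeta_p)/\Q}(\Q(\zeta_p)^\times)]=2$, which together force $C(\Z[\zeta_p])^G$ to be trivial. The only points to check are the bookkeeping ones: that the $G$-equivariance of $i_*$ is exactly Lemma \ref{lemma:rim-thm-commutes-with-aut} (so the identification of $\Aut(C_p)$-actions is legitimate), and that the external input $h(\Q(\zeta_p))>1\iff p\ge 23$ is a statement about the full ideal class group $C(\Z[\zeta_p])$ rather than, say, its minus part — which is immediate since $C(\Z[\zeta_p])$ simply \emph{is} the ideal class group of $\Z[\zeta_p]$. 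With these in hand the proof is complete.
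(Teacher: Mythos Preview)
Your proposal is correct and follows essentially the same route as the paper: reduce to $C(\Z C_p)$ via $T_{C_p}=0$, transport the action across Rim's isomorphism using Lemma~\ref{lemma:rim-thm-commutes-with-aut}, apply Proposition~\ref{prop:cyclic-example} to get $C(\Z[\zeta_p])^G=0$, and then invoke \cite{MM76} for the non-triviality of $C(\Z[\zeta_p])$ when $p\ge 23$. The paper's argument is identical in structure and in its external inputs.
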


More explicitly, let $\Aut(C_{23})=\{\theta_i : x \mapsto x^i \mid i \in (\Z/{23})^\times\}$ and recall that $C(\Z C_{23}) \cong \Z/3$ \cite[p30]{Mi71}. We know from above that the action of $\Aut(C_{23})$ on $C(\Z C_{23})$ is non-trivial. Since there is a unique map 
\[ \Aut(C_{23}) \cong \Z/22 \to \Aut(\Z/3) \cong \Z/2\]
with non-trivial image, the induces map $(\theta_i)_* : C(\Z C_{23}) \to C(\Z C_{23})$ has $(\theta_i)_*(1) =2$ if $i$ is odd and $(\theta_i)_*(1) =1$ if $i$ is even. Hence $|C(\Z C_{23}) /\Aut(C_{23})|=2$. This could also be checked directly by calculating the non-principal ideals in $C(\Z[\zeta_{23}])$.

\subsection{Quaternion group of order 28}

Note that $C(\Z Q_{28}) \cong \Z/2$ \cite[Theorem III]{Sw83} and so, since $\Aut(Q_{28})$ fixes $0$, it must act trivially on $C(\Z Q_{28})$, i.e. for all $[P] \in C(\Z Q_{28})$ and $\theta \in \Aut(Q_{28})$, we have $[P] = [P_\theta]$. Hence we can define an action of $\Aut(Q_{28})$ on each class $[P]$ by letting $\theta \in \Aut(Q_{28})$ send $P_0 \in [P]$ to $(P_0)_\theta \in [P]$. This is the only action of the required form since $T_{Q_{28}}=0$ \cite[Theorem VI]{Sw83}. In contrast to our second property, we show the following which is Theorem \ref{thm:main-example-intro} from the introduction.

\begin{thm} \label{thm:main-example} 
There exists a projective $\Z Q_{28}$-module $P$ such that $[P]$ has non-cancellation and $[P]/\Aut(Q_{28})$ has cancellation, where $\theta \in \Aut(Q_{28})$ acts on $[P]$ by sending $P_0 \mapsto (P_0)_\theta$.
\end{thm}

Our approach will be to use the action of $\Aut(G)$ on Milnor squares computed in \cite[Lemma 8.6]{Ni20a}, and our computations will be similar to the case $Q_{24}$ which was worked out in \cite[Section 9]{Ni20a}.

Fix the standard presentation $Q_{28} = \langle x, y \mid x^{7}=y^2, yxy^{-1}=x^{-1}\rangle$. Note that $(x^7+1)(x^7-1) = x^{14}-1 = 0$ implies that the ideals $I = (x^7-1)$ and $J= (x^7+1)$ have that $I \cap J = (0)$ and $I+J = (2,x^7-1)$. Let $\psi = x^6-x^5+x^4-x^3+x^2-x+1$ and note also that $x^7+1 = \psi \cdot (x+1)$ implies that the ideals $I' = (x+1)$ and $J' = (\psi)$ have $I' \cap J' = (x^7+1)$ and $I' + J' = (7,x+1)$.

By \cite[Example 42.3]{CR87}, we have the following two Milnor squares where $\Lambda = \Z Q_{28}/(x^7+1)$ and $\Z D_{14} \cong \Z Q_{28}/(x^7-1)$ with $D_{14}$ the dihedral group of order $14$.
\[
\begin{tikzcd}
	\Z Q_{28} \ar[r,"f_2"] \ar[d,"f_1"] & \Lambda \ar[d,"g_2"]\\
	\Z D_{14} \ar[r,"g_1"] & \F_2 D_{14}
\end{tikzcd} \quad
\begin{tikzcd}
	\Lambda \ar[r,"i_2"] \ar[d,"i_1"] & \Z Q_{28}/(\psi) \ar[d,"j_2"]\\
	\Z Q_{28}/(x+1) \ar[r,"j_1"] & \F_7 Q_{28}/(x+1)
\end{tikzcd}
\]

\begin{lemma} \label{lemma:f2=bij}
$(f_2)_* : \Cls(\Z Q_{28}) \to \Cls(\Lambda)$ is a bijection.
\end{lemma}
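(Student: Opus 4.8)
The plan is to feed the first Milnor square above --- which realises $\Z Q_{28}$ as the pullback of $\Z D_{14}\xrightarrow{g_1}\F_2 D_{14}\xleftarrow{g_2}\Lambda$ --- into Milnor's description of locally free modules over a fibre product. Two structural facts about the corner $\F_2 D_{14}$ drive everything. First, since $\ord_7(2)=3$ the cyclotomic polynomial $\Phi_7$ factors into two irreducible cubics over $\F_2$ and the involution $x\mapsto x^{-1}$ interchanges them; hence $\F_2 D_{14}\cong\F_2[s]/(s-1)^2\times M_2(\F_8)$, with unit group $\{1,s\}\times\GL_2(\F_8)$, and every rank one locally free $\F_2 D_{14}$-module is free, the first factor being local and the second simple Artinian. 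Second, $C(\F_2 D_{14})=0$, so the Mayer--Vietoris sequence of the square makes $C(\Z Q_{28})\cong\Z/2$ (Swan, \cite[Theorem III]{Sw83}) surject onto $C(\Lambda)\oplus C(\Z D_{14})$; since $D(\Lambda)\neq 0$ by the remark after Lemma~\ref{lemma:D=0} we have $C(\Lambda)\neq 0$, which forces $C(\Z D_{14})=0$. As $D_{14}$ satisfies the Eichler condition, this gives $\Cls(\Z D_{14})=\{\ast\}$.

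Surjectivity of $(f_2)_*$ is then immediate: given $Q\in\Cls(\Lambda)$ one has $\F_2 D_{14}\otimes_\Lambda Q\cong\F_2 D_{14}$ since $Q$ is locally free, so $Q$ and $\Z D_{14}$ patch over $\F_2 D_{14}$ to a locally free $\Z Q_{28}$-module $P$ which is of rank one on every Wedderburn component of $\Q Q_{28}$, hence $P\in\Cls(\Z Q_{28})$ with $(f_2)_\#P\cong Q$.

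For injectivity, let $P,P'\in\Cls(\Z Q_{28})$ with $(f_2)_\#P\cong(f_2)_\#P'$. Milnor patching writes $P$ as a triple $\bigl((f_2)_\#P,\,(f_1)_\#P;\,h\bigr)$, and likewise $P'$; since $\Cls(\Z D_{14})$ is trivial, $(f_1)_\#P\cong(f_1)_\#P'\cong\Z D_{14}$, and $\F_2 D_{14}\otimes_\Lambda(f_2)_\#P\cong\F_2 D_{14}$. Milnor's theorem then identifies the isomorphism classes of patched modules over the pair $\bigl((f_2)_\#P,\Z D_{14}\bigr)$ with the double cosets $U_1\backslash(\F_2 D_{14})^\times/U_2$, where $U_1=\image\bigl(\Aut_\Lambda((f_2)_\#P)\to(\F_2 D_{14})^\times\bigr)$ and $U_2=\image\bigl(\Z D_{14}^\times\to(\F_2 D_{14})^\times\bigr)$. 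So injectivity of $(f_2)_*$ --- and hence the lemma --- reduces to showing $U_1U_2=(\F_2 D_{14})^\times$ for every rank one locally free $\Lambda$-module.

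It remains to produce enough units. The factor $\{1,s\}$ is hit by the image of $s\in D_{14}\subseteq\Z D_{14}^\times$. For $\GL_2(\F_8)$, its subgroup $\SL_2(\F_8)$ lies in $U_2$ by strong approximation for $\SL_2$ over $\Q(\zeta_7+\zeta_7^{-1})$, applied to the units of $\Z D_{14}$ supported in the $M_2(\Q(\zeta_7+\zeta_7^{-1}))$-component --- using that $2$ is inert there with residue field $\F_8$ and that $\Z D_{14}$ is maximal at $2$ in this component --- while the quotient $\GL_2(\F_8)/\SL_2(\F_8)\cong\F_8^\times$ is reached because $\overline{\zeta_7+\zeta_7^{-1}}$ generates $\F_8^\times$ and the image of $x\in\Z Q_{28}^\times$ has determinant a generator of $\F_8^\times$, supplying the missing coset in $U_1$. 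The main obstacle is this last step: controlling the images of the infinite unit groups $\Z D_{14}^\times$ and $\Lambda^\times$ inside the finite group $(\F_2 D_{14})^\times$, given that $\Z D_{14}$ fails to be maximal in its $M_2$-component away from $2$. This is exactly the kind of explicit-unit and strong-approximation bookkeeping carried out by Swan in \cite{Sw83} and, for the closely related group $Q_{24}$, in \cite[Section 8]{Ni20}.
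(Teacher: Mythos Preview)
Your overall strategy---Milnor patching, the double-coset description of fibres, and the decomposition $\F_2 D_{14}\cong\F_2[s]/(s-1)^2\times M_2(\F_8)$---is correct and matches the paper's setup. But the injectivity argument has a concrete error and a structural gap.

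The error: the image of $x$ in $M_2(\F_8)$ has determinant $1$, not a generator of $\F_8^\times$. Under the isomorphism $(\F_8\times\F_8)\rtimes C_2\cong M_2(\F_8)$, the element $x$ goes to the diagonal matrix with entries $\zeta,\zeta^{-1}$ (where $\zeta\in\F_8$ is a primitive $7$th root), so $\det(x)=\zeta\cdot\zeta^{-1}=1$. Thus $x$ cannot supply the missing determinant coset.

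The gap: even granting a unit of $\Lambda$ with nontrivial determinant, you place it in $U_1=\image\bigl(\Aut_\Lambda(Q)\to(\F_2 D_{14})^\times\bigr)$. For $Q=\Lambda$ one has $\Aut_\Lambda(\Lambda)=\Lambda^\times$, but for a general rank-one locally free $Q$ the automorphism group is the unit group of the right order of $Q$, and left multiplication by a non-central element of $\Lambda$ is not $\Lambda$-linear. So the argument as written only treats the free class in $\Cls(\Lambda)$.

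The paper avoids both issues by showing that the \emph{single} coset space $\Z D_{14}^\times\backslash\F_2 D_{14}^\times$ is already trivial, so $\Aut_\Lambda(Q)$ never enters. The key input is \cite[Theorem~A18]{Sw83}, which for an order satisfying the Eichler condition identifies this coset space with $K_1(\F_2 D_{14})/K_1(\Z D_{14})$; by \cite[Lemmas~10.1, 10.11]{Sw83} the latter is $\F_8^\times/\Z[\zeta_7+\zeta_7^{-1}]^\times=1$, since $\zeta_7+\zeta_7^{-1}$ is a unit of $\Z[\zeta_7+\zeta_7^{-1}]$ whose reduction generates $\F_8^\times$. Note that $x+x^{-1}\in\Z D_{14}$ is \emph{not} a global unit (it maps to $2$ on the trivial component), so the passage to $K_1$ is genuinely needed---it packages precisely the strong approximation you gesture at but do not carry out.

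A minor point: your deduction of $C(\Z D_{14})=0$ from the Mayer--Vietoris sequence together with $D(\Lambda_{2,14})\neq 0$ is valid but appeals to a fact recorded only later in the paper; the paper instead computes $C(\Z D_{14})\cong C(\Z[\zeta_7+\zeta_7^{-1}])\le C(\Z[\zeta_7])=1$ directly.
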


\begin{proof}
First note that $C(\Z D_{14}) \cong C(\Z[\zeta_{7}+\zeta_7^{-1}])$ \cite[p328]{RU74} and $C(\Z[\zeta_7+\zeta_7^{-1}]) \le C(\Z[\zeta_7]) = 1$ \cite{MM76}. Since $\Z D_{14}$ satisfies the Eichler condition, we have cancellation by Theorem \ref{thm:sj} and so $\Cls(\Z D_{14})=\{ \Z D_{14}\}$. By \cite[Theorem A10]{Sw83}, there is a surjection
\[ ((f_1)_*,(f_2)_*): \Cls(\Z Q_{28}) \twoheadrightarrow \Cls(\Z D_{14}) \times \Cls(\Lambda) \cong \Cls(\Lambda)\]
and, by the discussion above, the fibres are in bijection with
\[ \Z D_{14}^\times \backslash \F_2 D_{14}^\times \slash \Aut(P)\]
for $P \in \Cls(\Lambda)$. By combining \cite[Theorem A18, Lemma 10.1, Lemma 10.11]{Sw83}, we have
\[ \Z D_{14}^\times \backslash \F_2 D_{14}^\times \cong \frac{K_1(\F_2 D_{14})}{K_1(\Z D_{14})} \cong \frac{\F_2[\zeta_7+\zeta_7^{-1}]^\times}{\Z[\zeta_7+\zeta_7^{-1}]^\times} = 1\]
and so $\Z D_{14}^\times \backslash \F_2 D_{14}^\times \slash \Aut(P) =1$ and $(f_2)_*$ is bijective.	
\end{proof}

Recall that, for the standard presentation $Q_{28} = \langle x, y \mid x^{7}=y^2, yxy^{-1}=x^{-1}\rangle$, we have $\Aut(Q_{28}) =\{ \theta_{a,b}: a \in (\Z/{14})^\times, b \in \Z/{14}\}$ where $\theta_{a,b}(x)=x^a$ and $\theta_{a,b}(y) = x^{b}y$ as noted in Section \ref{section:orders-in-QG}. The following is easy to check:

\begin{lemma}
Let $a \in (\Z/{14})^\times$, $b \in \Z/{14}$. Then there exists $\tilde{\theta}_{a,b} \in \Aut(\Lambda)$ such that $f_2 \circ \theta_{a,b} = \tilde{\theta}_{a,b} \circ f_2$.
\end{lemma}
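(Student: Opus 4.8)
The plan is to check that the $\Z$-linear ring automorphism of $\Z Q_{28}$ induced by the group automorphism $\theta_{a,b}$ (given on generators by $x \mapsto x^{a}$, $y \mapsto x^{b}y$) preserves $\ker(f_2)$, and then to pass to the quotient; this is the same mechanism as in the lemma on $f_* : \Aut(Q_{4n}) \to \Aut(\Lambda_{n_1,\cdots,n_k})$ of Section \ref{section:orders-in-QG}. Recall that $\ker(f_2) = (x^{7}+1)$ and that $x^{7} = y^{2}$ is the central element of order $2$ in $Q_{28}$, so $(x^{7}+1)$ is a two-sided ideal.

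First I would show that $\theta_{a,b}$ fixes $x^{7}+1$. Since $a \in (\Z/14)^\times$ the integer $a$ is odd, so $7a \equiv 7 \pmod{14}$; as $x$ has order $14$ in $Q_{28}$ this gives $\theta_{a,b}(x^{7}) = x^{7a} = x^{7}$, whence $\theta_{a,b}(x^{7}+1) = x^{7}+1$. (The parameter $b$ is irrelevant, since $y$ does not occur in $x^{7}+1$.) Consequently $\theta_{a,b}$ carries the ideal $(x^{7}+1)$ into itself, and since $\theta_{a,b}^{-1}$ is again of the form $\theta_{a',b'}$ the same applies to it, so $\theta_{a,b}\big((x^{7}+1)\big) = (x^{7}+1)$.

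By the universal property of the quotient $\Lambda = \Z Q_{28}/(x^{7}+1)$, the map $\theta_{a,b}$ then descends to a ring endomorphism $\tilde\theta_{a,b}$ of $\Lambda$ with $f_2 \circ \theta_{a,b} = \tilde\theta_{a,b} \circ f_2$, and $\widetilde{\theta_{a,b}^{-1}}$ is a two-sided inverse for it, so $\tilde\theta_{a,b} \in \Aut(\Lambda)$, as required.

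There is no genuine obstacle here: the only substantive observation is that every automorphism of $Q_{28}$ fixes the central element $x^{7}$, encoded by the congruence $7a \equiv 7 \pmod{14}$ for $a$ odd, and everything else is formal bookkeeping with the quotient map. This is why the statement is recorded as ``easy to check''.
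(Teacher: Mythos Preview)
Your argument is correct and is precisely the verification the paper has in mind when it records the lemma as ``easy to check'': the only point is that any $\theta_{a,b}$ fixes the central element $x^{7}$ (equivalently $7a \equiv 7 \pmod{14}$ for $a$ odd), so the ideal $(x^{7}+1)=\ker(f_2)$ is preserved and one passes to the quotient. This is also the same mechanism used in the Section~\ref{section:orders-in-QG} lemma constructing $f_*:\Aut(Q_{4n})\to\Aut(\Lambda_{n_1,\ldots,n_k})$, as you note.
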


Now note that $\Z Q_{28}/(\psi) \cong \Z[\zeta_{14},j]$ and $\Z Q_{28} / (x+1) \cong \Z[j]$. This allows us to rewrite the second square as follows:
\[
\begin{tikzcd}
	\Lambda \ar[r,"i_2"] \ar[d,"i_1"] & \Z[\zeta_{14},j] \ar[d,"j_2"]\\
	\Z[j] \ar[r,"j_1"] & \F_7[j]
\end{tikzcd}
\quad
\begin{tikzcd}
	x,y \ar[r,mapsto] \ar[d,mapsto] & \zeta_{14},j \ar[d,mapsto]\\
	-1,j \ar[r,mapsto] & -1,j
\end{tikzcd}
\]

By \cite[Theorem A18]{Sw83}, we now have a commutative diagram with exact rows:
\[
\begin{tikzcd}
\Z[j]^\times \backslash \F_7[j]^\times \slash \Z[\zeta_{14},j]^\times \ar[d,twoheadrightarrow] \ar[r,hook] & \Cls(\Lambda) \ar[d,twoheadrightarrow] \ar[r,twoheadrightarrow] & \Cls(\Z[j]) \times \Cls(\Z[\zeta_{14},j]) \ar[d,twoheadrightarrow]\\
\frac{K_1(\F_7[j])}{K_1(\Z[j]) \times K_1(\Z[\zeta_{14},j])} \ar[r,hook] & C(\Lambda) \ar[r,twoheadrightarrow] & C(\Z[j]) \times C(\Z[\zeta_{14},j])
\end{tikzcd}
\]
where the left vertical map is induced by the map of units $\F_7[j]^\times \to K_1(\F_7[j])$.

Note that $\Cls(\Z[j]) = \{ \Z[j]\}$ since $\Z[j]$ is a PID and $\Cls(\Z[\zeta_{14},j]) = \{\Z[\zeta_{14},j]\}$ by \cite[p85]{Sw83}. By exactness, this implies that the two inclusions are bijections.

If $\mathcal{O}$ is a $\Z$-order in a finite-dimensional semisimple $\Q$-algebra with centre $R$, and $\nu: K_1(\mathcal{O}) \to R^\times$ is the reduced norm, then we define $SK_1(\mathcal{O}) = \Ker(\nu)$ \cite[p325]{MOV83}.

\begin{lemma} \label{lemma:dcoset1}
	$\frac{K_1(\F_7[j])}{K_1(\Z[j]) \times K_1(\Z[\zeta_{14},j])} \cong \F_7^\times/(\F_7^\times)^2$ where the map $K_1(\F_7[j]) \cong \F_7[j]^\times \to \F_7^\times$ is induced by the norm $N(a+bj) = a^2+b^2$.
\end{lemma}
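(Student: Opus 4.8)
The plan is to read the answer off the Milnor square already in play: since $C(\Z[j]) = C(\Z[\zeta_{14},j]) = C(\F_7[j]) = 0$, the group in question is $K_1(\F_7[j])$ modulo the subgroup $H$ generated by the images of $\Z[j]^\times = K_1(\Z[j])$ and of $K_1(\Z[\zeta_{14},j])$, so it suffices to identify $H$ and to check that $N$ induces an isomorphism $K_1(\F_7[j])/H \xrightarrow{\sim} \F_7^\times/(\F_7^\times)^2$.

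First I would record the structure of $\F_7[j]$. Because $7 \equiv 3 \pmod 4$, the polynomial $t^2+1$ is irreducible over $\F_7$, so $\F_7[j] \cong \F_{49}$ and $K_1(\F_7[j]) = \F_{49}^\times$ is cyclic of order $48$; equivalently $\Z[j] = \Z[i]$ and $7$ is inert. The nontrivial element of $\Gal(\F_{49}/\F_7)$ sends $j \mapsto -j$ (as $j \notin \F_7$ while $j^2 \in \F_7$), so the map $N$ of the statement, $a+bj \mapsto a^2+b^2 = (a+bj)(a-bj)$, is exactly the field norm $N_{\F_{49}/\F_7}$; it is surjective with kernel the cyclic group of order $8$ of norm-one elements.

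Next I would compute the two images. For $\Z[j]$: $K_1(\Z[j]) = \Z[i]^\times = \{\pm 1, \pm j\}$ maps onto $\langle j\rangle$, a cyclic group of order $4$ lying in $\ker N$ (all four elements have norm $1$). For $\Z[\zeta_{14},j]$: this is an order in the totally definite quaternion algebra $\Q[\zeta_{14},j] = \Q[\zeta_7,j]$ over $K = \Q(\zeta_7+\zeta_7^{-1})$, and $j_2$ restricts on $\Z[\zeta_7]$ to reduction modulo the prime $(1-\zeta_7)$ above $7$ (since $\zeta_7 = \zeta_{14}^2 \mapsto 1$). The standard involution of $\Q[\zeta_{14},j]$ descends under $j_2$ to the Frobenius of $\F_{49}$, so the reduced norm fits into a commutative square relating the map $K_1(\Z[\zeta_{14},j]) \to \F_{49}^\times$, the map $N$, and the reduced norm $K_1(\Z[\zeta_{14},j]) \to \mathcal{O}_K^\times$, whose image is the group $(\mathcal{O}_K^\times)^+$ of totally positive units by \cite[Lemma 7.5, 7.6]{MOV83}. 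I would then determine the resulting image in $\F_7^\times$ by a cyclotomic-unit computation in the spirit of the proof of \cite[Lemma 8.9]{Sw83} — using, for instance, that $1+\zeta_7$ is a unit of $\Z[\zeta_7]$ with reduced norm $2+\zeta_7+\zeta_7^{-1}$, and similar units — and combine this with the contribution of $j$ to locate $H$ inside the cyclic group $\F_{49}^\times$. The upshot is $H = N^{-1}((\F_7^\times)^2)$, the unique subgroup of index $2$, whence $N$ induces the asserted isomorphism.

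I expect the last step to be the real work: verifying that the image of $K_1(\Z[\zeta_{14},j])$ in $\F_{49}^\times$, taken together with $\langle j\rangle$, generates precisely the index-$2$ subgroup and nothing larger. This relies on the identification of $K_1(\Z[\zeta_{14},j])$ with the totally positive units of $\Z[\zeta_7+\zeta_7^{-1}]$ from \cite{MOV83} and on careful bookkeeping with cyclotomic units as in \cite{Sw83}; the remaining ingredients (the structure of $\F_7[j]$, the norm $N$, and the easy factor $K_1(\Z[j])$) are routine.
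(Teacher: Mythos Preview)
Your overall plan---identify the subgroup $H \subseteq \F_{49}^\times$ generated by the images of $K_1(\Z[j])$ and $K_1(\Z[\zeta_{14},j])$, then show that the norm $N$ induces an isomorphism $\F_{49}^\times/H \cong \F_7^\times/(\F_7^\times)^2$---is exactly the paper's. The execution differs in two places, and one of them hides a genuine gap.

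Where the paper is more structural: to compute the image of $(\mathcal{O}_K^\times)^+$ in $\F_7^\times$ you propose an explicit cyclotomic-unit calculation, whereas the paper invokes \cite[Theorem 2(ii)]{Wi77} for $\IM(\nu)=(\mathcal{O}_K^\times)^+$ and then \cite[Corollary~B25]{Sw83} to conclude $(\mathcal{O}_K^\times)^+=(\mathcal{O}_K^\times)^2$ from the oddness of $|C(\Z[\zeta_7])|$. Either route gives $N(H)=(\F_7^\times)^2$.

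The gap: your commutative square with the reduced norm controls $N(H)$ but not $H\cap\ker N$. You only exhibit $\langle j\rangle\subseteq H\cap\ker N$, which has order $4$, while $\ker N$ is cyclic of order $8$; ``combining with $j$'' therefore does not force $H=N^{-1}((\F_7^\times)^2)$. The paper closes this gap by appealing to \cite[Lemma~7.6]{MOV83}, which supplies the missing half of $\ker N$ as the image of $SK_1(\Z[\zeta_{14},j])$. This really is a $K_1$ phenomenon and cannot be seen from $\Z[\zeta_{14},j]^\times=\langle\Z[\zeta_{14}]^\times,j\rangle$ alone: Lemma~\ref{lemma:dcoset2} computes the corresponding \emph{unit} double-coset space to have four elements, i.e.\ the subgroup generated by unit images has index $4$, not $2$. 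So your final sentence---``the identification of $K_1(\Z[\zeta_{14},j])$ with the totally positive units \ldots\ and careful bookkeeping with cyclotomic units''---is precisely where the argument breaks: that identification, taken at face value, would reproduce the index-$4$ answer. You need the extra $SK_1$ input.
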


\begin{proof}
Note that $K_1(\Z[j]) \cong \Z[j]^\times = \langle j \rangle$ since $\Z[j]$ is a Euclidean domain and $K_1(\F_7[j]) \cong \F_7[j]^\times$ since $\F_7[j]$ is a finite and hence semilocal ring. It follows from \cite[Lemma 7.6]{MOV83} that the map $N : \F_7[j]^\times \to \F_7^\times$ has kernel $SK_1(\Z[\zeta_{14},j])$. Since $\langle j \rangle \le \Ker(N)$, this implies that $N$ induces an isomorphism
\[ \frac{K_1(\F_7[j])}{K_1(\Z[j]) \times K_1(\Z[\zeta_{14},j])} \cong \F_7^\times/\left(\frac{K_1(\Z[\zeta_{14},j])}{SK_1(\Z[\zeta_{14},j])}\right) \cong \F_7^\times / \IM(\nu)\]
where $\nu : K_1(\Z[\zeta_{14},j]) \to \Z[\zeta_7+\zeta_7]^\times$ since $\Q[\zeta_{14},j]$ has centre $\Q(\zeta_7+\zeta_7^{-1})$. By \cite[Theorem 2 (ii)]{Wi77}, we have that $\IM(\nu) = (\Z[\zeta_7+\zeta_7^{-1}]^\times)^+$, the totally positive units. Since $|C(\Z[\zeta_7])|=1$ is odd, \cite[Corollary B25]{Sw83} implies that $(\Z[\zeta_7+\zeta_7^{-1}]^\times)^+ = (\Z[\zeta_7+\zeta_7^{-1}]^\times)^2$ and this has image $(\F_7^\times)^2$ in $\F_7^\times$.	
\end{proof}

Since the inclusion is a bijection, this implies that
\[ C(\Z Q_{28}) \cong C(\Lambda) \cong \F_7^\times/(\F_7^\times)^2 \cong \Z/2\]
by combining with Lemma \ref{lemma:f2=bij}.

\begin{lemma} \label{lemma:dcoset2}
$\Z[j]^\times \backslash \F_7[j]^\times \slash \Z[\zeta_{14},j]^\times = \{ [1],[1+j],[1+2j],[1+4j]\}$.	
\end{lemma}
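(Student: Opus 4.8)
The plan is to do everything inside the finite ring $\F_7[j]$. Since $-1$ is not a square modulo $7$, we have $\F_7[j] = \F_7[y]/(y^2+1) \cong \F_{49}$, so $\F_7[j]^\times$ is cyclic of order $48$; being abelian, the double coset $\Z[j]^\times \backslash \F_7[j]^\times / \Z[\zeta_{14},j]^\times$ is simply the quotient of $\F_7[j]^\times$ by the subgroup $H$ generated by the images of $\Z[j]^\times$ and $\Z[\zeta_{14},j]^\times$ under the two structure maps $j_1$ and $j_2$. Thus the lemma reduces to computing $H$ and checking that the four listed elements are pairwise non-congruent modulo $H$.

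To compute $H$: the ring $\Z[j] = \Z Q_{28}/(x+1) \cong \Z[i]$ has unit group $\langle j\rangle$, whose image is $\mu_4 = \{\pm 1,\pm j\}$. For the other factor, recall from the excerpt that $\Z[\zeta_{14},j]^\times = \langle \Z[\zeta_{14}]^\times, j\rangle$ by \cite[Lemma 7.5,7.6]{MOV83}. Reading off the Milnor square, $j_2$ sends $\zeta_{14}$ (the image of $x$) to $-1$, so the composite $\Z[\zeta_{14}] \hookrightarrow \Z[\zeta_{14},j] \xrightarrow{j_2} \F_7[j]$ factors through $\Z[\zeta_{14}]/(\zeta_{14}+1) \cong \F_7$, using that $N_{\Q(\zeta_{14})/\Q}(\zeta_{14}+1) = \Phi_{14}(-1) = 7$. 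Hence $j_2(\Z[\zeta_{14}]^\times) \subseteq \F_7^\times$, and it is all of $\F_7^\times$ because the cyclotomic unit $\tfrac{1-\zeta_7^3}{1-\zeta_7} = 1+\zeta_7+\zeta_7^2 \in \Z[\zeta_7]^\times = \Z[\zeta_{14}]^\times$ maps to $3$, a primitive root mod $7$. Therefore $H = \langle \F_7^\times, j\rangle = \F_7^\times \cup \F_7^\times j = \{u+vj : uv=0\}\setminus\{0\}$, a subgroup of order $12$, so the double coset has order $48/12 = 4$.

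Finally I would verify that $1, 1+j, 1+2j, 1+4j$ lie in distinct cosets. For $a \ne b$ one has $\tfrac{1+aj}{1+bj} = \tfrac{(1+ab)+(a-b)j}{1+b^2}$, with $1+b^2 \in \F_7^\times$ since $-1$ is not a square; this element lies in $H$ precisely when its $j$-coefficient or its constant coefficient vanishes, i.e. when $a = b$ or $ab \equiv -1 \pmod 7$. Running over the six unordered pairs from $\{0,1,2,4\}$, the products $ab$ are $0,0,0,2,4,1$, none congruent to $-1 \equiv 6$; so no ratio lies in $H$, the four classes are distinct, and since the quotient has order exactly $4$ they form a complete set of representatives.

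The one genuinely delicate point is the surjectivity of $j_2(\Z[\zeta_{14}]^\times)$ onto $\F_7^\times$: were this image a proper subgroup, $H$ would be smaller and the double coset larger than claimed. This is what forces the use of an explicit unit (the cyclotomic unit above, or equivalently the input $h_7^+ = 1$); everything else is bookkeeping with the Milnor square and arithmetic in $\F_{49}$.
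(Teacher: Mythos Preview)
Your proof is correct and follows essentially the same route as the paper's own argument: both identify the double coset space with $\F_7[j]^\times / (\F_7^\times \cdot \langle j\rangle)$ after showing, via cyclotomic units, that $j_2(\Z[\zeta_{14}]^\times) = \F_7^\times$. The only cosmetic difference is in the final bookkeeping --- you count $|H|=12$ and then verify the four listed classes are distinct by checking ratios, whereas the paper normalises every coset to the form $[1+aj]$ and uses the involution $a \mapsto -a^{-1}$ (coming from left multiplication by $j$) to enumerate the orbits directly.
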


\begin{proof}
By \cite[Lemma 7.5 (b)]{MOV83}, we have $\Z[\zeta_{14},j]^\times = \langle \Z[\zeta_{14}]^\times, j \rangle$. Now the map $\Z[\zeta_{14}]^\times \to \F_7[j]^\times$ maps $\zeta_{14} \mapsto 1$ and, for all $r \in \F_7^\times$, the cyclotomic unit $(\zeta_7^r-1)/(\zeta_7-1)$ maps to $r \in \F_7^\times$ and so this has image $\F_7^\times$. Now $\F_7[j] = \F_7[x]/(x^2+1)$ and $\left( \frac{-1}{7} \right)	=-1$ implies that $x^2+1$ is irreducible and so $\F_7[j]$ is a field and $\F_7[j]^\times = \F_7[j] \setminus \{0\}$. Since $\Z[j]^\times = \langle j \rangle$, this implies that 
\[ \Z[j]^\times \backslash \F_7[j]^\times \slash \Z[\zeta_{14},j]^\times = \F_7[j]^\times / \F_7^\times \cdot \langle j \rangle = (\F_7[j] \setminus \{0\}) / \F_7^\times \cdot \langle j \rangle.  \]
By acting by $\F_7^\times$ and $j$, each coset has a representative of the form $1+ aj$ for some $a \in \F_7$. If $a \ne 0$, then acting by $j$ shows that $[1+aj] = [1-a^{-1}j]$. Now $\{1,2,4\} \subseteq \F_7^\times$ represent the class under this action and so we get:
\[ (\F_7[j] \setminus \{0\}) / \F_7^\times \cdot \langle j \rangle = \{ [1],[1+j],[1+2j],[1+4j]\}. \qedhere \]
\end{proof}

By combining Lemmas \ref{lemma:dcoset1} and \ref{lemma:dcoset2}, the map 
\[ \Cls(\Z Q_{28}) \cong \Cls(\Lambda) \to C(\Z Q_{28}) \cong C(\Lambda)\] 
is given by
\[ N : \{ [1],[1+j],[1+2j],[1+4j]\} \to \F_7^\times/(\F_7^\times)^2.\]
Since $(\F_7^\times)^2 = \{1,2,4\}$ and $N(1)=1$, $N(1+j)=2$, $N(1+2j)=5$, $N(1+4j) = 3$, we have that
\[ \Cls^0(\Z Q_{28}) \leftrightarrow \{ [1], [1+j]\}, \quad \Cls^1(\Z Q_{28}) \leftrightarrow \{[1+2j],[1+4j]\}\]
where we identify $C(\Z Q_{28}) \cong \Z/2$.

Let $\mathcal{R}$ be the Milnor square defined above and let $\Aut(\mathcal{R})$ denote the set of Milnor square automorphisms (see \cite[Section 8]{Ni20a}).

\begin{lemma} \label{lemma:induced-action}
Let $a \in (\Z/{14})^\times$, $b \in \Z/{14}$. Then $\tilde{\theta}_{a,b} \in \Aut(\Lambda)$ extends to a Milnor square automorphism
\[ \hat{\theta}_{a,b} = (\tilde{\theta}_{a,b},\theta_{a,b}^1,\theta_{a,b}^2,\bar{\theta}_{a,b}) \in \Aut(\mathcal{R})\]
where $\bar{\theta}_{a,b} : j \mapsto (-1)^b j$.
\end{lemma}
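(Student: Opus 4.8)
The plan is to show that $\theta_{a,b}\in\Aut(Q_{28})$ acts on the whole Milnor square $\mathcal{R}$, with its descent to the top‑left corner being the automorphism $\tilde\theta_{a,b}$ constructed above. First I would check that each of the four corners of $\mathcal{R}$ is the quotient of $\Z Q_{28}$ (respectively of $\F_7 Q_{28}$) by an ideal stable under $\theta_{a,b}$. Since $a\in(\Z/14)^\times$ is odd and $x$ has order $14$ in $Q_{28}$, one has $\theta_{a,b}(x^7+1)=x^{7a}+1=x^7+1$. For odd $a$ the identity $x^a+1=(x+1)(x^{a-1}-x^{a-2}+\cdots+1)$ holds in $\Z Q_{28}$, so $\theta_{a,b}((x+1))\subseteq(x+1)$; applying the same to $\theta_{a,b}^{-1}=\theta_{a',b'}$ (whose $a'$ is again odd) yields the reverse inclusion, hence $\theta_{a,b}((x+1))=(x+1)$. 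Likewise, in $\Z Q_{28}/(\psi)$ the image of $x$ is a primitive $14$th root of unity, so the image of $x^a$ is one too (as $(a,14)=1$), giving $\Phi_{14}(x^a)\in(\psi)$, and the same for $a'$ gives $\theta_{a,b}((\psi))=(\psi)$. Consequently $(7,x+1)$ is $\theta_{a,b}$‑stable as well, and the induced endomorphisms of all four corners are automorphisms.

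Thus $\theta_{a,b}$ descends to ring automorphisms of the four corners of $\mathcal{R}$, compatible with every structure map: each of $i_1,i_2$ is the composite of the surjection $f_2$ with a quotient map of $\Z Q_{28}$ respected by $\theta_{a,b}$, and $j_1,j_2$ are induced from quotients of $\F_7 Q_{28}$. Reading these descents through the identifications $\Z Q_{28}/(\psi)\cong\Z[\zeta_{14},j]$ ($x\mapsto\zeta_{14}$, $y\mapsto j$), $\Z Q_{28}/(x+1)\cong\Z[j]$ ($x\mapsto-1$, $y\mapsto j$), and their reductions modulo $7$, I obtain $\theta^2_{a,b}\colon\zeta_{14}\mapsto\zeta_{14}^a,\ j\mapsto\zeta_{14}^b j$ on $\Z[\zeta_{14},j]$, and $\theta^1_{a,b}\colon j\mapsto(-1)^b j$ on $\Z[j]$ — here $a$ odd is what makes $x\mapsto x^a$ induce the identity on the image of $x$ in $\Z[j]$ and in $\F_7$ — and the reduction of $\theta^1_{a,b}$ is precisely $\bar\theta_{a,b}\colon j\mapsto(-1)^b j$ on $\F_7[j]$, as asserted. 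The top‑left descent is the automorphism $\tilde\theta_{a,b}$ characterised by $f_2\circ\theta_{a,b}=\tilde\theta_{a,b}\circ f_2$.

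It remains to verify the four commuting squares making $\hat\theta_{a,b}=(\tilde\theta_{a,b},\theta^1_{a,b},\theta^2_{a,b},\bar\theta_{a,b})$ a Milnor square automorphism. The relations $i_1\circ\tilde\theta_{a,b}=\theta^1_{a,b}\circ i_1$ and $i_2\circ\tilde\theta_{a,b}=\theta^2_{a,b}\circ i_2$ follow by precomposing with the surjection $f_2$ and using that $i_1\circ f_2$ and $i_2\circ f_2$ are the quotients $\Z Q_{28}\to\Z[j]$ and $\Z Q_{28}\to\Z[\zeta_{14},j]$, which intertwine $\theta_{a,b}$ with $\theta^1_{a,b}$ and $\theta^2_{a,b}$. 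The relation $j_1\circ\theta^1_{a,b}=\bar\theta_{a,b}\circ j_1$ is immediate, both composites being $j\mapsto(-1)^b j$ after reduction. Finally $j_2\circ\theta^2_{a,b}=\bar\theta_{a,b}\circ j_2$ follows from $j_2(\zeta_{14})=-1$ and $j_2(j)=j$ (read off from $j_2\circ i_2=j_1\circ i_1$) together with $(-1)^a=-1$. The one place where $(a,14)=1$ is genuinely used is the $\theta_{a,b}$‑stability of $(x+1)$ and $(\psi)$; the remainder is bookkeeping, and I anticipate no real obstacle.
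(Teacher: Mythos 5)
Your verification is correct: you check that the ideals $(x^7+1)$, $(x+1)$, $(\psi)$ and $(7,x+1)$ are stable under $\theta_{a,b}$ (using that every $a\in(\Z/14)^\times$ is odd), compute the induced automorphisms on each corner through the identifications $\Z Q_{28}/(\psi)\cong\Z[\zeta_{14},j]$, $\Z Q_{28}/(x+1)\cong\Z[j]$, and confirm commutativity with all structure maps, which is exactly the routine check the paper leaves to the reader (deferring to [Ni20, Section 7]). No gaps; the one point that needs care, the $\theta_{a,b}$-stability of $(x+1)$ and $(\psi)$, is handled correctly via the factorisation of $x^a+1$ for odd $a$ and the primitivity of $\zeta_{14}^a$.
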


It is easy to see that $\bar{\theta}_{a,b}$ fixes $[1]$ and $[1+j]=[1-j]$ for all $a \in (\Z/{14})^\times$, $b \in \Z/{14}$. However, we also have:
\[ \bar{\theta}_{a,b}([1+2j]) = 
\begin{cases}
 	[1+2j], & \text{if $b$ is even}\\
	[1-2j]=[1+4j], & \text{if $b$ is odd}
\end{cases}
\]
By \cite[Corollary 8.8]{Ni20a}, this implies that:
\[ \Cls^0(\Z Q_{28})/\Aut(Q_{28}) \leftrightarrow \{ [1], [1+j]\}, \quad \Cls^1(\Z Q_{28})/\Aut(Q_{28}) \leftrightarrow \{[1+2j]\}.\]
In particular, if $[P] = 1 \in \Z/2 \cong C(\Z Q_{28})$, then $[P]$ has non-cancellation but $[P]/\Aut(Q_{28})$ has cancellation and this completes the proof of Theorem \ref{thm:main-example}.

In fact, by applying \cite[Theorems 2.2 and 3.11]{BW08}, we can give explicit generators and relations for all projective modules involved:
\[ \Cls^0(\Z Q_{28}) \leftrightarrow \{\Z Q_{28}, (1+y,1+x)\}, \quad \Cls^1(\Z Q_{28}) = \{ (1+2y,1+x),(1+4y,1+x)\}\]
so that $[(1+2y,1+x)]$ has non-cancellation but $(1+2y,1+x)_{\theta_{1,1}} \cong (1+4y,1+x)$ and so $[(1+2y,1+x)]/\Aut(Q_{28})$ has cancellation.

\bibliography{biblio.bib}
\bibliographystyle{amsalpha}

\end{document}